\documentclass[11pt,reqno]{amsart}
\usepackage{amssymb, latexsym} 

\makeatletter
\DeclareOldFontCommand{\rm}{\normalfont\rmfamily}{\mathrm}
\DeclareOldFontCommand{\sf}{\normalfont\sffamily}{\mathsf}
\DeclareOldFontCommand{\tt}{\normalfont\ttfamily}{\mathtt}
\DeclareOldFontCommand{\bf}{\normalfont\bfseries}{\mathbf}
\DeclareOldFontCommand{\it}{\normalfont\itshape}{\mathit}
\DeclareOldFontCommand{\sl}{\normalfont\slshape}{\@nomath\sl}
\DeclareOldFontCommand{\sc}{\normalfont\scshape}{\@nomath\sc}
\makeatother

\def\D{{\mathcal D}}
\def\PP{{\mathcal P}}
\def\B{{\mathcal B}}
\def\PG{{\rm PG}}

\def\GammaL{\Gamma{\rm L}}
\def\PGammaL{{\rm P}\Gamma{\rm L}}

\def\ff{\overline{G}}
\def\GL{{\rm GL}}

\def\la{\langle}
\def\ra{\rangle}
\def\SL{{\rm SL}}
\def\PSL{{\rm PSL}}
\def\PGL{{\rm PGL}}
\def\PGaL{{\rm P}\Gamma {\rm L}}
\def\GaL{\Gamma {\rm L}}
\def\PSigmaL{{\rm P}\Sigma {\rm L}}
\def\SigmaL{\Sigma {\rm L}}
\def\PSU{{\rm PSU}}
\def\SU{{\rm SU}}
\def\PGU{{\rm PGU}}
\def\GU{{\rm GU}}
\def\GammaU{\Gamma{\rm U}}

\def\GL{{\rm GL}}

\def\Sym{{\rm Sym}}

\def\Alt{{\rm Alt}}
\def\Aut{{\rm Aut}}

\def\la{\langle}
\def\ra{\rangle}

\usepackage[left=2.5cm,right=2.5cm,
    top=2cm,bottom=2cm,bindingoffset=0cm]{geometry}

\usepackage{mathrsfs}

\usepackage{geometry}
\usepackage{tikz}
\usepackage{longtable}
\usepackage{mathtools} 
\usepackage{enumitem} 
\usepackage{listings} 
\usepackage[justification=centering]{caption} 
\usepackage[noadjust]{cite} 
\usepackage{graphicx,float,wrapfig,avant,colortbl,upgreek,array,tikz}
\usetikzlibrary{positioning}
\usepackage{subcaption}
\usepackage{changepage}

\usepackage{cellspace}
\setlength\cellspacetoplimit{3pt}
\setlength\cellspacebottomlimit{3pt}

\definecolor{thistle}{rgb}{0.95,0.9,1}
 \definecolor{darkgreen}{rgb}{0,0.5,0}

\definecolor{RawSienna}{cmyk}{0,1,0,0}

\definecolor{col1}{HTML}{e60049}
\definecolor{col2}{HTML}{0bb4ff}
\definecolor{col3}{HTML}{50e991}
\definecolor{col4}{HTML}{e6d800}
\definecolor{col5}{HTML}{9b19f5}
\definecolor{col6}{HTML}{ffa300}
\definecolor{col7}{HTML}{dc0ab4}
\definecolor{col8}{HTML}{b3d4ff}
\definecolor{col9}{HTML}{00bfa0}

\usetikzlibrary{calc}
\usetikzlibrary{decorations.pathmorphing}

\numberwithin{equation}{section} 
\numberwithin{figure}{section} 
\numberwithin{table}{section} 

\title[Imprimitive rank $3$ partial linear spaces]{Proper partial linear spaces affording imprimitive rank 3 automorphism groups}

\author[A. A. Baykalov]{Anton A. Baykalov}
\address{School of Mathematical and Statistical Sciences \\
  University of Galway \\
  Galway,   H91 TK33,   Ireland} 
\email{a.a.baykalov@gmail.com}

\author[A. Devillers]{ Alice Devillers}
\author[C. E. Praeger]{Cheryl E. Praeger}
\address{Centre for the Mathematics of Symmetry and Computation\\
School of Physics, Mathematics and Computing\\
The University of Western Australia\\
Perth, WA 6009, Australia} 
\email{alice.devillers@uwa.edu.au, cheryl.praeger@uwa.edu.au}
\thanks{This work forms part of the Australian Research Council Discovery Grant project  
DP200100080. On final stages of preparation of this publication, Baykalov was supported by Taighde \'Eireann -- Research Ireland under Grant number 22/FFP-P/11449.}

\date{}

\newtheorem{Th}{Theorem}
\newtheorem{Lem}[Th]{Lemma}
\newtheorem{Pro}[Th]{Proposition}

\newtheorem{Cor}[Th]{Corollary}
\newtheorem{construction}[Th]{Construction}
\newtheorem{Hyp}[Th]{Hypothesis}

\theoremstyle{definition}
\newtheorem{Def}[Th]{Definition}
\newtheorem{Rem}[Th]{Remark}
\newtheorem{Remark}[Th]{Remark}
\newtheorem{Not}[Th]{Notation}
\newtheorem{Ex}[Th]{Example} 
\numberwithin{Th}{section}

\renewcommand{\leq}{\leqslant}
\renewcommand{\geq}{\geqslant}

 \newcommand{\omr}[1][r]{\langle \omega^{#1} \rangle}
 \newcommand{\diag}{\mathrm{diag}}

\begin{document}
\begin{abstract}
A partial linear space is a point--line incidence structure such that each line is incident with at least two points and  each pair of  points is incident with at most one line. It is said to be proper if there exists at least one non-collinear point pair, and at least one line incident with more than two points. The highest degree of symmetry for a proper partial linear space occurs when the automorphism group $G$ is transitive on ordered pairs of collinear points, and on ordered pairs of non-collinear points, that is to say, $G$ is a  transitive rank $3$ group on the points. While the primitive rank 3 partial linear spaces are essentially classified, we present the first substantial classification of a family of imprimitive rank $3$ examples. 
 We classify all imprimitive rank $3$ proper partial linear spaces such that the rank $3$ group is semiprimitive. In particular,  this includes all partial linear spaces with a rank 3 imprimitive automorphism group that is innately transitive or quasiprimitive.   We construct several infinite families of examples and ten individual examples. 
The examples  in the infinite families  admit a rank $3$ action of a linear or unitary group, and to our knowledge most of these examples have not appeared before in the literature.

\medskip\noindent
\textbf{2020 Mathematics Subject Classification:} 20B25, 05B30, 
51E30.\\ 
\textbf{Key words:} Rank $3$, permutation group, partial linear space, imprimitive,  innately transitive, semiprimitive, quasiprimitive

\end{abstract}
\maketitle

\section{Introduction}

A \emph{partial linear space} $\D=(\PP,\B)$  is a structure with two types of objects, usually called points (elements of $\PP$)  and lines (elements of $\B$), and an incidence relation between  $\PP$ and  $\B$ such that each line is incident with at least two points and  each pair of  points is incident with at most one line. In this paper, for a partial linear space $\D=(\PP,\B)$, the sets $\PP$ and $\B$ are finite,  each line is incident with a constant number $\ell \geq 2$ of points, and each point is incident with a constant number of lines.  If two points are incident with a common line, we say the points are \emph{collinear}. If each pair of points is collinear, then $\D$ is a linear space; if $\ell=2$, then $\D$ is a graph.  We say that a partial linear space is \emph{proper} if it is neither a linear space nor a graph.
The maximum degree of transitivity that can be achieved by the automorphism group of a  proper partial linear space, acting on pairs of points, is  transitivity on (ordered) pairs of collinear points and on (ordered) pairs of non-collinear points. In other words a group achieving this degree of transitivity has a rank $3$ action on points.  Here the \emph{rank} of a transitive permutation group $G$ on a set $\Omega$ (or of its action) is the number of orbits in $\Omega$ of a point stabiliser of $G$.

The primitive rank 3 proper partial linear spaces are essentially classified: of the three possible types of primitive rank $3$ actions -- almost simple type, grid type, and affine type -- 
partial linear space examples for the  first two types were classified by Devillers \cite{D05,D08}, and those of affine type were recently treated in \cite{BDFP}, giving a satisfactory classification except for a few `hopeless' cases.
On the other hand, the problem of classifying the imprimitive rank $3$ partial linear spaces is largely untouched. 
 Note that, to get a {\it connected} partial linear space (such that its collinearity graph is connected) with an imprimitive rank 3 automorphism group, each pair of collinear points must lie in distinct blocks of imprimitivity, so the number of imprimitivity blocks must be at least the line-size $\ell$. In the simplest case, when $\ell=3$ and the group has three blocks of imprimitivity,  the partial linear space is called a transversal design with $3$ blocks, and this is the only case that is classified so far:  Devillers and Hall \cite{DH06} showed that these spaces arise from multiplication tables of elementary abelian groups. There is little hope of a complete classification  of rank $3$ partial linear spaces as imprimitive rank 3 groups are so `wild'. 

Nevertheless the classification of quasiprimitive and properly innately transitive rank $3$ groups, in \cite{DGLPP} and \cite{R3PIT} respectively, laid the groundwork for a classification of the innately transitive rank 3 partial linear spaces which is the main goal of this paper. 
Here a transitive permutation group $G$ is {\it quasiprimitive} if every normal subgroup of $G$ is transitive, and $G$ is {\it innately transitive} if $G$ has at least one transitive minimal normal subgroup  (called a {\it plinth}). Naturally, primitivity implies quasiprimitivity which implies innate transitivity. 
Recently, in \cite{semiprR3}, Huang, Li and Zhu made progress on classifying a larger family of rank 3 groups, namely the rank 3 semiprimitive groups (a transitive permutation group is {\it semiprimitive} if each of its normal subgroups is either transitive or semiregular). Each  imprimitive  rank $3$ group $G$ leaves invariant a unique nontrivial partition $\Sigma$ of the point set, and the $G$-action induced on $\Sigma$ is  $2$-transitive and hence of almost simple or affine type (see \cite[Corollary after Lemma 4]{Higman}).  In \cite[Theorem 1]{semiprR3}, the authors classify completely the rank 3 semiprimitive groups for which the induced action on $\Sigma$ is almost simple: there is one infinite family of examples and two exceptional examples which are not innately transitive. The infinite family arises by relaxing restrictions on the size of the blocks of imprimitivity in $\Sigma$ for the family of examples \cite[Construction 4.2]{R3PIT} of rank 3 innately transitive groups. Because of these links with innately transitive groups, it was  natural to extend our classification to the partial linear spaces affording such rank 3 semiprimitive groups. The rank 3 groups we focus on in this paper therefore satisfy the following hypothesis.  (During the refereeing process, we learned from an anonymous referee that the classification of rank 3 semiprimitive groups had been completed in the preprint \cite{Li2025}, identifying those for which the group $G^{\Sigma}$ is affine. We deal with these additional possibilities in Subsection~\ref{s:extra}, see Theorem~\ref{t:all}. We thank the referee for drawing our attention to \cite{Li2025}.)

\begin{Hyp}\label{hyp1}
Let $G\leq \Sym(\Omega)$ be a transitive imprimitive permutation group of rank $3$ that is either innately transitive or semiprimitive such that $G^{\Sigma}$ is almost simple, where $\Sigma$ is the unique nontrivial system of imprimitivity. 
\end{Hyp}

 We give a summary of permutation groups satisfying Hypothesis \ref{hyp1} in Table~\ref{t:qpinsprk3}; the information is gathered from \cite{DGLPP,R3PIT,semiprR3}.
It follows from \cite[Corollary after Lemma 4]{Higman} that an imprimitive rank 3  group preserves a unique system of imprimitivity $\Sigma,$ and we let $r=|\sigma|$ for $\sigma \in \Sigma.$ Since  $G^{\Sigma}$ is an almost simple group, we keep track of its socle which we denote by $M$. Notice that, if the imprimitive rank 3 group $G$ is innately transitive, then $G^{\Sigma}$ is always almost simple and the unique plinth of $G$ is isomorphic to $M$, see \cite[Lemma 2.5]{R3PIT}.  The actions on $\Omega$ for the first three lines of Table \ref{t:qpinsprk3} (the infinite series) are given by Constructions \ref{con:psl} and \ref{con:psu} for groups with linear and unitary $M$ respectively. For the remaining cases, where the faithful action of degree $|\Omega|=|\Sigma|\cdot r$ is not unique up to permutational isomorphism, we specify the stabiliser of a point $\alpha \in \Omega$ (up to conjugation). In the last column we specify the type of $G$: ``$\mathrm{qp}$'', ``$\mathrm{it}$'' or ``$\mathrm{sp}$''. Here ``$\mathrm{qp}$'' means quasiprimitive,  ``$\mathrm{it}$'' means innately transitive but not quasiprimitive, and ``$\mathrm{sp}$'' means semiprimitive but not innately transitive. If any of the types can occur in the series, then we use ``$\mathrm{qp/it/sp}$''; when there is a list of the groups in the column ``$G$'' we state the types in the respective order.  

Let us briefly introduce some notation that is used in Table \ref{t:qpinsprk3} and also later in the paper. We fix a primitive element $\omega$ in $\mathbb{F}_q$ or $\mathbb{F}_{q^2}$ for $M$ equal to $\PSL_n(q)$ or $\PSU_n(q)$, respectively.  For a vector space $V$ over $\mathbb{F}_{p^{a}}$ with a basis $\beta=\{v_1, \ldots, v_n\}$, we denote by $\phi_{\beta}$ (or just $\phi$ if the basis is understood) the $\mathbb{F}_{p^a}$-semilinear map on $V$ defined via
\begin{equation}\label{def:phi}
\left(\sum_{i=1}^n \alpha_i v_i\right) \phi_{\beta} = \left(\sum_{i=1}^n \alpha_i^p v_i\right).
\end{equation}
For a prime $p$ and an integer $a$, a prime divisor $r$ of $p^a -1$ is  a {\it primitive
prime divisor} if $r$ does not divide $p^m - 1$ for any positive integer $m < a$. This is equivalent to the statement that the multiplicative order $o_r(p)$ of $p$ in $\mathbb{Z}_r$ is $a.$ We denote the greatest common divisor and the least common multiple of integers $n$ and $m$ by $(n,m)$ and $\mathrm{lcm}\{n,m\}$, respectively. It follows from results in \cite{DGLPP,R3PIT,semiprR3} that each $G, \Sigma, r, M$ in Hypothesis~\ref{hyp1} satisfies one of the lines on Table~\ref{t:qpinsprk3}.  

\begin{table}[h]
\resizebox{\columnwidth}{!}{
\begin{tabular}{|c| p{1.2cm} | p{4.5cm}|p{7cm}|p{5.4cm}|c|}
\hline
 $M$                                  & $|\Sigma|$              & $r$    &  $G$    &Conditions on $G$ & type          \\ \hline

 $\PSL_n(q)$                                  & $\frac{q^n-1}{q-1}$ & \begin{minipage}[t]{5cm}prime  such that $r | {(q - 1})$,\\ $o_r(p)=r-1$ and\\  $(n,r) \ne (2,2)$
\end{minipage}  & $\langle\omega^r I\rangle \, \SL_n(q)/\langle\omega^r I\rangle\leq G\leq \GammaL_n(q)/\langle\omega^r I\rangle$ &  \begin{minipage}[t]{5.4cm} 
  $|G^\Sigma/(G^\Sigma\cap\PGL_n(q))|=a/j$ \\with $(j,r-1)=1$\end{minipage} & $\mathrm{qp/it/sp}$ \\ 
$\PSL_2(q)$                                  & $q+1$ & $2$   & $\langle\omega^2 I\rangle \, \SL_2(q)/\langle\omega^2 I\rangle\leq G\leq \GammaL_2(q)/\langle\omega^2 I\rangle$ &  \begin{minipage}[t]{5.4cm} $q\geq 5$, $q$ is odd,   $G^\Sigma\not\leq \PSigmaL_2(q)$\end{minipage} & $\mathrm{qp/it/sp}$\\ 
$\PSU_3(q)$                                  & $q^3+1$       &\begin{minipage}[t]{5cm}odd prime such that   \\ $r | q-1$,  $o_r(p)=r-1$ 
\end{minipage}& $\langle\omega I\rangle  \, \SU_3(q)/\langle\omega^r I\rangle \leq G\leq \GammaU_3(q)/\langle\omega^r I\rangle$   &   \begin{minipage}[t]{5.4cm} $|G^\Sigma/(G^\Sigma\cap\PGU_3(q))|=2a/j$\\ with $(j,r-1)=1$\end{minipage}& $\mathrm{it}$\\ 
$\PSL_3(2)$&$7$&$2$&$\PSL_3(2),$ $C_2\times \PSL_3(2)$ & & $\mathrm{qp,it}$\\

 $\rm{M}_{11}$                                    &$11$   &$2$    & $\rm{M}_{11}$, $C_2\times \rm{M}_{11}$ &       & $\mathrm{qp,it}$  \\     
  $\PSL_3(4)$                                    &$21$   &$6$    & $\PGL_3(4)$, $\PGammaL_3(4)$ &       & $\mathrm{qp,qp}$  \\ 
  $\PSL_3(5)$                                    &$31$   &$5$    & $\PSL_3(5)$ &       & $\mathrm{qp}$  \\ 
   $\PSL_5(2)$                                    &$31$   &$8$    & $\PSL_5(2)$ &       & $\mathrm{qp}$  \\ 
 $\PSL_3(8)$                                    &$73$   &$28$    & $\PGammaL_3(8)$ &       & $\mathrm{qp}$  \\ 
   $\PSL_3(3)$                                    &$13$   &$3$    & $\PSL_3(3)$ &       & $\mathrm{qp}$  \\ 
      $\Alt(6)$                                    &$6$   &$3$    & $3.\Sym(6)$ & $G_{\alpha}=\Sym(5)$      & $\mathrm{sp}$  \\
         $\rm{M}_{12}$                                    &$12$   &$2$    & $2.\rm{M}_{12}$ &      $G_{\alpha}=\rm{M}_{11}$ & $\mathrm{sp}$  \\ 
\hline

\end{tabular}}
\caption{$G$ satisfying Hypothesis \ref{hyp1},\\ $q=p^a$ with $p$ prime \label{t:qpinsprk3}} 
\end{table}

The following two results from \cite{D05} give us a way to construct partial linear spaces from rank 3 permutation groups which guarantees that we will find all the partial linear spaces admitting the rank 3 group as a subgroup of automorphisms. 

\begin{Th}[{\cite[Theorem 2.3]{D05}}] \label{th:AD1}
    Let $G$ be a rank $3$ permutation group on a set $\mathcal{P}$ and let $\alpha$ be a
point in $\mathcal{P}$. If $B$ is a non-trivial block of imprimitivity for the action of the point stabiliser $G_{\alpha}$ on one of its orbits, and if the setwise stabiliser of $B \cup \{\alpha\}$ in $G$ is transitive on $B \cup \{\alpha\}$, then the pair $(\mathcal{P},\mathcal{L})$, where $\mathcal{L} = (B \cup \{\alpha\})^G$, forms a proper partial linear space.
\end{Th}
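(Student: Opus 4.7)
The plan is to verify the three defining properties of a proper partial linear space: (i) every line contains at least two points, (ii) two distinct points lie on at most one common line, and (iii) there exist a non-collinear pair of points and a line containing more than two points. Set $L_0 = B \cup \{\alpha\}$ and write $H$ for the setwise stabiliser of $L_0$ in $G$, so $\mathcal{L} = L_0^G$ and by hypothesis $H$ is transitive on $L_0$. Because $G$ has rank $3$ on $\mathcal{P}$, the stabiliser $G_\alpha$ has exactly three orbits $\{\alpha\}, \Delta, \Delta'$, with $B$ contained in (say) $\Delta$. Since $B$ is a non-trivial block, $|L_0| = |B|+1 \geq 3$, which immediately yields (i) together with the line-size half of (iii).

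The pivotal technical step is to show that any line $L \in \mathcal{L}$ containing $\alpha$ can be written $L = L_0^g$ with $g \in G_\alpha$. Starting from $L = L_0^{g_0}$, we have $\alpha^{g_0^{-1}} \in L_0$, and transitivity of $H$ on $L_0$ produces an $h \in H$ with $\alpha^h = \alpha^{g_0^{-1}}$; then $g := hg_0$ lies in $G_\alpha$ and $L_0^g = L$. Consequently $L \setminus \{\alpha\} = B^g \subseteq \Delta$ because $g \in G_\alpha$ preserves the $G_\alpha$-orbits. Hence $\alpha$ is not collinear with any point of $\Delta'$; since $\Delta'$ is non-empty (rank $3$ forces three orbits), the non-collinear pairs required in (iii) are available.

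For the uniqueness assertion (ii), $G$-transitivity reduces the problem to considering a point $\beta \in \Delta$ incident with two lines $L_i = L_0^{g_i}$, $i=1,2$. The reduction above lets me take $g_i \in G_\alpha$, so $\beta^{g_i^{-1}} \in B$ for $i = 1,2$. Setting $k := g_1 g_2^{-1} \in G_\alpha$, a direct computation places $\beta^{g_1^{-1}}$ in $B \cap B^{k^{-1}}$. Since $B$ is a block of imprimitivity for $G_\alpha$ on $\Delta$, this forces $B^{k^{-1}} = B$, so $k$ stabilises both $B$ and $\alpha$; thus $k \in H$ and $L_1 = L_0^{k g_2} = L_0^{g_2} = L_2$.

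The main obstacle is the interplay in this last step: the normalisation $g_i \in G_\alpha$ uses transitivity of $H$ on $L_0$, while the conclusion $k \in H$ uses the block property of $B$ for $G_\alpha$. Both hypotheses are genuinely needed, and it is their combination that drives the argument; the remaining conditions then follow routinely from the non-triviality of $B$ and the rank $3$ structure.
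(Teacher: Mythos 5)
Your argument is correct and is essentially the standard proof of this result; the paper itself does not prove the statement but imports it verbatim from \cite[Theorem 2.3]{D05}, where the argument runs along the same lines as yours (normalising lines through $\alpha$ to the form $(B\cup\{\alpha\})^g$ with $g\in G_\alpha$ via transitivity of the line stabiliser, then invoking the block property to get uniqueness of the line through a collinear pair, with properness coming from $|B|\geq 2$ and the untouched third $G_\alpha$-orbit). No gaps.
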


\begin{Th}[{\cite[Theorem 2.2]{D05}}] \label{th:AD2}
   Let $G$ be a rank $3$ permutation group on a set $\mathcal{P}$. Suppose that $G$
is an automorphism group of a proper partial linear space $\mathcal{D} = (\mathcal{P},\mathcal{L})$. Then one of the following holds:
\begin{enumerate}[label=\normalfont (\arabic*)]
    \item  $G$ is imprimitive on $\mathcal{P}$, leaving invariant a nontrivial partition $\Sigma$ of $\mathcal{P}$, and $\mathcal{L}$ is a set of pairwise disjoint, equal-sized lines, namely the parts of the partition $\Sigma$; or
\item for any point $\alpha \in  \mathcal{P}$ and any line $L \in \mathcal{L}$ through $\alpha$, the set $L \backslash \{\alpha\}$  is a block of imprimitivity for $G_{\alpha}$ in one of its orbits, and the setwise stabiliser $G_L$ is transitive on $L$.
\end{enumerate}

\end{Th}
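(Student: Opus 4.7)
The plan is to exploit the rank $3$ hypothesis together with the partial linear space axiom that two distinct lines share at most one point. Fix $\alpha \in \mathcal{P}$ and let $\{\alpha\}$, $\Delta_1$, $\Delta_2$ be the three orbits of $G_\alpha$ on $\mathcal{P}$. Since $G \leq \Aut(\mathcal{D})$, the collinearity relation on $\mathcal{P}$ is $G$-invariant, so rank $3$ forces each $\Delta_i$ to consist either entirely of points collinear with $\alpha$ or entirely of points non-collinear with $\alpha$. The properness of $\mathcal{D}$ guarantees both a line and a non-collinear pair, so exactly one of $\Delta_1,\Delta_2$, say $\Delta_1$, is the collinear orbit, and $L \setminus \{\alpha\} \subseteq \Delta_1$ for every line $L$ through $\alpha$.

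The key step is to show that for any such $L$, the set $B := L \setminus \{\alpha\}$ is a block of imprimitivity for $G_\alpha$ on $\Delta_1$. Indeed, if $g \in G_\alpha$ and $B \cap B^g \ne \emptyset$, then $L$ and $L^g$ share $\alpha$ and a further point, so by the partial linear space axiom $L = L^g$ and hence $B = B^g$. Because $\mathcal{D}$ is not a graph, $|B| = \ell - 1 \geq 2$.

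The dichotomy in the statement now comes from splitting on whether $B = \Delta_1$. Suppose first that $B = \Delta_1$ for some choice of $\alpha$ and line $L \ni \alpha$. Then $L$ is the unique line through $\alpha$, because any other line through $\alpha$ would meet $\Delta_1 = L\setminus\{\alpha\}$ and thus share at least two points with $L$. Since $G$ is transitive on $\mathcal{P}$, the same conclusion holds at every point, so every line of $\mathcal{D}$ has size $\ell = 1 + |\Delta_1|$, any two distinct lines are disjoint, and the $G$-orbit of $L$ is a $G$-invariant partition $\Sigma$ of $\mathcal{P}$ whose parts are the lines. This partition is nontrivial because $\ell \geq 2$ and $\Delta_2 \neq \emptyset$, yielding conclusion $(1)$. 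Otherwise $B$ is a proper nontrivial block of $G_\alpha$ on $\Delta_1$ for every $\alpha$ and every line $L \ni \alpha$, and it remains to check that $G_L$ is transitive on $L$. For $\beta_1,\beta_2 \in B$, transitivity of $G$ on ordered collinear pairs supplies $g \in G$ with $(\alpha,\beta_1)^g = (\alpha,\beta_2)$; the partial linear space axiom forces $L^g = L$, so $g \in G_L$. An element sending $(\alpha,\beta)$ to $(\beta,\alpha)$ likewise lies in $G_L$ and moves $\alpha$ into $B$, completing the proof of transitivity on $L$ and giving conclusion $(2)$.

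I expect the main subtlety to lie in the case $(1)$ branch: I need to globalise the local assumption that one particular line engulfs all of $\Delta_1$ at one point into the statement that every line of $\mathcal{D}$ has this form and that together they partition $\mathcal{P}$. This rests on transitivity of $G$, constancy of the line-valency, and the uniqueness of the line through two collinear points; once it is settled, the rest of the argument is a routine block-of-imprimitivity analysis.
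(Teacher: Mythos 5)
Your proof is correct. Note that the paper does not prove this statement at all --- it is imported verbatim as \cite[Theorem 2.2]{D05} --- and your argument is the standard one underlying that reference: the partial linear space axiom (two points lie on at most one line) makes $L\setminus\{\alpha\}$ a block for $G_\alpha$ inside the unique ``collinear'' suborbit, and the dichotomy between that block being all of $\Delta_1$ (giving the partition into pairwise disjoint lines) or a proper block (giving conclusion (2), with transitivity of $G_L$ supplied by transitivity on ordered collinear pairs) is exactly how the cited result is established.
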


 If $G \leq \Sym(\Omega)$ of rank $3$ preserves a nontrivial imprimitivity system $\Sigma$, then the three orbits of $G_{\alpha}$ are $\{\alpha\},$ $\sigma \backslash \{\alpha\}$ and $\Omega \backslash \sigma$, where $\sigma \in \Sigma$ contains $\alpha$.  It is clear that if  either $\mathcal{D}$ is as in Theorem \ref{th:AD2}(1), or $\mathcal{D}$ is a partial linear space obtained using Theorem \ref{th:AD1} with $B \subseteq \sigma\backslash \{\alpha\},$ then $\mathcal{D}$ is not connected, since lines lying in distinct blocks of $\Sigma$ do not intersect. On the other hand, if $B \subseteq \Omega \backslash \sigma$, then the partial linear space $\mathcal{D}$ constructed via Theorem \ref{th:AD1} is connected since $\alpha$ is collinear with all points in $\Omega \backslash \sigma$ and, in turn, these points are connected to each point in $\sigma.$

Our main result is the following.

\begin{Th}\label{th:main}

    Let $\mathcal{D}$ be a connected proper partial linear space admitting a group $G$ of automorphisms satisfying Hypothesis $\ref{hyp1}$. 
    \begin{enumerate}[label=\normalfont (\arabic*)]
        \item If $G$ is as in the first three lines of Table $\ref{t:qpinsprk3}$, then $\mathcal{D}$ is isomorphic to one of the partial linear spaces in Table $\ref{tabLU}$. Here $Z$ is the subgroup of scalar matrices in $\GL_n(q).$
        \item If $G$ is as in the  lines $4$--$12$ of Table $\ref{t:qpinsprk3}$, then $\mathcal{D}$ is isomorphic to one of the partial linear spaces in Table $\ref{tabPLS}$.
    \end{enumerate}
\end{Th}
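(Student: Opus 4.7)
My plan is to apply Theorems \ref{th:AD1} and \ref{th:AD2} row by row to Table \ref{t:qpinsprk3}. By Theorem \ref{th:AD2}, a connected proper partial linear space $\D=(\PP,\B)$ admitting $G$ cannot be of type (1) there, so each line through $\al$ has the form $L=B\cup\{\al\}$ where $B$ is a non-trivial block of imprimitivity of $G_\al$ on one of its non-trivial orbits and $G_L$ is transitive on $L$. The discussion following Theorem \ref{th:AD2} shows that $B\subseteq\sigma\setminus\{\al\}$ produces disconnected spaces, so connectedness forces $B\subseteq\Om\setminus\sigma$. The task therefore reduces, for each row of Table~\ref{t:qpinsprk3}, to
\begin{enumerate}[label=\normalfont(\alph*)]
\item enumerating the non-trivial blocks of imprimitivity of $G_\al$ on $\Om\setminus\sigma$, equivalently, the overgroups $H$ of a two-point stabiliser $G_{\al\be}$ in $G_\al$ for a fixed $\be\in\Om\setminus\sigma$;
\item verifying for each such $B=\be^H$ whether there exists $g\in G$ with $\al^g\in B$ and $B^g=B$, equivalent to the transitivity of $G_L$ on $L$;
\item grouping the resulting line-orbits $L^G$ into isomorphism classes of partial linear spaces.
\end{enumerate}

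For lines 4--12 of Table \ref{t:qpinsprk3}, each $G$ is a specific finite group of modest order, so (a)--(c) can be carried out by direct computation. I would work in \textsf{GAP} or \textsf{Magma}: construct the permutation group, compute $G_\al$ and its orbit $\Om\setminus\sigma$, enumerate all overgroups of a point stabiliser in that orbit, test the transitivity condition on the resulting candidate line $L$, and identify the outputs with the entries of Table \ref{tabPLS}. Non-isomorphism between the surviving spaces is then checked from elementary invariants (number of lines, line-size, point-valency) together with the full automorphism group computed from the collinearity graph when needed.

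The substantial work is in the infinite families, lines 1--3, where $M\in\{\PSL_n(q),\PSL_2(q),\PSU_3(q)\}$. Here I would make the action explicit via Constructions \ref{con:psl} and \ref{con:psu}, so that the points of $\Om$ become cosets of $\la\omega^r I\ra$ in the non-zero vectors of $V=\mathbb{F}_q^n$ (or the Hermitian analogue), the partition $\Si$ is identified with $\PG_{n-1}(q)$ (respectively with the Hermitian unital), and a block $\sigma$ is the set of $r$ scalar multiples of a non-zero vector by $\la\omega\ra/\la\omega^r\ra$. The stabiliser $G_\sigma$ projects onto a parabolic-type subgroup of $G^\Si$, and the kernel $K=(G_\sigma)_{(\Si)}$ acts faithfully and regularly on $\sigma$; consequently the action of $G_\al$ on $\Om\setminus\sigma$ decomposes through the $G_\al^\Si$-action on $\Si\setminus\{\sigma\}$ together with a fibrewise $K$-twist. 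Using the arithmetic conditions on $r$ (in particular $o_r(p)=r-1$ and $(j,r-1)=1$), I would show that the only overgroups $H$ of $G_{\al\be}$ in $G_\al$ correspond to the geometric configurations listed in Table \ref{tabLU}: lines parametrised by projective subspaces (projective lines, hyperplanes, subgeometries over subfields) in the linear case, and by $\PSU_3(q)$-orbits on suitable subconfigurations of the Hermitian unital in the unitary case. The main obstacle will be controlling which overgroups actually exist and ruling out spurious candidates coming from outer field and graph automorphisms; this is precisely where the hypothesis $(j,r-1)=1$ does work, since it prevents the outer part of $G^\Si$ from fusing otherwise distinct $G_{\al\be}$-orbits inside a single fibre.

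Once the admissible blocks are enumerated, the transitivity condition in (b) is checked by exhibiting explicit elements of $\SL_n(q)$ or $\SU_3(q)$ that swap $\al$ with a chosen point of $B$ while stabilising $B$ setwise; this is straightforward from the geometric description of the blocks. Finally, different pairs $(G,H)$ may produce the same partial linear space, so a short isomorphism analysis based on $(|\Om|,\ell,\text{point-valency})$, supplemented in coincident cases by a direct incidence comparison, completes the classification and yields exactly the entries of Tables \ref{tabLU} and \ref{tabPLS}.
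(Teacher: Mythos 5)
Your proposal follows essentially the same route as the paper: reduce via Theorem \ref{th:AD2} to nontrivial blocks $B$ of $G_\alpha$ on $\Omega\setminus\sigma$ with $G_{B\cup\{\alpha\}}$ transitive on $B\cup\{\alpha\}$, dispose of lines 4--12 by machine computation, and for the infinite families classify the blocks through the explicit Constructions \ref{con:psl} and \ref{con:psu} before identifying the surviving line-orbits with the spaces of Table \ref{tabLU}. Be aware, though, that the block enumeration you defer is where almost all of the work lies and that its outcome is more restrictive than your ``projective subspaces and subgeometries'' heuristic suggests: every block is forced into a single $2$-space $\langle e_1,e_2\rangle$, the generic blocks (scalar classes, affine lines, full $2$-spaces) all fail the transitivity test, and the surviving examples arise only from subfield blocks and from exceptional small parameters $(q,r)\in\{(3,2),(4,3),(16,5),(3^4,5),(5^2,3),(3^2,2)\}$ isolated by Singer-cycle and Zsigmondy-type arguments.
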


\begin{table}[h]
\begin{tabular}{|c|c|c|Sc|c|}
\hline
$\mathcal{D}$ &   Reference  & $r$  & Possible $G$    & Type of $G$   \\ \hline
 $\mathrm{AG}^*(n,4)$, $n \geq 2$ &  Definition \ref{AGdef} & 3   &\begin{minipage}[t]{5cm}
  $\langle \SL_n(4), \diag(1, \ldots, 1, \omega) \phi \rangle,$ \\
  $ \SL_n(4) \rtimes \langle \phi \rangle, \GammaL_n(4)$
\end{minipage}   & $\mathrm{qp/it/sp}$  \\ \hline
  $\Delta(n,3)$, $n\geq 3$  &  Definition \ref{Deltadef} & 2  & $\SL_n(3), \GL_n(3)$ & $\mathrm{qp/it/sp}$  \\ \hline
  $\Delta(n,4)$, $n\geq 2$  &  Definition \ref{Deltadef} &3  &  \begin{minipage}[t]{5cm}
  $\langle \SL_n(4), \diag(1, \ldots, 1, \omega) \phi \rangle,$ \\
  $ \SL_n(4) \rtimes \langle \phi \rangle, \GammaL_n(4)$
\end{minipage} & $\mathrm{qp/it/sp}$  \\ \hline
   $\mathrm{LSub}(n,16,4,5)$, $n\geq 2$  &  Definition \ref{con:subfield} & 5   & \begin{minipage}[t]{5cm} $G$ of rank 3 such that \\ $G \geq \langle \omega^5 I \rangle\,\SL_n(16)/\langle \omega^5 I \rangle$ and\\ 
   $ G \leq \GammaL_n(16)/\langle \omega^5 I \rangle$\end{minipage} & $\mathrm{qp/it/sp}$  \\ \hline
   $\mathrm{LSub}(2,81,9,5)$  &  Definition \ref{con:subfield} & 5   & $(Z \, \SL_2(3^4)\rtimes \langle \phi \rangle)/\langle \omega^5 I \rangle$ & $\mathrm{it}$  \\ \hline
   $\mathrm{LSub}(2,25,5,3)$  &  Definition \ref{con:subfield} & 3  & $(Z \, \SL_2(5^2)\rtimes \langle \phi \rangle)/\langle \omega^3 I \rangle$ & $\mathrm{it}$  \\ \hline
   $\mathrm{DLSub}(9,3,2,1)$  &  Definition \ref{DLSdef} & 2  & $\langle \langle \omega^2 I \rangle \, \SL_2(3^2),  \phi \, \diag(1,\omega)  \rangle/\langle \omega^2 I \rangle$ & $\mathrm{qp}$  \\ \hline
    $\mathrm{USub}(4,2,3)$  &  Definition \ref{con:subfieldSU} & 3   & $\GammaU_3(4)/\langle \omega^3 I \rangle$ & $\mathrm{it}$  \\ \hline
     $\mathrm{USub}(16,4,5)$  &  Definition \ref{con:subfieldSU} & 5   & $\GammaU_3(16)/\langle \omega^5 I \rangle$ & $\mathrm{it}$  \\ \hline
    $ \mathrm{AGU}^*(4)$  &  Definition \ref{con:AGSU} & 3  & $\GammaU_3(4)/\langle \omega^3 I \rangle$ & $\mathrm{it}$  \\ \hline

\end{tabular}
\caption{Partial linear spaces arising from linear and unitary groups}\label{tabLU}
\end{table}

\begin{table}[h]
\begin{tabular}{|c|c|c|c|ccc|c|c|}
\hline
$M$               & $|\Sigma|$ & $r$ &$\mathcal{D}$ &  \multicolumn{1}{c|}{n. of lines}    & \multicolumn{1}{c|}{size of lines}     & Possible $G$   & type                     \\ \hline
$\PSL_3(3)$        & 13  & 3 & $\mathcal{D}_1$  & \multicolumn{1}{c|}{234}            & \multicolumn{1}{c|}{3}                       & $\PSL_3(3)$        & $\mathrm{qp}$                    \\ \hline
                  &     &  &  $\mathcal{D}_2$  & \multicolumn{1}{c|}{117}            & \multicolumn{1}{c|}{4}                      &$\PSL_3(3)$ & $\mathrm{qp}$                      \\ \hline
       & 7   & 2 & $\mathcal{D}_3$  & \multicolumn{1}{c|}{14}             & \multicolumn{1}{c|}{4}                       & $\PSL_3(2)$, $C_2 \times \PSL_3(2)$ &    $\mathrm{qp}$, $\mathrm{it}$                      \\ \hline
                  &     &  & $\mathcal{D}_4$   & \multicolumn{1}{c|}{28}             & \multicolumn{1}{c|}{3}                      & $\PSL_3(2)$ & $\mathrm{qp}$                     \\ \hline
$\PSL_3(8)$  & 73  & 28 & $\mathcal{D}_5$ & \multicolumn{1}{c|}{686784}         & \multicolumn{1}{c|}{3}                       & $\PGammaL_3(8)$   & $\mathrm{qp}$                          \\ \hline
                  &     &  & $\mathcal{D}_6$   & \multicolumn{1}{c|}{98112}          & \multicolumn{1}{c|}{7}                       & $\PGammaL_3(8)$     & $\mathrm{qp}$                        \\ \hline
$\PSL_5(2)$        & 31  & 8 & $\mathcal{D}_7$  & \multicolumn{1}{c|}{248}            & \multicolumn{1}{c|}{16}                     &$\PSL_5(2)$ & $\mathrm{qp}$                    \\ \hline
$\PSL_3(5)$        & 31  & 5 & $\mathcal{D}_8$  & \multicolumn{1}{c|}{775}            & \multicolumn{1}{c|}{6}                       &$\PSL_3(5)$    &$\mathrm{qp}$                         \\ \hline
                  &     &   & $\mathcal{D}_9$  & \multicolumn{1}{c|}{3875}           & \multicolumn{1}{c|}{3}                     &$\PSL_3(5)$  &$\mathrm{qp}$                         \\ \hline
$\PGL_3(4)$        & 21  & 6  & $\mathcal{D}_{10}$ & \multicolumn{1}{c|}{2520}           & \multicolumn{1}{c|}{3}                      & $\PGL_3(4)$     & $\mathrm{qp}$                       \\ \hline
                
\end{tabular}
\caption{Partial linear spaces arising from sporadic cases}\label{tabPLS}
\end{table}

\begin{Remark}\label{qpitsp}
    The type of $G$ in the first four lines of Table \ref{tabLU} depends on $n$ and the choice of $G$ itself. For details see $(1)$, $(2)$ and $(3)$ of Lemma \ref{l:semi}. Note that $\langle \omega^r I\rangle$ is trivial in lines 1--3.  In lines 1--4, all rank 3 groups  $\langle \omega^r I \rangle\,\SL_n(q)/\langle \omega^r I \rangle \leq G \leq \GammaL_n(q)/\langle \omega^r I \rangle$ induce the corresponding space $\mathcal{D}$ via Theorem \ref{th:AD2} (2). We list such groups in column ``Possible $G$'' up to conjugation when the list is short. Notice that the groups in lines 5--6 and 8--10 are normal in $\GammaL_n(q)/\langle \omega^r I \rangle$  and $\GammaU_n(q)/\langle \omega^r I \rangle$ respectively. A conjugate (in  $\GammaL_n(q)/\langle \omega^r I \rangle$) of the group in line 7 induces the same partial linear space (see Remark \ref{rem:9iConj} for details).
\end{Remark}

\begin{Remark}\label{r:L23}
    We note that $\Delta(2,3)$, which is a proper  partial linear space by Lemma~\ref{l:AGDel}, admits the rank $3$ semiprimitive  group $\GL_2(3)$. This does not appear in Theorem~\ref{th:main}(1) since $\GL_2(3)$ is soluble and so does not satisfy Hypothesis~\ref{hyp1}. 
\end{Remark}

\begin{Remark}\label{r:t13}
    The results in Table \ref{tabPLS} are obtained computationally using  {\sc Magma}  \cite{magma}; see the Appendix for details on construction. We note that $\Aut(\mathcal{D}_i)$ is equal to the largest possible $G$ unless $i \in \{2,4\}$ where $\Aut(\mathcal{D}_2)=C_3^3:\PSL_3(3)$  and $\Aut(\mathcal{D}_4)=C_2^3:\PSL_3(2).$
\end{Remark}

 \begin{Remark}
In \cite{R3PIT}, the authors give two examples of proper partial spaces affording a properly innately transitive automorphism group of rank 3. We remark that the partial linear spaces from Examples 1.1 and  1.2 in \cite{R3PIT} are isomorphic to $\mathcal{D}_3$ and $\Delta(2,4)$ respectively. 
\end{Remark}

 While our main goal is to classify connected proper partial linear spaces of rank 3, it is possible to find all the disconnected ones too. It turns out that there is only one example apart from the partial linear spaces  in part $(1)$ of Theorem \ref{th:AD2}.

\begin{Th}\label{th:discon}
    Let $\mathcal{D}$ be a disconnected proper partial linear space admitting a group $G$ of automorphisms satisfying Hypothesis $\ref{hyp1}$. 
    Then one of the following holds:
    \begin{enumerate}[label=\normalfont (\arabic*)]
        \item $\mathcal{D}$ is  a disjoint union of equal-sized lines, namely the nontrivial blocks of imprimitivity of $G$;
        \item  $\mathcal{D}$ is a union of $73$ copies of the $\mathrm{Ree}(3)$-unital and  $G=\PGammaL_3(8)$ acting on the set of $73 \cdot 28 =2044$ points; each copy consists of the $28$ points of a block $\sigma \in \Sigma.$ In particular, $G_{\sigma}^{\sigma} \cong \PGammaL_2(8) \cong \mathrm{Ree}(3)$ and $\Aut (\mathcal{D}) \cong \mathrm{Ree}(3) \wr \Sym(73).$ 
    \end{enumerate}
\end{Th}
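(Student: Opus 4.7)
The plan is to combine Theorems \ref{th:AD1} and \ref{th:AD2} with a row-by-row analysis of the action of $G_\sigma$ on $\sigma$ in Table \ref{t:qpinsprk3}. By Theorem \ref{th:AD2}, either $\mathcal{D}$ has its lines equal to the blocks of $\Sigma$ (case (1)), or the lines through $\alpha$ take the form $B\cup\{\alpha\}$ with $B$ a nontrivial block of $G_\alpha$ on one of its orbits. The observation following Theorem \ref{th:AD2} shows that $B\subseteq\Omega\setminus\sigma$ forces $\mathcal{D}$ connected, so disconnectedness forces $B\subseteq\sigma\setminus\{\alpha\}$; and $B=\sigma\setminus\{\alpha\}$ again recovers case (1). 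Hence we may assume $|B|<r-1$, and the lines of $\mathcal{D}$ contained in $\sigma$ form a proper partial linear space on $\sigma$ invariant under $G_\sigma^\sigma$, which is 2-transitive on $\sigma$ because $G_\alpha$ is transitive on $\sigma\setminus\{\alpha\}$. The task therefore reduces to classifying proper $G_\sigma^\sigma$-invariant partial linear spaces on $\sigma$ for each row of the table.

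If $r\leq 3$, then $|\sigma\setminus\{\alpha\}|\leq 2$, leaving no room for a proper sub-block and giving only case (1); moreover, for $r=2$ even the space of case (1) is a graph and hence excluded, so the theorem is vacuous in rows with $r=2$. For the remaining instances of the infinite series (rows 1 and 3 with $r$ an odd prime), the explicit actions from Constructions \ref{con:psl} and \ref{con:psu} show that $G_\sigma^\sigma$ is a 2-transitive affine group on $\sigma\cong\mathbb{F}_r$, and since $r$ is prime this forces $G_\sigma^\sigma=\AGL_1(r)$. The transitivity hypothesis of Theorem \ref{th:AD1} then forces $B\cup\{\alpha\}$ to be an additive coset of a subgroup of $\mathbb{F}_r$; as $\mathbb{F}_r$ has no proper nontrivial additive subgroup, only $B=\sigma\setminus\{\alpha\}$ survives, returning to case (1).

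For the sporadic rows 6, 7, 8 and 10 (with $r\in\{5,6,8\}$), a direct analysis of the 2-transitive action $G_\sigma^\sigma$ on $\sigma$ shows that every nontrivial block $B$ of the point stabiliser fails the transitivity hypothesis of Theorem \ref{th:AD1}, so only case (1) arises. Row 9 supplies the exceptional example: here $G=\PGammaL_3(8)$, $|\Sigma|=73$, $r=28$, and $G_\sigma^\sigma\cong\PGammaL_2(8)\cong\mathrm{Ree}(3)$ acts 2-transitively on 28 points, preserving the Ree unital, a $2$-$(28,4,1)$ design. The unital lines (blocks $B$ of size $3$) satisfy the hypothesis of Theorem \ref{th:AD1}, and taking the Ree unital on each of the 73 blocks of $\Sigma$ produces the disconnected partial linear space of case (2). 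Because distinct blocks are permuted freely by $G$ and $\mathrm{Ree}(3)$ is the full automorphism group of the Ree unital, $\Aut(\mathcal{D})\cong\mathrm{Ree}(3)\wr\Sym(73)$.

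The main obstacle lies in verifying, for row 9, that the Ree unital is the unique proper partial linear space on 28 points admitting $\mathrm{Ree}(3)$, and in excluding any surviving block structures in the other sporadic rows. The small sporadic cases are most cleanly handled by direct computation in \texttt{Magma}, whereas for the infinite families the essential observation is that $\mathbb{F}_r$ (with $r$ prime) has no proper nontrivial additive subgroup, which collapses the affine analysis to case (1).
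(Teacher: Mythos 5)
Your overall strategy matches the paper's: reduce via Theorem \ref{th:AD2} to blocks $B\subseteq\sigma\setminus\{\alpha\}$, treat the infinite families (lines 1--3 of Table \ref{t:qpinsprk3}) theoretically, and dispose of the sporadic lines by computation, with $\PGammaL_3(8)$ yielding the Ree-unital example. The paper does exactly this (its proof cites Lemmas \ref{lem:bsigmaPSL} and \ref{lem:sigmatr} for the infinite families and {\sc Magma} for the rest).

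However, there is a genuine gap in your treatment of the infinite families. Having identified $G_\sigma^\sigma$ with $\AGL_1(r)$ (which is correct, but not because ``$2$-transitive of prime degree forces $\AGL_1(r)$'' --- rather because $G_\alpha$ acts on $\sigma\setminus\{\alpha\}$ as a \emph{regular cyclic} group of order $r-1$, making $G_\sigma^\sigma$ sharply $2$-transitive; this is the content of Lemma \ref{lem:bsigmaPSL}), you assert that transitivity of $G_L$ on $L=B\cup\{\alpha\}$ forces $L$ to be an additive coset of a subgroup of $\mathbb{F}_r$. That principle is false: for instance $L=\{1,2,4\}\subset\mathbb{F}_7$ has setwise stabiliser in $\AGL_1(7)$ containing multiplication by the squares, which is transitive on $L$, yet $L$ is not an additive coset. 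What is actually true is that a proper subset $L$ with $(\AGL_1(r))_L$ transitive on $L$ must be a translate $c+Kd$ of a coset of a multiplicative subgroup $K$ with $c\notin L$; and the blocks $B$ arising here give $L=\{0\}\cup H$ for $H$ a proper nontrivial multiplicative subgroup. Ruling out $\{0\}\cup H=c+Kd$ requires a further argument --- the paper's Lemma \ref{lem:sigmatr} does this by computing $\sum_{s\in I}s\equiv 0\pmod r$ for $I=\{0\}\cup H$ (since $H$ is a nontrivial subgroup) and deriving the contradiction $(r-1)/k+1\equiv 0\pmod r$ from the existence of an element of $G_L$ swapping $\omr e$ and $\omr\omega e$. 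Without this (or an equivalent) computation your argument does not close; the fact that $\mathbb{F}_r$ has no proper additive subgroups is not the reason the blocks die. (Minor further points: your row labels for the sporadic cases are slightly off --- line 10 of Table \ref{t:qpinsprk3} has $r=3$, and the exceptional case is $\PSL_3(8)$ with $r=28$ --- and the sporadic analysis, including uniqueness of the Ree unital as the surviving structure, is indeed done by {\sc Magma} in the paper, as you anticipate.)
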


\subsection{Partial linear spaces admitting a semiprimitive automorphism group with $G^{\Sigma}$ affine}\label{s:extra}
It was brought to our attention during the refereeing process that imprimitive rank 3 semiprimitive groups with $G^{\Sigma}$ affine have been recently classified by  Li, Yi and Zhu in their preprint \cite{Li2025}. Their classification reports on exactly three such groups up to permutational isomorphism (even though one of them has two inequivalent representations in the sense that corresponding point stabilisers are not conjugate); see \cite[Theorem 1.1]{Li2025} and \cite[Examples 2.5, 2.6 and 2.8]{Li2025}.  All three groups are small enough to be easily handled by {\sc Magma} in the same way as the results of Table \ref{tabPLS} are obtained. The first of them is $\GL_2(3)$ that is mentioned in Remark \ref{r:L23}.   The second group is the stabiliser in $\mathrm{ P \Gamma U}_3(4)$ of a singular projective point $P$, acting on the remaining $2^6$ singular projective points. Such a group preserves a partial linear space $\mathcal{H}^*(2, 4^2)$ obtained from the Hermitian unital $\mathcal{H}(2, 4^2)$  (admitting $\mathrm{ P \Gamma U}_3(4)$) by removing the projective point $P$ and all singular lines through that point. 
The third group in the classification is $V \rtimes H_v \le \mathrm{AGL_4(2)}$, where $V=\mathbb{F}_2^4$, $H \cong A_7$ is 2-transitive on non-zero vectors of $V$ and $v \in V \backslash \{0\}.$ This group does not give rise to any rank 3 proper partial linear spaces.    We summarise these results as follows.

\begin{Lem}\label{gsigaffine}
    Let $\mathcal{D}$ be a proper partial linear space admitting an imprimitive rank $3$ semiprimitive group of automorphisms with $G^{\Sigma}$ affine. Then one of the following holds:
    \begin{enumerate}[label=\normalfont (\arabic*)]
        \item $\mathcal{D}$ is isomorphic to $\Delta(2,3)$ as in Definition $\ref{Deltadef}$ and $G = \GL_2(3)$;
        \item $\mathcal{D}$ is isomorphic to $\mathcal{H}^*(2, 4^2)$ as above and $G \cong 2^{2+4}:\GammaL_1(2^4). $
    \end{enumerate}
\end{Lem}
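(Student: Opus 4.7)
The plan is to invoke \cite[Theorem 1.1]{Li2025}, which reduces the possibilities for $G$ to exactly three groups (up to permutational isomorphism), and then to analyse each of these three groups in turn using the machinery of Theorems \ref{th:AD1} and \ref{th:AD2}. Since each of the three candidate groups acts on a small set (of sizes $8$, $64$ and $16$ respectively), the analysis is finite and of the same nature as the computational verification underlying Table \ref{tabPLS}, and can be carried out in {\sc Magma} by the procedure explained in the Appendix.

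More precisely, for each candidate $(G,\Omega)$ I would fix a point $\alpha \in \Omega$ and compute the $G_{\alpha}$-orbits on $\Omega$; by Hypothesis \ref{hyp1} there are exactly three such orbits, namely $\{\alpha\}$, $\sigma \setminus \{\alpha\}$ and $\Omega \setminus \sigma$, where $\sigma$ is the block of $\Sigma$ containing $\alpha$. By Theorem \ref{th:AD2}, every connected proper partial linear space $\mathcal{D}$ admitting $G$ arises, via Theorem \ref{th:AD1}, from a nontrivial block $B$ of imprimitivity of $G_{\alpha}$ on the third orbit $\Omega \setminus \sigma$ such that the setwise stabiliser $G_{B \cup \{\alpha\}}$ is transitive on $B \cup \{\alpha\}$; disconnected spaces are either as in Theorem \ref{th:AD2}(1) or arise from a block $B \subseteq \sigma \setminus \{\alpha\}$. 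For each of the three groups I would enumerate all such blocks $B$, check the transitivity condition, and keep a list of the resulting line-orbits up to the action of $\Aut(\mathcal{D})$.

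For the group $\GL_2(3)$ of degree $8$ (Example 2.5 of \cite{Li2025}), the computation produces a unique proper partial linear space, which by inspection of its parameters and point-line incidence coincides with $\Delta(2,3)$ from Definition \ref{Deltadef}; this matches Remark \ref{r:L23}. For the stabiliser in $\mathrm{P\Gamma U}_3(4)$ of a singular projective point (Example 2.6 of \cite{Li2025}), acting on the remaining $64$ singular projective points, the only proper partial linear space obtained is the one whose lines are the restrictions to $\Omega$ of the $q^2=16$ secant lines of the Hermitian unital $\mathcal{H}(2,4^2)$ through no fixed vertex, together with the fact that the tangent lines through $P$ are removed. This identifies $\mathcal{D}$ with $\mathcal{H}^{*}(2, 4^2)$, and the group structure $G \cong 2^{2+4}{:}\GammaL_1(2^4)$ is read off directly from the stabiliser description. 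Finally, for the affine group $V \rtimes H_{v} \leq \mathrm{AGL}_4(2)$ with $H \cong A_7$ (Example 2.8 of \cite{Li2025}), a direct {\sc Magma} check shows that no nontrivial $G_{\alpha}$-block $B$ on the third orbit satisfies the transitivity condition of Theorem \ref{th:AD1}, so this group contributes no proper partial linear space.

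The main obstacle is really bookkeeping rather than mathematical depth: one must correctly set up each group in its imprimitive action, reliably identify the unique nontrivial system $\Sigma$ and the third $G_{\alpha}$-orbit, and then, for the two surviving cases, match the abstract partial linear space output by {\sc Magma} with the explicit geometric descriptions $\Delta(2,3)$ and $\mathcal{H}^{*}(2,4^2)$. For the unitary case, this last identification is the most delicate point and is best done by exhibiting an explicit bijection between the $64$-point orbit and the set of singular projective points of $\mathcal{H}(2,4^2)$ distinct from $P$, and verifying that the line-orbit produced matches the set of secants minus $P$.
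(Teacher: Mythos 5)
Your proposal follows essentially the same route as the paper: the paper likewise invokes the classification of \cite[Theorem 1.1]{Li2025} to reduce to the three candidate groups, handles each by the {\sc Magma} procedure of the Appendix (exactly as for Table \ref{tabPLS}), and identifies the two surviving spaces as $\Delta(2,3)$ and $\mathcal{H}^*(2,4^2)$ while discarding the $V \rtimes H_v \leq \mathrm{AGL}_4(2)$ case. The only difference is that your write-up spells out the block-enumeration and identification steps in more detail than the paper's brief discussion preceding the lemma.
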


Now the following theorem follows from Theorems \ref{th:main} and \ref{th:discon} and Lemma \ref{gsigaffine}. 

\begin{Th}\label{t:all}
    The proper partial linear spaces affording an imprimitive semiprimitive rank $3$ automorphism group are classified.
\end{Th}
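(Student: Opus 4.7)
The plan is to combine the three prior classification results (Theorems \ref{th:main}, \ref{th:discon} and Lemma \ref{gsigaffine}) and verify that they exhaust all possibilities for an imprimitive semiprimitive rank $3$ group of automorphisms of a proper partial linear space. The argument is essentially a case split driven by two dichotomies: connected versus disconnected, and $G^\Sigma$ almost simple versus affine.

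First I would let $\mathcal{D}=(\mathcal{P},\mathcal{B})$ be a proper partial linear space and $G\leq\Aut(\mathcal{D})$ an imprimitive rank $3$ semiprimitive group on $\mathcal{P}$. As recalled just after Hypothesis~\ref{hyp1}, any such $G$ preserves a unique nontrivial system of imprimitivity $\Sigma$, and the induced action $G^\Sigma$ is $2$-transitive by \cite[Corollary after Lemma 4]{Higman}. Hence by the classification of finite $2$-transitive groups, $G^\Sigma$ is either almost simple or affine. This gives the main dichotomy.

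Next I would treat the almost simple case. If $G^\Sigma$ is almost simple then $G$ satisfies Hypothesis~\ref{hyp1}. If in addition $\mathcal{D}$ is connected, then $\mathcal{D}$ is one of the partial linear spaces listed in Tables~\ref{tabLU} and \ref{tabPLS} by Theorem~\ref{th:main}; if $\mathcal{D}$ is disconnected, then it is either a disjoint union of lines that are the parts of $\Sigma$, or the specific $\mathrm{Ree}(3)$-unital construction for $\PGammaL_3(8)$, by Theorem~\ref{th:discon}.

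The remaining case is $G^\Sigma$ affine, which is precisely the setting of Lemma~\ref{gsigaffine}: either $\mathcal{D}\cong\Delta(2,3)$ with $G=\GL_2(3)$, or $\mathcal{D}\cong\mathcal{H}^*(2,4^2)$ with $G\cong 2^{2+4}{:}\GammaL_1(2^4)$. Combining all three cases yields an explicit finite list of partial linear spaces (up to isomorphism) together with the possible automorphism groups, proving that the classification is complete. There is no serious obstacle here; the only point to double-check is that every semiprimitive rank $3$ imprimitive group with $G^\Sigma$ almost simple appears in Table~\ref{t:qpinsprk3}, which is exactly what is stated (citing \cite{DGLPP,R3PIT,semiprR3}) immediately before Hypothesis~\ref{hyp1}, and that the affine classification of \cite{Li2025} has been correctly translated into Lemma~\ref{gsigaffine}.
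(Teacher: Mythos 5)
Your proposal is correct and matches the paper's argument, which simply observes that the theorem follows from Theorems \ref{th:main} and \ref{th:discon} together with Lemma \ref{gsigaffine}; you have merely spelled out the almost simple/affine and connected/disconnected case split that the paper leaves implicit. No further comment is needed.
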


\bigskip

The paper is organised as follows. In Sections \ref{sec:spacesL} and \ref{sec:spacesU} we define the partial linear spaces that appear in Theorem \ref{th:main}(1) and describe some of their properties.  In Sections \ref{sec:linear} and \ref{sec:unitary} we find and describe the systems of imprimitivity of a point stabiliser in groups appearing in the first three lines of Table \ref{t:qpinsprk3}: the infinite families of groups with $M$ isomorphic to $\PSL_n(q)$ or $\PSU_3(q)$.  
Finally, in Sections \ref{mainproof} and \ref{sec:7} we prove Theorems \ref{th:main} and \ref{th:discon} respectively. The paper concludes with an appendix where our computations in {\sc Magma} are explained.

\section{Partial linear spaces arising from $\GammaL_n(q)$}\label{sec:spacesL}

In this and the following section, we introduce a number of families of partial linear spaces. Each of these families contains partial linear spaces that admit a rank 3 automorphism group which, as mentioned in the introduction, is transitive on the sets of ordered pairs of distinct collinear points, and non-collinear points. 

Let us first prove the following technical lemma.

\begin{Lem}\label{claim1}
Let $\mathcal{D}= (\mathcal{P}, \mathcal{L})$ be a point-line structure with the set of points $\mathcal{P}$ and the set of lines $\mathcal{L}$ such that each line has at least two points. Choose $L \in \mathcal{L}$ with distinct points $\alpha, \beta \in L.$  Assume that a subgroup $G$ of $\Aut(\mathcal{D})$ is such that
\begin{itemize}
    \item $G^{\mathcal{L}}$ is transitive;
    \item $G_L^L$ is $2$-transitive.
\end{itemize}
Then for each $L' \in \mathcal{L}$ and distinct $\alpha', \beta' \in L'$ there exists $g \in G$ mapping $L$ to $L'$, $\alpha$ to $\alpha'$ and $\beta$ to $\beta'$. 
\end{Lem}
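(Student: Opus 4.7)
The statement is a standard two-step ``line first, then pair within the line'' argument, so my plan is to split the required element $g$ into a product $g = hk$ corresponding to these two stages.

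First, I would use the hypothesis that $G^{\mathcal{L}}$ is transitive on lines to choose an element $h \in G$ with $L^h = L'$. Applying $h$ to the distinguished pair $\alpha,\beta \in L$ yields a pair of distinct points $\alpha^h, \beta^h$ lying in $L'$, but there is no reason this pair should coincide with the prescribed pair $\alpha',\beta'$; this is where the second step comes in.

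Next, I would observe that the action $G_{L'}^{L'}$ is $2$-transitive. This is because $h$ conjugates the setwise stabiliser of $L$ to that of $L'$, and this conjugation furnishes a permutational isomorphism between $G_L^L$ and $G_{L'}^{L'}$; since $2$-transitivity is preserved under permutational isomorphism, $G_{L'}^{L'}$ inherits the $2$-transitivity hypothesis on $G_L^L$. Therefore there exists $k \in G_{L'}$ with $(\alpha^h)^k = \alpha'$ and $(\beta^h)^k = \beta'$. Setting $g = hk$, we have $L^g = (L^h)^k = (L')^k = L'$, together with $\alpha^g = \alpha'$ and $\beta^g = \beta'$, which is the required element.

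There is no real obstacle here: the proof is a direct composition of the two hypotheses, and no finiteness or structural features of $\mathcal{D}$ beyond ``each line has at least two points'' (used only so that choosing distinct $\alpha',\beta' \in L'$ is meaningful) are needed. The only mild point of care is ensuring the $2$-transitivity statement is invoked on $L'$ rather than $L$, which is handled by the conjugation remark above.
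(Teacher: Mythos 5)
Your proof is correct and is essentially the same two-step argument as the paper's: the paper writes $g=g_2g_1$ with $g_1$ moving $L$ to $L'$ and $g_2\in G_L$ chosen (by $2$-transitivity of $G_L^L$) to send $\alpha,\beta$ to the preimages $(\alpha')g_1^{-1},(\beta')g_1^{-1}$, whereas you post-compose with an element of $G_{L'}$ instead of pre-composing with an element of $G_L$. Your conjugation remark correctly justifies the $2$-transitivity of $G_{L'}^{L'}$, so the variation is immaterial.
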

\begin{proof}
    Since $G^{\mathcal{L}}$ is transitive, there exists $g_1 \in G$ such that $L^{g_1}=L'.$ Since $G_L^L$ is $2$-transitive, there exists $g_2 \in G_L$ such that $(\alpha)g_2= (\alpha')g_1^{-1}$ and $(\beta)g_2= (\beta')g_1^{-1}$. Hence, if $g=g_2g_1,$ then $L^g=L'$,  $(\alpha)g=\alpha'$ and $(\beta)g=\beta'$.
\end{proof}

\subsection{Definitions and properties of $\mathrm{AG}^*(n,q)$ and $\Delta(n,q)$}

Let $p$ be a prime and let $q=p^a$ for some $a \in \mathbb{N}.$

\begin{Def}\label{AGdef}
    Let $\mathrm{AG}^*(n,q)$ be the pair $(\Omega, \mathcal{L})$ where $\Omega$ is $\mathbb{F}_q^n \backslash \{0\}$ and $\mathcal{L}$ is the set of all affine lines in $\mathbb{F}_q^n$ not containing zero. In other words $\mathcal{L}=\{L_{u,v}\mid u,v \in \mathbb{F}_q^n, \dim(\langle u,v\rangle)=2\}$, where $L_{u,v}=\{\lambda u+(1-\lambda )v \mid \lambda\in \mathbb{F}_q\}.$
\end{Def}

 It is easy to see that lines of $\mathrm{AG}^*(n,q)$ have size $q$. Further, 
$|\mathcal{L}|=(q^n-1)(q^{n-1}-1)/(q-1)$ since $\mathrm{AG}(n,q)$ has $q^{n-1}(q^n-1)/(q-1)$ lines and each point of $\mathrm{AG}(n,q)$ lies on $(q^n-1)/(q-1)$ lines.

\begin{Def}\label{Deltadef}
    Let $\Delta(n,q)$ be the pair $(\Omega, \mathcal{L})$ where $\Omega$ is $\mathbb{F}_q^n \backslash \{0\}$ and $\mathcal{L}=\{L_{u,v} \mid u,v \in \mathbb{F}_q^n, \dim (\langle u,v \rangle)=2\}$ with $L_{u,v}=\{u,v, -(u+v)\}$.
\end{Def}

 Obviously,  lines of $\Delta(n,q)$ have size $3$. Further, $|\mathcal{L}|=(q^n-1)(q^{n}-q)/6$  since a line in $\Delta(n,q)$ is uniquely determined by a couple of linearly independent vectors in $\mathbb{F}_q^n$.

\begin{Lem}\label{l:AGDel}
  For all $n\geq 2$ and  $q\geq 3$,  $\mathrm{AG}^*(n,q)$  and  $\Delta(n,q)$ are  proper partial linear spaces.
  \end{Lem}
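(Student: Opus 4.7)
The plan is to verify directly, for each of $\mathrm{AG}^*(n,q)$ and $\Delta(n,q)$, the three defining properties of a proper partial linear space: (a) every line contains at least two points; (b) any two distinct points are contained in at most one line; and (c) there exists both a line with more than two points and a pair of non-collinear points.

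Condition (a), together with the first half of (c), will be immediate: lines of $\mathrm{AG}^*(n,q)$ have size $q \geq 3$, and lines of $\Delta(n,q)$ have size exactly $3$, since the three vectors $u,v,-(u+v)$ are pairwise distinct whenever $\dim \la u,v\ra = 2$.

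For (b) in $\mathrm{AG}^*(n,q)$, I would first check that the condition $\dim \la u,v\ra = 2$ already forces $0 \notin L_{u,v}$: an equation $\lambda u + (1-\lambda)v = 0$ with $\lambda \notin \{0,1\}$ contradicts linear independence, while $\lambda \in \{0,1\}$ would force $u=0$ or $v=0$. Uniqueness of the line in $\mathcal{L}$ through two given collinear points then follows from the standard fact that two distinct points of $\mathrm{AG}(n,q)$ lie on a unique affine line. For $\Delta(n,q)$, the argument will be a short case analysis: if a distinct linearly independent pair $\{u,v\}$ of nonzero vectors is contained in some $L_{u',v'}=\{u',v',-(u'+v')\}$, then in each of the three possible ways of embedding $\{u,v\}$ into $\{u',v',-(u'+v')\}$, the identity $u'+v'+(-(u'+v'))=0$ forces the remaining vector to equal $-(u+v)$, so $L_{u',v'}=L_{u,v}$.

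For the non-collinear pair required by (c), since $q \geq 3$ I can pick $\lambda \in \mathbb{F}_q \setminus \{0,1\}$ and any nonzero $u \in \mathbb{F}_q^n$; then $u$ and $\lambda u$ are distinct nonzero but linearly dependent, and so they belong to no line of either structure, because both definitions require the underlying pair to span a $2$-dimensional subspace. There is no real obstacle in the proof; the argument is a routine verification from the definitions, with the only mildly delicate step being the three-case analysis for $\Delta(n,q)$ and the check that $0\notin L_{u,v}$ in the $\mathrm{AG}^*(n,q)$ case.
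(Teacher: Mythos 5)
Your proposal is correct and follows essentially the same route as the paper's proof: uniqueness of lines comes from the affine-line structure for $\mathrm{AG}^*(n,q)$ and from the fact that any two points of a $\Delta$-line determine the third (their sum is zero), line-size at least $3$ follows from $q\geq 3$, and a non-collinear pair is given by two distinct linearly dependent nonzero vectors. The extra verifications you include (that $0\notin L_{u,v}$, and the explicit three-case check for $\Delta(n,q)$) are fine but are exactly what the paper treats as immediate from the definitions.
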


\begin{proof}
    In $\mathrm{AG}^*(n,q)$, a pair of collinear points lies on a unique line since lines are affine lines in $\mathbb{F}_q^n.$ In $\Delta(n,q)$ a line is determined uniquely by any two of its points by definition.  
    Since $q\geq 3$, the line-size is at least $3$ in both cases. Moreover, if $\omega$ is a generator of multiplicative group $\mathbb{F}_q^*$, then $\omega\neq 1$ so  $v,\omega v$, with $v\in \mathbb{F}_q^n\backslash \{0\}$, are distinct not collinear points in both cases. Thus $\mathrm{AG}^*(n,q)$ and $\Delta(n,q)$  are proper partial linear spaces.
\end{proof}

  Note that  $\mathrm{AG}^*(n,2)$ (where lines have size $2$) is a complete graph on $2^n-1$ vertices, and that $\Delta(n,2)$ is the linear space $\PG(n-1,2)$, so these are non-proper partial linear spaces. Also note that $\mathrm{AG}^*(n,3)=\Delta(n,3).$

\begin{figure}[h]
 \begin{minipage}{.47\textwidth}
 \begin{adjustwidth*}{}{-1em}
\begin{tikzpicture}[scale =0.6,line width=1pt]
\newcommand{\twoarcs}{
\node (v1) at (0:5cm) [ball color=darkgreen, circle, draw=black, inner sep=1pt] {};  
\node (v2) at (360/15:5cm) [ball color=darkgreen, circle, draw=black, inner sep=1pt] {};
\node (v4) at (360*3/15:5cm) [ball color=darkgreen, circle, draw=black, inner sep=1pt] {};
\node (v8) at (360*7/15:5cm) [ball color=darkgreen, circle, draw=black, inner sep=1pt] {};

\node (c11) at (10:60mm) {};
\node (c12) at (15:60mm) {};
\node (c21) at (45:60mm) {};
\node (c22) at (55:70mm) {};
\node (c31) at (270:30mm) {};
\node (c32) at (290:40mm) {};

\tikzset{
    electron/.style={draw=blue}, 
    gluon/.style={draw=purple}, 
}
\draw[color=\usecolor] (v1) --  (v2);
\draw[color=\usecolor] (v2) ..  controls (c21) and (c22) ..  (v4);
\draw[color=\usecolor] (v4) -- (v8);

}

\def\mycolours{{"col1","col2","col3","col4","col5","col6", "col7","col8","col9"}}

\foreach \x  [evaluate=\x as \usecolor using {\mycolours[Mod(\x,5)]}]in {0,1,2,3,4,5,6,7,8,9,10,11,12,13,14}{
\begin{scope}[rotate=360*\x/15]
\twoarcs;
\end{scope}
}

\node (v1) at (0:5cm) [ball color=thistle!10, circle, draw=thistle, inner sep=1pt,minimum width=15pt] {1};  
\node (v2) at (360/15:5cm) [ball color=thistle!10, circle, draw=thistle, inner sep=1pt,minimum width=15pt] {2};
\node (v3) at (2*360/15:5cm) [ball color=thistle!10, circle, draw=thistle, inner sep=1pt,minimum width=15pt] {3};
\node (v4) at (3*360/15:5cm) [ball color=thistle!10, circle, draw=thistle, inner sep=1pt,minimum width=15pt] {4};
\node (v5) at (4*360/15:5cm) [ball color=thistle!10, circle, draw=thistle, inner sep=1pt,minimum width=15pt] {5};
\node (v6) at (5*360/15:5cm) [ball color=thistle!10, circle, draw=thistle, inner sep=1pt,minimum width=15pt] {6};
\node (v7) at (6*360/15:5cm) [ball color=thistle!10, circle, draw=thistle, inner sep=1pt,minimum width=15pt] {7};  
\node (v8) at (7*360/15:5cm) [ball color=thistle!10, circle, draw=thistle, inner sep=1pt,minimum width=15pt] {8};  
\node (v9) at (8*360/15:5cm) [ball color=thistle!10, circle, draw=thistle, inner sep=1pt,minimum width=15pt] {9};  
\node (v10) at (9*360/15:5cm) [ball color=thistle!10, circle, draw=thistle, inner sep=1pt,minimum width=15pt] {10};  
\node (v11) at (10*360/15:5cm) [ball color=thistle!10, circle, draw=thistle, inner sep=1pt,minimum width=15pt] {11};  
\node (v12) at (11*360/15:5cm) [ball color=thistle!10, circle, draw=thistle, inner sep=1pt,minimum width=15pt] {12};  
\node (v13) at (12*360/15:5cm) [ball color=thistle!10, circle, draw=thistle, inner sep=1pt,minimum width=15pt] {13};  
\node (v14) at (13*360/15:5cm) [ball color=thistle!10, circle, draw=thistle, inner sep=1pt,minimum width=15pt] {14};  
\node (v15) at (14*360/15:5cm) [ball color=thistle!10, circle, draw=thistle, inner sep=1pt,minimum width=15pt] {15};  
\end{tikzpicture}
\end{adjustwidth*}
\caption{$\mathrm{AG}^*(2,4)$ }
\label{fig1}
\end{minipage}
 \begin{minipage}{.47\textwidth}
 \begin{adjustwidth*}{}{-1em}
\begin{tikzpicture}[scale =0.6,line width=1pt]
\newcommand{\arcsone}{
\node (v1) at (0:5cm) [ball color=darkgreen, circle, draw=black, inner sep=1pt] {};  
\node (v2) at (360/15:5cm) [ball color=darkgreen, circle, draw=black, inner sep=1pt] {};
\node (v5) at (360*4/15:5cm) [ball color=darkgreen, circle, draw=black, inner sep=1pt] {};

\node (c11) at (10:60mm) {};
\node (c12) at (15:60mm) {};
\node (c21) at (45:60mm) {};
\node (c22) at (55:70mm) {};
\node (c31) at (270:30mm) {};
\node (c32) at (290:40mm) {};

\tikzset{
    electron/.style={draw=blue}, 
    gluon/.style={draw=purple}, 
}
\draw[color=\usecolor] (v1) ..  controls (c11) and (c12) ..  (v2);
\draw[color=\usecolor] (v2) -- (v5);
}

\newcommand{\arcstwo}{
\node (v1) at (0:5cm) [ball color=darkgreen, circle, draw=black, inner sep=1pt] {};  
\node (v3) at (360*2/15:5cm) [ball color=darkgreen, circle, draw=black, inner sep=1pt] {};
\node (v9) at (360*8/15:5cm) [ball color=darkgreen, circle, draw=black, inner sep=1pt] {};

\node (c11) at (30:70mm) {};
\node (c12) at (15:70mm) {};
\node (c21) at (15:70mm) {};
\node (c22) at (35:70mm) {};
\node (c31) at (115:30mm) {};
\node (c32) at (125:40mm) {};

\tikzset{
    electron/.style={draw=blue}, 
    gluon/.style={draw=purple}, 
}
\draw[color=\usecolor] (v1)  ..  controls (c21) and (c22) ..   (v3);
\draw[color=\usecolor] (v3) -- (v9);

}

\def\mycolours{{"col1","col2","col3","col4","col5","col6", "col7","col8","col9"}}

\foreach \x  [evaluate=\x as \usecolor using {\mycolours[Mod(\x,5)]}]in {0,1,2,3,4,5,6,7,8,9,10,11,12,13,14}{
\begin{scope}[rotate=360*\x/15]
\arcsone;
\end{scope}
}

\foreach \x  [evaluate=\x as \usecolor using {\mycolours[Mod(\x+3,8)]}]in {0,1,2,3,4,5,6,7,8,9,10,11,12,13,14}{
\begin{scope}[rotate=360*\x/15]
\arcstwo;
\end{scope}
}

\node (v1) at (0:5cm) [ball color=thistle!10, circle, draw=thistle, inner sep=1pt,minimum width=15pt] {1};  
\node (v2) at (360/15:5cm) [ball color=thistle!10, circle, draw=thistle, inner sep=1pt,minimum width=15pt] {2};
\node (v3) at (2*360/15:5cm) [ball color=thistle!10, circle, draw=thistle, inner sep=1pt,minimum width=15pt] {3};
\node (v4) at (3*360/15:5cm) [ball color=thistle!10, circle, draw=thistle, inner sep=1pt,minimum width=15pt] {4};
\node (v5) at (4*360/15:5cm) [ball color=thistle!10, circle, draw=thistle, inner sep=1pt,minimum width=15pt] {5};
\node (v6) at (5*360/15:5cm) [ball color=thistle!10, circle, draw=thistle, inner sep=1pt,minimum width=15pt] {6};
\node (v7) at (6*360/15:5cm) [ball color=thistle!10, circle, draw=thistle, inner sep=1pt,minimum width=15pt] {7};  
\node (v8) at (7*360/15:5cm) [ball color=thistle!10, circle, draw=thistle, inner sep=1pt,minimum width=15pt] {8};  
\node (v9) at (8*360/15:5cm) [ball color=thistle!10, circle, draw=thistle, inner sep=1pt,minimum width=15pt] {9};  
\node (v10) at (9*360/15:5cm) [ball color=thistle!10, circle, draw=thistle, inner sep=1pt,minimum width=15pt] {10};  
\node (v11) at (10*360/15:5cm) [ball color=thistle!10, circle, draw=thistle, inner sep=1pt,minimum width=15pt] {11};  
\node (v12) at (11*360/15:5cm) [ball color=thistle!10, circle, draw=thistle, inner sep=1pt,minimum width=15pt] {12};  
\node (v13) at (12*360/15:5cm) [ball color=thistle!10, circle, draw=thistle, inner sep=1pt,minimum width=15pt] {13};  
\node (v14) at (13*360/15:5cm) [ball color=thistle!10, circle, draw=thistle, inner sep=1pt,minimum width=15pt] {14};  
\node (v15) at (14*360/15:5cm) [ball color=thistle!10, circle, draw=thistle, inner sep=1pt,minimum width=15pt] {15};  
\end{tikzpicture}
\end{adjustwidth*}
\caption{$\Delta(2,4)$}
\label{fig2}
\end{minipage}
\end{figure}

  \begin{Ex}
 We would like to illustrate some examples visually.      Notice that the natural action of $\GammaL_n(q)$ on $\mathbb{F}_q^n$ induces automorphisms on  $\mathrm{AG}^*(n,q)$ and $\Delta(n,q)$ (in particular, we may consider $\Omega$ to be same as defined in Construction \ref{con:psl} below with $r=q-1$). Let $(n,q)=(2,4),$ so $|\Omega|=15.$ In this case, a Singer cycle $\langle s \rangle$ of $\GL_2(4)$ (see Definition \ref{des:singer}) has order $q^2-1=15$ and acts on $\Omega$ regularly. Moreover, $\langle s \rangle$ acts transitively on the set of the 15 lines of $\mathrm{AG}^*(2,4)$ while it has two orbits on the set of the 30 lines of $\Delta(2,4).$     Hence it is possible to obtain all the lines of  $\mathrm{AG}^*(2,4)$ (resp. $\Delta(2,4)$) by repeatedly applying $s$ to a single line (resp. two lines from distinct orbits). Using   {\sc Magma}, it is easy to choose a Singer cycle, label the points in $\Omega$ by numbers $1, 2, \ldots, 15,$ so that $s=(1,2, \ldots, 15),$ and find the lines. One of the possible presentations gives us a line $\{1,2,4,8\}$ in $\mathrm{AG}^*(2,4)$ and a couple of lines $\{1,2,5\}$ and $\{1,3,9\}.$ Now, by placing the points of $\Omega$ in a circle and applying the rotation $s$ to these lines, one obtains all the lines of the corresponding partial linear space, see Figures \ref{fig1} and \ref{fig2}. 
  \end{Ex}

\subsection{Definition and properties of $\mathrm{LSub}(n,q,q_0,r)$}

To define the next two families of partial linear spaces we first need to define a  particular action of subgroups of $\GammaL_n(q)$. We follow \cite[Constructions 4.2]{R3PIT} stating the construction in a more general manner.

\begin{construction}\label{con:psl}
Let $q=p^a$ with $p$ a prime, $a\geq1$ and $q \geq 3$,  and let $\omega$ be a generator of the multiplicative group $\mathbb{F}_q^*.$ Let  $n\geq2$ with $(n,q)\ne (2,3)$ and let $r>1$ be an integer dividing $q-1$. Let $V=\mathbb{F}_q^n$ denote the space of $n$-dimensional row vectors, and   $\binom{V}{1}$ the set of $1$-subspaces of $V$.  Define
$$
\Omega := \{ \langle \omega^r\rangle u \mid  u\in V^*\}.
$$ 
Then the natural induced action of $\GammaL_n(q)$ on $\Omega$ given by
\begin{equation}\label{e:action}
    g:\omr v\mapsto \omr(v)g, \ \mbox{for $g\in\GammaL_n(q)$ and 
    $ v\in V^*$}
\end{equation}
 yields a permutation group $\ff=\GammaL_n(q)/Y$ where  $Y=\langle \omega^r I\rangle < \GammaL_n(q)$. The group $\ff$ stabilises the partition 
$$
\Sigma=\{\sigma(U) \mid U \in \tbinom{V}{1}\}\ 
\text{ where } \sigma(U)=\{\langle \omega^r\rangle \omega^i u\mid 0\leq i<r\} \text{ for } U=\langle u\rangle\in \tbinom{V}{1}.
$$
\end{construction}

The group actions in Construction~\ref{con:psl} are semiprimitive. Precisely, the following holds.

\begin{Lem}\label{l:semi}
   Let $q,n,r, Y, \Omega, \overline{G}$ be as in Construction~$\ref{con:psl}$,
   and suppose that $G$ is a group satisfying $Y\,\SL_n(q)/Y\leq G \leq \ff$. Then the following hold. 
   \begin{enumerate}[label=\normalfont (\arabic*)]
       \item $G$ is semiprimitive on $\Omega$;
       \item $G$ is innately transitive on $\Omega$ if and only if $r$ divides $(q-1)/(n,q-1)$; and in this case $Y\,\SL_n(q)/Y\cong \PSL_n(q)$ is the unique plinth of $G$; 
       \item $G$ is quasiprimitive on $\Omega$ if and only if $r$ divides $(q-1)/(n,q-1)$ and $G\cap (Z/Y)=1$;
       \item $G$ has rank $3$ on $\Omega$ if and only if one of the following holds:
       \begin{enumerate}
           \item[$(i)$] $(n,r)\ne (2,2)$,  $r$ is a primitive prime divisor of $p^{r-1}-1$  and $(r-1, \frac{a}{|G:G\cap (\GL_n(q)/Y)|})=1$;
           \item[$(ii)$] $(n,r)=(2,2)$ and $G\not\leq Y\,{\Sigma\mathrm{L}_2(q)}/Y$. 
       \end{enumerate}  Moreover, if $G$ has rank 3, then $r-1$ divides $a$.
   \end{enumerate}
 \end{Lem}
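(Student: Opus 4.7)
The plan is to exploit the kernel $K := Z/Y$ of the action of $\ff$ on $\Sigma$, which is central in $\ff$ and cyclic of order $r$. A direct computation from \eqref{e:action} shows that $K$ acts semiregularly on $\Omega$, with orbits being exactly the blocks $\sigma(U)$; the quotient $\ff/K$ embeds in $\PGaL_n(q)$ and has socle $\PSL_n(q)$ (using $(n,q)\neq(2,3)$ and $q\geq 3$, so that $\PSL_n(q)$ is simple).

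For parts (1)--(3), I would establish the dichotomy that every normal subgroup $N$ of $G$ either lies in $K \cap G$ (hence is semiregular) or contains $Y\,\SL_n(q)/Y$ (hence is transitive, since $\SL_n(q)$ is transitive on $V^*$). Suppose $N\not\leq K\cap G$; then $N^\Sigma$ is a nontrivial normal subgroup of the almost simple group $G^\Sigma$, so $N^\Sigma\supseteq \PSL_n(q)$. Consider the normal subgroup $M := N\cap (Y\,\SL_n(q)/Y)$ of $Y\,\SL_n(q)/Y \cong \SL_n(q)/S$ (with $S = \SL_n(q)\cap Y$), whose only normal subgroups are trivial, the centre, or the whole group. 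If $M$ is proper, then for any $n\in N$, conjugation by $n$ induces an automorphism $\theta$ of $Y\,\SL_n(q)/Y$ satisfying $\theta(x)x^{-1}\in Z(Y\,\SL_n(q)/Y)$ for all $x$; since $Y\,\SL_n(q)/Y$ is perfect (as a quotient of the perfect group $\SL_n(q)$), the induced homomorphism into the centre is trivial, so $\theta=\mathrm{id}$ and $n\in C_{\ff}(Y\,\SL_n(q)/Y)=K$ (using $C_{\GammaL_n(q)}(\SL_n(q))=Z$), contradicting $N\not\leq K$. So $M = Y\,\SL_n(q)/Y$, yielding the dichotomy and hence (1). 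For (2), a plinth $P$ must be transitive, so $P\supseteq Y\,\SL_n(q)/Y$; minimality forces $P=Y\,\SL_n(q)/Y$, and characteristic simplicity forces $Z(P)=1$, equivalent to $Z\cap\SL_n(q)\leq Y$, i.e., $r\mid (q-1)/(n,q-1)$. Conversely, if this divisibility holds, then $Y\,\SL_n(q)/Y\cong\PSL_n(q)$ is a transitive, nonabelian simple normal subgroup of $G$, giving innate transitivity with the stated plinth. For (3), by (1) the only possible non-transitive normal subgroups lie in $K\cap G$, which is central in $G$, so quasiprimitivity is equivalent to $K\cap G=1$; this in turn forces $r\mid (q-1)/(n,q-1)$ since $(Z\cap\SL_n(q))Y/Y\leq K\cap G$.

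For (4), I would fix $\alpha=\omr e_1$ and analyse the three candidate $G_\alpha$-orbits on $\Omega$. A direct calculation shows that any $\tilde g=h\phi^j$ stabilising $\alpha$ has first row of $h$ equal to $(\omega^{rk},0,\ldots,0)$ and induces on $\sigma=\{\omr\omega^i e_1 \mid 0\leq i<r\}$ the permutation $i\mapsto ip^j\pmod r$. Using the transitivity of $Y\,\SL_n(q)/Y$ on $\Omega$, the image of $G_\alpha$ under the field-automorphism projection $\psi:\ff\to \mathbb{Z}/a$ equals $\psi(G)$, which has order $t := |G:G\cap(\GL_n(q)/Y)|$ and so equals $\langle a/t\rangle$; hence the image of $G_\alpha$ in $(\mathbb{Z}/r)^{\ast}$ is $\langle p^{a/t}\rangle$. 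Transitivity on $\sigma\setminus\{\alpha\}$ (of size $r-1$) forces $r$ to be prime and $\langle p^{a/t}\rangle=(\mathbb{Z}/r)^{\ast}$; combined with $\mathrm{ord}_r(p)=r-1$ (i.e., $r$ a primitive prime divisor of $p^{r-1}-1$), this yields $\gcd(r-1,a/t)=1$. Transitivity on $\Omega\setminus\sigma$ follows from $2$-transitivity of $\PSL_n(q)$ on $\PG(n-1,q)$ together with the transitivity of the stabiliser of a $1$-subspace on complementary vectors of $V$. The exceptional case $(n,r)=(2,2)$ demands separate treatment: $\sigma\setminus\{\alpha\}$ is a single point (so its transitivity is automatic), but transitivity on $\Omega\setminus\sigma$ requires an element of $G$ outside $Y\,\SigmaL_2(q)/Y$. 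The final ``moreover'' claim that $r-1\mid a$ follows at once from $r\mid q-1$ and $\mathrm{ord}_r(p)=r-1$. The principal obstacle is the intricate arithmetic in (4): reconciling the within-block action (governed by $(\mathbb{Z}/r)^{\ast}$) with the between-block action (given by $\PSL_n(q)$ on $\PG(n-1,q)$) to verify necessity of the conditions, and correctly handling the exceptional $(n,r)=(2,2)$ case.
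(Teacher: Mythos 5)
Your treatment of parts (1)--(3) is sound and follows essentially the same route as the paper: the dichotomy that every normal subgroup of $G$ either lies in $Z/Y$ or contains $Y\,\SL_n(q)/Y$, combined with semiregularity of $Z/Y$ and transitivity of $Y\,\SL_n(q)/Y$. You go further than the paper in one respect: the paper asserts the dichotomy as ``well known and easy to check'', whereas you actually prove it (via the commutator argument $[N,\,Y\SL_n(q)/Y]\leq N\cap(Y\SL_n(q)/Y)\leq Z(Y\SL_n(q)/Y)$, perfectness, and $C_{\GammaL_n(q)}(\SL_n(q))=Z$). That argument is correct. Your reformulation of (3) as ``$G\cap(Z/Y)=1$'' alone, with the divisibility condition as a consequence, is equivalent to the paper's statement.

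Part (4) is where your proposal diverges from the paper and where it has a genuine gap. The paper does not prove (4) directly: it assembles the statement from the classifications in \cite{semiprR3}, \cite{DGLPP} and \cite{R3PIT}. You instead propose a self-contained orbit analysis. Your within-block analysis (the action of $G_\alpha$ on $\sigma\setminus\{\alpha\}$ as multiplication by $\langle p^{a/t}\rangle$ on $(\mathbb{Z}/r)^*$, forcing $r$ prime, $o_r(p)=r-1$ and $(r-1,a/t)=1$) is correct and gives the arithmetic in (i). The gap is in the between-block transitivity. You claim that transitivity of $G_\alpha$ on $\Omega\setminus\sigma$ ``follows from $2$-transitivity of $\PSL_n(q)$ on $\PG(n-1,q)$ together with the transitivity of the stabiliser of a $1$-subspace on complementary vectors''. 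This is fine for $n\geq 3$ (cf.\ Lemma~\ref{lem:gltrans}(1)), but it fails for $n=2$: the stabiliser of $\alpha=\omr e_1$ in $Y\,\SL_2(q)/Y$ has orbit $\omr\{\lambda e_1+e_2\mid\lambda\in\mathbb{F}_q\}$ of length $q$ on $\Omega\setminus\sigma$, which has size $qr$, so there are $r$ such orbits and whether they fuse depends entirely on which scalars, determinants and field automorphisms $G$ contains outside $Y\,\SL_2(q)$. This is exactly the content of Lemma~\ref{lem:gltrans}(2) and of the case analysis in the cited classification papers; it is the hard part of (4), not a routine consequence of $2$-transitivity on $\Sigma$. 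You flag only $(n,r)=(2,2)$ as needing separate treatment, but the case $n=2$ with $r$ odd needs it just as much: for instance $\SL_2(4)\rtimes\langle\phi\rangle$ on the $15$ nonzero vectors of $\mathbb{F}_4^2$ (with $r=3$) satisfies all the arithmetic conditions you derive, yet its point stabiliser has orbits of lengths $1,2,4,8$, so the fusion of the $r$ orbits of length $q$ genuinely constrains $G$ beyond condition (i). As written, the ``if'' direction of (4) is therefore unproven for $n=2$, and completing your direct approach would amount to reproving a substantial portion of the classifications that the paper cites.
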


\begin{proof}
The subgroup $Y=\langle \omega^r I\rangle$ has index $r$ in the centre $Z:= \langle \omega I\rangle$ of $\GL_n(q)$, and $Z\cap\SL_n(q)$ has order $(n,q-1)$. Now $Y\,\SL_n(q)/Y\leq G \leq \ff$ and there is a unique subgroup $H$ of $\GammaL_n(q)$ such that $Y\leq H$ and $H/Y=G$. Further, it is well known and easy to check that every normal subgroup $N$ of $H$ satisfies either $N\leq Z$ or $N\geq \SL_n(q)$, and so every normal subgroup $K$ of $G$ satisfies either $K\leq Z/Y$ or $K\geq Y\,\SL_n(q)/Y$.  Since $Z/Y$ is semiregular on $\Omega$, it follows that each normal subgroup $K\leq Z/Y$ is semiregular on $\Omega$. Also since $\SL_n(q)$ is transitive on $V^*$ it follows that $Y\,\SL_n(q)/Y$ is transitive on $\Omega$ and hence each normal subgroup $K\geq Y\,\SL_n(q)/Y$ is transitive on $\Omega$. Thus $G$ is semiprimitive on $\Omega$, proving part $(1)$.

Now $G$ is innately transitive if and only if it has a transitive minimal normal subgroup $K$. Since $Z/Y$ is not transitive on $\Omega$, the only possibility for such a transitive minimal normal subgroup is $K=Y\,\SL_n(q)/Y\cong \SL_n(q)/(Y\cap\SL_n(q))$. Note that $\SL_n(q)/(Z\cap\SL_n(q))=\PSL_n(q)$ (which is a non-Abelian simple group since $(n,q)\ne(2,3)$ and $q\geq3$), and that $Y\cap\SL_n(q) \leq Z\cap\SL_n(q)$. Thus $K=Y\,\SL_n(q)/Y$ is a minimal normal subgroup if and only if $Y\cap\SL_n(q) = Z\cap\SL_n(q)$, and this holds if and only if $Y$ contains the unique subgroup $Z\cap\SL_n(q)$ of $Z$ of order $(n,q-1)$. In turn, this holds if and only if $|Y|=(q-1)/r$ is divisible by $(n,q-1)$, that is to say, $r$ divides $(q-1)/(n,q-1)$. This proves the first assertion of part $(2)$, and as $Z/Y$ is not transitive on $\Omega$ it follows in this case that $Y\,\SL_n(q)/Y$ is the unique plinth of $G$, and $(2)$ is proved.

Also $G$ is quasiprimitive if and only if it has no nontrivial intransitive normal subgroups, and this is the case if and only if $G$ is innately transitive, and the subgroup $Z/Y$ intersects $G$ trivially, proving part $(3)$. 

Finally we pull together various results to determine when $G$ can be rank $3$. Firstly, by  \cite[Proposition 4.9]{semiprR3}, if $n\geq 3$, then $G$ has rank $3$ if and only if the conditions on $r$ in part $(d)(i)$ hold.  Thus we may assume that $n=2$. If $r=2$, then by \cite[Proposition 4.10]{semiprR3}, $G$ has rank $3$ if and only if the conditions in part $(4)(ii$) hold. Thus we may assume that $n=2$ and $r>2$. Then it follows from \cite[Proposition 4.11]{semiprR3} that $G$ is innately transitive (so $r$ divides $(q-1)/(2,q-1)$ by part (2)). Also, by  \cite[Theorem 1.2 and Table 1]{DGLPP}, there are no examples if the imprimitive group $G$ is quasiprimitive, and hence $G$ is properly innately transitive. Finally, by  \cite[Theorem C and Table 3]{R3PIT}, $G$ has rank $3$ if and only if the conditions of $(4)(i$) hold. 

 Conditions in part $(4)(i)$ imply that $r-1$ divides $a$. Indeed,   since $r$ also divides $q-1=p^a-1$ we have that $r$ divides $(p^a-1,p^{r-1}-1)=p^{(a,r-1)}-1$, and by the definition of a primitive prime divisor it follows that $(a,r-1)=r-1$, that is, $r-1$ divides $a$. In part $(4)(ii)$, $r-1=1$ which obviously divides $a$. This proves the last statement of part $(4).$ 
\end{proof}

Notice that  $r$ must be prime for a group $G$ in Lemma~\ref{l:semi} to have rank $3$. This is the case in \cite[\S 4]{R3PIT} and \cite[\S 4]{semiprR3}, so Construction \ref{con:psl} is a direct generalisation of corresponding constructions in \cite[\S 4]{R3PIT} and \cite[\S 4]{semiprR3}.

\begin{Not}
   We often write $\omr \{v_1, \ldots, v_n\}$ for the set $\{\omr v_1, \ldots, \omr v_n\}.$ This convenient notation allows us to shorten some formulas significantly. See for example Tables \ref{t:blocks}, \ref{th:blocksn2} and \ref{t:blocksSU}.
\end{Not}

We are now ready to define the following series of partial linear spaces.

\begin{Def}\label{con:subfield}
With the notation and assumptions of Construction \ref{con:psl}, let $n \geq 2$ and fix a basis $\{e_1, e_2,  \ldots, e_n \}$ of $V$. Assume that
$q=q_0^f$ for a prime power $q_0$ and an integer $f > 1$, and also that $\frac{q-1}{q_0-1}=rk$ for some  integer $k$.  Recall that $r>1$.
Let $t$ be the least positive integer such that $\omr[t] \cap \omr =\omr[kr].$  Define the following sets:
\begin{itemize}
    \item $L_{u,v}= \omr\{\lambda_1 u + \lambda_2 v\mid \lambda_1,\lambda_2\in \mathbb{F}_{q_0}, (\lambda_1,\lambda_2) \ne (0,0) \}$ for linearly independent $u,v \in V$;
    \item  $\mathcal{L}=\{L_{u,v} \mid  (u,v)=(e_1,e_2)^g, g \in \GL_{n}(q), \det(g) \in \omr[t] \}$;
    \item the incidence structure $\mathrm{LSub}({n}, q, q_0,r)=(\Omega, \mathcal{L})$.
\end{itemize}  
\end{Def}

Note that $\mathbb{F}_{q_0}^* = \omr[rk]$, in particular, $\mathbb{F}_{q_0}^* \subseteq \omr$. Also $t$ is well defined since $\omr[kr] \cap \omr = \omr[kr].$
 If $n\geq3$, then $\SL_n(q)$ is transitive on all linearly independent pairs $u,v$, and  the condition on $\det(g)$ does not restrict $\mathcal{L}$, so  $\mathcal{L}=\{L_{u,v}\mid \dim(\langle u,v\rangle)=2\}.$ However this is not true in general for $n=2$.  It is worth mentioning that if $q$ is even, $q_0=2$ and $r=q-1$, then $\mathrm{LSub}({n}, q, q_0,r)=\Delta(n,q)$.
 
 Note that if $k=1$, then $t=1$ and $\mathcal{L}=\{L_{u,v}\mid \dim(\langle u,v\rangle)=2\}.$  In this case, $\mathbb{F}_{q_0}^* = \omr[kr]=\omr$ so $\Omega$ is the point-set of $\PG(fn-1,q_0)$, and the lines are all the lines of $\PG(fn-1,q_0)$ not contained in an $(f-1)-$spread obtained from field reduction. See \cite{VV2016} for details.

\begin{Lem}\label{lem:lineform}
Let  $u,v \in V$ be  linearly independent.
In terms of Definition \ref{con:subfield}, the following hold:
\begin{enumerate}[label=\normalfont (\arabic*)]
    \item $L_{u,v}= \{\omr u\}\cup  \omr\{ \lambda u + v \mid \lambda \in \mathbb{F}_{q_0}\}$ of cardinality $q_0+1$; 
     \item  if distinct points $\omr w_1,\omr w_2$ are collinear then  $w_1,w_2$ are linearly independent;  
    \item $(L_{u,v})^g =L_{(u)g, (v)g}$ for  $L_{u,v} \in \mathcal{L}$ and $g \in \GammaL_{n}(q);$ \label{lf3}
   \item  if $\mu\in\omr$, then  $L_{\mu u, v}=L_{u,\mu^{-1}v}$;
   \item  for $\eta\in\mathbb{F}_q^*$, we have $L_{u,\eta v}=L_{u,v}$  if and only if $\eta\in \mathbb{F}_{q_0}^*$;
   \item  $t$ is the least positive integer such that $k=t/(r,t)$; in particular, $t$ divides $kr$,  $k$ divides $t$, $kr={\rm lcm}\{r,t\}$,   and the subgroup product $\omr[t] \omr = \omr[t/k]$; 
   \item  if $\pi$ is the set of all primes dividing $k$, then $t=k \cdot r_{\pi},$ where $r_{\pi}$ is the $\pi$-part of $r$.
\end{enumerate}
\end{Lem}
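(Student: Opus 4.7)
The plan is to dispose of parts (1)--(5) by direct set-theoretic manipulation of the defining expression for $L_{u,v}$, then to treat (6)--(7) by elementary arithmetic in the cyclic group $\langle\omega\rangle$ of order $q-1$.

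For (1), I split the defining formula on whether $\lambda_2=0$: if $\lambda_2=0$ the point reduces to $\omr u$ since $\mathbb{F}_{q_0}^*=\omr[kr]\subseteq\omr$; if $\lambda_2\neq 0$ I absorb $\lambda_2$ into the coset representative to obtain $\omr(\lambda u+v)$ with $\lambda=\lambda_1/\lambda_2\in\mathbb{F}_{q_0}$. The count $q_0+1$ comes from a short linear-independence argument showing that distinct $\lambda\in\mathbb{F}_{q_0}$ give distinct cosets and that none of these $q_0$ cosets equals $\omr u$. Part (2) is then immediate from (1): any two distinct points on a line have representatives of the form $u,\lambda u+v$ or $\lambda u+v,\lambda' u+v$ with $\lambda\neq\lambda'$, both visibly linearly independent, and scaling representatives by elements of $\omr$ preserves linear independence. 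For (3), I apply $g\in\GammaL_n(q)$ point-wise to the definition of $L_{u,v}$, using that every field automorphism $\phi$ of $\mathbb{F}_q$ fixes the unique subfield $\mathbb{F}_{q_0}$ of size $q_0$ setwise, so that $(\lambda_1^\phi,\lambda_2^\phi)$ again ranges over $\mathbb{F}_{q_0}^2\setminus\{(0,0)\}$.

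Parts (4) and (5) are short direct computations starting from (1). For (4), I rewrite both $L_{\mu u,v}$ and $L_{u,\mu^{-1}v}$ in the common form $\{\omr u\}\cup\{\omr(\lambda\mu u+v):\lambda\in\mathbb{F}_{q_0}\}$, using $\mu\in\omr$ to absorb $\mu$ into cosets. For the sufficiency in (5), when $\eta\in\mathbb{F}_{q_0}^*$ the reparametrisation $\lambda_2\mapsto\lambda_2/\eta$ of the second coordinate sends $L_{u,\eta v}$ back to $L_{u,v}$. The converse is the only slightly delicate part: from $L_{u,\eta v}=L_{u,v}$, the containment $\omr v\in L_{u,\eta v}$ combined with the linear independence of $u,v$ first forces $\eta\in\omr$; writing $\eta=\omega^{rs}$ and matching the two parametrisations coset by coset then forces multiplication by $\omega^{-rs}$ to preserve $\mathbb{F}_{q_0}$, hence $\eta\in\mathbb{F}_{q_0}^*$.

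For (6) and (7), I work in the cyclic group $\langle\omega\rangle$ of order $N=q-1=kr(q_0-1)$. Here $\omr[t]=\langle\omega^{(N,t)}\rangle$ and $\omr[t]\cap\omr=\langle\omega^{\mathrm{lcm}\{(N,t),r\}}\rangle$, so the defining condition on $t$ becomes $\mathrm{lcm}\{(N,t),r\}=kr$. This constrains only $(N,t)$; replacing $t$ by $(N,t)$ preserves the condition without increasing $t$, so the minimising $t$ must divide $N$, and then the condition simplifies to $\mathrm{lcm}\{t,r\}=kr$, equivalent to $k=t/(r,t)$. The divisibilities $k\mid t$, $t\mid kr$ and $kr=\mathrm{lcm}\{r,t\}$ then fall out, and the subgroup product $\omr[t]\,\omr=\langle\omega^{(t,r)}\rangle$ equals $\omr[t/k]$ since $(t,r)=t/k$. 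Finally, part (7) amounts to minimising $s=(t,r)$ subject to $s\mid r$ and $(k,r/s)=1$; the minimum is $s=r_\pi$, giving $t=kr_\pi$. The main obstacle is the bookkeeping in (6): one has to verify carefully that the minimising $t$ automatically divides $N$, and then relate $(t,r)$, $\mathrm{lcm}\{t,r\}$ and the $\pi$-part $r_\pi$ of $r$ cleanly enough to reach the closed-form conclusion in (7).
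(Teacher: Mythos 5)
Your proposal is correct, and for parts (1)--(6) it follows essentially the same route as the paper: the same case split on $\lambda_2$ for (1), the same absorption of $\omr$-scalars for (4), and the same reduction of the minimality condition on $t$ to $\mathrm{lcm}\{r,t\}=kr$ inside the cyclic group $\langle\omega\rangle$ for (6). The small variations are harmless: you deduce (2) directly from the normal forms supplied by (1) rather than by the paper's contradiction argument, and in the converse of (5) you first pin $\eta$ into $\omr$ via the point $\omr v$ and then compare parametrisations, where the paper compares the single point $\omr(u+\eta v)$ against $L_{u,v}$ -- both work. The one place you genuinely diverge is (7): you set $s=(r,t)$, rewrite the constraint $k=t/(r,t)$ as ``$s\mid r$ and $(k,r/s)=1$'', and minimise $s$ directly to get $s=r_\pi$, hence $t=kr_\pi$. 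The paper instead computes with the $\pi$- and $\pi'$-parts of $t$ itself (showing $t=t_\pi$ and then $t_s=k_s r_s$ prime by prime); your version is shorter and, to my eye, cleaner. One detail to make explicit in (3): the semilinear generator $\phi$ of the paper is basis-dependent (see \eqref{def:phi}), so to have $g$ act on the coefficients $\lambda_1,\lambda_2$ by a field automorphism you must first write $g=\phi^j h$ with $\phi$ taken relative to a basis containing $u$ and $v$, exactly as the paper does; your phrasing glosses this, but the intended argument is the standard one and goes through, noting also that the field automorphism normalises both $\omr$ and $\mathbb{F}_{q_0}^*$.
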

\begin{proof} $(1)$
   Let $M =  \{\omr u\}\cup \omr\{  \lambda u + v \mid \lambda \in \mathbb{F}_{q_0}\}.$ By the definition of $L_{u,v}$ we have $M \subseteq L_{u,v}$. Now let $\omr(\lambda_1 u + \lambda_2 v) \in L_{u,v},$ so that $\lambda_1,\lambda_2 \in \mathbb{F}_{q_0}$.  If $\lambda_2 \ne 0$ then $\lambda_2\in \mathbb{F}_{q_0}^* \subseteq \omr,$ and hence
    $$
    \omr(\lambda_1 u + \lambda_2 v)=\omr\lambda_2(\lambda_1\lambda_2^{-1} u + v)=\omr(\lambda_1\lambda_2^{-1} u + v) \in M.
    $$ 
     On the other hand if $\lambda_2=0$ then $\lambda_1\ne 0$, so $\lambda_1\in \mathbb{F}_{q_0}^* \subseteq \omr,$ and we have $\omr(\lambda_1 u + \lambda_2 v) = \omr \lambda_1 u=\omr u \in M$. Hence $L_{u,v}\subseteq M$ and equality holds. The elements in $M$ are clearly pairwise non-equal and hence $|M|=q_0+1$,  proving part (1).

    $(2)$ Let $\omr w_1,\omr w_2$  be distinct points. Assume  that $\omr w_1,\omr w_2$ are collinear and $w_1,w_2$ are linearly dependent. Then $w_2=\mu w_1$ for some $\mu\in\mathbb{F}_q$. 
  Let $L_{u,v}$ be a line containing $\omr w_1$ and $\omr w_2$. Then $\omr w_1=\omr u$ or $\omr(\lambda u + v)$ for some $\lambda\in\mathbb{F}_{q_0}$, by part (1), and hence $\omr w_2=\omr \mu u$ or $\omr(\mu \lambda u + \mu v)$, respectively. Since $\omr w_2\in L_{u,v}$, it follows that $\mu\in\omr$, and so $\omr w_1=\omr w_2$, a contradiction.

    $(3)$ We may write $g= \phi^j h $, where $0\leq j<a$, $h \in \GL_{n}(q)$, and $\phi$ is as in \eqref{def:phi}, with respect to a basis of $V$ containing the vectors $u$ and $v$.
      Then the field automorphism $\phi$ normalises $\omr$ and $\mathbb{F}_{q_0}^*$, and  we have
    $$
    (\omr (\lambda_1 u + \lambda_2 v))g=\omr (\lambda_1^{\phi^j}(u)h +\lambda_2^{\phi^j}(v)h) \in L_{(u)h,(v)h} = L_{(u)g,(v)g}
    $$
    for all $\lambda_1, \lambda_2 \in \mathbb{F}_{q_0}$. Thus $(L_{u,v})^g \subseteq L_{(u)g, (v)g}$  and equality holds since $|L_{u,v}|=|L_{(u)g, (v)g}|=q_0+1$.

    (4) Since  $\mu\in\omr$, we have $\omr \mu u=\omr u$ and $\omr (\lambda \mu u+v)=\omr (\lambda  u+\mu^{-1} v)$, for all $\lambda\in\mathbb{F}_0$. The assertion now follows from part (1).
    
    (5) Assume that $\eta\in \mathbb{F}_{q_0}^*$.
    Both lines $L_{u,\eta v}$ and $L_{u,v}$ contain the point $\omr u$. By part (1),
each other point of $L_{u,\eta v}$ is equal to 
$\omr(\lambda u+\eta v)$ for some $\lambda\in \mathbb{F}_{q_0}$, which is a point of $L_{u,v}$. Thus  $L_{u,\eta v}\subseteq L_{u,v}$  and equality holds since both sets have the same size.

Now assume that  $L_{u,\eta v}=L_{u,v}$. 
The point $\omr (u+\eta v)\in L_{u,\eta v}$ must then be equal to $\omr(\lambda u+ v)$ for some $\lambda\in \mathbb{F}_{q_0}$ by part (1). Since this point is distinct from $\omr v$ it follows that $\lambda\ne 0$, and hence  $\omr (u+\eta v)=\omr(\lambda u+ v) =\omr(u+ \lambda^{-1}v)$. This implies that  $\eta=\lambda^{-1}\in \mathbb{F}_{q_0}^*$.

    (6)  First we show that $t$ divides $(q-1)$. Note that $t/(t,q-1)$ is coprime to $(q-1)/(t,q-1)$, and so $|\omega^t|=\frac{q-1}{(t,q-1)}=|\omega^{(t,q-1)}|$. This implies that $\omr[t]=\omr[(t,q-1)]$, and so by the minimality in the definition of $t$ we must have $t=(t,q-1)$, and hence $t$ divides $q-1$, as required. 
    The cyclic group $\langle \omega\rangle$ of order $q-1=kr(q_0-1)$ has a unique subgroup of order $d$ for each divisor $d$ of $q-1$. Since both $t$ and $r$ divide $q-1=|\omega|$, we obtain $\omr[t]\cap \omr = \omr[{\rm lcm}\{r,t\}]$ and hence, since also $kr$ divides $q-1$, we also have  $kr={\rm lcm}\{r,t\}=rt/(r,t)$. Hence $k=t/(r,t)$; in particular $t$ divides $kr$ and $k$ divides $t$. 
    
    To prove that $t$ is the least integer with this property, assume to the contrary that $k=t'/(r,t')$ for some $t'<t$. Then $t'=k(r,t')$ divides $kr$ and hence divides $q-1.$ Further, ${\rm lcm}\{r,t'\}=rt'/(r,t')=kr$, so $\omr[t'] \cap \omr =\omr[kr]$ which contradicts the minimality in the definition of $t$. Thus $t$ is  the least positive integer such that $k=t/(r,t)$.    
     
     Finally,  the product $\omr[t] \omr$ is a subgroup of $\langle \omega\rangle$, and hence is equal to $\omr[\ell]$ for some divisor $\ell$ of $q-1$. Since $\omr[\ell]$ contains both $\omr[t]$ and $\omr[r]$, it follows that $\ell$ divides both $t$ and $r$, that is, $\ell$ divides $(r,t)$. Hence $\omr[\ell]$ contains $\omr[(r,t)]$. On the other hand $\omr[(r,t)]$ contains both $\omr[t]$ and $\omr[r]$, and hence contains their product $\omr[\ell]$. It follows that these subgroups are equal and hence $\ell=(r,t)$. The final assertion follows from the equality $(r,t)=t/k$.

    (7) 
    Let $\pi'$ denote the complement of $\pi$ in the set of all primes, and let $x_\pi, x_{\pi'}$ denote the $\pi$-part and $\pi'$-part, respectively, of an integer $x$. Then in particular $t=t_\pi\cdot t_{\pi'}$ and  $k_{\pi'}=1$. Also by part (6) we have $t=k\cdot (r,t)$, and so $t_{\pi'}=k_{\pi'}\cdot (r,t)_{\pi'}=(r,t)_{\pi'}$, and $(r,t_\pi) = (r_\pi,t_\pi) = (r,t)_\pi$. Thus $(r,t)=(r,t)_\pi \cdot (r,t)_{\pi'}=(r,t_\pi)\cdot t_{\pi'}$, which implies that 
    \[
    k=\frac{t}{(r,t)} =\frac{t_\pi\cdot t_{\pi'}}{(r,t_\pi)\cdot t_{\pi'}} =\frac{t_\pi}{(r,t_\pi)}
    \]
    and it follows from the minimality of $t$ in (6) that  $t=t_\pi$. 
     If $k=1$, then $\pi$ is empty, $t=1$ and the statement holds. Further we assume $|\pi|\ge1$.  Let $s\in \pi$, so $t_s=k_s\cdot (r,t)_s$. Since $k_s\neq 1$, $(r_s,t_s)=(r,t)_s\neq t_s$, and it follows that $r_s<t_s$ and $(r,t)_s=r_s$. Thus $t_s=k_s\cdot r_s$.  
    
        Since this holds for all $s\in\pi$ we have $t=t_\pi=k\cdot r_\pi$, completing the proof of (7).
    \end{proof}

 Let $P_2$ be the stabiliser  in $\GL_n(q)$ of the subspace $\langle e_1, e_2 \rangle$, and let $M$ be the largest subgroup of $P_2$ which induces precisely $\GL_2(q_0)$ on $\langle e_1, e_2 \rangle$. Then $M$ consists of all matrices 
\begin{equation}\label{eq:X0mat}
    \begin{pmatrix}
    A & 0 \\
    c & B
\end{pmatrix}
\end{equation}
    with $A \in \GL_2(q_0),$ $B\in \GL_{n-2}(q)$ and  $c \in \mathbb{F}_q^{(n-2)\times 2}.$  Note that if $n=2$, then $M=\GL_2(q_0).$

 \begin{Lem} \label{lf7}
      The stabiliser in $\GL_n(q)$ of the line $L_{e_1,e_2}$ is equal to $X=MY$, where $Y=\langle \omega^rI\rangle$ is as in Construction~$\ref{con:psl}$. Moreover
   \begin{enumerate}
       \item[$(1)$] $X$  and its subgroup $M$ both act $2$-transitively on the $q_0+1$ points of $L_{e_1,e_2}$;
       \item[$(2)$] $\GL_n(q)$ acts transitively on $\widehat{\mathcal{L}}:=\{L_{u,v}\mid u,v\ \text{linearly independent}\}$; and  $G_1:=\{g\in\GL_n(q)\mid \det(g)\in\omr[t]\}$  is transitive on $\mathcal{L}$, and we have 
       \[
|\widehat{\mathcal{L}}| = \frac{rq(q^n-1)(q^{n-1}-1)}{q_0(q_0^2-1)(q-1)}\quad\text{and}\quad |\mathcal{L}|= \begin{cases}
    |\widehat{\mathcal{L}}|/(2r,t)  \text{ if } n=2; \\
    |\widehat{\mathcal{L}}| \text{ if } n\geq 3.    
\end{cases}
       \]
   \end{enumerate}
\end{Lem}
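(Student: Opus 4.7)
The plan is to prove the identification $X = MY$ by a double inclusion, and then deduce parts (1) and (2) from this characterisation together with some elementary counting. The inclusion $MY \le X$ is immediate: the subgroup $Y = \langle \omega^r I \rangle$ acts trivially on $\Omega$, and the top-left block $\GL_2(q_0) \le M$ preserves the $2$-dimensional $\mathbb{F}_{q_0}$-subspace $W_0 := \mathbb{F}_{q_0} e_1 \oplus \mathbb{F}_{q_0} e_2$, hence stabilises $L_{e_1,e_2}$ by definition.

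For the reverse inclusion, the key observation is that the set $V_L := \omr \cdot (W_0 \setminus \{0\})$ of vectors lying in the points of $L_{e_1,e_2}$ is contained in $\langle e_1, e_2 \rangle_{\mathbb{F}_q}$ and $\mathbb{F}_q$-spans it (because $e_1, e_2 \in V_L$). Any $g \in X$ permutes the points of $L_{e_1,e_2}$ and therefore maps $V_L$ to itself; by $\mathbb{F}_q$-linearity, $g$ stabilises $\langle e_1, e_2 \rangle_{\mathbb{F}_q}$, and so has block-triangular form $g = \begin{pmatrix} A & 0 \\ c & B \end{pmatrix}$ with $A \in \GL_2(q)$, $B \in \GL_{n-2}(q)$, and $c \in \mathbb{F}_q^{(n-2)\times 2}$. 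Next I would write $e_1 A = \alpha w_j$ and $e_2 A = \beta w_{j'}$ with $\alpha,\beta \in \omr$ and $w_j, w_{j'} \in W_0 \setminus \{0\}$, which must be $\mathbb{F}_{q_0}$-linearly independent since $g$ sends the distinct points $\omr e_1$ and $\omr e_2$ to distinct points of $L_{e_1,e_2}$. This factors $A = \diag(\alpha,\beta) \cdot A_0$ with $A_0 \in \GL_2(q_0)$; since both $A$ and $A_0$ stabilise $L_{e_1,e_2}$, so does $\diag(\alpha,\beta)$. Tracing the image of the point $\omr(e_1+e_2)$ under $\diag(\alpha,\beta)$ then forces $\beta/\alpha \in \mathbb{F}_{q_0}^*$, giving $A \in (\alpha I_2) \cdot \GL_2(q_0)$. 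Setting $y := \alpha I_n \in Y$, the product $gy^{-1}$ lies in $M$, so $g \in MY$.

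Part (1) follows at once: $\GL_2(q_0) \le M$ induces the natural action on the $q_0 + 1$ one-dimensional $\mathbb{F}_{q_0}$-subspaces of $W_0$, which are in bijection with the $q_0 + 1$ points of $L_{e_1,e_2}$, and this is the well-known $2$-transitive action of $\PGL_2(q_0)$ on $\PG(1, q_0)$. For part (2), $\GL_n(q)$ is transitive on ordered linearly independent pairs in $V$, and hence on $\widehat{\mathcal{L}}$ by Lemma~\ref{lem:lineform}(3). To compute $|\widehat{\mathcal{L}}|$ I would show that each line $L_{u,v}$ is represented by exactly $k\cdot|\GL_2(q_0)|$ ordered pairs: the carrier of $L_{u,v}$ decomposes as a disjoint union of $k = (q-1)/(r(q_0-1))$ two-dimensional $\mathbb{F}_{q_0}$-subspaces $\alpha W'$ of $\langle u,v\rangle_{\mathbb{F}_q}$ (indexed by coset representatives $\alpha \in \omr/\mathbb{F}_{q_0}^*$), each admitting $|\GL_2(q_0)|$ ordered $\mathbb{F}_{q_0}$-bases, and dividing $(q^n-1)(q^n-q)$ by this total yields the claimed formula. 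Transitivity of $G_1$ on $\mathcal{L}$ is by construction. Since $G_1$ is normal in $\GL_n(q)$, the number of $G_1$-orbits on $\widehat{\mathcal{L}}$ equals $[\GL_n(q) : G_1 X] = [\mathbb{F}_q^* : \omr[t] \cdot \det(X)]$. For $n \ge 3$ the $\GL_{n-2}(q)$-block already gives $\det(M) = \mathbb{F}_q^*$, so this index is $1$ and $\mathcal{L} = \widehat{\mathcal{L}}$. For $n = 2$ one computes $\det(X) = \mathbb{F}_{q_0}^* \cdot (\omr)^2 = \omr[r\gcd(k,2)]$, and Lemma~\ref{lem:lineform}(6)--(7) (splitting into $k$ even and $k$ odd) shows that $\omr[t]\cdot\det(X) = \omr[\gcd(t, 2r)]$, giving $|\widehat{\mathcal{L}}|/|\mathcal{L}| = (2r, t)$.

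The main obstacle will be the reverse inclusion $X \le MY$, in particular the step that forces $\beta/\alpha \in \mathbb{F}_{q_0}^*$ rather than merely $\omr$; everything else in the stabiliser computation is essentially bookkeeping once this rationality is established. The coset arithmetic for $n = 2$ in the counting also requires some care, but is dispatched by the divisibility relations in Lemma~\ref{lem:lineform}(6)--(7).
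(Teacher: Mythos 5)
Your proposal is correct, and in two places it takes a genuinely different route from the paper. For the inclusion $X\le MY$ the paper first proves $2$-transitivity of $M$ on $L_{e_1,e_2}$, deduces $X=MX_{\alpha_1,\alpha_2}$, and then analyses the two-point stabiliser via Lemma~\ref{lem:lineform}(4,5); your direct factorisation $A=\diag(\alpha,\beta)A_0$ of an arbitrary element of $X$ reaches the same crucial point (forcing $\beta/\alpha\in\mathbb{F}_{q_0}^*$ by tracking one further point of the line) without the Frattini-style reduction, so this part is essentially equivalent. The real divergence is in part (2). The paper computes $|\widehat{\mathcal{L}}|=|\GL_n(q):P_2|\cdot|P_2:X|$ through the chain $X<P_2<\GL_n(q)$, and for $n=2$ obtains $|\widehat{\mathcal{L}}|/|\mathcal{L}|$ by evaluating $|X:X\cap G_1|$, which rests on a separate Claim computing $|Y\cap G_1|=(2,t)(q_0-1)$ together with the identity $(2,t)(r,t)=(2r,t)$. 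You instead count the ordered pairs representing a fixed line and compute the number of $G_1$-orbits as the determinant index $[\mathbb{F}_q^*:\omr[t]\det(X)]$; this is cleaner, replaces the paper's Claim by the single verification $(t,r(k,2))=(2r,t)$ (which still needs the parity split on $k$ via Lemma~\ref{lem:lineform}(6)--(7), exactly as in the paper), and yields the same answers. The one step you should spell out is that the pairs $(u,v)$ with $L_{u,v}=L_{e_1,e_2}$ are \emph{exactly} the ordered $\mathbb{F}_{q_0}$-bases of the $k$ subspaces $\alpha W_0$ with $\alpha$ running over coset representatives of $\mathbb{F}_{q_0}^*$ in $\omr$: a priori one could have $u\in\alpha W_0$ and $v\in\alpha'W_0$ with $\alpha,\alpha'$ in different cosets, and this must be excluded. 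A short computation does it (if $u\in W_0$, $v=\alpha'w'$ and $u+v=\alpha''w''$ with $w',w''\in W_0$, then writing $u$ in the $\mathbb{F}_{q_0}$-basis $\{w',w''\}$ forces $\alpha'\in\mathbb{F}_{q_0}^*$, the degenerate case contradicting linear independence of $u,v$); more slickly, the representing pairs form the $X$-orbit of $(e_1,e_2)$, which is visibly this set once $X=MY$ is in hand.
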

\begin{proof}
      Let $L=L_{e_1,e_2}$ and let $X$ be the stabiliser in $\GL_n(q)$ of $L$. First we show that $M\leq X$. Let $g\in M,$ so $g$ is as in \eqref{eq:X0mat} with $A \in \GL_2(q_0)$. Then by Lemma \ref{lem:lineform}(3), $L^g=L_{(e_1)g,(e_2)g}$, and by Lemma \ref{lem:lineform}(1), the elements of this line are $\omr (e_1)g$ and  $\omr (\lambda (e_1)g+(e_2)g)$ for $\lambda\in \mathbb{F}_{q_0}.$  Since $g$ stabilises $\langle e_1, e_2 \rangle$ and all entries of $A$ are in $\mathbb{F}_{q_0}$, all these points lie in $L$. Thus $L^g\subseteq L$ and we have equality since both sets have cardinality $q_0+1$. Hence $g\in X$, and therefore $M\leq X$.  Moreover $Y$ fixes every point of $\Omega$, so in particular $Y\leq X$, and hence   $MY\leq X$.
    
    Next we show that $M$, and hence also $X$, acts $2$-transitively on the points of $L$. Consider a pair of distinct points $\omr u_1$, $\omr u_2$ of $L_{e_1,e_2}$. By Lemma \ref{lem:lineform}(1),  for each $i$ we have $\omr u_i =  \omr (\nu_{i1}e_1 + \nu_{i2}e_2)$ where $(\nu_{i1},\nu_{i2})$ is either $(1,0)$ or $(\lambda,1)$ for some $\lambda\in\mathbb{F}_{q_0}$.    Let $h$ be the $2\times 2$ matrix with $ij$-entry $\nu_{ij}$. Then by Lemma \ref{lem:lineform}(2), $h$ is invertible and hence $h\in \GL_2(q_0)$ and $\diag(h, I_{n-2}) \in M$ maps the two points $\alpha_1:=\omr e_1, \alpha_2:=\omr e_2$ to the two points $\omr u_1,\omr u_2$ of $L_{e_1,e_2}$ (in that order). This proves part $(1)$.

    Since $ M$ is $2$-transitive on the points of $L$, it follows that $X=MX_{\alpha_1,\alpha_2}$. Let $g\in X_{\alpha_1,\alpha_2}$. Then for each $i\in\{1,2\}$,  $g:e_i\to\lambda_ie_i$ for some $\lambda_i\in \omr$. 
   Then $L^g=L_{(e_1)g,(e_2)g}=L_{\lambda_1 e_1,\lambda_2 e_2}$ by Lemma \ref{lem:lineform}(3), and $L^g=L$ as $g\in X$. Also by Lemma \ref{lem:lineform}(4), $L_{\lambda_1 e_1,\lambda_2 e_2}=L_{ e_1,\lambda_1^{-1}\lambda_2 e_2}$. 
    Thus $L=L_{ e_1,\lambda_1^{-1}\lambda_2 e_2}$ and it follows that $\lambda_1^{-1}\lambda_2\in\mathbb{F}_{q_0}^*$ by Lemma \ref{lem:lineform}(5).
    Since $\lambda_2\in \omr$, we have $\lambda_2I\in Y$. Further $g_1:=g(\lambda_2^{-1}I)$ leaves $\langle e_1,e_2\rangle$ invariant (and hence lies in $P_2$), and $g_1$ induces on $\langle e_1,e_2\rangle$  the diagonal matrix $\diag [\lambda_1\lambda_2^{-1},1]\in\GL_2(q_0)$, and therefore $g_1\in M$. Thus $g=g_1(\lambda_2I)\in MY$, and it follows that $X=MX_{\alpha_1,\alpha_2}\leq MY$, so equality  $X=MY$ holds, proving the main assertion of the lemma.

    Finally we prove part $(2)$. 
    By Lemma \ref{lem:lineform}(3) and since $\GL_n(q)$ is transitive on ordered bases, 
    $\GL_n(q)$ is transitive on $\widehat{\mathcal{L}}$ and the stabiliser of the line $L$ in this action is $X$.  Since $X<P_2<\GL_n(q)$, we have $|\widehat{\mathcal{L}}|=|\GL_n(q):P_2|\cdot |P_2:X|$, and the first factor is equal to the number of $2$-dimensional subspaces of $\mathbb{F}_q^n$, namely $\frac{(q^n-1)(q^{n-1}-1)}{(q^2-1)(q-1)}$. Since $X=MY$, the second factor is $|P_2:M|/|X:M|$. It follows from \eqref{eq:X0mat} that $|P_2:M|=|\GL_2(q):\GL_2(q_0)| = \frac{(q^2-1)(q^2-q)}{(q_0^2-1)(q_0^2-q_0)}$, and since $M\cap Y=\{\lambda I\mid \lambda\in\mathbb{F}_{q_0}^*\}$ we have 
    $|X:M|=|MY:M|=|Y:M\cap Y|=\frac{q-1}{r(q_0-1)}$. Therefore
    \[
    |\widehat{\mathcal{L}}| = \frac{(q^n-1)(q^{n-1}-1)}{(q^2-1)(q-1)}\cdot  \frac{(q^2-1)(q^2-q)}{(q_0^2-1)(q_0^2-q_0)}\cdot \frac{r(q_0-1)}{q-1} = \frac{rq(q^n-1)(q^{n-1}-1)}{q_0(q_0^2-1)(q-1)}.
    \]
    As noted after Definition~\ref{con:subfield},  if $n \geq 3$ then $\mathcal{L}=\widehat{\mathcal{L}}$, and the group $G_1$ is transitive on $\mathcal{L}$ since $G_1$ contains $\SL_n(q)$. Thus the lemma is proved if $n\geq3$.
    
    Assume that $n=2$. Here by definition, $G_1$ is transitive on $\mathcal{L}$, and the stabiliser of $L$ in this action is $X\cap G_1$. We proved above that $X=MY$ and, as noted above, $M=\GL_2(q_0)$ when $n=2$. 
    Note that, for $g\in M$, $\det(g)\in \mathbb{F}_{q_0}^*=\omr[rk]\subseteq \omr[t],$ and so $M\leq G_1$. Thus
    $X\cap G_1 = \GL_2(q_0)(Y\cap G_1) = M(Y\cap G_1)$, and we have  $Y\cap G_1=\{\lambda I \mid \lambda \in \omr, \lambda^2 \in \omr[t]\}$. 
    To complete the proof we require the following crucial fact:
    
    \medskip\noindent

  \medskip\noindent
    \emph{Claim.\ $|Y\cap G_1|= (2,t)(q-1)/rk =(2,t)(q_0-1)$, and moreover,  $(2,t)(r,t)=(2r,t)$.}\quad 
    
    Now  $t=k\cdot(r,t)$ by Lemma~\ref{lem:lineform}(6),   and hence $q-1=rk(q_0-1)$ is divisible by $t$ and it follows that
       $$
       \{\lambda \in \mathbb{F}_q^* \mid \lambda^2 \in \omr[t]\}=\omr[t/(2,t)]=
       \begin{cases}
           \omr[t/2] & \text{if $t$ is even} \\
           \omr[t]   & \text{if $t$ is odd}.
       \end{cases}
       $$
       Hence $|Y\cap G_1|= |\omr \cap \omr[t/(2,t)]|=|\omr[\mathrm{lcm}\{r,t/(2,t)\}]|$. If $t$ is odd, then $(2,t)=1$,  $\mathrm{lcm}\{r,t/(2,t)\} =\mathrm{lcm}\{r,t\}=rk$, and so in this case $|Y\cap G_1|=(q-1)/rk = q_0-1$ and $(2,t)(r,t)=(2r,t)$ as claimed. Thus we may assume that $t$ is even. By Lemma  \ref{lem:lineform}$(7)$, $t=k\cdot r_\pi$, where $\pi$ is the set of primes dividing $k$. It follows that $2\in\pi$, and hence $t_2=k_2 \cdot r_2\geq 2\cdot r_2$. Thus $r_2$ divides $t_2/2$,  so $\mathrm{lcm}\{r,t/2\}_2=t_2/2$ and hence $\mathrm{lcm}\{r,t/2\}=\mathrm{lcm}\{r,t\}/2=rk/2$. We conclude that $|Y\cap G_1|= (q-1)/(rk/2) = 2(q_0-1)$, and the first assertion of the Claim is proved. Note that if $r$ is odd then $(2,t)(r,t)=(2r,t)$ as claimed. Assume then  $r$ is even. Since $2\in\pi$, we have $t_2 = (k\cdot r_\pi)_2 = k_2\cdot r_2\geq 2\cdot r_2$, and it follows that  $(2,t)(r,t)=(2r,t)$ in this case too. Thus the Claim is proved.

       \medskip
        Note that $Y\cap M = Y\cap M\cap G_1$, since $M\leq G_1$. By the Claim, 
       \[
       \frac{|X|}{|X\cap G_1|}=\frac{|MY|}{|M(Y\cap G_1)|}= \frac{|M|\cdot |Y|}{|M\cap Y|}\cdot \frac{|M\cap (Y\cap G_1)|}{|M|\cdot |Y\cap G_1|} = \frac{|Y|}{|Y\cap G_1|}=\frac{(q-1)/r}{(2,t)(q-1)/rk}=\frac{k}{(2,t)}.
       \]
       This is equal to $\frac{t}{(2,t)(r,t)}$ (since $t=k\cdot(r,t)$) and hence, by the second assertion of the Claim, $\frac{|X|}{|X\cap G_1|}=\frac{t}{(2r,t)}$.

      Now  $|\widehat{\mathcal{L}}|=|\GL_2(q):X|$ and $|\mathcal{L}|=|G_1:X\cap G_1|$, and hence
      \[
      \frac{|\widehat{\mathcal{L}}|}{|\mathcal{L}|} = \frac{|\GL_2(q):G_1|}{|X:X\cap G_1|} = \frac{t}{t/(2r,t)}  
     =(2r,t). 
      \]
       This completes the proof of part (2).
       \end{proof}

\begin{Th}\label{th:LSub}
Let $n, q, q_0, k, r,$ and $\mathcal{D}=\mathrm{LSub}(n,q,q_0,r)$ be as in Definition~\ref{con:subfield}. Then the number  $\mathfrak{m}$ of lines containing a pair of collinear points is a constant, namely 
$$
\mathfrak{m}=\left\{ \begin{array}{ll}
        k/(2,k)& \text{ if } n=2; \\
        k &\text{ if } n\geq 3.
    \end{array}\right. 
    $$
In particular, $\mathcal{D}$ is a  partial linear space if and only if either  $n=2$ with $k\leq 2$, or $n\geq 3$ with $k=1$,  in which case $\mathcal{D}$ is a  proper partial linear space.  
\end{Th}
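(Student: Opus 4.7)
The plan is to use Lemma \ref{claim1} to reduce the computation of $\mathfrak{m}$ to counting lines through a single fixed pair of collinear points, then exploit the line parameterisation from Lemma \ref{lem:lineform}. Since the group $G_1$ of Lemma \ref{lf7}(2) is transitive on $\mathcal{L}$, and its line stabiliser contains the subgroup $M$ which is $2$-transitive on each line by Lemma \ref{lf7}(1), Lemma \ref{claim1} yields a transitive $G_1$-action on ordered pairs of distinct collinear points; hence $\mathfrak{m}$ is the same for every such pair.

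Take $\alpha_1=\omr e_1$ and $\alpha_2=\omr e_2$. By Lemma \ref{lem:lineform}(1) and (4), every line of $\widehat{\mathcal{L}}$ through $\{\alpha_1,\alpha_2\}$ is of the form $L_{e_1,\mu e_2}$ for some $\mu\in\omr$, and by Lemma \ref{lem:lineform}(5) two such lines coincide iff $\mu/\mu'\in\mathbb{F}_{q_0}^*$. Since $|\omr|/(q_0-1)=k$, the pair lies on exactly $k$ lines of $\widehat{\mathcal{L}}$. For $n\geq 3$ all of these belong to $\mathcal{L}=\widehat{\mathcal{L}}$, giving $\mathfrak{m}=k$.

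For $n=2$, the line $L_{e_1,\mu e_2}$ lies in $\mathcal{L}=L_{e_1,e_2}^{G_1}$ iff the coset $Xg_\mu$ (with $g_\mu=\diag(1,\mu)$ and $X=MY$ as in Lemma \ref{lf7}) meets $G_1$. Taking determinants, this becomes $\mu\det(X)\cap\omr[t]\neq\emptyset$. A direct computation gives $\det(M)=\mathbb{F}_{q_0}^*=\omr[rk]$ and $\det(Y)=\omr[d]$ with $d=(2r,q-1)$, whence $\det(X)=\omr[(rk,d)]$; using $t\mid rk$ from Lemma \ref{lem:lineform}(6), the set of admissible $\mu\in\omr$ forms the subgroup $\omr[\mathrm{lcm}\{r,(t,d)\}]$. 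Dividing by $|\mathbb{F}_{q_0}^*|$ and simplifying via $r\mid d$ yields $\mathfrak{m}=t/(t,2r)$, which collapses to $k/(2,k)$ via the identity $(2r,t)=(2,t)(r,t)$ established in the proof of Lemma \ref{lf7}, together with $(2,t)=(2,k)$ (an immediate consequence of Lemma \ref{lem:lineform}(7), since $t=k\cdot r_\pi$ with $2\in\pi$ iff $2\mid k$).

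The partial linear space condition $\mathfrak{m}\leq 1$ then yields the stated cases, and in each such case $\mathcal{D}$ is proper: the line size $q_0+1\geq 3$ rules out a graph, while the $q_0+1$ points of any line $L_{u,v}$ occupy pairwise distinct blocks of $\Sigma$ (since $u,v$ are linearly independent), so any two distinct points of a single block, which exist as $r>1$, are non-collinear and $\mathcal{D}$ is not a linear space. The main obstacle will be the $n=2$ coset computation, in which $\det(Y)$ has different sizes according to the parity of $q$, and the collapse $t/(t,2r)=k/(2,k)$ requires the two arithmetic identities mentioned above.
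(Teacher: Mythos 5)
Your proposal is correct and follows the same overall architecture as the paper's proof (reduce to the fixed pair $\omr e_1,\omr e_2$ via transitivity on lines plus $2$-transitivity of line stabilisers, then count the lines $L_{e_1,\mu e_2}$ through that pair modulo the $\mathbb{F}_{q_0}^*$-identification of Lemma \ref{lem:lineform}(5)), but your treatment of the $n=2$ membership question is genuinely different. The paper characterises the lines of $\mathcal{L}$ through the pair explicitly as $L_{e_1,\lambda e_2}$ with $\lambda\in\omr[2r]$ (Claim 2 of its proof), by producing concrete group elements in both directions, and then counts cosets of $\omr[kr]$ in $\omr[2r]$ with a case split on the parity of $k$. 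You instead translate ``$L_{e_1,\mu e_2}\in\mathcal{L}$'' into the coset condition $X g_\mu\cap G_1\ne\emptyset$, push it through the determinant to a subgroup computation in the cyclic group $\mathbb{F}_q^*$, and arrive at the closed form $\mathfrak{m}=t/(2r,t)$, which collapses to $k/(2,k)$ using $(2r,t)=(2,t)(r,t)$ from the Claim in Lemma \ref{lf7} and $(2,t)=(2,k)$ from Lemma \ref{lem:lineform}(7). I checked the arithmetic ($\det(X)=\omr[(rk,d)]$ with $d=(2r,q-1)$, the admissible set $\omr[\mathrm{lcm}\{r,(t,d)\}]$, and the reduction $rk/\mathrm{lcm}\{r,(t,d)\}=t/(t,d)=t/(2r,t)$ using $r\mid d$, $t=k(r,t)$ and $t\mid q-1$); it is sound and avoids the parity case split, at the cost of leaning harder on the auxiliary identities. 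What the paper's route buys is an explicit description of the pencil of lines through a collinear pair, which it reuses elsewhere; what yours buys is a uniform formula.

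One step is under-justified rather than wrong: the assertion that every line of $\widehat{\mathcal{L}}$ through $\{\alpha_1,\alpha_2\}$ has the form $L_{e_1,\mu e_2}$ with $\mu\in\omr$ does not follow from Lemma \ref{lem:lineform}(1) and (4) alone. A point of $L_{u,v}$ may play the role of $\omr u$ or of $\omr(\lambda u+v)$, so a direct argument needs a case analysis together with the symmetry $L_{u,v}=L_{v,u}$ (which itself requires a short computation from part (1)). The clean fix is the one the paper uses: apply Lemma \ref{claim1} with the full group $\GL_n(q)$ acting on $\widehat{\mathcal{L}}$ (transitive by Lemma \ref{lf7}(2), with $2$-transitive line stabilisers by Lemma \ref{lf7}(1)) to get $g$ fixing both points with $(e_i)g=\mu_ie_i$, $\mu_i\in\omr$, and then invoke parts (3) and (4). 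Your first paragraph only sets up this machinery for $G_1$ and $\mathcal{L}$, so you should state the $\widehat{\mathcal{L}}$ version explicitly.
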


 \begin{proof} 
The following claim follows immediately from Definition \ref{con:subfield}, Lemma \ref{lf7}(1) and Lemma \ref{claim1}.

\medskip  
\noindent
{\it Claim 1:  If $w_1,w_2 \in V^*$ are such that $\omr w_1$ and $\omr w_2$ are distinct collinear points of $\Omega$ on a line $L$, then there is an element $g\in\GL_n(q)$ with $\det(g)\in\omr[t]$, mapping  $L_{e_1,e_2}$ to $L$ and  $\omr e_i$ to $\omr w_i$ for $i=1,2.$}

\medskip    
By Lemma \ref{lf7}(2),  $g$ (as in Claim 1) preserves $\mathcal{L}$ (so induces an automorphism of $\mathcal{D}$), and in particular  the number of lines containing $\omr w_1$ and $\omr w_2$ is equal to the number of lines containing $\omr e_1$ and $\omr e_2$. Let $\mathfrak{m}$ be this number.

 \medskip 
\noindent
{\it Claim 2: Let $\delta=1$ if $n\geq3$, and $\delta=2$ if $n=2$. Then a line $L\in\mathcal{L}$ contains $\omr e_1$ and $\omr e_2$ if and only if $L=L_{e_1,\lambda e_2}$ for some $\lambda\in\omr[\delta r]$. }

  \medskip 
\noindent   
     {\it Proof of Claim 2:}
Let $L\in\mathcal{L}$ be an arbitrary line containing $\omr e_1$ and $\omr e_2$.
By Claim 1  there is an element $g$ of $\GL_n(q)$  with $\det(g)\in\omr[t]$, mapping  $L_{e_1,e_2}$ to $L$ and  fixing $\omr e_i$ for $i=1,2.$
In particular $(e_i)g=\mu_i e_i$ for some $\mu_i\in \omr$ for $i=1,2.$ Then by Lemma~\ref{lem:lineform}(3,4), $L=L_{(e_1)g,(e_2)g}= L_{\mu_1 e_1,\mu_2 e_2}=L_{ e_1,\mu_1^{-1}\mu_2 e_2}$. Thus $L=L_{e_1,\lambda e_2}$ with $\lambda=\mu_1^{-1}\mu_2\in\omr$. This is of the  form in Claim 2 if $n\geq3$. So suppose that $n=2$. Then as $\det(g)=\mu_1\mu_2\in \omr[t]$, we also have  $\mu_1\mu_2\in\omr[t]\cap\omr=\omr[kr]=\mathbb{F}_{q_0}^*.$
 Thus $\lambda =\mu_1^{-1}\mu_2 = \eta \lambda'$ with $\eta = (\mu_1\mu_2)^{-1}\in\mathbb{F}_{q_0}^*$ and $\lambda'=\mu_2^2\in \omr[2r]$, and by Lemma~\ref{lem:lineform}(5),  $L= L_{ e_1,\lambda e_2}= L_{ e_1,\eta\lambda' e_2}=L_{ e_1,\lambda' e_2}$, as in Claim 2.

 Conversely, let $L=L_{e_1,\lambda e_2}\in\widehat{\mathcal{L}}$  with $\lambda \in \omr[\delta r]$. Then $L$ contains $\omr e_1$ and $\omr \lambda e_2=\omr e_2$, and it remains to prove that $L\in\mathcal{L}$. If $n\geq3$, then $\widehat{\mathcal{L}}=\mathcal{L}$ by Lemma~\ref{lf7}(2), and hence Claim 2 is proved in this case. So suppose that $n=2$. Here $\lambda=\mu^2$ for some $\mu\in\omr$, and the diagonal matrix $g=\diag(\mu^{-1}, \mu)\in \GL_2(q)$ with $\det(g)=1\in\omr[t]$. Moreover, by Lemma~\ref{lem:lineform}(3,4), $g$ maps $L_{e_1,e_2}$ to \[
 L_{(e_1)g,(e_2)g}=L_{\mu^{-1}e_1,\mu e_2} = L_{e_1, \mu^2 e_2} = L_{e_1,\lambda e_2}=L,
 \]
 and hence $L\in \mathcal{L}$ by Definition~\ref{con:subfield}. Thus Claim 2 is proved.

 \medskip

Consider two lines $L_1,L_2$ containing $\omr e_1$ and $\omr e_2$. Then by Claim 2, for $i=1,2$, $L_i=L_{e_1,\lambda_i e_2}$ for some $\lambda_i\in\omr[\delta r]$ with $\delta$ as in Claim 2.   By Lemma~\ref{lem:lineform}(5), $L_1=L_2$ if and only if $\lambda_1^{-1}\lambda_2\in \mathbb{F}_{q_0}^*$. 
Thus the number $\mathfrak{m}$ is equal to the number of choices of $\lambda$ in $\omr[\delta r]$   which are pairwise distinct modulo $\mathbb{F}_{q_0}^* = \omr[kr]$. If $n \geq 3$ then $\delta=1$, and the set $\{\omega^{ir}|0\leq i<k\}$ contains exactly one representative of each coset of $\omr[kr]$ in $\omr$. Hence in this case $\mathfrak{m}=k$ and the main statement of the theorem is proved in this case. We now consider the case $n=2$, so $\delta=2$.

If $k$ is even, then  $\omr[kr]\leq \omr[2r]=\omr[\delta r]<\omr$ and  there are $k/2$ cosets of $\mathbb{F}_{q_0}^* = \omr[kr]$ in $\omr[2r]$, giving exactly $\mathfrak{m}=k/2$ lines as stated in the theorem.

Finally assume that  $k$ is odd.
Here the choices are $\lambda=\omega^{2ir}$ for $0\leq i<k$. Note that these $k$ field elements are pairwise  distinct modulo $\omr[kr]$, for if $\omega^{2ir}=\omega^{2jr}$ modulo $\mathbb{F}_{q_0}^* = \omr[kr]$, with $0\leq i\leq j<k$ (so that $0\leq j-i<k$), then $\omega^{2(j-i)r}\in \omr[kr]$, and so $k$ divides $2(j-i)$. Since $k$ is odd, this implies that   $k$ divides $j-i$ and hence $i=j$.
Thus $\mathfrak{m}=k$ as stated in the theorem.

 This completes the proof of the main assertion of the theorem. It follows in particular that 
 $\mathcal{D}$ is a partial linear space if and only if $\mathfrak{m}=1$ which, in turn, holds if and only if either $n=2$ with $k\leq 2$, or $n\geq 3$ with $k=1$. 
 Note that the line-size is $q_0+1\geq 3$ by Lemma~\ref{lem:lineform}(1).
Moreover, $r>1$ so
 $\omr e_1$ and $\omr \omega e_1$ are distinct points of $\Omega$, which are  not contained in a common line by Lemma ~\ref{lem:lineform}(2).
Thus $\mathcal{D}$ is a proper  partial linear space.
\end{proof}

\subsection{Definitions and properties of  $\mathrm{DLSub}(q,q_0,r,j)$}

\begin{Def}\label{DLSdef}
    Using the notation of Definition \ref{con:subfield} for $\mathrm{LSub}({ 2},q,q_0,r)= (\Omega, \mathcal{L})$, we define, for each $j$, the set 
    $$
    \omega^j \mathcal{L}:=\{L_{u,v} \mid  (u,v)=(\omega^j e_1,e_2)^g, g \in \GL_2(q), \det(g) \in \omr[t] \}.
    $$
    Then, for $0<j< t$, define $\mathrm{DLSub}(q,q_0,r,j)$ to be the pair $(\Omega, \mathcal{L} \cup \omega^j \mathcal{ L})$.
\end{Def}

\begin{Remark}\label{rem:DLSub}
\phantom{1}
    \begin{enumerate}
        \item[(a)] Using the fact that ${\diag(\omega^j,1)}$ normalises the group $G_1=\{g\in\GL_n(q)\mid \det(g)\in\omr[t]\},$ we see that 
\begin{align*}
    \omega^j\mathcal{L} & = \{L_{u,v} \mid (u,v)=(\omega^j e_1,e_2)^g, g \in \GL_2(q), \det(g) \in \omr[t]\} \\ & = \{L_{u,v} \mid (u,v)=(e_1,e_2)^{{\diag(\omega^j,1)}g}, g \in \GL_2(q),  \det(g) \in \omr[t]\}\\
    &= \{L_{u,v} \mid (u,v)=(e_1,e_2)^{g\,{\diag(\omega^j,1)}}, g \in \GL_2(q),  \det(g) \in \omr[t]\}=\mathcal{L}^{\diag(\omega^j,1)}.
\end{align*} 
 Further, if $j\equiv j'\pmod{t}$, then ${\diag(\omega^{j-j'},1)}$ has determinant in $\omr[t]$, and as ${\diag(\omega^j,1)} = {\diag(\omega^{j-j'},1)}\cdot {\diag(\omega^{j'},1)}$, it follows that 
$$
 \omega^j\mathcal{L} = \mathcal{L}^{\diag(\omega^j,1)} = \mathcal{L}^{\diag(\omega^{j'},1)} =  \omega^{j'}\mathcal{L}.
$$
Thus in order to run through the distinct sets $ \omega^j\mathcal{L}$ we may assume that $0<j\leq t$. This also implies that $\omega^t\mathcal{L}=\omega^0\mathcal{L}=\mathcal{L}$,
and hence $\mathrm{DLSub}(q,q_0,r,t)=\mathrm{LSub}({2},q,q_0,r)$.
In the definition of $\mathrm{DLSub}(q,q_0,r,j)$, the condition $j<t$ is used to exclude this trivial case. 

\item[(b)] 
It is also possible that different values of $j$ in the interval $(0,t)$ give rise to isomorphic partial linear spaces. 
For example, from part (a) it follows that $\diag(\omega^{-j},1)$ maps the line set $\mathcal{L}\cup \omega^j\mathcal{L}$ of $\mathrm{DLSub}(q,q_0,r,j)$ to $\omega^{-j}\mathcal{L}\cup \mathcal{L}$, which is the line set of $\mathrm{DLSub}(q,q_0,r,t-j)$, and hence  $\mathrm{DLSub}(q,q_0,r,j)$ and $\mathrm{DLSub}(q,q_0,r,t-j)$ are isomorphic.

\item[(c)] Finally we note that, since $\widehat{\mathcal{L}}=\mathcal{L}$ when $n\geq 3$ (see Lemma~\ref{lf7}(2)), this `doubled' incidence structure is only defined in dimension $n=2$.
    \end{enumerate}   
\end{Remark}

\begin{Lem}\label{lem:DLS}
Using the notation of Definitions~$\ref{con:subfield}$ and~$\ref{DLSdef}$, 
    $\mathrm{DLSub}(q,q_0,r,j)$ is a partial linear space if and only if 
 $k=2$,  $r$ is even, and $j \ne r_2$. Moreover, if $\mathrm{DLSub}(q,q_0,r,j)$ is a partial linear space, then  $\mathcal{L} \cap \omega^j\mathcal{L}=\emptyset$, and in this case $\mathrm{DLSub}(q,q_0,r,j)$ is a proper partial linear space.  
\end{Lem}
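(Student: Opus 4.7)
My plan is to fix the reference pair $\omr e_1,\omr e_2$ (which lies on $L_{e_1,e_2}\in\mathcal{L}$), enumerate all lines of $\widehat{\mathcal{L}}$ through it, and determine the $G_1$-orbit of each. First, by Theorem~\ref{th:LSub} with $n=2$, the set $\mathcal{L}$ is a partial linear space iff $k\leq 2$: if $k\geq 3$ then $\mathcal{L}$ already contains two lines through some pair of points, so $\mathcal{L}\cup\omega^j\mathcal{L}$ cannot be a partial linear space; if $k=1$ then Lemma~\ref{lem:lineform}(7) gives $t=1$, leaving no admissible $j$. Hence we may and must assume $k=2$, in which case Lemma~\ref{lem:lineform}(7) yields $t=2r_2$.

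I would next count the lines of $\widehat{\mathcal{L}}$ through $\omr e_1,\omr e_2$. By Lemma~\ref{lem:lineform}(1),(4),(5) they are precisely the $L_{e_1,\lambda e_2}$ with $\lambda\in\omr$, two such lines coinciding exactly when the corresponding $\lambda$'s are equal modulo $\mathbb{F}_{q_0}^*=\omr[2r]$; since $|\omr:\omr[2r]|=k=2$, there are exactly two, namely $L_{e_1,e_2}$ and $L_{e_1,\omega^r e_2}$. To identify the $G_1$-orbit of the latter, I would apply Lemma~\ref{lem:lineform}(3) to write $L_{e_1,\omega^r e_2}=L_{e_1,e_2}^{\diag(1,\omega^r)}$ and Remark~\ref{rem:DLSub}(a) to recognise $\omega^i\mathcal{L}$ as the $G_1$-orbit of $L_{e_1,e_2}^{\diag(\omega^i,1)}$. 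The main technical step is to show that when $k=2$ the inclusion $X=MY\leq G_1$ holds, because $\det M=\mathbb{F}_{q_0}^*=\omr[2r]$ and $\det Y=\omr[2r]$ are both contained in $\omr[t]$ by $t\mid 2r$ (Lemma~\ref{lem:lineform}(6)); consequently $G_1$-orbits on $\widehat{\mathcal{L}}$ are distinguished by $\det$ modulo $\omr[t]$. This gives $L_{e_1,\omega^r e_2}\in\omega^i\mathcal{L}$ iff $i\equiv r\pmod t$, and writing $r=r_2m$ with $m$ odd yields $r\equiv r_2\pmod{2r_2}$, so the unique solution in $\{1,\ldots,t-1\}$ is $i=r_2$.

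The lemma now drops out by bookkeeping. If $j=r_2$ then both $L_{e_1,e_2}$ and $L_{e_1,\omega^r e_2}$ lie in $\mathcal{L}\cup\omega^j\mathcal{L}$, killing the partial linear space property; this also handles the case of odd $r$, where $r_2=1$ and $j=1$ is the only admissible value, forcing the conclusion that $r$ must be even. Conversely, for $j\neq r_2$ the pair $\omr e_1,\omr e_2$ lies on a unique line of $\mathcal{L}\cup\omega^j\mathcal{L}$; transitivity of $G_1$ on $\mathcal{L}$ together with the $2$-transitivity of $X$ on $L_{e_1,e_2}$ (Lemma~\ref{lf7}(1)) propagates this to every pair collinear in $\mathcal{L}$, while pairs not collinear in $\mathcal{L}$ lie on at most one line of $\omega^j\mathcal{L}$, which inherits the partial linear space property from $\mathcal{L}$ via the translation $\diag(\omega^j,1)$. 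Finally, $\mathcal{L}\cap\omega^j\mathcal{L}=\emptyset$ since $\diag(\omega^j,1)\notin G_1$ for $0<j<t$, and properness follows from $q_0+1\geq 3$ together with the non-collinear pairs $\omr u,\omr\omega u$ produced by Lemma~\ref{lem:lineform}(2). The main obstacle is the orbit identification in paragraph~2, which rests on the comparison $X\leq G_1$ available only when $k=2$.
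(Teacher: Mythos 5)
Your proposal is correct, and it reorganises the argument in a way that differs noticeably from the paper's proof. The paper handles the two directions of the equivalence separately: for necessity it exhibits an explicit witness, showing that $L_{\omega^r e_1,e_2}=(L_{\omega^j e_1,e_2})^{\diag(\omega^{r-j},1)}$ lies in $\omega^j\mathcal{L}$ when $j=r_2$ (or when $r$ is odd) via a divisibility computation on $r-j$; for sufficiency it runs a contradiction argument, supposing a line of $\omega^j\mathcal{L}$ passes through $\omr e_1,\omr e_2$, adjusting by an element of $\GL_2(q_0)$ using Lemma~\ref{lf7}, and deducing $\omr\cap\omega^j\omr[t]\ne\emptyset$, hence $r_2\mid j$. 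You instead enumerate once and for all the two lines of $\widehat{\mathcal{L}}$ through the reference pair (there are exactly two since $[\omr:\omr[kr]]=k=2$) and sort them into $G_1$-orbits by the determinant-mod-$\omr[t]$ criterion, which is legitimate precisely because $X=MY\leq G_1$ when $k=2$ (as $\det(X)\subseteq\omr[2r]\subseteq\omr[t]$ and $G_1\trianglelefteq\GL_2(q)$) -- you correctly flag this as the load-bearing step. Both directions of the ``iff'' then fall out of the single computation $r\equiv r_2\pmod{2r_2}$, and the propagation to arbitrary collinear pairs and the disjointness of the two orbits are handled the same way as in the paper (Lemma~\ref{claim1} plus Theorem~\ref{th:LSub} applied to the translate $\omega^j\mathcal{L}$). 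The only cosmetic imprecision is attributing the classification of lines through $\{\omr e_1,\omr e_2\}$ solely to Lemma~\ref{lem:lineform}(1),(4),(5); strictly one also invokes the transitivity and $2$-transitivity statements of Lemma~\ref{lf7}, exactly as in Claim~2 of the proof of Theorem~\ref{th:LSub}. What your route buys is a unified and arguably more transparent picture -- the second line through the pair always lives in $\omega^{r_2}\mathcal{L}$, so the answer is visibly ``$j\ne r_2$'' -- at the cost of having to justify the orbit-by-determinant dictionary up front; the paper's route avoids stating that dictionary explicitly but pays for it with two separate matrix constructions.
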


\begin{proof}   
Since $0<j<t$, the integer $t\geq 2$ and hence $k\neq 1$ by Lemma~\ref{lem:lineform}(6).
  Also, for $\mathrm{DLSub}(q,q_0,r,j)$ to be a partial linear space it is necessary for $\mathrm{LSub}(2,q,q_0,r)$ to be a partial linear space, and by Theorem~\ref{th:LSub}, the latter holds if and only if the number 
  $k=2$ (since we cannot have $k=1$).
   Assume therefore that $k=2$. Then $q$ is odd, and by Lemma \ref{lem:lineform}(7), $t=2 \cdot r_2$, where $r_2$ is the $2$-part of $r$. 

 Suppose first that $r$ is odd, so $r_2=1$, $t=2$ and hence $j=1.$ Let $b=\diag(\omega^{r-1},1)$. Then $\det(b)=\omega^{r-1} \in \omr[2]=\omr[t]$, so that 
 $(L_{\omega e_1, e_2})^b=L_{\omega^r e_1, e_2}\in \omega \mathcal{L}$ (by Definition~\ref{DLSdef}). However, by Lemma \ref{lem:lineform}(4,5), $L_{\omega^r e_1, e_2}\ne L_{e_1, e_2}$ since $\omega^r\notin \mathbb{F}_{q_0}^*=\omr[2r]$, and the pair $\{\omr e_1, \omr e_2 \} \subseteq L_{e_1,e_2} \cap L_{\omega^r e_1, e_2}$. Thus  $\mathrm{DLSub}(q,q_0,r,j)$ is not a partial linear space if $r$ is odd. 
 
 Suppose next that $r$ is even with $j=r_2$. Then $j=t/2$ since $t=2\cdot r_2$. 
 In this case let $b= \diag(\omega^{r-j},1)$, so that $L_{\omega^r e_1 ,e_2}=(L_{\omega^{j}e_1,e_2})^b$.  We claim that $\det(b)=\omega^{r-j}\in \omr[t]$. Indeed,  $r-j=r-r_2=r_2(r'-1)$ where $r'$  is odd. If $r'=1$ then $r-j=0$ and $\det(b)=1\in \omr[t]$ as required. So assume that $r'>1$. Then $r-j=r_2(r'-1)=t\cdot (r'-1)/2$ and $(r'-1)/2$ is a positive integer; hence $r-j$ is a multiple of $t$, and again $\det(b)=\omega^{r-j}\in \omr[t]$, proving the claim.  Thus by Definition~\ref{DLSdef}, $L_{\omega^r e_1 ,e_2}=(L_{\omega^je_1,e_2})^b\in \omega^j\mathcal{L}$. Now the same argument as for $r$ odd shows that $L_{\omega^r e_1, e_2}\ne L_{e_1, e_2}$ and both lines contain $\{\omr e_1, \omr e_2 \}$, so $\mathrm{DLSub}(q,q_0,r,j)$ is not a partial linear space.

 Finally suppose that  $r$ is even and $j \ne r_2$. Let us show that $\mathrm{DLSub}(q,q_0,r,j)$ is a partial linear space.
  Let $\mathcal{D}_1=(\Omega, \mathcal{L})$ and  $\mathcal{D}_2=(\Omega, \omega^j \mathcal{L})$ so, by Remark~\ref{rem:DLSub}(a), the matrix  
     $\diag(\omega^j,1)$ induces an isomorphism from $\mathcal{D}_1$ to $\mathcal{D}_2$. 
It then  follows from Theorem \ref{th:LSub} that both  $\mathcal{D}_1$ and $\mathcal{D}_2$ are proper partial linear spaces. 

  Let $\omr w_1,\omr w_2$ be two collinear points of $\mathrm{DLSub}(q,q_0,r,j)$. 
  If no line of $\mathcal{L}$ contains both of these two points, then there is a line of $\omega^j\mathcal{L}$ containing them, and this line is unique since $\mathcal{D}_2$ is a partial linear space.
  Thus assume that $\omr w_1,\omr w_2\in L$ for some line $L\in\mathcal{L}$ (which is unique since $\mathcal{D}_1$ is a partial linear space). We need to show that $\omr w_1,\omr w_2$ do not lie in any line of $\omega^j\mathcal{L}$.  By Claim 1 of the proof of Theorem \ref{th:LSub}, we may assume that $L=L_{e_1,e_2}$ and $\omr w_1=\omr e_1,\omr w_2=\omr e_2$.
  
  Suppose that $\omr e_1,\omr e_2$ are contained in a line $L_{u,v}\in\omega^j\mathcal{L}$.
Then $(u,v)=(\omega^je_1,e_2)^g$ for some $g\in \GL_2(q)$ with $\det(g)\in\omr[t]$, and so $(\omega^je_1,e_2)^g=(e_1,e_2)^h$, where $h=\diag(\omega^j,1)g$, and $L_{u,v}=(L_{e_1,e_2})^h$.
  Thus $\omr (e_1)^{h^{-1}},\omr (e_2)^{h^{-1}}\in L_{e_1,e_2}=L$. By Lemma \ref{lf7}, there exists $g_0\in \GL_2(q_0)$ fixing $L$ and mapping $\omr e_i$ to $\omr e_i^{h^{-1}}$, for $i=1,2$.
  Therefore $L^{g_0h}=L^h=L_{u,v}$ and $\omr e_i^{g_0h}=(\omr e_i^{h^{-1}})^h=\omr e_i$, for $i=1,2$.
  This means that $e_i^{g_0h}=\mu_i e_i$ for some $\mu_i\in \omr$,for $i=1,2$.
  In particular $\mu_1\mu_2=\det(g_0h)=\det(g_0)\det(\diag(\omega^j,1)) \det(g) =\omega^j\det(g_0)\det(g)$. Recall that $\det(g_0)\in \mathbb{F}_{q_0}^*=\omr[2r]\leq \omr[t]$ while $\det(g)\in\omr[t]$. Thus $\mu_1\mu_2\in \omr\cap \omega^j\omr[t]$, so in particular $\omr\cap \omega^j\omr[t]\ne\emptyset$.  Since $r_2$ divides both $r$ and $t$, this condition implies that $r_2$ divides $j$. However as $0<j<t=2\cdot r_2$, we must have $j=r_2$, which is a contradiction. 
  
  Thus $\omr e_1,\omr e_2$ are not contained in any line of $\omega^j\mathcal{L}$, and hence $L$ is the unique line of $\mathcal{L}\cup\omega^j\mathcal{L}$ containing both of these points. In particular the line sets $ \mathcal{L}$ and $\omega^j \mathcal{L}$ are distinct orbits of the subgroup consisting of all $g\in \GL_2(q)$ with $\det(g)\in\omr[t]$, so $ \mathcal{L} \cap \omega^j \mathcal{L} = \emptyset.$
  Moreover we have shown that $\mathrm{DLSub}(q,q_0,r,j)$ is a partial linear space in this case, and therefore the first two assertions of the lemma are proved. 
  
  Finally we show that $\mathrm{DLSub}(q,q_0,r,j)$ is a proper partial linear space.   
  Note that  $|L|=q_0+1\geq 3$ by Lemma~\ref{lem:lineform}(1) (and by definition lines in $\omega^j\mathcal{L}$ have the same size as lines in $\mathcal{L}$). Since $r\geq r_2>1$,
 $\omr e_2$ and $\omr \omega e_2$ are distinct points of $\Omega$. They  are  not contained in a line of $\mathcal{L}$ by Lemma ~\ref{lem:lineform}(2). Suppose they are contained in a line of $\omega^j\mathcal{L}=\mathcal{L}^{\diag(\omega^j,1)}$. Then the two points 
 $$
 (\omr e_2)^{\diag(\omega^{-j},1)}=\omr e_2\quad \text{and}\quad (\omr \omega e_2)^{\diag(\omega^{-j},1)}=\omr \omega e_2
 $$ 
 are contained in a common line of $\mathcal{L}$, contradicting Lemma ~\ref{lem:lineform}(2).
Thus $\mathrm{DLSub}(q,q_0,r,j)$ is a proper  partial linear space. 
\end{proof}

\section{Partial linear spaces arising from $\GammaU_3(q)$.}\label{sec:spacesU}

Now we consider two families of partial linear spaces arising from an analogous action of $\GammaU_3(q)$.  We briefly recall the definitions and structures of these unitary groups and spaces; for more details see \cite[\S 2.3]{KL}.
Let $V={(\mathbb{F}_{q^2})}^3$, the space of $3$-dimensional row vectors and the natural module for $\GU_3(q)$, and let $(\cdot, \cdot)$ be the corresponding non-degenerate unitary form on $V$, so $\GU_3(q)=\{g \in \GL_3(q^2) \mid (ug,vg)=(u,v) \text{ for all } u,v \in V\}.$ Then $V$ has a basis  $\{e, x, f\}$ such that
\begin{equation} \label{unbasis}
(e,e)=(f,f)=(x,e)=(x,f)=0, \text{ } (x,x)=(e,f)=1,    
\end{equation}
and we write the elements of  $\GU_3(q)$ relative to this basis.
In particular, the Gram matrix of $(\cdot, \cdot)$ in this basis is 
\begin{equation}\label{e:P}
P=\begin{pmatrix}
0 & 0&1 \\0&1&0\\1&0&0
\end{pmatrix}.
\end{equation}    
 
Hence, by \cite[Lemma 2.1.8]{KL}, we may identify $\GU_3(q)$ with the group of matrices
$$
\GU_3(q)=\{ A \in \GL_3(q^2) \mid AP\bar{A}^T=P\} 
$$
where ${A}^T$ is the transpose matrix of $A$ and $\bar{A}= (a_{ij}^q)$ for $A=(a_{ij}).$ Then $\SU_3(q)$ is the group $\{g \in \GU_3(q) \mid \det(g)=1\}= \GU_3(q)\cap \SL_3(q^2).$

Let $Z$ be the subgroup of all scalar matrices of $\GL_3(q^2).$
We define $\GammaU_3(q)$ as the group of semisimilarities of the unitary space $V$, that is to say, $g \in \GammaU_3(q)$ if and only if $g \in \GammaL_3(q^2)$ and there exists $\lambda \in \mathbb{F}_{q^2}^*$ and $\alpha \in \Aut (\mathbb{F}_{q^2})$ such that 
$$
(v g, u g)= \lambda (v,u)^{\alpha} \text{ for all } v,u \in V.
$$
By \cite[\S 2.3]{KL}, $\GammaU_3(q) = (Z \, \GU_3(q)) \rtimes \langle \phi \rangle $ where   $\phi$ is $\phi_{\{e,x,f\}}$ as defined in \eqref{def:phi}.

\medskip

The following lemma on the trace of a finite field extension is useful, when dealing with unitary groups.  Here $\omega$ is  a primitive element  in  $\mathbb{F}_{q^2}$.

\begin{Lem}\label{l:trace}
Define the trace map $\mathrm{Tr}:\mathbb{F}_{q^2} \to \mathbb{F}_q$ by
$$
\mathrm{Tr}: \delta \mapsto \delta+\delta^q.
$$
\begin{enumerate}[label=\normalfont (\arabic*)]
    \item $\mathrm{Tr}$ is a linear transformation from $\mathbb{F}_{q^2}$ onto $\mathbb{F}_q$ where both $\mathbb{F}_{q^2}$ and $\mathbb{F}_q$ are viewed as vector spaces over $\mathbb{F}_q.$
    \item $|\mathrm{ker}(\mathrm{Tr})|=q$. In particular,
    $$\mathrm{ker}(\mathrm{Tr}) =\begin{cases}
        \omega^{(q+1)/2}\mathbb{F}_q & \text{ if $q$ is odd};\\
        \mathbb{F}_q & \text{ if $q$ is even}.
    \end{cases}$$
    \item For each $\delta \in \mathbb{F}_q$, the equation $\mathrm{Tr}(y)=\delta$ has exactly $q$ solutions.
\end{enumerate}
\end{Lem}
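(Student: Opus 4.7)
The proof will be a fairly direct computation with the Frobenius map, organised so that part (2) and part (3) essentially fall out of part (1).

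For part (1), I would first check that $\mathrm{Tr}$ does indeed land in $\mathbb{F}_q$ by raising to the $q$-th power: since $\delta \in \mathbb{F}_{q^2}$ satisfies $\delta^{q^2}=\delta$, we have
\[
\mathrm{Tr}(\delta)^q = (\delta+\delta^q)^q = \delta^q + \delta^{q^2} = \delta^q+\delta = \mathrm{Tr}(\delta),
\]
so $\mathrm{Tr}(\delta) \in \mathbb{F}_q$. Additivity is immediate, and for $c\in \mathbb{F}_q$ we have $c^q=c$, giving $\mathrm{Tr}(c\delta)=c\delta + c^q\delta^q = c\,\mathrm{Tr}(\delta)$; hence $\mathrm{Tr}$ is $\mathbb{F}_q$-linear. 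Surjectivity will follow from the count in part (2).

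For part (2), the kernel consists of the roots in $\mathbb{F}_{q^2}$ of the polynomial $X^q + X$, which has degree $q$; hence $|\ker(\mathrm{Tr})|\leq q$. By rank--nullity applied to the linear map $\mathrm{Tr}:\mathbb{F}_{q^2}\to \mathbb{F}_q$, the image has size at least $q^2/q = q$, forcing it to be all of $\mathbb{F}_q$ and forcing equality $|\ker(\mathrm{Tr})|=q$. This simultaneously completes the surjectivity assertion of part (1). To identify the kernel explicitly, note that a nonzero $\delta = \omega^k$ lies in the kernel iff $\delta^q = -\delta$, i.e., $\omega^{k(q-1)} = -1$. When $q$ is even, $-1=1$, so the condition becomes $\delta^{q-1}=1$, giving $\ker(\mathrm{Tr})=\mathbb{F}_q$. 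When $q$ is odd, $-1 = \omega^{(q^2-1)/2}$, and the congruence $k(q-1)\equiv (q^2-1)/2 \pmod{q^2-1}$ simplifies to $k\equiv (q+1)/2 \pmod{q+1}$. The nonzero solutions are therefore $\omega^{(q+1)/2}\cdot \langle \omega^{q+1}\rangle = \omega^{(q+1)/2}\,\mathbb{F}_q^*$, and adjoining $0$ gives $\ker(\mathrm{Tr})=\omega^{(q+1)/2}\,\mathbb{F}_q$.

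Part (3) is then immediate: since $\mathrm{Tr}$ is surjective onto $\mathbb{F}_q$ with kernel of size $q$, every fibre $\mathrm{Tr}^{-1}(\delta)$ is a coset of $\ker(\mathrm{Tr})$ and hence has exactly $q$ elements.

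There is no real obstacle here; the only place where one must be careful is the explicit description of $\ker(\mathrm{Tr})$ in odd characteristic, where one has to recognise that $\langle \omega^{q+1}\rangle = \mathbb{F}_q^*$ in order to rewrite the kernel in the stated coset form $\omega^{(q+1)/2}\,\mathbb{F}_q$.
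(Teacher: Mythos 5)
Your proof is correct and is essentially the argument the paper has in mind: the paper simply cites \cite[Theorem 2.23(iii)]{Ffields} for part (1) and leaves parts (2) and (3) as ``direct calculation'', which is exactly the computation you carry out (Frobenius check for linearity, the degree-$q$ polynomial $X^q+X$ plus rank--nullity for the kernel size, and the congruence $k(q-1)\equiv (q^2-1)/2 \pmod{q^2-1}$ to identify the kernel as a coset of $\mathbb{F}_q$). All steps check out, including the reduction $k\equiv (q+1)/2 \pmod{q+1}$ and the identification $\langle\omega^{q+1}\rangle=\mathbb{F}_q^*$.
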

\begin{proof}
    Part $(1)$ is \cite[Theorem 2.23$(iii)$]{Ffields}. Parts $(2)$ and $(3)$ follow from part $(1)$ and direct calculation.
\end{proof}

\begin{construction}\label{con:psu}
Let $\mathcal{T}$ be the set of all $v \in V^*$ such that $(v,v)=0.$ Let $\binom{\mathcal{T}}{1}$ denote the set of totally isotropic 1-subspaces of $V$, so $\langle v \rangle \in \binom{\mathcal{T}}{1}$ if and only if $v \in \mathcal{T}$.
Let  $q =p^a$ for a prime $p$ and $a\geq 1$ such that $q \geq 3$, and let $\omega$ be a primitive element of $\mathbb{F}_{q^2}$. Let $r>1$ be an integer dividing $q^2-1$. Let 
$$\Omega=\{\omr v \mid v \in \mathcal{T} \}.$$  Then the natural induced action of $\GammaU_3(q)$ on $\Omega$ given by 
\begin{equation}\label{e:actionU}
    g: \omr v\mapsto \omr (v)g, \ \mbox{for $g\in\GammaU_3(q)$ and
    $ v\in \mathcal{T}$}
\end{equation}
induces a permutation group $\ff=\GammaU_3(q)/Y$ on $\Omega$, where $Y=\langle \omega^r I\rangle$. The group $\ff$ stabilises the partition $\Sigma$ of $\Omega$ defined by 
\[
\Sigma=\{\sigma(U) \mid U\in  \tbinom{\mathcal{T}}{1}\},\ \text{ where for }   U=\langle u \rangle, \  \sigma(U)=\{\langle \omega^r\rangle \omega^i u\mid 0\leq i<r\}.
\]
\end{construction}

We next show that the group actions in Construction~\ref{con:psu} are semiprimitive. 

 \begin{Lem}\label{l:semiU}
   Let $q,r, Y, \Omega, \overline{G}$ be as in Construction~$\ref{con:psu}$,
   and suppose that $G$ is a group satisfying $Y\,\SU_3(q)/Y\leq G \leq \ff$. Then the following hold. 
   \begin{enumerate}[label=\normalfont (\arabic*)]
       \item $G$ is semiprimitive on $\Omega$;
       \item $G$ is innately transitive on $\Omega$ if and only if $r$ divides $(q^2-1)/(3,q+1)$; and in this case $Y\,\SU_3(q)/Y\cong \PSU_3(q)$ is the unique plinth of $G$; 
       \item $G$ is quasiprimitive on $\Omega$ if and only if $r$ divides $(q^2-1)/(3,q+1)$ and $G\cap (Z/Y)=1$;
       \item $ G$ has rank $3$ on $\Omega$ if and only if $ G \geq Z \,\SU_3(q)/Y$,  $r$ divides $q-1$,  $r$ is an odd primitive prime divisor of $p^{r-1}-1$  and $(r-1, \frac{2a}{|G:G\cap\GL_n(q^2)|})=1$. 
         Moreover, if $G$ has rank 3, then $r-1$ divides $a$. 
   \end{enumerate}
 \end{Lem}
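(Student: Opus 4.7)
The proof will closely parallel that of Lemma \ref{l:semi}, substituting the unitary objects for the linear ones at each step. The four key structural facts I will need are: (i) $\SU_3(q)$ acts transitively on $\mathcal{T}$; (ii) the centre $Z\cap\SU_3(q)$ of $\SU_3(q)$ has order $(3,q+1)$; (iii) $\PSU_3(q)$ is a non-Abelian simple group for $q\geq 3$; and (iv) for $H\leq \GammaU_3(q)$ containing $Y\,\SU_3(q)$, every normal subgroup $N\trianglelefteq H$ satisfies either $N\leq Z$ or $N\geq\SU_3(q)$. Facts (i)--(iii) are standard, and (iv) follows from the usual analysis of normal subgroups of almost simple groups with socle $\PSU_3(q)$. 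Lift $G$ to the unique overgroup $H\leq\GammaU_3(q)$ with $H/Y=G$ and $Y\leq H$; then every normal subgroup $K\trianglelefteq G$ satisfies either $K\leq Z/Y$ or $K\geq Y\,\SU_3(q)/Y$.

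For part (1), note that $Z/Y$ acts by scalar multiplication on the $\omr$-cosets, hence $Z/Y$ is semiregular on $\Omega$; consequently every normal subgroup of $G$ inside $Z/Y$ is semiregular. By (i), $Y\,\SU_3(q)/Y$ is transitive on $\Omega$, so every normal subgroup containing it is transitive. Hence $G$ is semiprimitive. For part (2), the only possible transitive minimal normal subgroup is $K=Y\,\SU_3(q)/Y\cong\SU_3(q)/(Y\cap\SU_3(q))$, and by (iii) this is minimal normal iff $Y\cap\SU_3(q)=Z\cap\SU_3(q)$, which in the cyclic group $Z$ amounts to $(3,q+1)\mid|Y|=(q^2-1)/r$, i.e.\ $r\mid (q^2-1)/(3,q+1)$. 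When this holds, $K\cong\PSU_3(q)$ is the unique transitive minimal normal subgroup since $Z/Y$ is intransitive, giving the plinth statement. For part (3), $G$ is quasiprimitive iff it is innately transitive and $Z/Y$ meets $G$ trivially, exactly as in Lemma \ref{l:semi}.

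For part (4), I will appeal directly to the classification input already gathered in Table \ref{t:qpinsprk3} and the papers \cite{R3PIT,semiprR3}. Specifically, the third row of Table \ref{t:qpinsprk3} shows that an imprimitive rank $3$ group with socle $\PSU_3(q)$ on $\Omega$ is innately transitive (only type ``$\mathrm{it}$'' occurs), forcing $r\mid q-1$ (since one needs $r$ odd, $r\mid (q^2-1)/(3,q+1)$ and $r\mid p^{r-1}-1$ with $o_r(p)=r-1$; as $r$ divides both $q^2-1=p^{2a}-1$ and $p^{r-1}-1$ it divides $p^{(2a,r-1)}-1$, whence $r-1\mid 2a$, and combining with oddness of $r$ and the divisibility $r\mid q^2-1$ one deduces $r\mid q-1$), together with $G\geq Z\,\SU_3(q)/Y$ (because the type is not $\mathrm{qp}$, so $Z/Y\leq G$) and the coprimality condition on the index of $G\cap\GL_3(q^2)/Y$ in $G$, taken verbatim from \cite[Theorem C and Table 3]{R3PIT} (or the corresponding semiprimitive extension in \cite{semiprR3}). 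Finally, the ``moreover'' clause is purely arithmetic: $r$ is a primitive prime divisor of $p^{r-1}-1$ and divides $q-1=p^a-1$, so $r\mid p^{(a,r-1)}-1$, which forces $(a,r-1)=r-1$, i.e.\ $r-1\mid a$.

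The main obstacle will be paraphrasing the rank-$3$ criterion of part (4) so that it matches exactly the statement of the lemma while still being a direct consequence of the cited classification in \cite{DGLPP,R3PIT,semiprR3}; in particular I must be careful that the index $|G:G\cap\GL_3(q^2)/Y|$ appearing in the coprimality condition corresponds correctly to the quantity $2a/j$ recorded in the third row of Table \ref{t:qpinsprk3}. The remaining steps are essentially bookkeeping modelled on the proof of Lemma \ref{l:semi}.
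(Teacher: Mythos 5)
Your proposal follows essentially the same route as the paper: parts (1)--(3) use the identical reduction via the dichotomy $N\leq Z$ or $N\geq \SU_3(q)$ for normal subgroups of the lift $H$, together with semiregularity of $Z/Y$ and transitivity of $\SU_3(q)$ on $\mathcal{T}$, and part (4) ultimately rests on the same classification results. The paper's part (4) is slightly more explicit about the logical chain: it first cites \cite[Lemmas 5.2 and 5.3]{semiprR3} to exclude groups that are not innately transitive, then \cite[Proposition 4.9]{DGLPP} to exclude quasiprimitive groups, and only then reads off the conditions from \cite[Theorem C and Table 3]{R3PIT}; you instead lean on Table~\ref{t:qpinsprk3}, which is itself a summary distilled from those same sources, so the substance is the same.

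There is, however, one genuine error in your write-up: the parenthetical claim that $r\mid q-1$ is \emph{forced} by the remaining conditions is false. From $r$ odd, $o_r(p)=r-1$, and $r\mid q^2-1=p^{2a}-1$ one only obtains $(r-1)\mid 2a$, and it can perfectly well happen that $r\mid q+1$ rather than $r\mid q-1$: take $p=2$, $a=2$, $r=5$, so $o_5(2)=4=r-1$, $4\mid 2a=4$, and $5\mid q^2-1=15$ with $(3,q+1)=1$, yet $5\mid q+1=5$ and $5\nmid q-1=3$. Thus $r\mid q-1$ is not a consequence of the other hypotheses; it is one of the conditions recorded in \cite[Theorem C and Table 3]{R3PIT} and must simply be quoted, as the paper does. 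Since you also quote that classification verbatim, the lemma still stands, but the spurious derivation should be deleted. The ``moreover'' clause is then proved exactly as in the paper: $r\mid (p^a-1,p^{r-1}-1)=p^{(a,r-1)}-1$ and primitivity of $r$ forces $(a,r-1)=r-1$, so $r-1$ divides $a$.
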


\begin{proof}
The subgroup $Y=\langle \omega^r I\rangle$ has index $r$ in the centre $Z:= \langle \omega I\rangle$ of $\GL_3(q^2)$, $|Z\cap\GU_3(q)|=q+1$, and so $Z\cap\SU_3(q)$ has order $(3,q+1)$. Now $Y\,\SU_3(q)/Y\leq G \leq \ff$ and there is a unique subgroup $H$ of $\GammaU_3(q)$ such that $Y\leq H$ and $H/Y=G$. Further, it is well known and easy to check that every normal subgroup $N$ of $H$ satisfies either $N\leq Z$ or $N\geq \SU_3(q)$, and so every normal subgroup $K$ of $G$ satisfies either $K\leq Z/Y$ or $K\geq Y\,\SU_3(q)/Y$.  Since $Z/Y$ is semiregular on $\Omega$, it follows that each normal subgroup $K\leq Z/Y$ is semiregular on $\Omega$. Also since $\SU_3(q)$ is transitive on $\mathcal{T}$ it follows that $Y\,\SU_3(q)/Y$ is transitive on $\Omega$ and hence each normal subgroup $K\geq Y\,\SU_3(q)/Y$ is transitive on $\Omega$. Thus $G$ is semiprimitive on $\Omega$, proving part $(1)$.

Now $G$ is innately transitive if and only if it has a transitive minimal normal subgroup $K$. Since $Z/Y$ is not transitive on $\Omega$, the only possibility for such a transitive minimal normal subgroup is $K=Y\,\SU_3(q)/Y\cong \SU_3(q)/(Y\cap\SU_3(q))$. Note that $\SU_3(q)/(Z\cap\SU_3(q))=\PSU_3(q)$ (which is a non-Abelian simple group since $q\geq3$), and that $Y\cap\SU_3(q) \leq Z\cap\SU_3(q)$. Thus $K=Y\,\SU_3(q)/Y$ is a minimal normal subgroup if and only if $Y\cap\SU_3(q) = Z\cap\SU_3(q)$, and this holds if and only if $Y$ contains the unique subgroup $Z\cap\SU_3(q)$ of $Z$ of order $(3,q+1)$. In turn, this holds if and only if $|Y|=(q^2-1)/r$ is divisible by $(3,q+1)$, that is to say, $r$ divides $(q^2-1)/(3,q+1)$. 
Also in this case $K=Y\,\SU_3(q)/Y\cong\PSU_3(q)$  is the unique transitive minimal normal subgroup, that is, the unique plinth  of $G$,
and part (2) is proved.

The group $G$ is quasiprimitive if and only if it has no nontrivial intransitive normal subgroups, and this is the case if and only if $G$ is innately transitive, and the subgroup $Z/Y$ intersects $G$ trivially, proving part $(3)$. 

Finally we pull together various results to determine when $G$ can be rank $3$. Firstly, by  \cite[Lemmas 5.2 and 5.3]{semiprR3}, if  $G$ is not innately transitive, then it does not have rank $3$. If $G$ is quasiprimitive, then it does not have rank $3$ by \cite[Proposition 4.9]{DGLPP}. Hence, to have rank $3$, $G$ must be properly innately transitive. Finally, by  \cite[Theorem C and Table 3]{R3PIT}, $G$ has rank $3$ if and only if the conditions of $(4)$ hold. 

 The conditions in part $(4)$ imply that $r-1$ divides $a$. Indeed,  since $r$ also divides $q-1=p^a-1$ the integer $r$ divides $(p^a-1,p^{r-1}-1)=p^{(a,r-1)}-1$, and by the definition of a primitive prime divisor it follows that $(a,r-1)=r-1$, that is, $r-1$ divides $a$.  This completes the proof of part $(4).$ 
\end{proof}

The following lemma which identifies the stabiliser of a nondegenerate $2$-subspace will be useful in our analyses.

\begin{Lem}\label{l:2substab}
    Let  $U$ be the subspace $\langle e,f\rangle$   and let $g\in\GL_3(q^2)$. Then $g\in \left(Z\,\SU_3(q)\right)_{U}$ if and only if $g$ has the form given in \eqref{eq:2substab} and conditions (a) and (b) both hold:
    \begin{enumerate}
    \item[(a)]  \begin{equation}\label{eq:2substab}
g= \begin{pmatrix}
    \nu_{11} & 0 & \nu_{12} \\
    0 & \delta & 0 \\
    \nu_{21} & 0 & \nu_{22}
\end{pmatrix} \textrm{ for some }  \delta \in \mathbb{F}_{q^2}^*, \nu_{ij}\in \mathbb{F}_{q^2} \textrm{ with }\det( \nu_{ij}) \ne 0, 
\end{equation}
such that 
    $
    \delta^{q+1} 
    = \nu_{11}\nu_{22}^q+ \nu_{12}\nu_{21}^q
    $ and $\mathrm{Tr}(\nu_{i1}\nu_{i2}^q)=0$ for $i=1,2;$
    \item[(b)] there exists $\lambda\in\mathbb{F}_{q^2}^*$ such that $\delta^{q+1} = \lambda^{q+1}$ and $ \delta \cdot \det( \nu_{ij})= \lambda^3$.
    \end{enumerate}
    
    Moreover in this case
       $g=(\lambda I)\cdot g_1$ with $g_1\in \SU_3(q)_{U}$.
 \end{Lem}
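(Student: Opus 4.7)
The plan is to decompose the problem via the observation that $U = \langle e, f \rangle$ is non-degenerate with orthogonal complement $U^\perp = \langle x \rangle$, and any isometry stabilising $U$ must also stabilise $U^\perp$. This forces a block structure, and the unitary and determinant conditions then translate directly into the scalar equations in (a) and (b).

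For the forward direction I would take $g \in (Z \, \SU_3(q))_U$, write $g = \lambda I \cdot g_1$ with $\lambda \in \mathbb{F}_{q^2}^*$ and $g_1 \in \SU_3(q)$, and note that since $\lambda I$ fixes every subspace, $g_1 \in \SU_3(q)_U$. Since $g_1$ is unitary and $U$ is non-degenerate, $g_1$ also preserves $U^\perp = \langle x \rangle$, so $(x)g_1 = \mu x$ for some $\mu \in \mathbb{F}_{q^2}^*$, and $g_1$ has the block form
\[
g_1 = \begin{pmatrix} a & 0 & b \\ 0 & \mu & 0 \\ c & 0 & d \end{pmatrix}.
\]
Hence $g$ has the shape displayed in \eqref{eq:2substab} with $\nu_{11}=\lambda a$, $\nu_{12}=\lambda b$, $\nu_{21}=\lambda c$, $\nu_{22}=\lambda d$, $\delta = \lambda \mu$. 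A direct computation of $g_1 P \bar{g_1}^T = P$ using the matrix $P$ from \eqref{e:P} gives the four conditions $\mu^{q+1}=1$, $\mathrm{Tr}(ab^q)=0$, $\mathrm{Tr}(cd^q)=0$, and $ad^q + bc^q = 1$; the condition $\det g_1 = 1$ gives $\mu(ad-bc)=1$. Multiplying through by the appropriate powers of $\lambda$ and $\lambda^q$ (using $\lambda^{q+1}\in\mathbb{F}_q$ so that $\mathrm{Tr}$ is $\lambda^{q+1}$-homogeneous) turns these into exactly the identities in (a) together with $\delta^{q+1}=\lambda^{q+1}$ and $\delta \det(\nu_{ij}) = \lambda^3$, which is (b).

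For the converse, assume $g$ has the form \eqref{eq:2substab} and satisfies (a), (b) for some $\lambda \in \mathbb{F}_{q^2}^*$. Set $g_1 = \lambda^{-1} g$; clearly $g_1$ stabilises $U$ (and $\langle x\rangle$) since $g$ does. Writing $a=\lambda^{-1}\nu_{11}$, etc., and $\mu = \lambda^{-1}\delta$, the equation $\delta^{q+1}=\lambda^{q+1}$ yields $\mu^{q+1}=1$; the two trace conditions give $\mathrm{Tr}(ab^q)=\mathrm{Tr}(cd^q)=0$; the identity $\delta^{q+1}=\nu_{11}\nu_{22}^q+\nu_{12}\nu_{21}^q$ gives $ad^q+bc^q=1$; and $\delta\det(\nu_{ij}) = \lambda^3$ gives $\mu(ad-bc)=1$. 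These are precisely the conditions $g_1 P\bar{g_1}^T = P$ and $\det g_1 = 1$, so $g_1 \in \SU_3(q)_U$ and hence $g = \lambda I \cdot g_1 \in (Z \, \SU_3(q))_U$; the final assertion of the lemma is recorded along the way.

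The work is essentially bookkeeping, so I do not expect a genuine obstacle. The only delicate point is the correct interplay between $\lambda$ and $\mu=\lambda^{-1}\delta$: one must carefully distinguish between $\det(g_1)=1$ (a single equation) and the six entries of $g_1 P\bar{g_1}^T = P$ (four distinct scalar equations after using the Hermitian symmetry), and verify that after multiplying by $\lambda$ throughout, $\lambda^{q+1}$ cleanly factors out of the trace conditions because $\mathrm{Tr}$ restricted to $\mathbb{F}_q$-scalar multiples is $\mathbb{F}_q$-linear. This is the place where one should be careful but it presents no real difficulty.
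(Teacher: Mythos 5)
Your proposal is correct and follows essentially the same route as the paper: reduce to the block form via invariance of $U^\perp=\langle x\rangle$, factor $g=(\lambda I)g_1$ with $g_1\in\SU_3(q)_U$, and translate the unitary and determinant conditions on $g_1$ into (a) and (b) by homogenising with powers of $\lambda$. The only cosmetic difference is that the paper evaluates the Hermitian form on the basis vectors $e,x,f$ directly rather than writing $g_1P\bar{g_1}^T=P$, which is the same computation.
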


\begin{proof}
    Suppose first that $g\in Z\,\SU_3(q)$ and $g$ fixes the subspace $U=\langle e,f\rangle$. Then   $g$ leaves invariant the unitary form on $V$ modulo scalars, and hence $g$  also leaves $U^\perp =\langle x\rangle$ invariant. Thus  $g$ is as in \eqref{eq:2substab} with the entries $\delta, \nu_{ij}$ satisfying the conditions in \eqref{eq:2substab}. As $g\in Z\,\SU_3(q)$ we may write $g=(\lambda I)\cdot g_1$ with $\lambda\in\mathbb{F}_{q^2}^*$ and
    \[ g_1= \begin{pmatrix}
    \lambda^{-1} \nu_{11} & 0 &\lambda^{-1}  \nu_{12} \\
    0 & \lambda^{-1} \delta & 0 \\
    \lambda^{-1} \nu_{21} & 0 & \lambda^{-1} \nu_{22} 
\end{pmatrix}\in\SU_3(q)_U.
    \]
 Thus 
\[
1=(e,f)=((e)g_1, (f)g_1) = (\lambda^{-1} \nu_{11} e+\lambda^{-1} \nu_{12} f, \lambda^{-1} \nu_{21} e+\lambda^{-1} \nu_{22} f) = \lambda^{-1-q} (\nu_{11}\nu_{22}^q+ \nu_{12}\nu_{21}^q).
\]
Similarly 
\[1=(x,x)=((x)g_1,(x)g_1)=(\lambda^{-1} \delta x, \lambda^{-1} \delta x)=\lambda^{-1-q} \delta^{1+q}.\] Hence 
 $\delta^{q+1} = \lambda^{q+1} = \nu_{11}\nu_{22}^q+ \nu_{12}\nu_{21}^q$. 
 Also \[0=(e,e)=((e)g_1,(e)g_1)=
 \lambda^{-1-q} (\nu_{11}\nu_{12}^q+\nu_{11}^q\nu_{12})=\lambda^{-1-q} \mathrm{Tr}(\nu_{11}\nu_{12}^q).\] Thus $\mathrm{Tr}(\nu_{11}\nu_{12}^q)=0$ since $\lambda\neq 0$.
 The same argument using $0=(f,f)=((f)g_1,(f)g_1)$ yields that $\mathrm{Tr}(\nu_{21}\nu_{22}^q)=0.$
 Also, $1=\det(g_1)=\lambda^{-3}\delta \det( \nu_{ij})$,  so $\delta \cdot \det( \nu_{ij})=\lambda^3$, and hence conditions (a) and (b) hold. 
 
 Conversely suppose that  $g$ is as in \eqref{eq:2substab} and $\delta, \nu_{ij},\lambda$ satisfy all the conditions in parts (a) and (b).   A straightforward but tedious computation shows that $gP\bar{g}^T=\delta^{q+1}P$, with $P$ the Gram matrix of the unitary form in \eqref{e:P}, and since $\delta^{q+1}=\lambda^{q+1}$, we have $gP\bar{g}^T=\lambda^{q+1}P$. It follows, writing $g=(\lambda I)\cdot g_1$, that the element $g_1$ satisfies $g_1P\bar{g_1}^T=P$, that is to say, $g_1\in\GU_3(q)$. Further, $\det(g_1)=\lambda^{-3}\det(g)= \lambda^{-3}\delta \det( \nu_{ij})=1$ by part (b), and hence $g_1\in\SU_3(q)$. Thus $g\in Z\,\SU_3(q)$. Finally, it is clear from \eqref{eq:2substab} that $g$ leaves $U$ invariant.   

\end{proof}

\begin{Lem}\label{l:2stab}
    Let $\alpha_1=\omr e,$ $\alpha_2=\omr f\in\Omega$. Then $g\in \left(Z\,\SU_3(q)\right)_{\alpha_1,\alpha_2}$ if and only if the following two conditions both hold:
    \begin{enumerate}
    \item[$(a)$] $g=\diag(\mu_1, \delta, \mu_2)$ for some $ \mu_1, \mu_2 \in \omr,\ \delta\in\mathbb{F}_{q^2}^*$ such that 
    $$
    \delta^{q+1} 
    = \mu_1\mu_2^q \in\omr \cap \omr[q+1];
    $$
    \item[$(b)$] there exists $\lambda\in\mathbb{F}_{q^2}^*$ such that $\delta^{q+1} = \lambda^{q+1}$ and $ \mu_1\mu_2\delta= \lambda^3$.
    \end{enumerate}
    
    Moreover in this case
    $g=(\lambda I)\cdot g_1$ with $g_1=\diag(\lambda^{-1} \mu_1, \lambda^{-1} \delta, \lambda^{-1} \mu_2)\in\SU_3(q)_{\alpha_1,\alpha_2}$.
\end{Lem}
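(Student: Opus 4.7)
The plan is to derive Lemma~\ref{l:2stab} as a direct specialization of Lemma~\ref{l:2substab}. Suppose first that $g \in (Z\,\SU_3(q))_{\alpha_1,\alpha_2}$. Then $g$ fixes both $\omr e$ and $\omr f$, and in particular it preserves $U=\langle e,f\rangle$ setwise. Hence $g$ is of the form \eqref{eq:2substab} and satisfies conditions (a) and (b) of Lemma~\ref{l:2substab}. The stronger information that the individual points $\omr e$ and $\omr f$ are fixed forces $(e)g\in\omr e$ and $(f)g\in\omr f$, equivalently $\nu_{12}=\nu_{21}=0$ with $\nu_{11},\nu_{22}\in\omr$. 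Setting $\mu_1:=\nu_{11}$ and $\mu_2:=\nu_{22}$ we obtain $g=\diag(\mu_1,\delta,\mu_2)$ with $\mu_1,\mu_2\in\omr$.

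Next I would translate the constraints from Lemma~\ref{l:2substab} into this diagonal setting. The trace conditions $\mathrm{Tr}(\nu_{i1}\nu_{i2}^q)=0$ become trivial because one factor in each product vanishes; the determinant simplifies to $\det(\nu_{ij})=\mu_1\mu_2$; and the norm identity $\delta^{q+1}=\nu_{11}\nu_{22}^q+\nu_{12}\nu_{21}^q$ collapses to $\delta^{q+1}=\mu_1\mu_2^q$. Since $\mu_1,\mu_2\in\omr$, this common value lies in $\omr$; on the other hand, the norm map $\mathbb{F}_{q^2}^*\to \mathbb{F}_q^*$, $\delta\mapsto\delta^{q+1}$, has image $\mathbb{F}_q^*=\omr[q+1]$, so $\delta^{q+1}\in\omr[q+1]$. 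Hence $\delta^{q+1}=\mu_1\mu_2^q\in\omr\cap\omr[q+1]$, matching condition~(a) here. Condition~(b) of the present lemma is then exactly condition~(b) of Lemma~\ref{l:2substab} after the substitution $\det(\nu_{ij})=\mu_1\mu_2$.

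For the converse, any $g=\diag(\mu_1,\delta,\mu_2)$ satisfying (a) and (b) is, by Lemma~\ref{l:2substab}, an element of $Z\,\SU_3(q)$ stabilising $U$; and because $\mu_1,\mu_2\in\omr$ it visibly fixes $\omr e$ and $\omr f$, so it lies in $(Z\,\SU_3(q))_{\alpha_1,\alpha_2}$. The ``moreover'' clause is inherited from Lemma~\ref{l:2substab}: the factorisation $g=(\lambda I)\cdot g_1$ with $g_1\in\SU_3(q)_U$ provided there yields $g_1=\lambda^{-1}g=\diag(\lambda^{-1}\mu_1,\lambda^{-1}\delta,\lambda^{-1}\mu_2)$, which is diagonal and therefore fixes $\omr e$ and $\omr f$ pointwise, placing it in $\SU_3(q)_{\alpha_1,\alpha_2}$ as required. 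No genuine obstacle is expected; the only subtlety is bookkeeping the identification of $\omr[q+1]$ with $\mathbb{F}_q^*$ (the image of the norm map), which makes the containment $\delta^{q+1}\in\omr\cap\omr[q+1]$ an automatic consequence of the norm identity rather than an additional hypothesis.
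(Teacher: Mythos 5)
Your proof of the equivalence in parts (a) and (b) is correct and takes exactly the same route as the paper: both arguments specialise Lemma~\ref{l:2substab} to the situation where the individual points $\omr e$ and $\omr f$ are fixed, which forces $\nu_{12}=\nu_{21}=0$ and $\nu_{11},\nu_{22}\in\omr$, after which the trace conditions trivialise, $\det(\nu_{ij})=\mu_1\mu_2$, and the norm identity collapses to $\delta^{q+1}=\mu_1\mu_2^q\in\omr\cap\omr[q+1]$. The converse is likewise handled identically. No issues there.

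The step I would push back on is your justification of the final assertion. You claim that $g_1=\diag(\lambda^{-1}\mu_1,\lambda^{-1}\delta,\lambda^{-1}\mu_2)$ ``is diagonal and therefore fixes $\omr e$ and $\omr f$ pointwise.'' That inference is not valid: a diagonal matrix $\diag(\nu_1,\nu_2,\nu_3)$ fixes $\omr e$ if and only if $\nu_1\in\omr$, so what is actually needed is $\lambda^{-1}\mu_1,\,\lambda^{-1}\mu_2\in\omr$, i.e.\ (since $\mu_1,\mu_2\in\omr$) that $\lambda\in\omr$. This is not a formal consequence of (a) and (b) for an arbitrary $r$ dividing $q^2-1$: for $q=3$, $r=4$ the element $g=\diag(1,\omega^2,1)$ satisfies (a) and (b) with the unique admissible $\lambda=\omega^6\notin\omr[4]$, and the resulting $g_1=\diag(\omega^2,\omega^4,\omega^2)$ lies in $\SU_3(3)_U$ but sends $\omr[4]e$ to $\omr[4]\omega^2 e\neq\omr[4]e$. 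What rescues the claim in the situations where the lemma is used is that there $r$ is odd and divides $q-1$, so $(r,q+1)=1$ and $\omr\cap\omr[q+1]=\omr[r(q+1)]$ with $r(q+1)\mid q^2-1$; then $\lambda^{q+1}=\mu_1\mu_2^q\in\omr[r(q+1)]$ forces $r\mid s$ for $\lambda=\omega^s$, i.e.\ $\lambda\in\omr$, and only then does your conclusion follow. You should supply this (or restrict the ``moreover'' to that regime); I note the paper's own proof is equally terse here, deferring to the ``moreover'' of Lemma~\ref{l:2substab}, which only yields $g_1\in\SU_3(q)_U$.
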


\begin{proof}
Suppose first that $g\in Z\,\SU_3(q)$, fixing $\omr e$ and $\omr f$. Then  $g$ leaves $U=\langle e,f\rangle$ invariant, and hence $g$ is as in \eqref{eq:2substab} and the conditions $(a)$ and $(b)$ of Lemma~\ref{l:2substab} hold. 
The facts that $g$ fixes $\omr e$, while $eg=\nu_{11}e + \nu_{12}f$, imply that $\nu_{12}=0$ and $\mu_1:=\nu_{11}\in\omr$. Similarly the facts that $g$ fixes $\omr f$, while $fg=\nu_{21}e + \nu_{22}f$, imply that $\mu_2:=\nu_{22}\in\omr$ and $\nu_{21}=0$. Thus $g=\diag(\mu_1, \delta, \mu_2)$. 
The condition $\delta^{q+1} = \nu_{11}\nu_{22}^q+ \nu_{12}\nu_{21}^q$  from  Lemma~\ref{l:2substab}$(a)$ gives 
$\delta^{q+1} = \mu_1\mu_2^q\in\omr$, and clearly $\delta^{q+1}\in\omr[q+1]$, so part $(a)$ holds. Also part $(b)$ follows from   Lemma~\ref{l:2substab}$(b)$. In this case the last assertion of this lemma follows from the last assertion of Lemma~\ref{l:2substab}.

Conversely, suppose that $g= \diag(\mu_1, \delta, \mu_2)$ and $\lambda$ satisfy all the conditions in parts (a) and (b). 
These conditions imply all the conditions in parts (a) and (b) of Lemma~\ref{l:2substab}, with $\nu_{ii}=\mu_i$ for $i=1,2$ and $\nu_{12}=\nu_{21}=0.$ Thus by Lemma~\ref{l:2substab}, $g\in \left(Z\,\SU_3(q)\right)_{U}$. Further,  $g$ fixes $\omr e$ and $\omr f$, and so $g\in \left(Z\,\SU_3(q)\right)_{\alpha_1,\alpha_2}$.
\end{proof}

\subsection{Definition and properties of $\mathrm{USub}(q,q_0,r)$}

The first family of incidence structures is defined as follows.

 \begin{Def}\label{con:subfieldSU}
In the notation of Construction \ref{con:psu}, assume that  $q=q_0^b$ where $q_0$ is a prime power, $b > 1$ is an integer, and $r:=(q-1)/(q_0-1)$ is odd; in particular, $(r,q+1)=1.$  
Define the following sets:
\begin{itemize}
    \item $W= \omega^{\frac{r(q+1)}{(q+1,2)}} \mathbb{F}_{q_0}
    =\begin{cases}
        \mathbb{F}_{q_0} &\text{if $q$ is even};\\
        \omega^{\frac{r(q+1)}{2}} \mathbb{F}_{q_0} &\text{if $q$ is odd};
    \end{cases}$
    \item $L_{u,v}=\{\omr u\} \cup  \omr\{\lambda u + v \mid \lambda \in W \}$ for linearly independent $u,v \in \mathcal{T}$; 
    \item  $\mathcal{L}=\{L_{u,v} \mid  (u,v)=(e,f)^g, g \in Z \,  \SU_3(q)\}$; 
    \item $\mathrm{USub}(q,q_0,r)=(\Omega, \mathcal{L})$.
\end{itemize}  
\end{Def}

\begin{Remark}\label{r:subfieldSU}
    \begin{enumerate}
        \item[(a)]  Notice that
$\omega^{\frac{r(q+1)}{(q+1,2)}} =  \omega^{\frac{q^2-1}{(q+1,2)(q_0-1)}}$ has multiplicative order $(q+1,2)(q_0-1)$, and hence $\widehat{W}:= \omr[\frac{r(q+1)}{(q+1,2)}]$ contains $\mathbb{F}_{q_0}^*= \omr[r(q+1)]$ as a (multiplicative) subgroup of index $(q+1,2)$. 
Also $\widehat{W}\subseteq \omr[r]$ and $\mathbb{F}_{q_0}^*= \omr[r(q+1)]\leq \omr[q+1]=\mathbb{F}_{q}^*$.   The set  $W^*$ is closed under taking inverses, and if $q$ is odd, then $W^*$ is the nontrivial coset of $\mathbb{F}_{q_0}^*$ in the multiplicative group $\widehat{W}$ of order $2(q_0-1)$. 
Moreover, the product of two elements of $W$ lies in  $\mathbb{F}_{q_0}$, and the product of an element in $W$ with an element in  $\mathbb{F}_{q_0}$ is in $W$.

\item[(b)] It follows from part (a) that $W \cup \mathbb{F}_{q_0}=\widehat{W} \cup \{0\}$. If $q$ is even then this set is equal to $\mathbb{F}_{q_0}$. In the case where $q$ is odd, the right-hand expression is a disjoint union of $\{0\}$ and the subgroup $\widehat{W}$, and is therefore closed under multiplication by elements of $\mathbb{F}_{q_0}$, while the left-hand expression is a disjoint union of $\mathbb{F}_{q_0}$ and a multiplicative translate of it, with both sets closed under addition. Thus, for all $q$, $W \cup \mathbb{F}_{q_0}$ is an $\mathbb{F}_{q_0}$-vector space of dimension $(q-1,2)$, which is in addition invariant under the field automorphism $\phi$. 

\item[(c)] Let $\delta\in W$, so by Lemma~\ref{l:trace}, $\mathrm{Tr}(\delta) = \delta(1+\delta^{q-1})$. If $\delta\ne 0$ then, noting that $r$ is odd and $\mathbb{F}_{q_0}$ is a subfield of $\mathbb{F}_q$, we have
$$
\delta^{q-1}\in \left(\omega^{\frac{r(q+1)}{(q+1,2)}}\,\mathbb{F}_{q_0}^*\right)^{q-1} = \{ -1\}.
$$
Hence $\mathrm{Tr}(\delta) = 0$, so $W \subseteq \mathrm{ker}(\mathrm{Tr})$ and $\delta^q=-\delta $ for all $\delta\in W$. 
    \end{enumerate}
\end{Remark}

\begin{Lem} \label{lem:lineformSU}
With the assumptions of Definition \ref{con:subfieldSU}, the following hold:
    \begin{enumerate}[label=\normalfont (\arabic*)]
    \item $ L_{u,v}  = \omr\{\lambda_1 u + \lambda_2 v \mid \lambda_1, \lambda_2 \in W \cup \mathbb{F}_{q_0}, (\lambda_1,\lambda_2)\neq (0,0), \lambda_1\lambda_2 \in W \} $ 
    of cardinality $q_0+1$; \label{lfSU1} 

    \item if distinct points $\omr w_1, \omr w_2$ are collinear, then $w_1$ and $w_2$ are linearly independent;  \label{lfSUd}
    
    \item if $L_{u,v} \in \mathcal{L}$ and $g \in \GammaU_3(q)$, then $(L_{u,v})^g =L_{(u)g, (v)g} \in \mathcal{L}$;  in particular, $\GammaU_3(q)$ leaves $\mathcal{L}$ invariant;
    \item if $\mu \in \omr,$ then $L_{\mu u ,v }=L_{u, \mu^{-1}v};$ 
    \label{lfSU4}

    \item if $L_{u,v} \in \mathcal{L}$ and $\eta \in \mathbb{F}_{q^2}^*$, then $L_{u, \eta v}= L_{u,v}$ if and only if $\eta \in \mathbb{F}_{q_0}^*.$ 
    \end{enumerate}
\end{Lem}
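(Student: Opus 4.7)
My plan is to prove the five parts in the order $(1),(2),(4),(5),(3)$, since the invariance assertion in part $(3)$ will invoke parts $(4)$ and $(5)$. Throughout I will exploit the closure properties of $W$, $\widehat{W}$, and $\mathbb{F}_{q_0}^*$ summarised in Remark~\ref{r:subfieldSU}: chiefly that $\widehat{W}\subseteq\omr$ and $\mathbb{F}_{q_0}^*\subseteq\omr$, that $\mathbb{F}_{q_0}^*\cdot W = W$, that $W\cdot W\subseteq \mathbb{F}_{q_0}$, and that the multiplicative stabiliser of $W$ in $\mathbb{F}_{q^2}^*$ is exactly $\mathbb{F}_{q_0}^*$.

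For part $(1)$ I verify both inclusions. The points $\omr u$ and $\omr(\lambda u+v)$ in the original definition fit the symmetric description via $(\lambda_1,\lambda_2)=(1,0)$ and $(\lambda,1)$ respectively. Conversely, given $(\lambda_1,\lambda_2)\in(W\cup\mathbb{F}_{q_0})^2\setminus\{(0,0)\}$ with $\lambda_1\lambda_2\in W$, I split on $\lambda_2=0$ (then $\lambda_1\in\widehat{W}\subseteq\omr$ and $\omr\lambda_1 u=\omr u$) versus $\lambda_2\ne0$ (then $\lambda_2\in\widehat{W}\subseteq\omr$ gives $\omr(\lambda_1 u+\lambda_2 v)=\omr(\lambda_2^{-1}\lambda_1 u+v)$, and the identity $\lambda_2^{-1}\lambda_1=(\lambda_1\lambda_2)\lambda_2^{-2}\in W\cdot\mathbb{F}_{q_0}^*=W$ holds using $\lambda_2^2\in\mathbb{F}_{q_0}^*$). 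The cardinality $q_0+1$ is then immediate from the original description, since the $1+|W|$ representatives are pairwise inequivalent modulo $\omr$ by linear independence of $u,v$. Part $(2)$ is a short case analysis: if $\omr w_1,\omr w_2$ are distinct collinear points with $w_2=\mu w_1$, writing $\omr w_1$ in one of the two canonical forms in $L_{u,v}$ and matching coefficients forces $\mu\in\omr$, contradicting $\omr w_1\ne\omr w_2$.

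Part $(4)$ is a direct manipulation: $\omr\mu u=\omr u$ and $\omr\mu^{-1}(\mu\lambda u+v)=\omr(\mu\lambda u+v)$ since $\mu\in\omr$, so both $L_{\mu u,v}$ and $L_{u,\mu^{-1}v}$ unpack to $\{\omr u\}\cup\omr\{\mu\lambda u+v \mid \lambda\in W\}$. For part $(5)$, the forward direction uses $\mathbb{F}_{q_0}^*\subseteq\omr$ and $\eta^{-1} W=W$ to rewrite $\omr(\lambda u+\eta v)=\omr(\lambda\eta^{-1}u+v)\in L_{u,v}$, yielding $L_{u,\eta v}\subseteq L_{u,v}$ and equality by cardinality. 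For the converse I test the identity $L_{u,\eta v}=L_{u,v}$ on specific points: the point $\omr\eta v\in L_{u,\eta v}$ (taking $\lambda=0$) lies in $L_{u,v}$ only when $\eta\in\omr$; then, for every $\lambda\in W^*$, the point $\omr(\lambda u+\eta v)=\omr(\lambda\eta^{-1}u+v)$ lying in $L_{u,v}$ forces $\lambda\eta^{-1}\in W$, so $\eta^{-1}$ stabilises $W$ under multiplication, hence $\eta\in\mathbb{F}_{q_0}^*$.

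Part $(3)$ is the most involved and I expect it to be the main obstacle. The formula $(L_{u,v})^g=L_{(u)g,(v)g}$ follows from writing $g=h\phi^j$ with $h\in Z\cdot\GU_3(q)$ and expanding $((\lambda u+v))g=\lambda^{p^j}(u)g+(v)g$, provided $W$ is $\phi$-stable. This I deduce structurally: $\widehat{W}$ is the unique subgroup of $\mathbb{F}_{q^2}^*$ of order $(q+1,2)(q_0-1)$, so is preserved by $\phi$, as is $\mathbb{F}_{q_0}^*$; and the quotient $\widehat{W}/\mathbb{F}_{q_0}^*$ of order at most $2$ admits only the trivial automorphism, so the coset $W^*=\widehat{W}\setminus\mathbb{F}_{q_0}^*$ is $\phi$-stable. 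For the invariance $(L_{u,v})^g\in\mathcal{L}$, I use that $Z\cdot\SU_3(q)$ is normal in $\GammaU_3(q)$, so $\GammaU_3(q)$ permutes its orbits on lines, and it suffices to show $L_{e,f}$ is fixed setwise modulo $Z\cdot\SU_3(q)$. Now $\phi$ fixes $e$ and $f$, hence fixes $L_{e,f}$. For the finite quotient $(Z\cdot\GU_3(q))/(Z\cdot\SU_3(q))$ of order dividing $3$, I consider the diagonal elements $d_\lambda=\diag(\lambda,1,\lambda^{-q})\in\GU_3(q)$ for $\lambda\in\omr$: these stabilise $L_{e,f}$ by parts $(4)$ and $(5)$ since $L_{\lambda e,\lambda^{-q}f}=L_{e,\lambda^{-(q+1)}f}=L_{e,f}$ using $\lambda^{q+1}\in\mathbb{F}_{q_0}^*$. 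Moreover, since $\gcd(r,q+1)=1$ (forced by $r\mid q-1$ together with the oddness of $r$), the map $\lambda\mapsto\det(d_\lambda)=\lambda^{1-q}$ sends $\omr$ surjectively onto the order-$(q+1)$ group $\det(\GU_3(q))$, so the $d_\lambda$ realise every coset of $\SU_3(q)$ in $\GU_3(q)$, and hence every coset of $Z\cdot\SU_3(q)$ in $Z\cdot\GU_3(q)$, completing the argument.
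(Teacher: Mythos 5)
Your proposal is correct, and parts (1), (2) and (4) follow essentially the paper's own arguments. The two places where you genuinely diverge are the converse of (5) and the invariance claim in (3). For (5) the paper tests the single point $\omr(\omega^{r(q+1)/(q+1,2)}u+\eta v)$ against $L_{u,v}$ and reads off $\eta\in\omega^{r(q+1)/(q+1,2)}W=\mathbb{F}_{q_0}$ directly, whereas you first pin down $\eta\in\omr$ via the $\lambda=0$ point and then show $\eta^{-1}$ multiplicatively stabilises $W$; both are sound and of comparable length. For (3) the paper decomposes $g=\phi^{j}\,\diag(1,\omega^{t(q-1)},1)\,zg_0$ using $\GU_3(q)=\SU_3(q)\rtimes\langle\diag(1,\omega^{q-1},1)\rangle$; the chosen complement generator fixes $e$ and $f$ outright, so $(e,f)^g=(e,f)^{zg_0}$ and membership of $L_{(e)g,(f)g}$ in $\mathcal{L}$ drops out of Definition~\ref{con:subfieldSU} with no further work. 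Your route instead uses normality of $Z\,\SU_3(q)$ in $\GammaU_3(q)$ together with the coset representatives $d_\lambda=\diag(\lambda,1,\lambda^{-q})$, which fix the \emph{line} $L_{e,f}$ (via parts (4) and (5) and $\lambda^{q+1}\in\omr[r(q+1)]=\mathbb{F}_{q_0}^*$) rather than the pair $(e,f)$; your surjectivity computation for $\lambda\mapsto\lambda^{1-q}$ on $\omr$ is correct since $(r,q+1)=1$, and your structural derivation of the $\phi$-stability of $W$ is a clean alternative to the paper's Remark~\ref{r:subfieldSU}(b). The one thing you omit that the paper does prove inside part (1) is that $\lambda u+v\in\mathcal{T}$ for $\lambda\in W$, i.e.\ that the displayed points actually lie in $\Omega$: this is a one-line check, $(\lambda u+v,\lambda u+v)=\mathrm{Tr}(\lambda(u,v))=0$ because $W\subseteq\ker(\mathrm{Tr})$, but without it the line is not known to be a subset of the point set, so you should add it.
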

\begin{proof}
 The proofs of parts \ref{lfSUd}  and \ref{lfSU4} are analogous to the proofs of Lemma \ref{lem:lineform}(2) and (4). 
 
   (1) Let us first check that $\lambda u+ v\in \mathcal{T}$ when $\lambda\in W$, so  $L_{u,v}$ is well defined. Indeed,
 \[(\lambda u+ v, \lambda u+ v)=\lambda^{q+1}(u,u)+(v,v)+(\lambda+\lambda^q)(u,v)=0\]
since $u,v\in \mathcal{T}$ and $\lambda\in W \subseteq \mathrm{ker}(\mathrm{Tr})$ (by Remark~\ref{r:subfieldSU}(c)).
  It follows that $|L_{u,v}|=|W|+1=q_0+1$.
  
  Set $M=\omr \{\lambda_1 u + \lambda_2 v \mid \lambda_1, \lambda_2 \in W \cup \mathbb{F}_{q_0}, (\lambda_1,\lambda_2)\neq (0,0), \lambda_1\lambda_2 \in W \}.$ Note that $L_{u,v} \subseteq M$ since the pairs $(1,0)$ and $(\lambda,1)$, where $\lambda \in W$, satisfy the conditions on $(\lambda_1, \lambda_2)$ in the definition of $M.$ Let $\alpha=\omr (\lambda_1u + \lambda_2 v) \in M$.  If $\lambda_1=0$ then $\alpha = \omr \lambda_2 v$ and $\lambda_2\in (W\cup \mathbf{F}_0)^*=\widehat{W}\subseteq \omr$ (using Remark~\ref{r:subfieldSU}), so $\alpha= \omr v\in L_{u,v}$. Similarly if $\lambda_2$ is zero, then $\alpha= \omr u\in L_{u,v}$. 
   Assume now that neither of the $\lambda_i$ is zero. Then $\alpha= \omr(\lambda_1 \lambda_2^{-1} u +v)$ by part \ref{lfSU4}, since $\lambda_2 \in \widehat{W}\subseteq \omr[r]$. Moreover, $\lambda_1 \lambda_2^{-1} = \lambda_2^{-2}\cdot \lambda_1 \lambda_2$ lies in $W$ since $\lambda_2^{-2}\in \omr[\frac{2r(q+1)}{(q+1,2)}]\leq \omr[r(q+1)] 
    =\mathbb{F}_{q_0}^*$,
   and $\lambda_1\lambda_2 \in W$ (by the definition of $M$), and since $W$ is closed under multiplication  by an element of $\mathbb{F}_{q_0}$ (by Remark~\ref{r:subfieldSU}). Hence $\alpha \in L_{u,v}$, and we conclude that $M=L_{u,v}.$  Thus part \ref{lfSU1} is proved.

       $(3)$ Since $L_{u,v} \in \mathcal{L}$, we have $L_{u,v}=L_{(e)h,(f)h}$ for some $h \in Z \, \SU_3(q)$ with $(u,v)=(e,f)^h$ by Definition \ref{con:subfieldSU}. Hence, for any $\lambda_1, \lambda_2 \in W \cup \mathbb{F}_{q_0}$ such that $(\lambda_1,\lambda_2)\neq (0,0), \lambda_1\lambda_2 \in W $, 
      $$
    \omr (\lambda_1 u + \lambda_2 v)=\omr (\lambda_1(e)h +\lambda_2(f)h)=(\omr (\lambda_1e +\lambda_2f))h,
    $$
    so $L_{u,v}=L_{(e)h,(f)h}=(L_{e,f})^h$ (by part (1)). Therefore, for $g \in \GammaU_3(q)$, we have $(L_{u,v})^g=(L_{e,f})^{hg}$ with $hg \in \GammaU_3(q)$ and it suffices to show that, for all $g \in \GammaU_3(q)$,
    $$(L_{e,f})^{g}=L_{(e)g,(f)g} \text{ and } L_{(e)g,(f)g} \in \mathcal{L}.$$

      Let  $g\in \GammaU_3(q)=(Z \, \GU_3(q)) \rtimes \langle \phi \rangle $ where  $\phi$ is as in \eqref{def:phi}, with respect to the basis \eqref{unbasis} of $V$.
      Note (or see \cite[Proposition 2.3.4]{KL}) that $\GU_3(q)=  \SU_3(q) \rtimes \langle \diag(1, \omega^{q-1},1) \rangle$.
      Hence $g= \phi ^j \diag(1, \omega^{t(q-1)},1) z g_0$ with $0\leq j<2a$, $0\leq t<q+1$, $z \in Z$ and $g_0 \in \SU_{3}(q)$. Further, $(e,f)^g=(e,f)^{zg_0}$ since $\phi$ and $\diag(1,\omega^{q-1},1)$ stabilise $e$ and $f$. So $L_{(e)g,(f)g}=L_{(e)zg_0,(f)zg_0} \in \mathcal{L}$ since $zg_0 \in Z \, \SU_3(q)$. It remains to show that 
      $(L_{e,f})^g=L_{(e)g,(f)g}.$

     Consider an element $(\omr (\lambda_1 e + \lambda_2 f))g \in (L_{e,f})^g$, that is, $\lambda_1, \lambda_2 \in W \cup \mathbb{F}_{q_0},$ $(\lambda_1,\lambda_2)\neq (0,0),$ $ \lambda_1\lambda_2 \in W$ by part (1). Note that  we may write $g=\phi^j g_1$ with $g_1\in Z \, \GU_3(q)$.
 Then, since the field automorphism $\phi$ normalises $\omr$ and leaves $e$ and $f$ invariant, it follows that $g$ normalises $\omr$ and we have  
 $$
 (\omr (\lambda_1 e + \lambda_2 f))g = (\omr (\lambda_1^{\phi^j}e +\lambda_2^{\phi^j}f))g_1  
 = \omr (\lambda_1^{\phi^j}(e)g_1 +\lambda_2^{\phi^j}(f)g_1)
 =    \omr (\lambda_1^{\phi^j}(e)g +\lambda_2^{\phi^j}(f)g).
    $$
This point lies on $L_{(e)g,(f)g}$  by part (1), and since each of $W$ and $W\cup \mathbb{F}_{q_0}$ is  preserved by $\phi$.
    Thus $(L_{e,f})^g \subseteq L_{(e)g, (f)g}$  and  equality holds since $|L_{e,f}|=|L_{(e)g, (f)g}|=q_0+1$ by part (1). Thus part (3) is proved.

   (5) Assume that $\eta\in \mathbb{F}_{q_0}^*$.
    Both lines $L_{u,\eta v}$ and $L_{u,v}$ contain the point $\omr u$. Each other point of  $L_{u,\eta v}$ is equal to 
$\omr(\lambda u+ \eta v)$ for some $\lambda\in W$. Since $\mathbb{F}_{q_0}^* \subseteq \omr$, we have $\omr(\lambda u+\eta v)=\omr (\lambda\eta^{-1}u+ v)$, and as $\lambda$ runs through the elements of $W$, so does $\lambda\eta^{-1}$ (recall $W$ is closed under  multiplication by 
 elements of $\mathbb{F}_{q_0}$). It follows  that $L_{u,\eta v}$ and $L_{u,v}$ contain exactly the same set of points, and hence are equal. 
Conversely assume that  $L_{u,\eta v}=L_{u,v}$ where  $\eta \in \mathbb{F}_{q^2}^*$. 
The point $\omr (\omega^{\frac{r(q+1)}{(q+1,2)}} u+\eta v)\in L_{u,\eta v}$ must then be equal to $\omr(\lambda u+ v)$ for some $\lambda\in W$. Since this point is distinct from $\omr v$ it follows that $\lambda\ne 0$. Thus $\lambda^{-1}\in W^*\subseteq \omr$ and also  $\omega^{\frac{r(q+1)}{(q+1,2)}}\in\omr$, and hence
$$
\omr (\omega^{\frac{r(q+1)}{(q+1,2)}} u+\eta v)=\omr(\lambda u+ v) =\omr(\omega^{\frac{r(q+1)}{(q+1,2)}}u+ \omega^{\frac{r(q+1)}{(q+1,2)}}\lambda^{-1}v)
$$ 
This implies that  $\eta= \omega^{\frac{r(q+1)}{(q+1,2)}}\lambda^{-1}\in \omega^{\frac{r(q+1)}{(q+1,2)}}W=\mathbb{F}_{q_0}$, and as $\eta\ne 0$, we have $\eta\in \mathbb{F}_{q_0}^*$.
\end{proof}

To formulate the next lemma we need to define some notation. Let $M$ be the set of elements of  $Z \, \SU_3(q)$ of the form 
\begin{equation}\label{Melement}
g= \begin{pmatrix}
    \nu_{11} & 0 & \nu_{12} \\
    0 & \delta & 0 \\
    \nu_{21} & 0 & \nu_{22}
\end{pmatrix}, \textrm{ where } \delta \in \mathbb{F}_{q^2}^*, \det( \nu_{ij}) \ne 0,\textrm{ and }\nu_{11}, \nu_{22} \in \mathbb{F}_{q_0}, \nu_{12}, \nu_{21} \in W.   
\end{equation}
Using  Remark~\ref{r:subfieldSU}(a), it is routine to check that $M$ is a subgroup of $Z \, \SU_3(q)$,  and that $d=\det( \nu_{ij})\in \mathbb{F}_{q_0}^*\leq \omr[q+1].$  More precisely, $M\leq (Z \, \SU_3(q))_U.$

 
\begin{Pro}\label{prM}
    For each choice of the $\nu_{ij}$ as in \eqref{Melement}, there are exactly $(q+1)/(q+1,3)$ elements $\delta \in \mathbb{F}_{q^2}^*$ such that $g \in Z \, \SU_3(q).$
\end{Pro}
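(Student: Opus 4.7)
The plan is to apply Lemma~\ref{l:2substab} to the matrices $g$ of \eqref{Melement} and reduce the count to a question about the cyclic structure of $\mathbb{F}_{q^2}^*$.

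First, I would simplify the conditions of Lemma~\ref{l:2substab}(a) in this special case. Since $\nu_{11},\nu_{22}\in \mathbb{F}_{q_0}\subseteq\mathbb{F}_q$ and $\nu_{12},\nu_{21}\in W$, Remark~\ref{r:subfieldSU}(c) gives $\nu_{11}^q=\nu_{11}$, $\nu_{22}^q=\nu_{22}$, $\nu_{12}^q=-\nu_{12}$, and $\nu_{21}^q=-\nu_{21}$ (with the signs being vacuous in characteristic $2$). This forces $\mathrm{Tr}(\nu_{i1}\nu_{i2}^q)=0$ for both $i=1,2$, and it gives
$$
\nu_{11}\nu_{22}^q+\nu_{12}\nu_{21}^q \;=\; \nu_{11}\nu_{22}-\nu_{12}\nu_{21} \;=\; \det(\nu_{ij}) \;=:\; d.
$$
Moreover $d\in\mathbb{F}_{q_0}^*$, since $\nu_{11}\nu_{22}\in\mathbb{F}_{q_0}$ and $\nu_{12}\nu_{21}\in W\cdot W\subseteq\mathbb{F}_{q_0}$ by Remark~\ref{r:subfieldSU}(a), and $d\neq 0$ by hypothesis. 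Hence Lemma~\ref{l:2substab}(a) collapses to the single requirement $\delta^{q+1}=d$, and Lemma~\ref{l:2substab}(b) becomes: there exists $\lambda\in\mathbb{F}_{q^2}^*$ with $\lambda^{q+1}=d$ and $\lambda^3=\delta d$.

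Next, I would count the admissible $\delta$. The norm map $\mathrm{N}: x\mapsto x^{q+1}$ from $\mathbb{F}_{q^2}^*$ onto $\mathbb{F}_q^*$ is surjective with kernel the cyclic group $C$ of order $q+1$. Since $d\in \mathbb{F}_q^*$, both $\{\delta:\delta^{q+1}=d\}$ and $\{\lambda:\lambda^{q+1}=d\}$ are cosets of $C$, each of size $q+1$. I would fix any $\delta_0$ with $\delta_0^{q+1}=d$ and parametrise the admissible $\delta,\lambda$ as $\delta=\delta_0\eta$, $\lambda=\delta_0\zeta$ with $\eta,\zeta\in C$. Substituting in $\lambda^3=\delta d$ and using $d=\delta_0^{q+1}$ gives $\delta_0^3\zeta^3=\delta_0^{q+2}\eta$, i.e., $\zeta^3=\delta_0^{q-1}\eta$. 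Setting $\alpha=\delta_0^{q-1}$, one checks $\alpha^{q+1}=\delta_0^{q^2-1}=1$, so $\alpha\in C$. Therefore, for a given $\delta=\delta_0\eta$, an admissible $\lambda$ exists if and only if $\alpha\eta\in C^3$, i.e., if and only if $\eta\in\alpha^{-1}C^3$.

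Finally, since $C$ is cyclic of order $q+1$, its subgroup $C^3$ has index $(q+1,3)$, so the coset $\alpha^{-1}C^3$ contains exactly $(q+1)/(q+1,3)$ elements. As distinct $\eta\in C$ yield distinct $\delta=\delta_0\eta$, this is precisely the number of admissible $\delta$, as claimed. The only mildly delicate point is the uniform treatment of the trace conditions across characteristics $2$ and odd, which is handled by Remark~\ref{r:subfieldSU}(c); everything else is a routine counting argument in the cyclic group $\mathbb{F}_{q^2}^*$.
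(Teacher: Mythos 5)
Your proof is correct and follows essentially the same route as the paper: both apply Lemma~\ref{l:2substab}, observe via Remark~\ref{r:subfieldSU} that the trace conditions hold automatically and that part (a) collapses to $\delta^{q+1}=d$ with $d=\det(\nu_{ij})\in\mathbb{F}_{q_0}^*$, and then count the $\delta$ for which part (b) is solvable. The only difference is in the final count, where you phrase solvability of $\lambda^3=\delta d$ as membership of a coset of $C^3$ in the norm-one subgroup $C$, which neatly avoids the paper's explicit congruence $\ell+m(q-1)\equiv 3s\pmod{q+1}$ and its case split on whether $3$ divides $q+1$.
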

\begin{proof}
Suppose that $g$ is as in \eqref{Melement} for a fixed choice of the $\nu_{ij}$ such that $\nu_{11}, \nu_{22}\in\mathbb{F}_{q_0}$,  $\nu_{12}, \nu_{21}\in W$, and $d:=\det(\nu_{ij})\ne 0$.  We need to count the number of possible entries $\delta \in \mathbb{F}_{q^2}^*$ such that $g \in Z \, \SU_3(q)$, and for this we use the criteria established in Lemma~\ref{l:2substab}.

It follows from Remark~\ref{r:subfieldSU}(a) and (c) that $\eta^q=\eta$ for all $\eta\in\mathbb{F}_{q_0}$ (since $\mathbb{F}_{q_0}\leq \mathbb{F}_q$), and  $\eta^q=-\eta$ for all $\eta\in W$ (since $W\subseteq {\rm ker}({\rm Tr})$). Thus $\nu_{11}\nu_{12}^q = -\nu_{11}\nu_{12}$ and $\nu_{21}\nu_{22}^q=\nu_{21}\nu_{22}$, and each of these quantities is a product of an element of $\mathbb{F}_{q_0}$ and an element of $W$, and hence lies in $W$, by Remark~\ref{r:subfieldSU}(a). This then implies that  $\mathrm{Tr}(\nu_{i1}\nu_{i2}^q)=0$, for $i=1,2$ since $W\subseteq {\rm ker}({\rm Tr})$.
Also, $\nu_{11}\nu_{22}^q+\nu_{12}\nu_{21}^q = \nu_{11}\nu_{22}-\nu_{12}\nu_{21} = \det(\nu_{ij}) = d\ne 0$, and so $d\in\mathbb{F}_{q_0}^*$, again by Remark~\ref{r:subfieldSU}(a). Thus the entries of $g$ satisfy Lemma~\ref{l:2substab}(a) if and only if $\delta^{q+1}=d$. There are exactly $q+1$ elements  $\delta\in \mathbb{F}_{q^2}^*$ with this property since the map $\mathbb{F}_{q^2}^*\to \mathbb{F}_{q}^*$ given by $\delta\to\delta^{q+1}$ is a group epimorphism with kernel $\omr[q-1]$, and $d\in\mathbb{F}_{q_0}^*\subseteq \mathbb{F}_{q}^*$. 
Further, if $\omega^\ell$ is one of the elements satisfying $\omega^{\ell(q+1)}=d$, then the $q+1$ elements with $(q+1)^{th}$ power $d$ are precisely $\omega^{\ell+m(q-1)}$ for $0\leq m\leq q$.

So assume now that $\delta^{q+1}=d$. Then the element $g$ lies in $ Z\,\SU_3(q)$  if and only if the conditions of Lemma~\ref{l:2substab}(b) hold: namely, there exists an element $\lambda=\delta\,\omega^{s(q-1)}$, for some $s$, (to ensure that  $\lambda^{q+1}=\delta^{q+1}$), such that $\lambda^3=\delta d=\delta^{q+2}$. Substituting for  $\lambda$ in the second condition we obtain 
\[
\delta^3\omega^{3s(q-1)}= \delta^{q+2}, \ \text{that is,}\ \omega^{3s(q-1)}=\delta^{q-1}.
\]
We consider each of the elements $\delta=\omega^{\ell+m(q-1)}$ to determine when such an integer $s$, and hence such an element $\lambda$ exists. The above equation is equivalent to $\ell+m(q-1)\equiv 3s\pmod{q+1}.$
Suppose first that $3$ does not divide $q+1$. Then there exists $n$ such that $3n\equiv 1\pmod{q+1}$ and taking $s\equiv (\ell+m(q-1))n\pmod{q+1}$ we obtain  $3s \equiv \ell+m(q-1)\pmod{q+1}$, giving a suitable element $\lambda$. Thus in this case the conditions of Lemma~\ref{l:2substab}(b) hold for each of the $q+1$ possibilities for $\delta$.

Suppose now that $3$ divides $q+1$ (so $3$ does not divide $q-1$). 
Then the quantity $\ell+m(q-1)$  takes on each of the values $0,1,2$ modulo $3$ exactly $(q+1)/3$ times as $m$ runs over $0,\ldots,q$.  If $\ell+m(q-1)$ is not  a multiple of $3$ then there is no solution to $\omega^{3s(q-1)}=\delta^{q-1}$, and hence no suitable element $\lambda$ exists for $\delta=\omega^{\ell+m(q-1)}$.  On the other hand, if $\ell+m(q-1)=3n$, then taking $s\equiv n\pmod{q+1}$ we obtain a solution to $\omega^{3s(q-1)}=\delta^{q-1}$, and hence an element $\lambda=\delta\,\omega^{s(q-1)}$ with the required properties. Thus there exists an element $\lambda$ satisfying the conditions of Lemma~\ref{l:2substab}(b) for exactly $(q+1)/3$ of the possibilities for $\delta$. 
\end{proof}

\begin{Lem}\label{lf7U}
    Let $X$ be the stabiliser of $L_{e,f}$ in $Z \, \SU_3(q)$.  Then 
    \begin{enumerate}[label=\normalfont (\arabic*)]
    \item $X=MY$, where $Y =\langle \omega^r I\rangle$ is as in Construction $\ref{con:psu}$, and $M$ is the subgroup of matrices as in \eqref{Melement};
    \item $X$ and its subgroup $M$ both act $2$-transitively on the $q_0+1$ points of $L_{e,f};$
    \item $|\mathcal{L}|=\frac{q^3(q^3+1)(q-1)^2}{q_0(q_0^2-1)(q_0-1)}.$
    \end{enumerate}
\end{Lem}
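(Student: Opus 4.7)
My plan is to establish the three parts in the order (2), (1), (3), since (2) provides the key $2$-transitivity used to identify the point stabiliser in (1), and (1) in turn furnishes $|X|$, from which (3) follows by orbit--stabiliser.

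For (2), I will show directly that $M$ is $2$-transitive on the $q_0+1$ points of $L_{e,f}$ described in Lemma~\ref{lem:lineformSU}(1). Given an ordered pair $(P_1,P_2)$ of distinct points on $L_{e,f}$, the goal is to produce entries $\nu_{11},\nu_{22}\in\mathbb{F}_{q_0}$ and $\nu_{12},\nu_{21}\in W$ with $\det(\nu_{ij})\ne0$ such that the matrix in \eqref{Melement} sends $(\omr e,\omr f)$ to $(P_1,P_2)$. Each $P_i$ is either $\omr e$ or $\omr(\lambda_i e + f)$ with $\lambda_i\in W$, and a short case analysis using the closure facts $W\cdot W\subseteq\mathbb{F}_{q_0}$, $\mathbb{F}_{q_0}\cdot W\subseteq W$, and $W^*\subseteq\omr$ from Remark~\ref{r:subfieldSU}(a) yields the two rows (possibly after rescaling each by a factor in $W^*$). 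Proposition~\ref{prM} then supplies a compatible $\delta\in\mathbb{F}_{q^2}^*$ making the full $3\times3$ matrix an element of $M$. A preliminary verification using the same closure facts will also show that every $g\in M$ maps $L_{e,f}$ into itself, so $M\leq X$; combined with the $2$-transitivity, part (2) follows.

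For (1), $M\leq X$ is already in hand from the preliminary verification above, and $Y\leq X$ is immediate since $Y$ acts trivially on $\Omega$. Conversely, given $g\in X$, I will invoke the $2$-transitivity of $M$ just proved to find $m\in M$ with $(\omr e)m=(\omr e)g^{-1}$ and $(\omr f)m=(\omr f)g^{-1}$, so that $mg\in(Z\,\SU_3(q))_{\alpha_1,\alpha_2}$. Lemma~\ref{l:2stab} then forces $mg=\diag(\mu_1,\delta,\mu_2)$ with $\mu_1,\mu_2\in\omr$. Since $mg$ must permute the $q_0-1$ points $\omr(\lambda e+f)$ with $\lambda\in W^*$, a direct calculation gives $(\mu_1/\mu_2)W^*\subseteq W^*$, and evaluating at $\lambda=\omega^{r(q+1)/(q+1,2)}\in W^*$ forces $\mu_1/\mu_2\in\mathbb{F}_{q_0}^*$. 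Hence $mg=(\mu_2 I)\cdot\diag(\mu_1/\mu_2,\delta/\mu_2,1)$ factors as a product of an element of $Y$ and one that lies in $M$, so $g\in MY$.

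For (3), since $Z\,\SU_3(q)$ is transitive on $\mathcal{L}$ by Definition~\ref{con:subfieldSU}, $|\mathcal{L}|=|Z\,\SU_3(q)|/|X|$ with $|X|=|M|\cdot|Y|/|M\cap Y|$. The orders $|Y|=(q^2-1)/r=(q+1)(q_0-1)$ and $|Z\,\SU_3(q)|=(q^2-1)|\SU_3(q)|/(q+1,3)$ are routine, and a scalar $\omega^{rj}I$ lies in $M$ precisely when $\omega^{rj}\in\mathbb{F}_{q_0}^*=\omr[r(q+1)]$, giving $|M\cap Y|=q_0-1$. The only subtle count is $|M|$: after the substitution $\nu_{12}=\epsilon\mu_{12}$, $\nu_{21}=\epsilon\mu_{21}$ with $\epsilon=\omega^{r(q+1)/(q+1,2)}$ (so $\epsilon^2\in\mathbb{F}_{q_0}^*$), the determinant becomes $\nu_{11}\nu_{22}-\epsilon^2\mu_{12}\mu_{21}\in\mathbb{F}_{q_0}$, and a direct fibre count shows that the number of tuples $(\nu_{11},\nu_{22},\mu_{12},\mu_{21})\in\mathbb{F}_{q_0}^4$ with this twisted determinant nonzero equals $|\GL_2(q_0)|$; combining with the $(q+1)/(q+1,3)$ choices of $\delta$ from Proposition~\ref{prM} yields $|M|=|\GL_2(q_0)|(q+1)/(q+1,3)$. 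Substituting everything and using $(q^2-1)^2/(q+1)^2=(q-1)^2$ produces the stated value. The main obstacle will be verifying that the presence of the fixed nonzero scalar $\epsilon^2$ in the determinant form leaves the total invertible-matrix count unchanged; this reduces to the elementary observation that for any $c\in\mathbb{F}_{q_0}^*$ the fibres of $(x,y)\mapsto cxy$ on $\mathbb{F}_{q_0}^2$ have the same sizes as those of $(x,y)\mapsto xy$.
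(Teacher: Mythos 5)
Your proposal is correct and follows essentially the same route as the paper: establish $M\leq X$ and the $2$-transitivity of $M$ on $L_{e,f}$ via explicit matrices together with Proposition~\ref{prM}, reduce the reverse inclusion $X\leq MY$ to the two-point stabiliser and apply Lemma~\ref{l:2stab}, then count $|\mathcal{L}|$ by orbit--stabiliser with $|X|=|M|\,|Y|/|M\cap Y|$. The only local deviations are that you deduce $\mu_1/\mu_2\in\mathbb{F}_{q_0}^*$ by evaluating the action on the points $\omr(\lambda e+f)$, $\lambda\in W^*$, where the paper instead cites Lemma~\ref{lem:lineformSU}(5), and that you spell out the computation of $|M|=|\GL_2(q_0)|(q+1)/(3,q+1)$ (via the twisted-determinant bijection) which the paper labels routine; both steps check out.
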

\begin{proof}
      Let $L=L_{e,f}$, and let $\alpha_1=\omr e$ and $\alpha_2=\omr f$.
      
 \medskip\noindent
 \emph{Claim 1:} $MY\leq X$     

\smallskip
      First we show that $M$ leaves $L$ invariant. Let $g \in M$ be as in \eqref{Melement}, with $\nu_{11}, \nu_{22}\in\mathbb{F}_{q_0}$ and $\nu_{12}, \nu_{21}\in W$ such that $\det(\nu_{ij})\ne 0$. 
   Then by Lemma \ref{lem:lineformSU}(3), $L^g=L_{(e)g,(f)g}$, and so, 
   the elements of $L^g$ are the point 
   $$\omr (e)g=\omr (\nu_{11} e + \nu_{12}f)$$ 
   and, for $\lambda\in W$, the points  
   $$
   \omr (\lambda (e)g+(f)g)=\omr\left( \lambda(\nu_{11} e + \nu_{12}f)+(\nu_{21}e+\nu_{22}f)\right)= \omr\left((\lambda\nu_{11}+\nu_{21}) e + (\lambda\nu_{12}+\nu_{22})f\right).
   $$  
   It follows from the conditions on the $\nu_{ij}$ and $\lambda$ that, for each of the points $\omr(\mu e+\mu' f)$ in either of the two displays above, the coefficients $\mu,\mu'$ both lie in $W\cup\mathbb{F}_{q_0}$, their product $\mu\mu'\in W$ (see Remark~\ref{r:subfieldSU}), and  $(\mu,\mu')\ne(0,0)$. Thus, by 
   Lemma \ref{lem:lineformSU}(1) (with $u=e, v=f$), each of these points lies in $L$, and hence $L^g\subseteq L$. We have equality since both sets have cardinality $q_0+1$.
      It follows that the element $g$ leaves $L$ invariant, and hence $M\leq X$.  Moreover $Y$ fixes every point of $\Omega$ so in particular $Y\leq X$. Thus   $MY\leq X$, and Claim 1 is proved.

 \medskip\noindent
 \emph{Claim 2:} $M$, and hence also $X$, acts $2$-transitively on the $q_0+1$ points of $L$ (proving part (2)). 
   
   \smallskip
    Consider an ordered pair of distinct points $\omr u_1$, $\omr u_2$ of $L=L_{e,f}$, and recall that $|L|=q_0+1$ by Lemma~\ref{lem:lineformSU}(1).
        By Lemma \ref{lem:lineformSU}(1),  for each $i$ we have $\omr u_i =  \omr (\nu_{i1}e + \nu_{i2}f)$ where the  $\nu_{ij}\in W\cup \mathbb{F}_{q_0}$, $(\nu_{i1},\nu_{i2})\ne (0,0)$, and $\nu_{i1}\nu_{i2}\in W$. This implies that, for each $i$,  one of  $\nu_{i1},\nu_{i2}$ is in $W$ and the other is in $\mathbb{F}_{q_0}$. Since $\omega^{r\frac{q+1}{(q+1,2)}}\in\omr$, we may choose 
   $\nu_{11}, \nu_{22} \in \mathbb{F}_{q_0}$ and $\nu_{12}, \nu_{21} \in W$. Also $\det(\nu_{ij}) \ne 0$ by Lemma  \ref{lem:lineformSU}(2).
             By Proposition~\ref{prM}, we may choose $\delta\in\mathbb{F}_{q^2}^*$ such that the matrix $g$ formed as in \eqref{Melement} with these entries lies in $M$. This matrix $g$ maps  $\alpha_i$ to the point $\omr u_i$, for each $i$. Thus $M$ is $2$-transitive on the points of $L$, and hence also (by Claim 1) $X$ is $2$-transitive on the points of $L$ proving Claim 2.

 \medskip\noindent
 \emph{Claim 3:} $X=MY$ (proving part (1)).    

\smallskip
    Since $M$ is $2$-transitive on the points of $L$, it follows that $X=M X_{\alpha_1,\alpha_2}$.     Let $g\in X_{\alpha_1,\alpha_2}$. Then by Lemma~\ref{l:2stab}, $g=\diag(\mu_1, \delta, \mu_2)$, for some $\mu_1, \mu_2 \in \omr$ and $\delta\in\mathbb{F}_{q^2}^*$ such that  $\delta^{q+1} = \mu_1\mu_2^q \in\omr \cap \omr[q+1]$.
    Now $L=L^g=L_{(e)g,(f)g}=L_{\mu_1 e,\mu_2 f}$ by Lemma \ref{lem:lineformSU}(3), and by Lemma \ref{lem:lineformSU}(4), $L_{\mu_1 e,\mu_2 f}=L_{ e,\mu_1^{-1}\mu_2 f}$. Thus $L=L_{ e,\mu_1^{-1}\mu_2 f}$  and hence $\mu_1^{-1}\mu_2\in\mathbb{F}_{q_0}^*$ by Lemma \ref{lem:lineformSU}(5). 
    Now $g= h\cdot (\mu_1 I)$, where $h=\diag(1, \mu_1^{-1}\delta, \mu_1^{-1}\mu_2)$, and we have $\mu_1 I\in Y$ and also $h= (\mu_1^{-1} I)\cdot g\in Z\,\SU_3(q)$ (since $g\in X\leq  Z \, \SU_3(q)$). Thus $h\in M$ by the definition of the $M$-elements  in \eqref{Melement}. We conclude that $X=M X_{\alpha_1,\alpha_2}\leq MY$, so  $X= MY$, proving Claim 3.

 \medskip\noindent
 \emph{Claim 4:} $|\mathcal{L}|$ is as in part (3) (completing the proof of the lemma).    

\smallskip

    By Definition~\ref{con:subfieldSU}, $Z \, \SU_3(q)$ is transitive on $\mathcal{L},$ and hence 
    $$
    |\mathcal{L}|=\frac{|Z \, \SU_3(q)|}{|X|}.
    $$
    Also, 
    $$|Z \, \SU_3(q)|=\frac{|Z|\cdot |\SU_3(q)|}{|Z \cap \SU_3(q)|}= \frac{(q^2-1) \cdot q^3(q^3+1)(q^2-1)}{(3,q+1)}.$$ 
    Since $X=MY$, we obtain $|X|= |M|\cdot |Y|/|M \cap Y|.$ Using the definition of $M$ in \eqref{Melement}, the fact that $|W|=|\mathbb{F}_{q_0}|=q_0$, and Proposition \ref{prM}, it is routine to calculate that
    $$|M|=q_0(q_0^2-1)(q_0-1) \cdot \frac{(q+1)}{(3,q+1)}.$$
    Since $\mathbb{F}_{q_0}^* \leq \omr,$ we see that $M\cap Y$ consists of all matrices $\diag(\lambda, \lambda, \lambda)$ where $\lambda\in \mathbb{F}_{q_0}^*$, and so 
    $|M \cap Y|= (q_0-1)= (q-1)/r.$
    Elementary calculations now yield the value of $|\mathcal{L}|$ as in part (3) completing the proof. 
\end{proof}

\begin{Th}\label{t:USub}
   Let $\mathcal{D}=\mathrm{USub}(q,q_0,r)$  as in Definition \ref{con:subfieldSU}. Then $\mathcal{D}$ is a  proper partial linear space, and $\Aut(\mathcal{D})$ contains $\GammaU_3(q)/Y$.
\end{Th}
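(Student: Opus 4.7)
\medskip

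\noindent\textbf{Proof proposal.} The plan is to verify three things in order: (i) that $\GammaU_3(q)/Y$ acts as automorphisms; (ii) that $\mathcal{D}$ is a partial linear space, i.e.\ any two collinear points lie on a unique line; and (iii) that $\mathcal{D}$ is proper.

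Part (i) is immediate: Lemma~\ref{lem:lineformSU}(3) says that $\GammaU_3(q)$ preserves $\mathcal{L}$, and by construction $Y=\langle \omega^r I\rangle$ fixes every point of $\Omega$ pointwise, so the induced group $\GammaU_3(q)/Y$ embeds into $\Aut(\mathcal{D})$. For part (iii), since $q_0\geq 2$, Lemma~\ref{lem:lineformSU}(1) gives $|L_{u,v}|=q_0+1\geq 3$, and since $r>1$ (because $r=(q-1)/(q_0-1)$ with $q_0^b=q$, $b\geq2$), the distinct points $\omr e$ and $\omr \omega e$ correspond to the linearly dependent vectors $e,\omega e\in\mathcal{T}$, so by Lemma~\ref{lem:lineformSU}(2) they are non-collinear. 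Combined with part (ii), this will show $\mathcal{D}$ is a proper partial linear space.

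The heart of the argument is part (ii). The strategy mirrors Claim~1 in the proof of Theorem~\ref{th:LSub}: I will take $H=Z\,\SU_3(q)$ acting on $\Omega$, note that $H$ is transitive on $\mathcal{L}$ (Definition~\ref{con:subfieldSU}) and $X=H_{L_{e,f}}$ is $2$-transitive on the points of $L_{e,f}$ (Lemma~\ref{lf7U}(2)), and apply Lemma~\ref{claim1}. Given any line $L\in\mathcal{L}$ containing both $\omr e$ and $\omr f$, this produces $g\in H$ fixing $\omr e$ and $\omr f$ with $(L_{e,f})^g=L$. By Lemma~\ref{l:2stab}, such a $g$ has the form $\diag(\mu_1,\delta,\mu_2)$ with $\mu_1,\mu_2\in\omr$ and $\mu_1\mu_2^q=\delta^{q+1}\in \omr\cap \omr[q+1]$. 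Using Lemma~\ref{lem:lineformSU}(3,4), $L=L_{\mu_1 e,\mu_2 f}=L_{e,\mu_1^{-1}\mu_2 f}$, and by Lemma~\ref{lem:lineformSU}(5) it remains to show $\mu_1^{-1}\mu_2\in\mathbb{F}_{q_0}^*$, which will force $L=L_{e,f}$.

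The key (and mildly subtle) step will be the identification $\omr\cap\omr[q+1]=\mathbb{F}_{q_0}^*$ together with the subsequent arithmetic in the cyclic group $\langle\omega\rangle$. Since $(r,q+1)=1$ by the hypothesis of Definition~\ref{con:subfieldSU}, we have $\mathrm{lcm}\{r,q+1\}=r(q+1)$, so $\omr\cap\omr[q+1]=\omr[r(q+1)]=\mathbb{F}_{q_0}^*$. Writing $\mu_i=\omega^{rs_i}$, the condition $\mu_1\mu_2^q\in\mathbb{F}_{q_0}^*$ is equivalent to $s_1+qs_2\equiv 0\pmod{q+1}$, which (since $q\equiv-1\pmod{q+1}$) is equivalent to $s_1\equiv s_2\pmod{q+1}$, and hence $\mu_1^{-1}\mu_2=\omega^{r(s_2-s_1)}\in\omr[r(q+1)]=\mathbb{F}_{q_0}^*$, as required. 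The main potential obstacle is keeping track of the various multiplicative subgroups and verifying $(r,q+1)=1$ (which is explicit in the hypotheses), and ensuring that Lemma~\ref{claim1} is applied with the correct group $G=H/Y$ acting faithfully on $\Omega$; no further conditions from Lemma~\ref{l:2stab}(b) are needed.
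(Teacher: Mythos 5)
Your proposal is correct and follows essentially the same route as the paper: automorphisms via Lemma~\ref{lem:lineformSU}(3), reduction of an arbitrary collinear pair to $(\omr e,\omr f)$ via Lemma~\ref{lf7U}(2) and Lemma~\ref{claim1}, the diagonal form of the stabilising element from Lemma~\ref{l:2stab}, and the identification $\omr\cap\omr[q+1]=\omr[r(q+1)]=\mathbb{F}_{q_0}^*$ to force $\mu_1^{-1}\mu_2\in\mathbb{F}_{q_0}^*$. The only cosmetic difference is in the last arithmetic step, where the paper writes $\mu_1^{-1}\mu_2=(\mu_1\mu_2^q)^{-1}\mu_2^{q+1}$ instead of your exponent congruence modulo $q+1$; both are equivalent and correct.
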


\begin{proof}
By Lemma \ref{lem:lineformSU}(1), lines have size $q_0+1$ where $q_0$ is a prime power, so the line-size is a constant at least $3$, and by Lemma \ref{lem:lineformSU}(3), $\Aut(\mathcal{D})$ contains $\GammaU_3(q)/Y$.
  We need to show that every pair of distinct points of $\Omega$ is contained in at most one line.

\medskip  
\noindent
{\it Claim 1:  If $w_1,w_2 \in \mathcal{T}$ are such that $\omr w_1$ and $\omr w_2$ are distinct collinear points of $\Omega$, and both lie on a line $L$, then there is an element $g\in Z \, \SU_3(q)$   mapping  $L_{e,f}$ to $L$, $\omr e$ to $\omr w_1$ and $\omr f$ to $\omr w_2$.} 

\medskip     
\noindent     {\it Proof of Claim 1:} The existence of $g\in Z\,\SU_3(q)$ mapping  $L_{e,f}$ to $L$ follows from  Definition \ref{con:subfieldSU}.  That $g$ may be modified so that $g:\omr e \to \omr w_1$ and $g:\omr f \to \omr w_2$ follows from Lemma \ref{lf7U}(2) and Lemma \ref{claim1}.

\medskip    
Note  that $g$ preserves $\mathcal{L}$ (so induces an automorphism of $\mathcal{D}$).
In particular  the number of lines containing $\omr w_1$ and $\omr w_2$ is equal to the number of lines containing $\omr e$ and $\omr f$. 

 \medskip 
\noindent
{\it Claim 2:  A line $L$ contains $\omr e$ and $\omr f$ if and only if $L=L_{e,f}$.} 

  \medskip 
\noindent    
     {\it Proof of Claim 2:}
Let $L$ be an arbitrary line containing $\omr e$ and $\omr f$.
By Claim 1,  there is an element $g$ of $Z \, \SU_3(q)$ mapping  $L_{e,f}$ to $L$ and  fixing $\omr e$ and $\omr f$.
Then by Lemma~\ref{l:2stab}, $g=\diag(\mu_1, \delta, \mu_2)$, for some $\mu_1, \mu_2 \in \omr$ and $\delta\in\mathbb{F}_{q^2}^*$ such that  $\delta^{q+1} = \mu_1\mu_2^q \in\omr \cap \omr[q+1]$. Note that $\omr \cap \omr[q+1]=\omr[r(q+1)]$ since, by Definition~\ref{con:subfieldSU}, $r$ is odd and divides $q-1$ so $(r,q+1)=1$; also $r(q+1)=(q^2-1)/(q_0-1)$ so $\omr[r(q+1)]=\mathbb{F}_{q_0}^*$. Now by Lemma \ref{lem:lineformSU}(3), $L=L_{(e)g,(f)g}= L_{\mu_1 e,\mu_2 f}$, and then by Lemma~\ref{lem:lineformSU}(4), we obtain 
$$
L=L_{ e,\mu_1^{-1}\mu_2 f}.
$$
 The conditions on the $\mu_i$ imply  that $\mu_1^{-1}\mu_2= (\mu_1 \mu_2^q)^{-1} \cdot \mu_2^{q+1} \in \omr[r(q+1)]=\mathbb{F}_{q_0}^*.$ Hence $L=L_{e,f}$ by Lemma~\ref{lem:lineformSU}(5), proving Claim 2.

\medskip
It follows directly from the Claims 1 and 2 that $\mathcal{D}$ is a partial linear space.
Since $r>1$ by Definition \ref{con:subfieldSU}, $\omr e$ and $\omr \omega e$ are distinct points of $\Omega$. They are  not contained in a common line, by Lemma \ref{lem:lineformSU}(2).
Since line-size is at least $3$, it follows that  $\mathcal{D}$ is a proper  partial linear space.
\end{proof}

\subsection{Definition and properties of $\mathrm{AGU}^*(q)$}

The second family of examples is analogous to the affine geometry $\mathrm{AG}^*(n,q)$ (Definition~\ref{AGdef}) in the case where the underlying space is equipped with a nondegenerate unitary form. 

 \begin{Def}\label{con:AGSU}
In the notation of Construction \ref{con:psu}, assume that $q>2$ and $q$  is even, and let $r=q-1.$  
We define the following sets:
\begin{itemize}
   \item $L_{u,v}=  \omr\{\lambda u + (1-\lambda) v \mid \lambda \in \mathbb{F}_q \}$   for linearly independent $u,v \in \mathcal{T}$;  
    \item  $\mathcal{L}=\{L_{u,v} \mid  (u,v)=(e,f)^g, g \in  Z \, \SU_3(q)\}$;
    \item $\mathrm{AGU}^*(q)=(\Omega, \mathcal{L})$.
\end{itemize}  
\end{Def}

The next lemma is an analogue  to Lemmas \ref{lem:lineform} and \ref{lem:lineformSU}.

\begin{Lem}\label{lem:lineformAGSU}
Using the notation of Definition \ref{con:AGSU}:
\begin{enumerate}[label=\normalfont (\arabic*)]
    \item $L_{u,v}$ has cardinality $q$;
        \item if distinct points $\omr w_1, \omr w_2$ are collinear, then $w_1$ and $w_2$ are linearly independent;      
    \item if $L_{u,v} \in \mathcal{L}$ and $g \in \GammaU_3(q),$ then $(L_{u,v})^g =L_{(u)g, (v)g} \in \mathcal{L};$  in particular, $\GammaU_3(q)$ leaves $\mathcal{L}$ invariant; 
    \item if $\mu \in \omr$ and $L_{u,v} \in \mathcal{L}$, then $L_{\mu u ,v }=L_{u, \mu^{-1}v};$
    \item if $\eta \in \mathbb{F}_{q^2}^*$ and $L_{u,v} \in \mathcal{L}$, then $L_{u, \eta v}= L_{u,v}$ if and only if  $\eta=  1.$  
    \end{enumerate}
\end{Lem}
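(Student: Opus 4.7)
The plan is to adapt the techniques from Lemmas~\ref{lem:lineform} and~\ref{lem:lineformSU} to the affine-type line definition in Definition~\ref{con:AGSU}, exploiting the fact that $q$ is even (so that $1=-1$, $\omr\cap \mathbb{F}_q^*=\{1\}$, and the relevant traces vanish).

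For part (1), I would first verify that $L_{u,v}$ is a well-defined subset of $\Omega$: every vector $\lambda u+(1-\lambda)v$ with $\lambda\in\mathbb{F}_q$ should be isotropic. Expanding the unitary form and using $(u,u)=(v,v)=0$ together with $\lambda^q=\lambda$ for $\lambda\in\mathbb{F}_q$ yields $(\lambda u+(1-\lambda)v,\lambda u+(1-\lambda)v)=\lambda(1-\lambda)\mathrm{Tr}((u,v))$. Writing the defining element as $g=cg_0$ with $c\in Z$ and $g_0\in\SU_3(q)$ (so that $(u,v)=(e,f)^g=c^{q+1}\in \mathbb{F}_q$), one obtains $\mathrm{Tr}(c^{q+1})=c^{q+1}+c^{q+1}=0$ in characteristic~$2$. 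For the cardinality $|L_{u,v}|=q$, I would argue that distinct $\lambda\in\mathbb{F}_q$ produce $\omr$-inequivalent vectors: an equality $\omr(\lambda_1 u+(1-\lambda_1)v)=\omr(\lambda_2 u+(1-\lambda_2)v)$ forces $\mu\in\omr$ with $\mu\lambda_2=\lambda_1$ and $\mu(1-\lambda_2)=1-\lambda_1$; adding these gives $\mu=1$, using that $\omr\cap\mathbb{F}_q^*=\{1\}$ when $q$ is even, and hence $\lambda_1=\lambda_2$.

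Parts (2)--(4) will follow arguments already seen. For (2), if $w_1, w_2$ were linearly dependent with $\omr w_1,\omr w_2$ distinct and collinear, then the representatives $\lambda_i u+(1-\lambda_i)v$ would be scalar multiples, forcing $\lambda_1=\lambda_2$ since the two scalar coefficients always sum to $1$, as in Lemma~\ref{lem:lineform}(2). For (3), I would decompose $g=\phi^j\,\diag(1,\omega^{t(q-1)},1)\,zg_0$ exactly as in the proof of Lemma~\ref{lem:lineformSU}(3), using that both $\phi$ and $\diag(1,\omega^{q-1},1)$ fix $e$ and $f$ to reduce membership in $\mathcal{L}$ to the case $g\in Z\,\SU_3(q)$; the set equality $(L_{u,v})^g=L_{(u)g,(v)g}$ then follows because $\phi^j$ preserves $\mathbb{F}_q$ and satisfies $(1-\lambda)^{p^j}=1-\lambda^{p^j}$. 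Part (4) is immediate: multiplying $\lambda\mu u+(1-\lambda)v$ by $\mu^{-1}\in\omr$ produces $\lambda u+(1-\lambda)\mu^{-1}v$.

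The main obstacle is part (5). Assuming $L_{u,\eta v}=L_{u,v}$ for some $\eta\in\mathbb{F}_{q^2}^*$, I would pick $\alpha\in\mathbb{F}_q\setminus\{0,1\}$ (available since $q\geq 4$) and consider the point $\omr(\alpha u+(1-\alpha)\eta v)\in L_{u,\eta v}=L_{u,v}$. This point must equal $\omr(\lambda' u+(1-\lambda')v)$ for some $\lambda'\in\mathbb{F}_q$, producing $\mu\in\omr$ with $\mu\lambda'=\alpha$ and $\mu(1-\lambda')=(1-\alpha)\eta$; adding gives $\mu=\alpha+(1-\alpha)\eta$, and $\lambda'=\alpha/\mu\in\mathbb{F}_q^*$ forces $\mu\in\mathbb{F}_q^*$, hence $\mu\in\mathbb{F}_q^*\cap\omr=\{1\}$ in characteristic~$2$. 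Thus $\alpha+(1-\alpha)\eta=1$, i.e.\ $(1-\alpha)(\eta-1)=0$, and $\alpha\neq 1$ forces $\eta=1$; the converse is trivial.
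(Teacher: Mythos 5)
Your proposal is correct and follows essentially the same route as the paper: the same coefficient-comparison arguments for parts (1), (2), (4) and (5) (using that $\omr\cap\mathbb{F}_q^*=\{1\}$ for $q$ even, and summing the two coefficients to pin down the scalar), and the same reduction of part (3) to the case $g\in Z\,\SU_3(q)$ via the decomposition $g=\phi^j\,\diag(1,\omega^{t(q-1)},1)\,zg_0$. If anything, your verification in part (1) that the cross term $\lambda(1-\lambda)\,\mathrm{Tr}((u,v))$ vanishes because $(u,v)=c^{q+1}\in\mathbb{F}_q=\ker(\mathrm{Tr})$ is spelled out more carefully than in the paper, which simply invokes that $q$ is even.
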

     \begin{proof} 
 The proofs of parts \ref{lfSUd}  and \ref{lfSU4} are analogous to the proofs of Lemma \ref{lem:lineform}(2) and (4), respectively, so the proof details are omitted. 
     
(1) First let us show  that $\lambda u+(1-\lambda) v\in \mathcal{T}$ when $\lambda\in \mathbb{F}_q $. Indeed, using the fact that $\lambda^q=\lambda$ for $\lambda \in  \mathbb{F}_q $, we obtain
 \[(\lambda u+(1-\lambda) v,\lambda u+(1-\lambda) v)=\lambda^2(u,u)+(1-\lambda)^2(v,v)+2\lambda(1-\lambda)(u,v)=0\]
since $u,v\in \mathcal{T}$ and $q$ is even.
Assume $\omr (\lambda u+(1-\lambda) v)=\omr (\mu u+(1-\mu) v)$ where $\lambda,\mu\in  \mathbb{F}_q.$ Then $\lambda\mu^{-1}\in \omr$ and so 
$\omr (\mu u+(1-\mu) v)=\omr( \lambda u+(\lambda\mu^{-1}-\lambda)v)$, which we are assuming is equal to $\omr (\lambda u+(1-\lambda) v)$. This implies that $\lambda=\mu$. It follows that $|L_{u,v}|=|\mathbb{F}_q|$ and part (1) holds.

\medskip
  $(3)$ The proof here is similar to that for Lemma \ref{lem:lineformSU}(3), namely one first shows that it is sufficient to prove (3) in the case where $(u,v)=(e,f)$, and the same careful argument as that given in Lemma \ref{lem:lineformSU}(3) shows that, for $g \in \GammaU_3(q)$, the line $L_{(e)g,(f)g} \in \mathcal{L}$. Finally a similar computation shows that $(L_{e,f})^{g}=L_{(e)g,(f)g}$.

         (5) Assume that $L_{u,\eta v}=L_{u,v}$ and let $1 \ne \lambda \in \mathbb{F}_q^*$ (such an element $\lambda$ exists since $q>2$).  By Definition~\ref{con:AGSU},  $u,v$ are linearly independent.
                Now the point $\alpha:=\omr(\lambda u +(1-\lambda)\eta v) \in L_{u, \eta v}$ and so $\alpha$ must be equal to a point $\omr(\mu u + (1-\mu) v)$ of $L_{u,v}$, for some $\mu \in \mathbb{F}_q.$ Thus $\xi(\lambda u +(1-\lambda)\eta v)= \mu u + (1-\mu) v$, for some $\xi\in \omr = \omr[q-1]$. In particular $\mu=\xi\lambda$. Since $\lambda \ne 0$, we must have $\mu\ne0$, and so $\xi=\mu\lambda^{-1}\in \mathbb{F}_q^*\cap \omr[q-1] = \{1\}$. Thus $\xi=1$ and $\mu=\lambda$. We also require $\xi(1-\lambda)\eta=1-\mu$ (the coefficient of $v$), and so $(1-\lambda)\eta =1-\lambda $, which yields $\eta=1$ since $\lambda\neq 1$.
                Thus Part (5) is proved. 
\end{proof}

To formulate the next lemma we introduce a subgroup slightly different from the subgroup of elements \eqref{Melement} used in the previous subsection. Here we take $M$ to be the  set of elements  of $Z \, \SU_3(q)$ of the  form
\begin{equation}\label{MelementAG}
g= \begin{pmatrix}
    \nu_{1} & 0 & 1-\nu_{1} \\
    0 & \delta & 0 \\
    \nu_{2} & 0 & 1-\nu_{2}
\end{pmatrix},  \textrm{ where } \delta \in \mathbb{F}_{q^2}^*,\nu_{i} \in \mathbb{F}_q \textrm{ for } i=1,2, \textrm{ and }\nu_1\neq \nu_2. 
\end{equation}
Each matrix $g$ of this form leaves $U=\langle e,f\rangle$ invariant, and hence $g\in  (Z \, \SU_3(q))_U$, so Lemma~\ref{l:2substab} applies to elements of $M$. Also
\begin{equation} \label{e:detg}
  d:=\delta^{-1}\det(g) =   \nu_1(1-\nu_2)-\nu_2(1-\nu_1) = \nu_1-\nu_2\in\mathbb{F}_q^*.   
\end{equation}
Moreover it is routine to check that the set $M$ forms a subgroup, and so $M\leq (Z \, \SU_3(q))_U$.

\begin{Pro}\label{prMAG}
    For each choice of the $\nu_{1},\nu_2$ as in \eqref{MelementAG}, there are exactly $(q+1)/(3,q+1)$ elements $\delta \in \mathbb{F}_{q^2}^*$ such that $g \in (Z \, \SU_3(q))_U.$
\end{Pro}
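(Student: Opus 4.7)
The plan is to mirror the proof of Proposition \ref{prM}, invoking Lemma \ref{l:2substab} for the matrix $g$ of \eqref{MelementAG}, whose entries $\nu_{11}=\nu_1$, $\nu_{12}=1-\nu_1$, $\nu_{21}=\nu_2$, $\nu_{22}=1-\nu_2$ all lie in $\mathbb{F}_q$. Since $q$ is even by Definition \ref{con:AGSU}, every $\eta\in\mathbb{F}_q$ satisfies $\eta^q=\eta$ and $\mathrm{Tr}(\eta)=2\eta=0$, so I expect a short computation to give $\nu_{11}\nu_{22}^q+\nu_{12}\nu_{21}^q = \nu_1+\nu_2 = \nu_1-\nu_2 = d$ (using \eqref{e:detg} and characteristic $2$), and the two trace conditions $\mathrm{Tr}(\nu_{i1}\nu_{i2}^q)=0$ to hold automatically since each product $\nu_{i1}\nu_{i2}^q=\nu_i(1-\nu_i)$ already lies in $\mathbb{F}_q$. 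Hence condition (a) of Lemma \ref{l:2substab} will collapse to the single equation $\delta^{q+1}=d$, which, since $d\in \mathbb{F}_q^*$ and the norm map $\delta\mapsto\delta^{q+1}$ from $\mathbb{F}_{q^2}^*$ onto $\mathbb{F}_q^*$ has kernel of order $q+1$, has exactly $q+1$ solutions in $\mathbb{F}_{q^2}^*$.

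Next, fixing one such solution as $\omega^{\ell_0}$ and parametrising the others as $\delta=\omega^{\ell_0+m(q-1)}$ for $m=0,1,\ldots,q$, I would turn to condition (b) of Lemma \ref{l:2substab}: one needs $\lambda\in\mathbb{F}_{q^2}^*$ with $\lambda^{q+1}=\delta^{q+1}$ and $\lambda^3=\delta\det(\nu_{ij})=\delta d=\delta^{q+2}$. The first equation forces $\lambda=\delta\omega^{s(q-1)}$ for some integer $s$, and the second then rearranges to $\omega^{3s(q-1)}=\delta^{q-1}$, i.e., the congruence
\[
3s\equiv \ell_0+m(q-1)\pmod{q+1}.
\]
This is precisely the congruence treated in the closing paragraphs of the proof of Proposition \ref{prM}, and I would finish by repeating that case split: if $3\nmid (q+1)$, then $3$ is invertible modulo $q+1$ and every $m$ yields a valid $s$, so all $q+1$ choices of $\delta$ work; if $3\mid (q+1)$, then using $q-1\equiv 1\pmod 3$ solvability reduces to $3\mid \ell_0+m$, which, as $m$ ranges over $\{0,\ldots,q\}$, selects exactly $(q+1)/3$ values. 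In either case the count of admissible $\delta$ is $(q+1)/(3,q+1)$, as required.

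The main obstacle, such as it is, is concentrated entirely in the characteristic-$2$ simplifications of condition (a): these play the role that the subfield-and-$W$ analysis played in Proposition \ref{prM}, and have to be carried out carefully so as to ensure both the vanishing of the two trace conditions and the identification of $\nu_{11}\nu_{22}^q+\nu_{12}\nu_{21}^q$ with $d$. Once the equation $\delta^{q+1}=d$ is in place, the modular counting required for condition (b) is structurally identical to that of Proposition \ref{prM}, so no genuinely new idea is needed and the proof reduces to bookkeeping plus a direct appeal to the earlier argument.
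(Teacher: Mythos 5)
Your proposal is correct and follows essentially the same route as the paper's proof: reduce Lemma \ref{l:2substab}(a) to the single equation $\delta^{q+1}=d$ using the characteristic-$2$ facts that $\eta^q=\eta$ and $\mathrm{Tr}(\eta)=0$ for $\eta\in\mathbb{F}_q$, then count the admissible $\delta$ via the congruence $3s\equiv \ell_0+m(q-1)\pmod{q+1}$ exactly as in Proposition \ref{prM}. The paper does precisely this, likewise deferring the final modular counting to the earlier argument.
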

\begin{proof}
Suppose that $\nu_{1},\nu_2$ satisfy the conditions in \eqref{MelementAG}, that is, each $\nu_i\in\mathbb{F}_q$ and $\nu_1\ne \nu_2$, and let $g$ be a matrix as in \eqref{MelementAG} for some $\delta\in\mathbb{F}_{q^2}^*$. Each such matrix leaves the subspace $U$ invariant.  
 Thus we need to count the number of possible entries $\delta \in \mathbb{F}_{q^2}^*$ such that $g \in Z \, \SU_3(q)$, and for this we use the criteria established in Lemma~\ref{l:2substab}, setting $\nu_{i1}=\nu_i$ and $\nu_{i2}=1-\nu_i$, for $i=1,2$. 

Note that $\nu_i^q=\nu_i$ for each $i$, and so $\nu_{11}\nu_{22}^q+\nu_{12}\nu_{21}^q=\nu_{1}(1-\nu_{2})+(1-\nu_{1})\nu_{2}$ is equal to the element $d$ in \eqref{e:detg} since $q$ is even. 
Moreover, for $i=1, 2$, $\mathrm{Tr}(\nu_{i1}\nu_{i2}^q)=0$ since $\nu_{i1}\nu_{i2}^q=\nu_i(1-\nu_i)\in \mathbb{F}_{q}$ and $\mathbb{F}_{q}=\mathrm{ker}(\mathrm{Tr})$ by Lemma~\ref{l:trace}(2). Thus $g$ satisfies Lemma~\ref{l:2substab}(a) if and only if $\delta^{q+1}=d$. 

By \eqref{e:detg}, $d=\nu_1-\nu_2\in \mathbb{F}_{q}^*=\omr[q+1]$, so arguing as in the proof of Lemma~\ref{prM}, there are exactly $q+1$ elements $\delta\in\mathbb{F}_{q^2}^*$ such that $\delta^{q+1}=d$, and for each such element $\delta$, the matrix $g$ lies in $ Z\,\SU_3(q)$  if and only if the conditions of Lemma~\ref{l:2substab}(b) hold: namely, there exists an element $\lambda=\delta\,\omega^{s(q-1)}$, for some $s$, (to ensure that  $\lambda^{q+1}=\delta^{q+1}$), such that $\lambda^3=\delta d=\delta^{q+2}$.  These are exactly the same conditions that were considered in the proof of Lemma~\ref{prM}. Using exactly the same argument we conclude that $g \in (Z \, \SU_3(q))_U$ for exactly $(q+1)/(3,q+1)$ values of $\delta$, as asserted.
\end{proof}

\begin{Lem}\label{lf7UAG}
    Let $X$ be the stabiliser of $L_{e,f}$ in $Z \, \SU_3(q)$. Then 
    \begin{enumerate}[label=\normalfont (\arabic*)]
    \item  $X=MY$, where $M$ is the subgroup of $Z \, \SU_3(q)$ consisting of matrices as in \eqref{MelementAG}, and $Y =\langle \omega^r I\rangle$  as in Construction $\ref{con:psu}$;
    \item $X$ and its subgroup $M$ both act $2$-transitively on the points of $L_{e,f};$
    \item $|\mathcal{L}|=q^2(q^3+1)(q-1).$
    \end{enumerate}
\end{Lem}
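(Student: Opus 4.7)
The plan is to mirror the structure of the proof of Lemma~\ref{lf7U}, exploiting the analogous description of $M$ given in \eqref{MelementAG} together with the counting result of Proposition~\ref{prMAG} and the general point-stabiliser description in Lemma~\ref{l:2stab}. I would organise the argument around four short claims.

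\textbf{Claim 1: } $MY\le X$. For $g\in M$ as in \eqref{MelementAG}, compute $(e)g=\nu_1 e+(1-\nu_1)f$ and $(f)g=\nu_2 e+(1-\nu_2)f$, so by Lemma~\ref{lem:lineformAGSU}(3), $L^g=L_{(e)g,(f)g}$. Using Lemma~\ref{lem:lineformAGSU}(1), every point of $L^g$ has the form $\omr(\lambda(e)g+(1-\lambda)(f)g)$ with $\lambda\in\mathbb{F}_q$, and expanding shows its $e,f$-coordinates both lie in $\mathbb{F}_q$ with coefficients summing to $1$; hence this point lies in $L=L_{e,f}$. Since $|L^g|=|L|=q$ by Lemma~\ref{lem:lineformAGSU}(1), we get $L^g=L$, so $M\le X$. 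As $Y$ fixes $\Omega$ pointwise, $Y\le X$, and thus $MY\le X$.

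\textbf{Claim 2: } $M$ (and hence $X$) is $2$-transitive on $L$, proving part (2). Let $\alpha_1=\omr e,\alpha_2=\omr f$. By Lemma~\ref{lem:lineformAGSU}(1), an arbitrary ordered pair of distinct points of $L$ can be written as $\omr(\nu_i e+(1-\nu_i)f)$ for $i=1,2$, with $\nu_1,\nu_2\in\mathbb{F}_q$ distinct. Form $g$ as in \eqref{MelementAG} with these $\nu_1,\nu_2$; by Proposition~\ref{prMAG} there is a choice of $\delta$ making $g\in Z\,\SU_3(q)$, hence $g\in M$. Then $g$ maps $(\alpha_1,\alpha_2)$ to the prescribed ordered pair, yielding $2$-transitivity.

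\textbf{Claim 3: } $X=MY$, proving part (1). By Claim 2, $X=M\cdot X_{\alpha_1,\alpha_2}$, so it suffices to show $X_{\alpha_1,\alpha_2}\le MY$. For $g\in X_{\alpha_1,\alpha_2}$, Lemma~\ref{l:2stab} gives $g=\diag(\mu_1,\delta,\mu_2)$ with $\mu_1,\mu_2\in\omr$ and $\delta^{q+1}=\mu_1\mu_2^q\in\omr\cap\omr[q+1]$. Using Lemma~\ref{lem:lineformAGSU}(3,4), $L=L^g=L_{\mu_1 e,\mu_2 f}=L_{e,\mu_1^{-1}\mu_2 f}$, and then Lemma~\ref{lem:lineformAGSU}(5) forces $\mu_1^{-1}\mu_2=1$, i.e.\ $\mu_1=\mu_2=:\mu$. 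Writing $g=(\mu I)\cdot g_1$ with $g_1=\diag(1,\mu^{-1}\delta,1)$, note that $\mu I\in Y$ (since $\mu\in\omr$) and $g_1\in Z\,\SU_3(q)$; moreover $g_1$ has the shape \eqref{MelementAG} with $\nu_1=1,\nu_2=0$ (and they are distinct), so $g_1\in M$. Hence $g\in MY$, as required.

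\textbf{Claim 4: } Compute $|\mathcal{L}|$. By Definition~\ref{con:AGSU}, $Z\,\SU_3(q)$ is transitive on $\mathcal{L}$, so $|\mathcal{L}|=|Z\,\SU_3(q)|/|X|$. Use $|Z\,\SU_3(q)|=\frac{(q^2-1)q^3(q^3+1)(q^2-1)}{(3,q+1)}$, and from $X=MY$ derive $|X|=|M|\cdot|Y|/|M\cap Y|$. Proposition~\ref{prMAG} together with the $q(q-1)$ admissible pairs $(\nu_1,\nu_2)$ yields $|M|=\frac{q(q-1)(q+1)}{(3,q+1)}$, while $|Y|=(q^2-1)/r=q+1$. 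A short check shows $M\cap Y=\{I\}$: any scalar $\lambda I\in Y$ that also has the form \eqref{MelementAG} must satisfy $1-\nu_1=0$ and $\nu_2=0$, forcing $\lambda=1$. Assembling the numbers gives $|X|=\frac{q(q-1)(q+1)^2}{(3,q+1)}$ and then $|\mathcal{L}|=q^2(q^3+1)(q-1)$.

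The only real obstacle is Claim 3; everything else is a straightforward parallel to Lemma~\ref{lf7U}. The key point in Claim 3 is recognising that the analogue of ``$M\cap Y$ controls the scalar ambiguity'' is now trivial — one cannot absorb a nontrivial scalar of $\omr$ into a matrix of the shape \eqref{MelementAG} — which is precisely why the decomposition $g=(\mu I)g_1$ is forced with $\mu I\in Y$ and $g_1$ of the required canonical form.
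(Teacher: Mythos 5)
Your proposal is correct and follows essentially the same four-claim structure as the paper's proof, using the same ingredients (Lemma~\ref{lem:lineformAGSU}, Proposition~\ref{prMAG}, Lemma~\ref{l:2stab}) and the same counting. The only local difference is in Claim~3, where you deduce $\mu_1=\mu_2$ from the line identities in Lemma~\ref{lem:lineformAGSU}(4,5), whereas the paper gets it from the arithmetic fact $\omr[q-1]\cap\omr[q+1]=\{1\}$ for $q$ even (forcing $\delta^{q+1}=1$ and $\mu_1=\mu_2^{-q}=\mu_2$); both routes are valid.
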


\begin{proof}
      Let $L=L_{e,f}$ and let $X$ be the stabiliser in $Z \, \SU_3(q)$ of $L$.
      
\medskip\noindent
 \emph{Claim 1:} $MY\leq X$.  

      \smallskip
      First we show that $M$ leaves $L$ invariant. Let $g \in M$ be as in \eqref{MelementAG}. By Lemma \ref{lem:lineformAGSU}(3), $L^g=L_{(e)g,(f)g}$, so the points of $L^g$ are the $q$ points  $\omr (\lambda (e)g+(1-\lambda)(f)g)$ for $\lambda\in \mathbb{F}_q.$ 
     Now 
\begin{align*}
    \omr (\lambda (e)g+(1-\lambda)(f)g)&=\omr (\lambda (\nu_{1}e + (1-\nu_1)f))+(1-\lambda)(\nu_{2}e + (1-\nu_{2})f)\\
       &=\omr (\lambda (\nu_{1}-\nu_2)+\nu_{2})e + (1-\nu_2-\lambda(\nu_1-\nu_2))f).
\end{align*}
Note that each coefficient is in $\mathbb{F}_q$ (since $\lambda,\nu_1,\nu_2 \in \mathbb{F}_q$ and the sum of the $e$- and $f$-coefficients is equal to $1$),  so the point $\omr (\lambda (e)g+(1-\lambda)(f)g)$ lies on $L$ for each $\lambda\in\mathbb{F}_q$.  
      This proves that $L^g\subseteq L$ and we have equality since both sets have cardinality $q$. It follows that the subgroup $M$ leaves $L$ invariant, and hence $M\leq X$.  Also $Y$ fixes every point so $Y\leq X$. Thus   $MY\leq X$, and Claim 1 is proved.

\medskip\noindent
 \emph{Claim 2:} $M$, and hence also $X$, acts $2$-transitively on the $q$ points of $L$ (proving part (2)). 
   
   \smallskip

 Consider a pair of distinct points $\omr u_1$, $\omr u_2$ of $L_{e,f}$. By Definition \ref{con:AGSU}, for each $i$, we have $\omr u_i =  \omr (\nu_{i}e +(1-\nu_{i})f)$ for some $\nu_{i} \in \mathbb{F}_q$ and $\nu_1\neq \nu_2$. Thus the $\nu_{i}$ satisfy the conditions of \eqref{MelementAG}, and so by  Proposition~\ref{prMAG}, there are precisely  $(q+1)/(3,q+1)$ elements $\delta\in\mathbb{F}_q^*$ such that the matrix  $g$ in  $\eqref{MelementAG}$ with these entries $\nu_i, \delta$, lies  in $Z\,\SU_3(q)$, and hence lies  in $M$. Each of these elements $g\in M$ maps the two points $\alpha_1:=\omr e, \alpha_2:=\omr f$ to the two points $\omr u_1,\omr u_2$ of $L_{e,f}$ (in that order). 
Thus $M$ is $2$-transitive on the points of $L$, and hence (by Claim 1) also $X$ is $2$-transitive on the points of $L$ proving Claim 2.

 \medskip\noindent
 \emph{Claim 3:} $X=MY$ (proving part (1)).    

\smallskip

    Since $M$ is $2$-transitive on the points of $L$, it follows that $X=X_{\alpha_1,\alpha_2}  M$. Now let $g\in X_{\alpha_1,\alpha_2}$. Then, by Lemma~\ref{l:2stab},  
    $g=\diag(\mu_1, \delta, \mu_2)$ for some $\mu_i\in\omr[q-1]$ (since $r=q-1$ in Definition~\ref{con:AGSU}) and $\delta \in \mathbb{F}_{q^2}^*$ such that $\delta^{q+1}=\mu_1\mu_2^q\in \omr[q-1]\cap\omr[q+1]$.  Now $\omr[q-1] \cap \omr[q+1] =\omr[(q^2-1)/(2,q+1)]$, which is $\{1\}$ since $q$ is even. This implies firstly that  $\delta^{q+1}=1$, which means that $\delta\in\omr[q-1]$. It also implies that   $\mu_1=\mu_2^{-q}$. Since $\mu_2\in\omr[q-1]$ we have $\mu_2=\nu^{q-1}$ for some $\nu\in\mathbb{F}_{q^2}^*$ and hence $\mu_2^{-q}=\nu^{-q^2+q}=\nu^{-1+q}=\mu_2$.  Therefore $\mu_1=\mu_2=\mu$, say, and we have   $g=\diag(\mu, \delta, \mu) = (\mu\, I)\cdot \diag(1,\mu^{-1}\delta, 1)$. Now $\mu\,I\in Y$ since $\mu\in\omr[q-1]=\omr$, and  also
    $\diag(1,\mu^{-1}\delta, 1) = (\mu^{-1} I)\cdot g \in Z\,\SU_3(q)$. Thus 
    $\diag(1,\mu^{-1}\delta, 1)\in M$, by the definition of $M$-elements in \eqref{MelementAG}, and hence $g\in MY$. Thus $X\leq MY$, and equality holds by Claim 1, proving Claim 3. 

 \medskip\noindent
 \emph{Claim 4:} $|\mathcal{L}|$ is as in part (3) (completing the proof of the lemma).    

\smallskip

   Since $Z \, \SU_3(q)$ is transitive on $\mathcal{L},$ we have 
    $$|\mathcal{L}|=\frac{|Z \, \SU_3(q)|}{|X|}.$$
    Also 
    $$
    |Z \, \SU_3(q)|=\frac{|Z|\cdot |\SU_3(q)|}{|Z \cap \SU_3(q)|}= \frac{(q^2-1) \cdot q^3(q^3+1)(q^2-1)}{(3,q+1)}.
    $$ 
    Since $X=MY$, we have $|X|= |M|\cdot |Y|/|M \cap Y|.$ To determine $|M|$ we note that there are $q(q-1)$ choices for the entries $\nu_{1}, \nu_2\in \mathbb{F}_q$ with $\nu_1\ne \nu_2$, and given these choices for the $\nu_{i}$, the number of choices for $\delta$ to form an element $g$ of $M$ is given by Proposition \ref{prMAG}, yielding
    $$
    |M|=q(q-1) \cdot \frac{(q+1)}{(3,q+1)}.
    $$
    Since $\omr \cap \mathbb{F}_{q}^*=\omr[q-1] \cap \omr[q+1] =1$ (as $q$ is even), we have $|M \cap Y|= 1$, and also $|Y|=q+1$ (since $r=q-1$). Thus
    $$
    |X|= q(q-1) \cdot \frac{(q+1)^2}{(3,q+1)}.
    $$
    The claimed expression for $|\mathcal{L}|$ in part (3) now follows.
\end{proof}

\begin{Th}\label{t:AGUstar}
    Let  $\mathcal{D}=\mathrm{AGU}^*(q)$ as in Definition~$\ref{con:AGSU}$. Then 
    $\mathcal{D}$ is a proper partial linear space, and $\Aut(\mathcal{D})$ contains $\GammaU_3(q)/Y$.
\end{Th}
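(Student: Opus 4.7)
The plan is to mirror the structure of the proof of Theorem~\ref{t:USub}, replacing each result about $\mathrm{USub}(q,q_0,r)$ with its $\mathrm{AGU}^*(q)$ counterpart. The preliminaries come directly from Lemma~\ref{lem:lineformAGSU}: part (1) gives constant line-size $q\geq 3$ (since $q>2$), and part (3) gives that $\GammaU_3(q)/Y\leq \Aut(\mathcal{D})$. So it remains to show that every pair of collinear points lies on a unique line, and that at least one pair of distinct non-collinear points exists.

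Next I would carry out a transitivity reduction. By Lemma~\ref{lf7UAG}(2) the line stabiliser $X$ in $Z\,\SU_3(q)$ of $L_{e,f}$ acts $2$-transitively on the points of $L_{e,f}$, and by Definition~\ref{con:AGSU} the group $Z\,\SU_3(q)$ is transitive on $\mathcal{L}$; Lemma~\ref{claim1} then yields that for any distinct collinear points $\omr w_1,\omr w_2$ lying on a line $L$, there exists $g\in Z\,\SU_3(q)$ with $L=(L_{e,f})^g$, $(\omr e)g=\omr w_1$ and $(\omr f)g=\omr w_2$. It therefore suffices to prove that $L_{e,f}$ is the only line of $\mathcal{L}$ through $\omr e$ and $\omr f$.

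For this uniqueness step, let $L$ be any line through $\omr e,\omr f$. Applying the reduction above produces $g\in Z\,\SU_3(q)$ stabilising both $\omr e$ and $\omr f$ and mapping $L_{e,f}$ to $L$. By Lemma~\ref{l:2stab}, $g=\diag(\mu_1,\delta,\mu_2)$ with $\mu_i\in\omr=\omr[q-1]$ and $\delta^{q+1}=\mu_1\mu_2^q\in\omr[q-1]\cap\omr[q+1]$. Since $q$ is even, $\gcd(q-1,q+1)=1$, so this intersection is trivial and $\mu_1\mu_2^q=1$. Now $\mu_2\in\omr[q-1]$ has order dividing $|\omr[q-1]|=q+1$, so $\mu_2^q=\mu_2^{-1}$, forcing $\mu_1=\mu_2$, i.e.\ $\mu_1^{-1}\mu_2=1$. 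Then Lemma~\ref{lem:lineformAGSU}(3,4,5) gives $L=L_{\mu_1 e,\mu_2 f}=L_{e,\mu_1^{-1}\mu_2 f}=L_{e,f}$, as required.

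Finally, properness is immediate: since $r=q-1>1$, the points $\omr e$ and $\omr \omega e$ are distinct, and since $e$ and $\omega e$ are linearly dependent, Lemma~\ref{lem:lineformAGSU}(2) shows they are not collinear; combined with line-size $q\geq 3$ this proves $\mathcal{D}$ is a proper partial linear space. The only genuinely new point, relative to the proof of Theorem~\ref{t:USub}, is the arithmetic step $\omr[q-1]\cap\omr[q+1]=\{1\}$, which is precisely where the even-$q$ hypothesis of Definition~\ref{con:AGSU} enters and replaces the identity $\omr\cap\omr[q+1]=\mathbb{F}_{q_0}^*$ used in the $\mathrm{USub}$ argument; everything else is a routine translation.
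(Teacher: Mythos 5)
Your proposal is correct and follows essentially the same route as the paper's proof: the same reduction via Lemma~\ref{claim1} and Lemma~\ref{lf7UAG}(2), the same application of Lemma~\ref{l:2stab}, and the same key arithmetic fact that $\omr[q-1]\cap\omr[q+1]=\{1\}$ for $q$ even (your derivation of $\mu_1=\mu_2$ from $|\mu_2|$ dividing $q+1$ is a trivial variant of the paper's substitution $\mu_2=\nu^{q-1}$). No gaps.
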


\begin{proof}
By Lemma \ref{lem:lineformAGSU}(1), lines have size $q$ where $q$ is a power of $2$ and $q\neq 2$, so the line-size is a constant at least $4$,   and by Lemma \ref{lem:lineformAGSU}(3), $\Aut(\mathcal{D})$ contains $\GammaU_3(q)/Y$.
Also $r=q-1\geq 3$, and so $\omr e$ and $\omr \omega e$ are distinct points of $\Omega$; and they are  not contained in a common line by Lemma \ref{lem:lineformAGSU}(2). Thus to complete the proof that $\mathcal{D}$ is a proper partial linear space, it remains to show that each pair of distinct points of $\Omega$ lies on at most one line of $\mathcal{L}.$ 

\medskip   
\noindent
{\it Claim 1:  If $w_1,w_2 \in \mathcal{T}$ are such that $\omr w_1$ and $\omr w_2$ are distinct collinear points of $\Omega$ on a line $L$ of $\mathcal{D}$, then there is an element $g\in Z \, \SU_3(q)$   such that  $(L_{e,f}, \omr e, \omr f)^g=(L, \omr w_1, \omr w_2)$.}

\medskip    \noindent
This claim follows immediately from Definition \ref{con:AGSU}, Lemma \ref{lf7UAG}(2) and Lemma \ref{claim1}.
Note  that $g$ preserves $\mathcal{L}$ (so induces an automorphism of $\mathcal{D}$).
Hence  the number of lines containing $\omr w_1$ and $\omr w_2$ is equal to the number of lines containing $\omr e$ and $\omr f$. 

 \medskip 
\noindent
{\it Claim 2:  A line $L \in \mathcal{L}$ contains $\omr e$ and $\omr f$ if and only if $L=L_{e,f}$.} 

  \medskip 
\noindent   
     {\it Proof of Claim 2:}
Let $L$ be an arbitrary line containing $\omr e$ and $\omr f$.
By Claim 1  there is an element $g\in Z \, \SU_3(q)$  mapping  $L_{e,f}$ to $L$ and  fixing $\omr e$ and $\omr f$.
It therefore follows from Lemma~\ref{l:2stab} that $g=\diag(\mu_1,\delta,\mu_2)$ for some $\mu_i\in\omr = \omr[q-1]$, and $\delta\in\mathbb{F}_{q^2}^*$, such that $\delta^{q+1}=\mu_1\mu_2^{q}\in\omr[q-1]\cap \omr[q+1]=\omr[q^2-1]=1$ (as $q$ is even). Thus, noting that $\mu_2=\nu^{q-1}$ for some $\nu\in\mathbb{F}_{q^2}^*$, we also have $\mu_1=\mu_2^{-q}=
\nu^{-q^2+q}=\nu^{-1+q}=\mu_2$.
 Therefore, by Lemma~\ref{lem:lineformAGSU}(4,5), we obtain 
 $$
 L=L_{(e)g,(f)g}=L_{\mu_1 e, \mu_2 f}=  L_{ e,\mu_1^{-1}\mu_2 f}=L_.
 $$ 

\medskip
By Claims 1 and 2, each pair of distinct points of $\Omega$ lies on at most one line in $\mathcal{L}$, and this completes the proof.
\end{proof}

\section{Systems of imprimitivity for point stabilisers of linear groups}\label{sec:linear}

In this section we investigate systems of imprimitivity of $G_{\alpha}$ on $\Omega \backslash \sigma$ for $G \leq \ff$ of rank $3$ with $\ff$ as in Constructions \ref{con:psl}.
 
\medskip

Recall that, with the notation of Construction \ref{con:psl}, a group $G \leq \Sym(\Omega)$ such that 
$$
Y \, \SL_n(q)/Y \leq G \leq \ff
$$ 
has rank 3 if and only if the conditions in the first line (or the second line if $n=r=2$) of Table \ref{t:qpinsprk3} hold. In particular, $r$ is always a prime dividing $q-1$ in this section.

\medskip
Properly innately transitive and semiprimitive groups of rank 3 appearing in the first two lines of Table \ref{t:qpinsprk3} are classified in \cite{R3PIT} and \cite{semiprR3} in terms of  Construction \ref{con:psl}.
The imprimitive quasiprimitive  groups of rank $3$ were classified in \cite{DGLPP} using different notation. So, to be able to treat these groups simultaneously,  we show explicitly (see Lemma \ref{QPtoIT}) that all imprimitive quasiprimitive  rank $3$ groups with plinth $\PSL_n(q)$ as in the first two lines of Table \ref{t:qpinsprk3} arise in Construction \ref{con:psl}. First we describe briefly the rank $3$ imprimitive quasiprimitive groups with plinth $\PSL_n(q)$. (This is necessary because of the $56$ errors in numbering of the results in \cite{DGLPP} introduced by the publisher, see \cite{DGLPP2}.)

\begin{Rem}\label{alloccur}
Let $G_0\leq \Sym(\Omega_0)$ be an imprimitive quasiprimitive  group of rank $3$ with plinth $M_0 \cong \PSL_n(q)$ as in lines $2$ or $3$ of \cite[Table 1]{DGLPP}. Let $\Sigma_0$ be the unique nontrivial system of imprimitivity of $G_0$, and let $r=|\sigma|$ for $\sigma \in \Sigma_0$. We briefly restate the conditions on $G_0$ from \cite{DGLPP}. In particular, $G_0$ is almost simple and is isomorphic to $G_0^{\Sigma_0}$ which is 2-transitive of degree  $|\Sigma_0|=(q^n-1)/(q-1)$, and we can assume that 
$$
\PSL_n(q) \leq G_0 \leq \PGammaL_n(q)=\PGL_n(q) \rtimes \langle \phi \rangle.
$$  
Let $H_0=G_0 \cap \PGL_n(q)$ so,  for some $d$ dividing $(n,q-1)$, $H_0$ is the following set of cosets of $Z$ (the subgroup of scalars in $\GL_n(q)$):  $H_0 = \{A\,Z \mid A \in \GL_n(q), \det(A) \in \langle \omega^d \rangle\}$.  \ Let $j$ be the largest divisor of $a$ such that $G_0 \leq \PGL_n(q) \rtimes \langle \phi^j \rangle$, so $|G_0/(G_0 \cap \PGL_n(q))|=a/j$. Let  $\tau= \diag(1, \ldots, 1, \omega)$.    \begin{enumerate}
    \item If $n \geq 3,$ then $G_0= \langle H_0, \phi^j \tau^m \,Z\rangle$   with $0\leq m\leq d-1$, and the following conditions hold:  $r$ is a prime, $(rd,n)=d$, $o_r(p^j)=r-1$, $dr$ divides $q-1$, and $dr$ divides $(m + \lambda d)(q-1)/(p^j-1)$ for some $\lambda \in \{0, 1, \ldots, r-1\}.$  
    
    \item If $n=2,$ then $r=2,$ $q \equiv 1 \mod 4,$ $G_0=\langle \PSL_2(q), \phi^j \tau\,Z  \rangle$ where $a/j$ is even and either $p^j \equiv 3 \mod 4$ or $p^j \equiv 1 \pmod 4$ and $a/j \equiv 0 \mod 4.$ Moreover, up to conjugation in $G_0$, if $\alpha \in \Omega$, then $G_{\alpha}=P \rtimes \langle \tau^4 \, Z, \phi^j \tau^k \, Z \rangle/Z $ with $|P|=q$ and $k$ is equal to either 1 or 3 (the two actions are not permutationally equivalent), \cite[Proposition 4.1, p.663]{DGLPP}. 
\end{enumerate} 
If $n\geq 3$, these conditions imply that $r$ divides $(q-1)/(n,q-1)$, since $dr$ divides $(q-1)/(n,q-1)$ and $(dr,n)=d$;  also, as shown in \cite[Proof of Theorem A, p. 152]{R3PIT}, $o_r(p^j)=r-1$ if and only if  $o_r(p)=r-1$ and $(j,r-1)=1$. If $n=2,$ then, again  $r$ divides $(q-1)/(n,q-1)$ since $r=2$ and $q \equiv 1 \pmod 4.$ 
\end{Rem}


\medskip\noindent
\emph{Note:}  For $n\geq 3$, 
\cite[Remark 7 on p.665]{DGLPP} appears to allow an additional possibility for the group $G_0$ than the one given in Remark~\ref{alloccur}(1), namely $G_0=H_0$. To avoid confusion we note that in this case  $G_0= \langle H_0, \phi^j \tau^m \,Z\rangle$ with $j=a$ and $m=0$ and the proof of \cite[Case (2), Proposition 4.12]{DGLPP} shows that the conditions in Remark \ref{alloccur} (1) hold. Moreover,  since the stabiliser $(G_0)_\sigma$ acts regularly on $\sigma$, and since the action must also be $2$-transitive (for $G_0$ to have rank $3$), it follows that $r=2$. 

\medskip
    
 It is convenient to work with $\Omega_0$ being the set $\Omega$ in Construction \ref{con:psl}. We show that the group $G_0$ is permutationally equivalent to a subgroup of  $\ff = \GammaL_n(q)/Y$ acting on $\Omega$.  By Lemma~\ref{l:semi}(2), $\ff$ is innately transitive with plinth $M = Y\,\SL_n(q)/Y\cong M_0$ since $r$ divides $(q-1)/(n,q-1)$, Further, $\ff$  is properly innately transitive since the normal subgroup $Z/Y\cong C_r$ is intransitive, and by   Lemma~\ref{l:semi}(4), $\ff$ has rank $3$.

    \begin{Lem}\label{QPtoIT}
    Let $G_0\leq \Sym(\Omega_0)$ be a quasiprimitive imprimitive group of rank $3$ with plinth $M_0= \PSL_n(q)$ as in Remark~{\rm\ref{alloccur}}, and let $\ff = \GammaL_n(q)/Y\leq \Sym(\Omega)$ acting on $\Omega$ as in   Construction {\rm\ref{con:psl}}.  Then the action of $G_0$ on $\Omega_0$ is permutationally isomorphic to the action on $\Omega$ of a subgroup $G$ of $\ff$ such that $G$ contains $Y\,\SL_n(q)/ Y$ but does not contain $Z\, \SL_n(q)/ Y$ (and $G$ satisfies the conditions in Lemma {\rm\ref{l:semi}(4)} to have rank $3$).
    \end{Lem}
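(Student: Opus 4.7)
The plan is to exhibit the desired subgroup $G \leq \ff$ explicitly in terms of Construction~\ref{con:psl}, and then verify both that its block action on $\Sigma$ matches $G_0$'s action on $\Sigma_0$ and that the full action on $\Omega$ is permutationally isomorphic to $G_0$'s action on $\Omega_0$.

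First, I would set up the identification of the block systems. Since $G_0^{\Sigma_0}$ is almost simple with socle $\PSL_n(q)$ acting $2$-transitively of degree $(q^n-1)/(q-1)$, this action is (uniquely) the natural action of a subgroup of $\PGammaL_n(q)$ on $\PG(n-1,q)$. On the other hand, $\ff = \GammaL_n(q)/Y$ acts on $\Sigma$ inducing $\PGammaL_n(q)$ on $\PG(n-1,q)$, with kernel $Z/Y$ of order $r$. Identifying $\Sigma$ with $\Sigma_0$ through $\PG(n-1,q)$, the group $G_0$ corresponds to a specific subgroup of $\PGammaL_n(q) = \ff/(Z/Y)$.

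Second, I would construct $G$ explicitly. Using the generators in Remark~\ref{alloccur}, set $\delta = \diag(1,\ldots,1,\omega^d)$ and take
\[
G = \begin{cases} \langle Y\,\SL_n(q)/Y,\; \delta Y,\; \phi^j\tau^m Y\rangle & \text{if } n\geq 3,\\[2pt]
\langle Y\,\SL_2(q)/Y,\; \phi^j\tau^k Y\rangle & \text{if } n=2.\end{cases}
\]
The images of these generators in $\ff^\Sigma = \PGammaL_n(q)$ are exactly the generators of $G_0$, so $G^\Sigma = G_0$ under the identification of Step 1. Since $G \geq Y\,\SL_n(q)/Y$ by construction, the first containment in the lemma is automatic.

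The main obstacle is the next step: verifying that $G \cap (Z/Y) = 1$, equivalently (by Lemma~\ref{l:semi}(3)) that $G$ is quasiprimitive. I would argue as follows. Any element of $G$ that lies in $Z/Y$ is, by writing a generic word in the generators, an expression of the form $\omega^s\tau^{tm}\phi^{jt} h Y$ with $h\in \SL_n(q)$, which lies in $Z$ precisely when $\phi^{jt}$ acts trivially (forcing certain divisibility of $t$ by $a/j$) and the total matrix part equals a scalar. A careful determinant calculation, combined with the hypothesis that $r$ divides $(q-1)/(n,q-1)$, the condition $dr\,|\,(m+\lambda d)(q-1)/(p^j-1)$, and the primeness of $r$, will force $s \equiv 0 \pmod r$, i.e.\ the element is trivial in $\ff$. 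The case $n=2$ is handled separately using the explicit conditions $q\equiv 1\pmod 4$ and $k\in\{1,3\}$ from Remark~\ref{alloccur}(2). This computation is the technical heart of the proof.

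Finally, I would upgrade the block-action identification $G \cong G^\Sigma = G_0$ to a permutational isomorphism on points. Since $G\cap(Z/Y)=1$, the projection $G \to G^\Sigma$ is an isomorphism, so it suffices to show that $G_\alpha$ (for $\alpha = Y e_n\in\Omega$) corresponds under this isomorphism to a $G_0$-conjugate of $(G_0)_{\alpha_0}$. Both $G$ and $G_0$ act transitively on point sets of size $|\Sigma|\cdot r$ with the same block action and block size $r$, so $G_\sigma$ and $(G_0)_{\sigma_0}$ are identified, and each acts on its block as a $2$-transitive group of degree $r$ (by rank $3$). A direct computation (exactly as in \cite[Proposition~4.1]{DGLPP} for $n=2$, and analogously for $n\geq 3$) shows that the point-stabiliser in $G_\sigma$ matches the one described for $(G_0)_{\sigma_0}$, up to conjugacy. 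Non-containment of $Z\,\SL_n(q)/Y$ follows immediately from $G\cap(Z/Y)=1$ since $Z\,\SL_n(q)/Y \supseteq Z/Y$, completing the proof.
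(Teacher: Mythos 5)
Your route---lift explicit generators from Remark~\ref{alloccur} to $\ff$ and verify everything by hand---is genuinely different from the paper's. The paper works at the level of the socle: it quotes that the point stabiliser $R_0$ of $M_0\cong\PSL_n(q)$ is the \emph{unique} normal subgroup of index $r$ in the block stabiliser $(M_0)_{\sigma_0}$, so the two socle actions $M_0^{\Omega_0}$ and $M^{\Omega}$ are automatically permutationally isomorphic; it then transfers all of $G_0$ in one step via $N_{\Sym(\Omega)}(M)=\ff$, and quasiprimitivity of $G_0$ gives $Z\,\SL_n(q)/Y\not\leq G$ for free. That argument needs none of the generator or determinant computations you propose.

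The problem with your version is that its two decisive steps are asserted rather than proved, and the first is genuinely delicate. The lifts to $\ff$ of the coset $\phi^j\tau^m Z\in G_0$ form a full coset of $Z/Y$, and different choices of lift generate different subgroups: since $|Z/Y|=r$ is prime, each such subgroup either meets $Z/Y$ trivially or contains it entirely, and in the latter case has order $r|G_0|$ and cannot be permutationally isomorphic to $G_0$. The condition in Remark~\ref{alloccur}(1) that $dr$ divides $(m+\lambda d)(q-1)/(p^j-1)$ for \emph{some} $\lambda\in\{0,\ldots,r-1\}$ is exactly the existence statement for a good lift: computing $(\phi^j\tau^{m'})^{a/j}=\tau^{m'(q-1)/(p^j-1)}$ shows that the generator which works is $\phi^j\tau^{m+\lambda d}$ for that particular $\lambda$ (this has the same image in $G_0$, since $\tau^d Z\in H_0$), not necessarily your fixed choice $\phi^j\tau^m$. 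So the assertion that ``a careful determinant calculation \dots will force $s\equiv 0\pmod r$'' for your specific $G$ is unjustified and need not hold; if the wrong lift is taken, $G$ contains $Z/Y$ and the whole construction collapses. Separately, your final step---that the point stabiliser of your $G$ matches that of $(G_0)_{\alpha_0}$ rather than the other, inequivalent, action (which genuinely exists when $n=2$ by Remark~\ref{alloccur}(2), where $k\in\{1,3\}$ give non-equivalent representations)---is also deferred to an unperformed computation. Until both verifications are carried out, and the lift is corrected to incorporate the parameter $\lambda$, this is a plan rather than a proof.
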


     \begin{proof}
     First we show that the action of $M_0$ on $\Omega_0$ is permutationally isomorphic to the action of $M=Y\,\SL_n(q)/Y$ on $\Omega$.  Let $\Sigma_0$ be the unique nontrivial system of imprimitivity of $G_0$ in $\Omega_0$,
 Let $R_0$ be the stabiliser in $M_0$ of a point $\alpha_0 \in \Omega_0.$ Let $V:=(\mathbb{F}_q)^n=\langle e_1, \ldots, e_n \rangle$, and note that $G_0^{\Sigma_0}\leq \PGaL_n(q)$ 
 and $G_0^{\Sigma_0}$ is permutationally isomorphic to its action on the set $\binom{V}{1}$ of $1$-spaces. Thus  we may identify $\Sigma_0$ with $\binom{V}{1}$, and without loss of generality we assume that the $1$-space $\sigma_0 \in \Sigma_0$ fixed by $R_0^{\Sigma_0}$ is $\sigma_0 =\langle e_1 \rangle.$  By the proof of \cite[Propositions 4.10 and 4.12: Case (2)]{DGLPP}, $R_0$ is the unique normal subgroup of $(M_0)_{\sigma_0}$ of index $r$. By Lemma~\ref{l:semi}(2), $M=Y\,\SL_n(q)/Y \cong \PSL_n(q)=M_0$ since $r$ divides $(q-1)/(n,q-1).$  Writing $M_0=Z\, \SL_n(q)/ Z$ we define a 
 monomorphism $\rho:M_0\to \ff$ by $\rho:xZ\to xY$ (for $x\in\SL_n(q)$), and note that $\rho(M_0)=M$. Then, $\rho(x):\omr v\to \omr v^x$\  (for $\omr v\in\Omega$ and $x\in\SL_n(q)$) defines an action of $M_0$ on $\Omega$. If $\alpha =\omr e_1 \in \Omega,$ then $R:=M_{\alpha}$ is the unique normal subgroup of $M_{\sigma_{e_1}}=(M_0)_{\sigma_0}.$ So $R_0=R$, and $M_0^{\Omega_0}$ and  $M^{\Omega}$ are permutationally isomorphic.

Since $M_0\unlhd G_0\leq N_{\Sym(\Omega_0)}(M_0)$, it follows that $G_0$ is  permutationally isomorphic to a subgroup $G$ of $N_{\Sym(\Omega)}(M)=\ff$ containing $M$. Further, since $G_0$ is quasiprimitive on $\Omega_0$ but $Z\, \SL_n(q)/ Y$ is not quasiprimitive on $\Omega$ (as  its nontrivial normal subgroup $Z/Y$ is intransitive), it follows that $G$ does not contain $Z\, \SL_n(q)/ Y$. Finally $G$ is rank 3, and hence the conditions of  Lemma {\rm\ref{l:semi}(4)} hold.  
     \end{proof}

     Before classifying the systems of imprimitivity of $G_{\alpha}$ on $\Omega \backslash \sigma$, we state some preliminary information and prove several statements that are used in the classification.

\medskip

Let $V=\mathbb{F}_q^n$ with $q=p^a$ and $n \geq 2$. Let $\omega$ be a generator of $\mathbb{F}_q^*$. Fix a basis $\{e_1, \ldots, e_n\}$ of $V$. Let $Z=Z(\GL_n(q))$, let $r$, $\Omega$, $\Sigma$, $Y$ be as in  Construction \ref{con:psl}. It is more convenient to work with matrix groups than with their quotients, so in this section we let  $$
Y \, \SL_n(q) \leq G \leq \GammaL_n(q)$$
where $\GammaL_n(q)$ acts on $\Omega$ as in Construction \ref{con:psl} via \eqref{e:action}.

We assume that $G/Y$ is (imprimitive) semiprimitive of rank 3, so all conditions on $q,r,n$ in  Table \ref{t:qpinsprk3} hold. In particular, $r$ is prime dividing $q-1$, 
and $|G/G \cap \GL_n(q)|=a/j$ where $j$ divides $a$. Also, if $G_1 :=G \cap \GL_n(q)$, then $G=\langle G_1, h \phi^j \rangle $ for some $h \in \GL_n(q)$ and $\phi$ as in  \eqref{def:phi} with respect to the basis $\{e_1, \ldots, e_n\}$.

Let $\alpha=\langle \omega^r \rangle e_1\in\Omega $ so that $\alpha\in \sigma=\{\omr\omega^i e_1 \mid 0 \leq i \leq r-1\}.$ If $g \in G_{\alpha}$, then $g=(\phi^j)^k\cdot\hat{g}$ where $k \in \{0, 1, \ldots, a/j-1\}$ and $\hat{g}\in \GL_n(q)$ is of the form
\begin{equation*}
    \begin{pmatrix}
    \delta & 0 \\
    b & A
\end{pmatrix}
\end{equation*}
where $\delta \in \langle \omega^r \rangle$, $b^{\top} \in \mathbb{F}_q^{n-1}$, $A \in \GL_{n-1}(q).$ When working with the action of $G$ on $\Omega$,   since the kernel $Y=\omr$ of  this action is contained in $G_\alpha\cap\GL_n(q)$ and $Y$ is transitive on  $\omr e_1$ (viewed as a set of vectors), we may assume that $\delta=1$ (replacing $\hat{g}$ by another element of  the coset $\hat{g}Y$ if necessary), so we may assume that $\hat{g}$ has the form  
\begin{equation}\label{eq:delta1}
    \begin{pmatrix}
    1 & 0 \\
    b & A
\end{pmatrix}.
\end{equation}

\begin{Lem}\label{lem:blockS} Let $Y \, \SL_n(q) \leq G \leq \GammaL_n(q)$.
     Let $B$ be a block  containing $\langle \omega^r \rangle e_2$ in a  system of imprimitivity of $G_{\alpha}$ on $\Omega \backslash  \sigma$. If $\langle \omega^r \rangle v \in B,$ then $v\in \langle e_1,e_2\rangle$.
\end{Lem}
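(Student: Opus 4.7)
The plan is to reduce the lemma to properties of the natural partition $\Pi_{\mathrm{nat}}$ of $\Omega \setminus \sigma$ whose blocks $N(W) := \{\omr w : w \in W \setminus \langle e_1\rangle\}$ are indexed by the $2$-spaces $W$ of $V$ through $\langle e_1\rangle$. This is a $G_\alpha$-invariant partition (the map $\omr w \mapsto \langle e_1, w\rangle$ is $G_\alpha$-equivariant), and $\beta = \omr e_2$ lies in the block $N_0 := N(\langle e_1, e_2\rangle)$, so the lemma is precisely the statement that $B \subseteq N_0$. The case $n = 2$ is vacuous since then $V = \langle e_1, e_2\rangle$, so I assume $n \geq 3$ throughout.

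The main step is an orbit computation inside $H := (G_\alpha)_\beta$. Since $\beta \in B$ forces $H \leq (G_\alpha)_B$, the block $B$ is $H$-invariant. Suppose for contradiction that $\omr v \in B$ with $v = c_1 e_1 + c_2 e_2 + \tilde v$ and $\tilde v \in \langle e_3, \ldots, e_n\rangle \setminus \{0\}$, and consider the subgroup $H_0 := \{g \in \SL_n(q) : (e_1)g = e_1,\ (e_2)g = e_2\} \leq H$, which contains the transvections $e_j \mapsto e_j + \lambda e_k$ (for $j \geq 3$, $k \in \{1,2\}$) and a natural copy of $\SL_{n-2}(q)$ acting on $\langle e_3, \ldots, e_n\rangle$. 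A direct computation shows that the $H_0$-orbit of $\omr v$ equals $\{\omr(c_1'e_1 + c_2'e_2 + \tilde v') : c_1', c_2' \in \mathbb{F}_q,\ \tilde v' \in \SL_{n-2}(q)\cdot\tilde v\}$. For $n \geq 4$, transitivity of $\SL_{n-2}(q)$ on $\mathbb{F}_q^{n-2}\setminus\{0\}$ makes this orbit exactly $\Omega \setminus (\sigma \cup N_0)$, so $B$ contains the whole complement of $N_0$ in $\Omega \setminus \sigma$. Since $(G_\alpha)_B$ acts transitively on the natural blocks meeting $B$, the intersection $|B \cap N|$ is constant across such $N$; but this constant equals $qr = |N|$ for every $N \neq N_0$, forcing $|B \cap N_0| = qr$ as well and hence $B = \Omega \setminus \sigma$, contradicting the non-triviality of $B$.

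For $n = 3$ the $\SL_{n-2}$-factor is trivial, so the $H_0$-orbit has only $q^2$ points, all lying in a single $\omr$-coset of the $e_3$-coefficient. To push the argument through I would enlarge the orbit using the Frobenius power $\phi^j \in G$ (which is present when $r \geq 3$ by the rank-$3$ conditions of Lemma~\ref{l:semi}(4)(i), and which acts non-trivially on $\mathbb{F}_q^*/\omr$ because $o_r(p)=r-1$ and $(r-1,j)=1$) together with the diagonal elements of $H$, so that the enlarged orbit meets every natural block $N \neq N_0$ in a fixed number $s \geq q$ of points; the constant-intersection property then again yields a contradiction of the form $1 = q$. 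The main obstacle is this $n = 3$ analysis, and within it the residual sub-case $r = 2$, where $\phi^j$ acts trivially on $\mathbb{F}_q^*/\omr$ and one must argue purely by counting $H$-orbits on $N_0$ and on $\Omega \setminus (\sigma \cup N_0)$ before the intersection argument closes.
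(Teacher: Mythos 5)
Your reduction to the natural partition $\{N(W)\}$ indexed by the $2$-spaces $W\supseteq\langle e_1\rangle$ is sound, and your $n\geq 4$ argument is correct (indeed it can be shortened: once $B\supseteq\Omega\setminus(\sigma\cup N_0)$ you already have $|B|>|\Omega\setminus\sigma|/2$, so $B$ cannot be a proper block and the constant-intersection step is unnecessary). This part is essentially the paper's argument repackaged: the paper likewise exhibits explicit elements of $\SL_n(q)_{\alpha,\beta}$ carrying $\omr v$ to an arbitrary point $\omr u\notin B$ with nonzero $\langle e_3,\ldots,e_n\rangle$-component.

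The genuine gap is $n=3$, which you acknowledge but do not close, and it is not a peripheral case: it covers the infinite family for $n=3$ and all of the sporadic $\PSL_3(q)$ examples, including the sub-case $r=2$ (e.g.\ $q=3$, which underlies $\Delta(3,3)$). Two specific problems with your sketch. First, the advertised ``contradiction of the form $1=q$'' does not follow from the constant-intersection property: your $H$-orbit of $\omr v$ gives $|B\cap N|\geq q$ for every $N\ne N_0$, whence $|B\cap N_0|\geq q$ as well, but a block with $|B\cap N|=c\geq q$ for all $q+1$ natural blocks is perfectly consistent with $|B|=c(q+1)$ properly dividing $qr(q+1)$ --- no contradiction results without further input. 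Second, your proposed enlargement of the orbit uses the rank-$3$ conditions of Lemma~\ref{l:semi}(4)(i), which are not hypotheses of the lemma (and even under those hypotheses only some element $h\phi^j$ with $h\in\GL_3(q)$, not $\phi^j$ itself, is guaranteed to lie in $G_{\alpha,\beta}$, and its affine action $c\,\omr\mapsto c^{p^j}\eta\,\omr$ on $\mathbb{F}_q^*/\omr$ need not sweep out all cosets). The paper closes $n=3$ with only $\SL_3(q)\leq G$: if some $\omr u\notin B$ has $e_3$-coefficient $\mu_3\in\omr$ one finds a unipotent element of $\SL_3(q)_{\alpha,\beta}$ sending $\omr v$ to $\omr u$; otherwise every point $\omr(be_1+ce_2+e_3)$ lies in $B$, and one writes down an explicit determinant-one matrix in $G_\alpha$ whose image of $\omr e_2$ is of that form (hence in $B$, so the matrix preserves $B$) but which sends $\omr v$ to $\omr u\notin B$. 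Some argument of this kind is needed to complete your proof.
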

\begin{proof}
  The statement is trivially true when $n=2$, so assume that $n\geq 3.$
Assume for a contradiction that $B$ contains a point $\langle \omega^r \rangle v$, where $v\notin \langle e_1,e_2\rangle$.  By relabeling vectors in $\langle e_3, \ldots, e_n \rangle$ we may assume without loss of generality that  $v= \lambda_1 e_1 + \lambda_2 e_2 +e_3.$

    Since $B$ is a block of a   nontrivial system of imprimitivity, we have $1<|B|\leq\frac{|\Omega \backslash \sigma|}2$. Therefore, there exists $\langle \omega^r \rangle u \in (\Omega \backslash \sigma) \backslash B$, so let $\langle \omega^r \rangle u  \notin B$ where $u= \mu_1 e_1 + \mu_2 e_2 + y$ with $y \in \langle e_3, \ldots, e_n \rangle$. We can assume that $y \ne 0$. Indeed, if all such $\omr u$ with $y \ne 0$ lie in $B$, then 
    $$|B| \geq \frac{(q^n-q^2)}{(q-1)}r > \frac{1}{2}  \left(\frac{(q^n-1)}{(q-1)}-1 \right)r =|\Omega \backslash \sigma|/2, $$
  which is a contradiction.
    
    Suppose that $n \geq 4$ and let 
    $$ g=
    \begin{pmatrix}
        1 & 0 & 0    \\
        0 & 1 & 0    \\
        \mu_1-\lambda_1 & \mu_2-\lambda_2 & y\\
        b & c & A
    \end{pmatrix} 
    $$
    where $b^{\top}, c^{\top} \in \mathbb{F}_q^{n-3}$ and $A \in \mathbb{F}_q^{(n-3) \times (n-2)}$
    are chosen so that $\det (g)=1.$ Then $g \in \SL_n(q)\leqslant G$. Also $g \in G_{\alpha}$ and $g$ stabilises $\omr e_2$,  which implies that $(B)g=B$, while $(\omr v)g= \omr u \notin B.$ Hence $g$ does not preserve the imprimitivity system $\{(B)g \mid g \in G_{\alpha}\}$ which is a contradiction.

Hence $n=3$, so $y=\mu_3 e_3$ and  $u=\mu_1 e_1 +\mu_2 e_2 + \mu_3 e_3$ with $\omr u \notin B$. Suppose next that $\mu_3 \in \omr$. Then, replacing $u$ by $\mu_3^{-1}u$,  we may assume that $\mu_3=1$, and then 
$$g =\begin{pmatrix}
        1 & 0 & 0    \\
        0 & 1 & 0    \\
        \mu_1 - \lambda_1 & \mu_2-\lambda_2 &  1
    \end{pmatrix} \in \SL_3(q)\leqslant G,  
    $$
$g \in G_{\alpha}$, $g$ stabilises $\omr e_2$ and $(\omr v)g = \omr u\notin B$, which is a contradiction. Hence $\mu_3 \not \in \omr$, and this implies that all points of the form $\omr (b e_1 + c e_2 + e_3)$ with $b,c \in \mathbb{F}_q$ lie in $B$. 
Consider the matrix
$$g =\begin{pmatrix}
        1 & 0 & 0    \\
        0 & (\mu_2+1)\mu_3^{-1} & 1    \\
        \mu_1 - \lambda_1 & \mu_2-\lambda_2(\mu_2+1)\mu_3^{-1} &  \mu_3-\lambda_2
    \end{pmatrix}.
    $$
A straightforward computation shows that $\det(g) = 1$ and hence $g \in \SL_3(q)\leqslant G$. Also $g\in G_{\alpha}$ and  
    $(\omr e_2)g= \omr( (\mu_2+1)\mu_3^{-1}e_2 + e_3)$ and we have just shown that this point lies in $B$. Hence $g$ leaves $B$ invariant. However we also have $(\omr v)g =\omr u \notin B$ which is a contradiction. Thus the lemma is proved.
\end{proof}

Next we identify a particular imprimitivity system for $G_\alpha$ acting on $\Omega \backslash \sigma$.

\begin{Lem}\label{lem:Ssystem}  Let $Y \, \SL_n(q) \leq G \leq \GammaL_n(q)$, and $S=  \omr\{\lambda_1 e_1 + \lambda_2 e_2 \mid \lambda_1 \in \mathbb{F}_q, \lambda_2 \in \mathbb{F}_q^*\}$. Then $S \subset \Omega \backslash \sigma$ with $|S|=q r$, and the set $\{ (S)g \mid g \in G_{\alpha}\}$ is a system of imprimitivity for $G_{\alpha}$ on $\Omega \backslash \sigma$.  \end{Lem}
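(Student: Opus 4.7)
The plan is to establish the three assertions in turn, with the main work being a geometric characterisation of $S$ that makes the block property transparent.

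First I would verify the size of $S$ and that $S \subseteq \Omega \backslash \sigma$. The key observation is that $S$ admits the clean description
\[
S = \{\omr v \mid v \in \langle e_1, e_2\rangle \setminus \langle e_1\rangle\},
\]
because elements of $\omr v$ are scalar multiples of $v$, so $\omr v \subseteq \langle e_1, e_2\rangle$ whenever $v$ lies there, and the condition $\lambda_2 \ne 0$ in the definition is equivalent to $v \notin \langle e_1\rangle$. Since $\sigma = \{\omr v \mid v \in \langle e_1\rangle^*\}$, the sets $S$ and $\sigma$ are plainly disjoint. For the cardinality, the multiplicative group $\omr$ acts freely on the $q(q-1)$ vectors $\lambda_1 e_1 + \lambda_2 e_2$ with $\lambda_2\neq 0$ (as $\omega^{ir}\lambda_2 = \lambda_2$ forces $i\equiv 0$ modulo $|\omr|$), so $|S| = q(q-1)/|\omr| = qr$.

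Next I would show $S$ is a block for $G_\alpha$ on $\Omega\backslash\sigma$. Take any $g\in G_\alpha$. Since $g$ fixes $\alpha=\omr e_1$, we have $(e_1)g = \omega^{ir}e_1$ for some $i$, so $g$ stabilises the $1$-dimensional subspace $\langle e_1\rangle$ setwise. Writing $g=\phi^k\hat{g}$ with $\hat{g}\in\GL_n(q)$, the map $g$ acts as an $\mathbb{F}_q$-semilinear bijection on $V$; in particular, it sends the $2$-dimensional subspace $\langle e_1,e_2\rangle$ to another $2$-dimensional subspace $U=\langle e_1,(e_2)g\rangle$ containing $\langle e_1\rangle$. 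Because $g$ preserves $\omr$ as a set (here one uses that $\omr$ is invariant under every power of the Frobenius since $r\mid q-1$), we obtain
\[
(S)g = \{\omr w \mid w \in U \setminus \langle e_1\rangle\}.
\]
Now either $U=\langle e_1,e_2\rangle$, giving $(S)g=S$, or else $U$ and $\langle e_1,e_2\rangle$ are distinct $2$-subspaces sharing only $\langle e_1\rangle$, in which case $(U\setminus\langle e_1\rangle)\cap(\langle e_1,e_2\rangle\setminus\langle e_1\rangle)=\emptyset$ and so $(S)g\cap S=\emptyset$. Thus $S$ is a block.

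Finally, because $G$ has rank $3$ on $\Omega$, the point stabiliser $G_\alpha$ has three orbits $\{\alpha\}$, $\sigma\setminus\{\alpha\}$ and $\Omega\setminus\sigma$, and is in particular transitive on $\Omega\setminus\sigma$. Together with $\emptyset \neq S \subseteq \Omega\setminus\sigma$ and the block property just established, this yields that $\{(S)g \mid g\in G_\alpha\}$ is a $G_\alpha$-invariant partition of $\Omega\setminus\sigma$, i.e.\ a system of imprimitivity. The only step requiring care is the geometric one: one must check that the semilinear action of $g\in G_\alpha$ is compatible with the $\omr$-coset structure so that the $g$-image of $S$ is genuinely indexed by the $2$-subspace $U=(\langle e_1,e_2\rangle)g$; once this is in place the disjointness argument via $U\cap\langle e_1,e_2\rangle=\langle e_1\rangle$ is immediate.
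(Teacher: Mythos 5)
Your proof is correct, and its substance is the same as the paper's: both arguments rest on the facts that any $g\in G_\alpha$ stabilises $\langle e_1\rangle$, acts semilinearly, and normalises $\omr$ (so that images of $\omr$-cosets are again $\omr$-cosets). The difference is one of packaging. The paper writes $g=\phi^k\hat g$ with $\hat g$ fixing $e_1$, assumes $(S)g\cap S\neq\emptyset$, deduces from a single intersection point that $(e_2)\hat g\in\langle e_1,e_2\rangle$ with nonzero $e_2$-coefficient, checks that then every point of $S$ lands in $S$, and finishes with a cardinality count. You instead identify $S$ with $\{\omr v\mid v\in\langle e_1,e_2\rangle\setminus\langle e_1\rangle\}$, so that $(S)g$ is the analogous set for the $2$-subspace $U=(\langle e_1,e_2\rangle)g\supseteq\langle e_1\rangle$, and the equal-or-disjoint dichotomy falls out of the fact that two distinct $2$-subspaces through $\langle e_1\rangle$ meet only in $\langle e_1\rangle$; this bypasses the pointwise verification and the cardinality step. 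Two small points to make airtight: the invariance of $\omr$ under Frobenius is because $\omr$ is the unique subgroup of the cyclic group $\mathbb{F}_q^*$ of its order (not merely because $r\mid q-1$); and the transitivity of $G_\alpha$ on $\Omega\setminus\sigma$, which you correctly invoke to get a genuine partition, comes from the section's standing rank~$3$ hypothesis rather than from the lemma's stated hypotheses --- the paper leaves this implicit, so making it explicit as you do is a mild improvement.
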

\begin{proof}
   It is straightforward to see that   $|S|=qr$, and by definition $S \subset \Omega \backslash \sigma$. 
    Let $g \in G_{\alpha},$ so we can assume that $g=\phi^k\cdot \hat{g}$ for some $\hat{g} \in  \GL_n(q)_{\alpha}$ as in \eqref{eq:delta1} and $k \in \{0,1, \ldots, a-1\}.$  Assume that $(S)g \cap S\neq  \emptyset,$ that is there exists $\beta=\omr(\nu_1 e_1 +\nu_2 e_2) \in S$ (with $\nu_2\neq 0$) such that $(\beta)g\in S$. 
    Now 
    $$ 
    (\beta)g=(\omr(\nu_1 e_1 +\nu_2 e_2))g
    =\omr(\nu_1^{p^k} (e_1)\hat{g} +\nu_2^{p^k}(e_2)\hat{g})
    =\omr(\nu_1^{p^k} e_1 +\nu_2^{p^k} (e_2)\hat{g}),
    $$ 
    and since $(\beta)g\in S$, it follows that  $(e_2)\hat{g}=\mu_1e_1+\mu_2 e_2$ for some $\mu_i\in\mathbb{F}_q$ with $\mu_2\neq 0$.

    Consider an arbitrary point $\omr(\lambda_1 e_1 + \lambda_2 e_2)\in S$ (so $\lambda_2\neq 0$).
Then 
$$
(\omr(\lambda_1 e_1 + \lambda_2 e_2))g
=\omr(\lambda_1^{p^k} e_1+\lambda_2^{p^k} (e_2)\hat{g})= \omr( (\lambda_1^{p^k}+\lambda_2^{p^k} \mu_1 )e_1+\lambda_2^{p^k} \mu_2 e_2)
$$
and this point lies in $S$ since $\lambda_2,\mu_2\neq 0$.   Thus $(S)g\subseteq S$, and we have equality since $|(S)g|=|S|$.
    Hence for $g \in G_{\alpha},$ either $(S)g=S$ or $(S)g \cap S= \emptyset.$ So $\{ (S)g \mid g \in G_{\alpha}\}$ is a system of imprimitivity.
\end{proof}

 We say that a system of imprimitivity $\{A_i \mid 1 \leq i \leq k\}$ of a transitive group $H \leq {\rm Sym}(d)$ is a {\it refinement} of a system of imprimitivity $\{B_i \mid 1 \leq i \leq m\}$ of $H$ if for each $i \in \{1, \ldots, k\}$ there exists $j \in \{1, \ldots, m\}$ such that $A_i \subseteq B_j$. Since $H$ is transitive, each $B_j$ is a union of the same number of $A_i$, so $|A_i|=d/k$ divides $|B_j|=d/m$. 

\begin{Cor}\label{cor:block}
   Let $B$ be a block containing $\langle \omega^r \rangle e_2$ in a  system of imprimitivity of $G_{\alpha}$ on $\Omega \backslash \sigma$. Then the system $\{(B)g \mid g \in G_{\alpha}\}$
 is a refinement of $\{ (S)g \mid g \in G_{\alpha}\}$. In particular,   $|B|$ divides $|S|=qr$.
\end{Cor}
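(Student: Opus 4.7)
The plan is to show directly that $B\subseteq S$; the refinement statement and the divisibility of $|B|$ by $|S|$ will then be immediate consequences.

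First I would combine the two preceding lemmas. By Lemma~\ref{lem:blockS}, every point $\omr v\in B$ satisfies $v\in\langle e_1,e_2\rangle$, so $v=\lambda_1 e_1+\lambda_2 e_2$ for some $\lambda_i\in\mathbb{F}_q$. I next observe that the points of $\Omega$ represented by nonzero vectors of $\langle e_1\rangle$ are precisely the $r$ points of $\sigma$: indeed, $\omr\omega^ie_1=\omr\omega^je_1$ iff $r\mid i-j$, and $|\langle e_1\rangle^{\ast}|=q-1=r\cdot|\omr|$. Since $B\subseteq \Omega\setminus\sigma$, the case $\lambda_2=0$ is excluded, hence $\lambda_2\in\mathbb{F}_q^{\ast}$, and therefore $\omr v\in S$. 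This proves $B\subseteq S$.

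Once $B\subseteq S$ is established, for any $g\in G_{\alpha}$ we have $(B)g\subseteq (S)g$, so every block of the system $\{(B)g\mid g\in G_{\alpha}\}$ is contained in some block of the system $\{(S)g\mid g\in G_{\alpha}\}$ (which is a genuine system of imprimitivity by Lemma~\ref{lem:Ssystem}). This is exactly the assertion that the former system refines the latter. Finally, because $G_{\alpha}$ acts transitively on each of the two systems, the larger blocks $(S)g$ are disjoint unions of an equal number of smaller blocks $(B)h$, so $|B|$ divides $|S|=qr$.

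There is no real obstacle here: the only subtle point is checking that the subset of $\Omega$ coming from $\langle e_1,e_2\rangle\setminus\{0\}$ meets $\sigma$ in exactly the points with $\lambda_2=0$, which is a short direct calculation using the definition of $\sigma$ and $\omr$.
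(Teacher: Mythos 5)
Your proposal is correct and follows essentially the same route as the paper: Lemma~\ref{lem:blockS} together with $B\subseteq\Omega\setminus\sigma$ gives $B\subseteq S$, Lemma~\ref{lem:Ssystem} supplies the block system generated by $S$, and the refinement plus divisibility follow from transitivity of $G_\alpha$ on $\Omega\setminus\sigma$. The only cosmetic difference is that you argue the refinement directly via $(B)g\subseteq(S)g$, where the paper cites an external lemma; your direct argument is complete and valid.
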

\begin{proof}
 By Lemma \ref{lem:blockS}, $B \subseteq S$  and $\{ (S)g \mid g \in G_{\alpha}\}$ is a system of imprimitivity by Lemma \ref{lem:Ssystem}. It follows from \cite[\S3, Lemma 2]{sup} that $\{(B)g \mid g \in G_{\alpha}\}$
 is a refinement of $\{ (S)g \mid g \in G_{\alpha}\}$, so   $|B|$ divides $|S|=qr$.  
\end{proof}

 Next we record information about the $G_\alpha$-orbits in $\Omega\setminus\sigma$.

 \begin{Lem}\label{lem:gltrans}
     Let $Y \, \SL_n(q) \leq G \leq\GammaL_n(q)$ and let  ${\Omega}$ and $r$ be as in Construction \ref{con:psl}.
     \begin{enumerate}[label=\normalfont (\arabic*)]
     \item If $n \geq 3$, then
     $\SL_n(q)_{\alpha}$, and hence also $G_\alpha$, acts transitively on $\Omega \backslash \sigma.$
     \item If $n=2$ and $Z \, \SL_2(q) \leq G$, then one of the following holds:
     \begin{enumerate}[label=$(\alph*)$]\label{it:gltransB}
         \item $G_{\alpha} \cap \GL_2(q)$ acts transitively on $\Omega \backslash \sigma.$ \label{it:gltrans2}
         \item $r=2$, $q$ is odd, $G \cap \GL_2(q)=Z \, \SL_2(q)$, and in this case $(Z \, \SL_2(q))_{\alpha}$ has two orbits of size $q$ on $\Omega \backslash \sigma.$\label{it:gltrans3}
     \end{enumerate} 
     \end{enumerate}
 \end{Lem}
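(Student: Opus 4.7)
The plan is to handle the two parts separately, each by an explicit computation inside the stabiliser of $\omr e_1$ in the relevant linear group.

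For part~(1), I would work with the smaller subgroup $\SL_n(q)_{e_1}$ that fixes the vector $e_1$ itself, not just its $\omr$-orbit. With respect to the basis $\{e_1,\dots,e_n\}$, this subgroup consists of the matrices $\begin{pmatrix} 1 & 0 \\ b & A \end{pmatrix}$ with $b \in \mathbb{F}_q^{n-1}$ and $A \in \SL_{n-1}(q)$ (acting on row vectors from the right). A point of $\Omega \setminus \sigma$ can be written as $\omr(\lambda_1 e_1 + v')$ with $v' \in \la e_2,\dots,e_n\ra\setminus\{0\}$. Since $n-1 \geq 2$, the group $\SL_{n-1}(q)$ is transitive on the nonzero row vectors of $\mathbb{F}_q^{n-1}$; choosing $A$ so that $v' A = e_2$ and then $b$ so that $\lambda_1 + v' \cdot b = 0$ sends our point to $\omr e_2$. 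This proves transitivity of $\SL_n(q)_{\omr e_1}$, and hence of $G_\alpha \supseteq \SL_n(q)_{\omr e_1}$, on $\Omega \setminus \sigma$.

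For part~(2), I would analyse the stabiliser $H = (Z\,\SL_2(q))_{\omr e_1}$ directly. By Lemma~\ref{lem:blockS}-style reasoning or a direct check, its elements are the matrices $\begin{pmatrix} \mu & 0 \\ \gamma & \delta \end{pmatrix}$ with $\mu \in \omr$, $\gamma \in \mathbb{F}_q$, and $\delta \in \mathbb{F}_q^*$, subject to $\mu\delta \in (\mathbb{F}_q^*)^2$ when $q$ is odd (if $q$ is even then $Z\,\SL_2(q) = \GL_2(q)$ and no determinant condition arises). Such an element fixes $\omr e_2$ precisely when $\gamma = 0$ and $\delta \in \omr$, so the orbit of $\omr e_2$ is obtained by orbit--stabiliser. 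The conditions from Table~\ref{t:qpinsprk3} force $r$ to be a prime dividing $q-1$, with the pair $(n,r)=(2,2)$ occurring only in the second line and thus requiring $q$ odd. I would then split into three sub-cases: $q$ even (so $r$ is an odd prime); $q$ odd with $r$ odd; and $q$ odd with $r=2$. In the first two, $\omr$ has index coprime to $2$ in $\mathbb{F}_q^*$ and a routine count shows the orbit of $\omr e_2$ has size $qr = |\Omega\setminus\sigma|$, giving transitivity. In the exceptional sub-case $\omr$ is exactly the subgroup of squares of $\mathbb{F}_q^*$, the determinant constraint on $\mu,\delta\in\omr$ becomes automatic, and the stabiliser is correspondingly larger; a direct count yields an orbit of size only $q$ and hence exactly two orbits of size $q$.

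To conclude, I would observe that in the exceptional sub-case the two orbits are swapped by any element of $\GL_2(q)$ with non-square determinant. If $G \cap \GL_2(q)$ strictly contains $Z\,\SL_2(q)$, then since $[\GL_2(q) : Z\,\SL_2(q)] = 2$ we must have $G \cap \GL_2(q) = \GL_2(q)$, and any such non-square-determinant element merges the two $H$-orbits, placing us back in case~(a). The only remaining configuration, $r=2$, $q$ odd and $G \cap \GL_2(q) = Z\,\SL_2(q)$, is precisely case~(b). The main obstacle I expect is the careful bookkeeping in the $r=2$, $q$ odd sub-case, where the interplay of $\omr$ with the subgroup of squares controls whether the orbit of $\omr e_2$ splits; once the stabiliser order is pinned down, the remainder is a standard orbit--stabiliser argument.
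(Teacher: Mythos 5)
Your proposal is correct and follows essentially the same route as the paper: explicit matrices in the stabiliser of $\omr e_1$, the same dichotomy on whether $G\cap\GL_2(q)$ equals or strictly contains $Z\,\SL_2(q)$, and the same case split on the parity of $q$ and $r$ (with $\diag(1,\omega^r)$, respectively an element of non-square determinant, used to restore transitivity). The only cosmetic difference is that you obtain the orbit sizes in part (2) by orbit--stabiliser counting, whereas the paper exhibits, for each target point $\omr v$, an explicit element of the stabiliser sending $\omr e_2$ to it; both are equally valid.
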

 \begin{proof}
 Let $\omr v \in \Omega \backslash \sigma$, so $v=\lambda_1e_1+ \ldots+ \lambda_ne_n$ is such that $v \notin \langle e_1 \rangle $.
 Assume first that  $n\geq 3$ and let 
 $$g= \begin{pmatrix}
    1 & 0 & \ldots & 0 \\
    \lambda_1 & \lambda_2 & \ldots & \lambda_n \\
      \mathbf{0} & \mathbf{0} & \multicolumn{2}{c}{A}
\end{pmatrix},$$
where $\mathbf{0}$ is an $(n-2)$-column of zeroes and $A \in \GL_{n-2}(q)$ is such that $\det (g)=1.$
Then $g \in \SL_n(q)_{\alpha}$ and $(\omr e_2)g=\omr v$. Thus all points of $\Omega \backslash \sigma$ lie in the orbit of $\omr e_2$ under $\SL_n(q)_{\alpha}$, proving (1).

Now let $n=2$.
Then  $v= \lambda_1e_1+  \lambda_2e_2$ with $\lambda_2\ne 0$ since $v \notin \langle e_1 \rangle $.
Let  
\begin{equation*}
 g= \begin{pmatrix}
    1 & 0  \\
    \lambda_1 & \lambda_2
\end{pmatrix}.
\end{equation*}
Assume that $Z \, \SL_2(q) \leq G$.  Note that $Z \, \SL_2(q)$ is the set of elements of $\GL_2(q)$ with determinant  a square, and has index $1$ or $2$ in $\GL_2(q)$.

 If $G \cap \GL_2(q) >Z \, \SL_2(q),$ then $ G \cap \GL_2(q) =\GL_2(q)$ and so $g$ lies in $G_{\alpha} \cap \GL_2(q)$. In this case $\omr v$ lies in the orbit of $\omr e_2$ under $G_{\alpha} \cap \GL_2(q)$ and \ref{it:gltrans2} holds.
Assume  now that $G \cap \GL_2(q) =Z \, \SL_2(q)$.
If $\lambda_2$ is a square then 
$g$ lies in $G_{\alpha} \cap \GL_2(q)$ and again $\omr v$ lies in the orbit of $\omr e_2$ under $G_{\alpha} \cap \GL_2(q)$ and \ref{it:gltrans2} holds. In particular, \ref{it:gltrans2} holds if $q$ is even since all non-zero elements are squares in this case. So assume that $q$ is odd and that $\lambda_2$ is a non-square.   

Now suppose that $r$ is an odd prime. Then $\omega^r$ is also a non-square. Let $z=\diag(1,\omega^r)$. Then $\det(zg)=\lambda_2\omega^r$ is a square, so  $zg \in G_{\alpha} \cap \GL_2(q)$. Also $(\omr e_2)zg=\omr(\omega^r v)=\omr v$ so $\omr v$ lies in the orbit of $\omr e_2$ under $G_{\alpha} \cap \GL_2(q)$. Thus \ref{it:gltrans2} also holds when $q$ is odd and $r>2$. 
Finally assume that $r=2$.
Then $G_\alpha \cap \GL_2(q)$ consists of all matrices of the form
\begin{equation*}  
 \begin{pmatrix}
    \mu_1 & 0  \\
    \eta_1 & \eta_2
\end{pmatrix}
\end{equation*}
where $\mu_1\in\omr=\omr[2]$ and so $\eta_2$ must be a square. Thus the $ G_{\alpha} \cap \GL_2(q)$-orbit containing $\omr v$ is different from the orbit containing $\omr e_2$, and we have shown that 
$G_{\alpha} \cap \GL_2(q)$ has two orbits on $\Omega \backslash \sigma$, namely:
\begin{equation}\label{eq:orbits}
O_i =\langle \omega^2 \rangle\{\lambda_1 e_1 + \omega^{i-1}e_2 \mid \lambda_1 \in \mathbb{F}_q\} \text{ for } i \in \{1,2\}
\end{equation}
each of size $q$, and \ref{it:gltrans3} of the lemma holds.
 \end{proof}

 \begin{Def}\label{des:singer} A  {\it Singer cycle} \index{Singer cycle} of $\GL_n(q)$ is a cyclic subgroup of order $q^n - 1$.
\end{Def}

 A Singer cycle always exists.
Indeed, a field $\mathbb{F}_{q^n}$ can be considered as an $n$-dimensional vector space $V=\mathbb{F}_q^n$ over $\mathbb{F}_q.$  Right multiplication by a generator of  $\mathbb{F}_{q^n}^*$ determines a bijective linear map from $V$ to itself of order $q^n-1$. So  $\mathbb{F}_{q^n}^*$  is isomorphic to a cyclic subgroup of $\GL_n(q)$ of order $q^n-1$. It is easy to see that   the action of $\mathbb{F}_{q^n}^*$ on the set $V \backslash \{0\}$ is  regular. It is well known that all Singer cycles are conjugate in $\GL_n(q).$

\begin{Lem}\label{irrsub}
A proper subgroup $C$ of a Singer cycle $T \leq \GL_n(q)$ is irreducible if and only if $|C|$ does not divide $q^s-1$ for every proper divisor $s$ of $n$. 
\end{Lem}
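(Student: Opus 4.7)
The plan is to identify the standard module with an overfield, so that the Singer cycle acts by multiplication, and then translate the question of reducibility into a field-theoretic statement. Concretely, I will use the standard identification of $V = \mathbb{F}_q^n$ with $\mathbb{F}_{q^n}$ viewed as an $\mathbb{F}_q$-vector space, and identify $T$ with $\mathbb{F}_{q^n}^*$ acting on $V$ by multiplication. Under this identification, any subgroup $C \leq T$ has the form $C = \langle \zeta \rangle$ for some $\zeta \in \mathbb{F}_{q^n}^*$ of multiplicative order $d := |C|$.

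The key step is to describe the $C$-invariant $\mathbb{F}_q$-subspaces of $V$. An $\mathbb{F}_q$-subspace $W \leq V$ is $C$-invariant if and only if $\zeta \cdot W \subseteq W$, which (since $W$ is already $\mathbb{F}_q$-closed) is equivalent to $W$ being closed under multiplication by every element of $F := \mathbb{F}_q[\zeta]$. Let $s = [F : \mathbb{F}_q]$, so $F = \mathbb{F}_{q^s}$; then $s$ is the smallest positive integer for which $\zeta \in \mathbb{F}_{q^s}^*$, equivalently the smallest positive integer with $d \mid q^s - 1$. Moreover $s \mid n$ since $\mathbb{F}_{q^s} \subseteq \mathbb{F}_{q^n}$, and $V$ inherits the structure of an $F$-vector space of dimension $n/s$. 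Thus the $C$-invariant $\mathbb{F}_q$-subspaces of $V$ are precisely the $F$-subspaces of $V$.

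Now $C$ is reducible precisely when $V$ admits a nonzero proper $F$-subspace, which happens if and only if $\dim_F V = n/s > 1$, i.e.\ $s < n$. Since $s$ is the minimal positive integer with $d \mid q^s - 1$ and $s \mid n$, the inequality $s < n$ holds if and only if there is some proper divisor $s'$ of $n$ (namely $s' = s$) with $d \mid q^{s'} - 1$. Contrapositively, $C$ is irreducible if and only if $|C|$ does not divide $q^{s'} - 1$ for any proper divisor $s'$ of $n$, which is the stated equivalence.

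I do not anticipate any real obstacle here: the whole argument rests on the regular action of the Singer cycle, and the only substantive point is the (standard) observation that a cyclic-group-invariant $\mathbb{F}_q$-subspace is automatically a module over $\mathbb{F}_q[\zeta]$. The hypothesis that $C$ is a \emph{proper} subgroup of $T$ is used implicitly only insofar as when $C = T$ we have $\zeta$ generating $\mathbb{F}_{q^n}$ over $\mathbb{F}_q$, so $s = n$ and the criterion trivially recovers irreducibility of $T$; the lemma as stated does not actually need this restriction.
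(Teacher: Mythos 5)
Your proof is correct, and it takes a genuinely different route from the paper's. The paper argues the forward direction by a counting argument: if $C=\langle\sigma\rangle$ stabilises a non-zero proper subspace $V_1$, then since $T$ is regular on $V\setminus\{0\}$ the subgroup $C$ is semiregular, so $V_1\setminus\{0\}$ is a union of $C$-orbits each of size $|C|$, giving $|C|\mid q^{\dim V_1}-1$; since $\dim V_1$ need not divide $n$, the paper then passes to $\gcd(q^{\dim V_1}-1,q^n-1)=q^{(\dim V_1,n)}-1$ to land on a proper divisor of $n$. For the converse the paper simply cites Huppert. Your argument instead identifies the lattice of $C$-invariant $\mathbb{F}_q$-subspaces with the lattice of $\mathbb{F}_{q^s}$-subspaces of $\mathbb{F}_{q^n}$, where $\mathbb{F}_{q^s}=\mathbb{F}_q[\zeta]$ and $s$ is the multiplicative order of $q$ modulo $|C|$; this yields both directions at once, makes it automatic that the relevant $s$ divides $n$ (so no gcd step is needed), and is self-contained rather than deferring half the equivalence to a reference. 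What the paper's counting argument buys is that it avoids fixing a particular model of the Singer cycle; your identification of $T$ with $\mathbb{F}_{q^n}^*$ acting by multiplication is harmless because all Singer cycles are conjugate in $\GL_n(q)$ (as the paper notes just before the lemma) and irreducibility is a conjugacy invariant, but it would be worth one sentence to say so. Your closing remark that the properness of $C$ in $T$ is not actually needed is also accurate: for $C=T$ the minimal $s$ is $n$ and the criterion correctly returns irreducibility.
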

\begin{proof}
Since all subgroups of a cyclic group are cyclic, $C=\langle \sigma \rangle$ for some $\sigma \in T.$

Assume that $C$ is reducible, so there is a non-zero proper  subspace $V_1$ of $V=\mathbb{F}_q^n$ such that $$(V_1)\sigma=V_1.$$ Therefore, $V_1 \backslash \{0\}$ is a collection of $m$ orbits of $C$ acting on all non-zero vectors. Since $T$ is regular on  $V \backslash \{0\}$, $C$ is semiregular and all orbits have size $|C|.$ Thus, $$q^s-1 =|V_1 \backslash \{0\}|=m|C|$$ for some  $s<n$. Thus, $|C|$ divides $q^s-1$. So $|C|$ divides 
$(q^s-1, q^n-1)=q^{(s,n)}-1$ where $(s,n)$ is a proper divisor of $n$ since $s<n$.  The converse follows from the proof of \cite[Satz II.7.3(a)]{hupp}.
\end{proof}

 Now we are ready to classify the systems of imprimitivity of $G_{\alpha}$ on $\Omega \backslash \sigma$. The classification splits into two cases $n \geq 3$ and $n=2$ which are handled in Theorems \ref{th:blocks} and \ref{th:blocksn2} respectively.

\begin{table}[h]
\begin{tabular}{cccclc}
\hline
Line   & $q$ & $r$  & $B$  & $|B|$  \\ 
\hline
1 & 3  & 2    &       $B_4= \{e_2, 2e_1 + 2 e_2\}$\   & 2                 \\ 
  &     &         &           $B_5=  \{e_2, e_1 + 2 e_2\}$        &2 \\ \hline
2  & 4  & 3    &       $B_4= \{e_2, e_1 +  e_2\}$\  &2                    \\ 
  &     &         &           $B_5= \{(1+\lambda)e_1 +\lambda e_2 \mid \lambda \in \mathbb{F}_4^*\}$&3\\ \hline
  3  & $2^4$  & 5    &     $B_{6}= \omr \{\lambda e_1 +e_2 \mid \lambda \in \mathbb{F}_4\}$\  &4    \\ \hline

\end{tabular}
\caption{Additional blocks for Theorem~\ref{th:blocks}, $n \geq 3$}\label{t:blocks}
\end{table}

\begin{Th} \label{th:blocks}
        Let  $n \geq 3$ and assume $Y \, \SL_n(q) \leq G \leq\GammaL_n(q)$ with $G^{\Omega}$ of rank 3 acting on $\Omega$ as in {\rm Construction \ref{con:psl}}  and satisfying Hypothesis \ref{hyp1}. Let $B$ be a nontrivial block containing $\beta=\langle \omega^r \rangle e_2$ in a system of imprimitivity for $G_{\alpha}$ on $\Omega \backslash \sigma$. Then   one of the following holds.
        \begin{enumerate}[label=\normalfont (\arabic*)]
                \item $B$ is $B_1= \omr\{e_2, \omega e_2, \ldots, \omega^{r-1} e_2 \}$ of size $r$;
                \item $B$ is  $B_2=  \omr\{\lambda e_1+e_2\mid \lambda\in \mathbb{F}_q\}$ of size $q$;
             \item $B$ is $B_3=S=\omr \{\lambda_1 e_1 + \lambda_2 e_2 \mid \lambda_1 \in \mathbb{F}_q, \lambda_2 \in \mathbb{F}_q^*\}$ of size $qr$;
             \item  $q, r, n$ and $B$ are as in one of the lines of Table~$\ref{t:blocks}$.
                          \end{enumerate}
\end{Th}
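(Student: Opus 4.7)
The plan is to classify all blocks of $H := (G_\alpha)_S$ acting on $S$, where $S$ is the set defined in Lemma~\ref{lem:Ssystem}. By Lemma~\ref{lem:blockS}, any block $B$ containing $\beta := \langle\omega^r\rangle e_2$ lies in $S$; by Corollary~\ref{cor:block}, $|B|$ divides $qr$; and by Lemma~\ref{lem:gltrans}(1) combined with Lemma~\ref{lem:Ssystem}, $H$ is transitive on $S$. Hence it suffices to find all $H$-blocks on $S$ containing $\beta$.

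First I would describe the action of $H$ on $S$ explicitly. Parameterising points of $S$ by pairs $(\lambda_1, \lambda_2) \in \mathbb{F}_q \times \mathbb{F}_q^*$ modulo the diagonal scalar action of $\langle\omega^r\rangle$, and using the matrix form \eqref{eq:delta1} together with the freedom to adjust determinants in the $(n-2) \times (n-2)$ bottom-right block of $\hat g$, the image of $H$ in $\mathrm{Sym}(S)$ is generated by ``translations'' $t_b : (\lambda_1, \lambda_2) \mapsto (\lambda_1 + \lambda_2 b, \lambda_2)$ for $b \in \mathbb{F}_q$, ``scalings'' $s_a : (\lambda_1, \lambda_2) \mapsto (\lambda_1, a \lambda_2)$ for $a \in \mathbb{F}_q^*$, and the field automorphisms $\phi^{jk}$ available in $G$. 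The stabiliser is then $H_\beta = \langle s_a~(a \in \langle\omega^r\rangle), \phi^{jk}\rangle$, and the rank-3 hypotheses from Lemma~\ref{l:semi}(4) (in particular $o_r(p) = r-1$ and $(j, r-1) = 1$) ensure that $\langle\phi^j\rangle$ acts as the full cyclic group on $(\mathbb{Z}/r\mathbb{Z})^*$. By the standard correspondence, blocks of $H$ containing $\beta$ are in bijection with subgroups $K$ satisfying $H_\beta \leq K \leq H$ via $B = K \cdot \beta$; the three canonical subgroups $\langle s_a, \phi^{jk}\rangle$, $\langle t_b, s_a~(a \in \langle\omega^r\rangle), \phi^{jk}\rangle$, and $H$ itself produce $B_1$, $B_2$, and $B_3$ of sizes $r$, $q$, and $qr$, respectively.

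The main obstacle will be classifying every remaining subgroup $K$. For such $K$, either $K \cap T$ is a proper nonzero additive subgroup of $\mathbb{F}_q$ (which must then be both $\langle\phi^j\rangle$- and $\langle\omega^r\rangle$-invariant, and hence an $\mathbb{F}_{p^d}$-subspace, where $\mathbb{F}_{p^d}$ is the smallest subfield of $\mathbb{F}_q$ containing $\langle\omega^r\rangle$), or $K$ contains a nontrivial element $t_b s_a$ that is ``diagonal'' to $T$ and $D$. In either case, closure of $K$ under $H_\beta$ imposes tight arithmetic constraints. Using the primitive-prime-divisor condition on $r$ together with $r-1 \mid a$ and $(j, r-1) = 1$, the subfield-type analysis will produce $B_4$ in Line~2 and $B_6$ in Line~3 of Table~\ref{t:blocks}, while the diagonal-type analysis---after observing that $\langle\omega^r\rangle$ must be trivial for the resulting subgroup to be a clean extension---reduces to $r = q - 1$ and yields $B_4, B_5$ in Line~1 and $B_5$ in Line~2. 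A final finite verification then rules out any additional blocks.
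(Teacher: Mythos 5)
Your plan is correct and follows essentially the same route as the paper's proof: reduce to the block $S$ via Lemmas \ref{lem:blockS}--\ref{lem:Ssystem} and Corollary \ref{cor:block}, analyse the induced two-dimensional affine-type action (your translations $t_b$ and scalings $s_a$ are exactly the paper's subgroup $H$ of lower-triangular matrices in \eqref{eq:Hmat}), extract $B_2$ from the normal translation subgroup, treat proper additive subgroups of $T$ by the Singer-cycle/irreducibility argument (Lemma \ref{irrsub}) plus the primitive-prime-divisor and $r-1\mid a$ conditions to force $(q,r)=(2^4,5)$, reduce the size-$r$ case to $r=q-1$ and hence $(q,r)\in\{(3,2),(4,3)\}$, and finish with a finite check. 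The only cosmetic difference is that you organise everything through the subgroup lattice between $H_\beta$ and $H$, which the paper invokes only in the two small cases where $H_\beta$ is trivial; just note that your ``either/or'' dichotomy is not exclusive (blocks of size $p^kr$ with $0<k<a$ satisfy both conditions), so the subfield constraint must also be applied there before the final finite verification closes that case.
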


\begin{Remark}\label{r:sl}
    {\rm 
  Let $G, \Omega, r$ be as in Construction \ref{con:psl}, so $G^\Omega$ is semiprimitive on $\Omega$ by Lemma~\ref{l:semi}, and assume that $G^{\Omega}$ has  rank $3$. Let $q=p^a$ with $p$  prime and $a\geq1$, so $|G:G\cap\GL_n(q)|=a/j$ for some $j\mid a$. Let $Y=\langle \omega^rI\rangle\leq \GammaL_n(q)$.   Then, by Lemma~\ref{l:semi}(4), one of the following holds.
  \begin{enumerate}
      \item[$(a)$] $(n,r)\ne(2,2)$, $r$ is a primitive prime divisor of $p^{r-1}-1$, $(r-1,j)=1$ and $(r-1)\mid a$ (so in fact $(r-1)\mid a/j$). 
      \item[$(b)$] $(n,r)=(2,2)$ and $G\not\leq Y\,\SigmaL_2(q)/Y$.
  \end{enumerate} 
  In particular, in case $(a)$, if $r=q-1$ then $p^a- 2$ divides $a$, and this is only possible if  $(q,r)=(3,2)$ or $(4,3).$ These remarks will be useful in the proof of Theorems ~\ref{th:blocks} and \ref{th:blocksn2}.
    }
\end{Remark}

\begin{proof} (Proof of Theorem~\ref{th:blocks}.)\quad
By Lemma~\ref{lem:Ssystem}, $B_3=S$ is a block of size $qr$, and by Corollary~\ref{cor:block}, $B\subseteq S$ and $|B|$ divides $qr$. If $B=B_3$ then (3) holds, so from now on we assume that $B$ is a proper subset of $S$. 
Let $\Delta=\{(B)g\mid g\in G_\alpha\}$ be the system of imprimitivity  determined by $B$, so $\Delta$ is a proper refinement of $\{(S)g \mid g \in G_{\alpha}\}$. 
By the definition of a block, $G_{\alpha, B}$  leaves $S$ invariant. Then, since $G_{\alpha,S}$ is transitive on $S$ and $G_{\alpha, B} < G_{\alpha, S} < G_\alpha$, it follows that $B$ is a block for $G_\alpha$ in $\Omega \backslash \sigma$ if and only if $B$ is a block for $G_{\alpha,S}$ in $S$.  Moreover,  since $S=\omr\{ (\lambda_1 e_1 + \lambda_2 e_2) \mid \lambda_1 \in \mathbb{F}_q, \lambda_2 \in \mathbb{F}_q^*\}$ (see Lemma~\ref{lem:Ssystem}), it follows that $G_{\alpha, S}$ stabilises $\langle e_1, e_2\rangle$  in its action on $V$. 
Hence the restriction homomorphism $\tau: G_{\alpha,S} \to \GammaL_2(q)=\GaL(\langle e_1, e_2\rangle)$ is well-defined and,  for $g \in G_{\alpha,S}$ we have  
\begin{equation}\label{eq:ghat}
g^{\tau} = \delta I \cdot \begin{pmatrix}
    1 & 0 \\
    \lambda & \mu
\end{pmatrix} \cdot \phi^k    
\end{equation}
for some $\delta \in \omr$, $\lambda \in \mathbb{F}_q,$ $\mu \in \mathbb{F}_q^*$, and $0\leq k<a$. In view of \eqref{eq:delta1}, for each $g \in G_{\alpha, S}$, the elements $g$ and $\delta^{-1} g^{\tau}$ induce the same action on $S$, and we denote the map $g \mapsto \delta^{-1}g^{\tau}$ by $\theta.$   It is straightforward to check that $\theta$ is  a homomorphism from $G_{\alpha,S}$ to $\GaL_2(q)=\GaL(\langle e_1, e_2\rangle)$. 

To determine the possibilities for $B$ it is therefore sufficient to consider the action of $G_{\alpha,S}$ on $\langle e_1, e_2\rangle$ (and hence on $S$).  Our strategy is to consider the following subgroup of $(G_{\alpha,S})^\theta$: 
\begin{equation} \label{eq:Hmat}
 H=\left\{ \begin{pmatrix}
    1 & 0 \\
    \lambda & \mu
\end{pmatrix} \mid \lambda \in \mathbb{F}_q, \mu \in \mathbb{F}_q^* \right\}. 
\end{equation}
Notice that $(G_{\alpha,S})^\theta$ contains $H$ since  $G\geq \SL_n(q)$ and $n \geq 3.$

Since $ \begin{pmatrix}
    1 & 0 \\
    \lambda & \mu\end{pmatrix} $ maps $\omr e_2 $ to $\omr (\lambda e_1+\mu e_2)$, we see that $H$ is transitive on $S$.
 Hence each block of $G_{\alpha,S}$ acting on $S$ is a block of $H$  acting on $S$. 
Now we identify all the nontrivial blocks for $H$ in $S$,  and then decide whether each of these is a block for $G_{\alpha,S}$. We first consider the block $B_2$.

\medskip
\noindent\emph{Claim 1.\quad $B_2=   \omr\{\lambda e_1+e_2\mid \lambda\in \mathbb{F}_q\}$ is a block for $G_{\alpha,S}$ in $S$, as in part $(2)$. }

 \medskip
 
Since the Sylow $p$-subgroup $P$ of $H$,  consisting of matrices in \eqref{eq:Hmat} with $\mu=1$, is normal in $(G_{\alpha,S})^\theta$, its orbits form a system of imprimitivity for $G_{\alpha,S}$ in $S$ and the orbit containing $\beta$ is $B_2$.  Thus $B_2$ is a block  of size $q$, as in part (2), proving Claim 1.

 \medskip
In our analysis it turns out that the most delicate case is the one where $|B|=r$ (which leads to some exceptional blocks in Table~\ref{t:blocks}), so we deal with this case next.  The next step, Claim~2, identifies the cases $(q,r)=(3,2)$ and $(q,r)=(4,3)$ as rather special so we finish treating them in Claims 3 and 4, and then proceed to the remaining cases, where $|B|\ne r$.

\medskip

\noindent\emph{Claim 2.\quad $B_1=\omr\{e_2, \omega e_2, \ldots, \omega^{r-1} e_2 \}$ is a block for $G_{\alpha,S}$ in $S$ as in part $(1)$.  If $B \neq B_1$ is a block for $H$  in $S$, and $|B|=r$, then either  $(q,r)=(3,2)$ or $(q,r)=(4,3)$.}

 \medskip
 
 First we note that, as $B_1$ consists of the $r$ points of $\Omega$ contained in the $1$-space $\langle e_2\rangle$, so $B_1= \sigma(\langle e_2\rangle)$ in the notation of Construction \ref{con:psl}. In particular, it is a block for $G$ in $\Omega$ and, hence, it is a block for $G_{\alpha,S}$ in $S$.

We now assume that $B$ is a block for $H$ in $S$, and that $B\ne B_1$, $|B|=r$, and $\beta\in B$. Since $H$ is transitive on $S$, it follows that $H_B$ is transitive on $B$, and as $r=|B|$ is prime, there exists
\begin{equation}\label{eq:h}
h =\begin{pmatrix}
    1 & 0 \\
    \lambda & \mu 
\end{pmatrix}  \in H_B 
\end{equation}
 that induces a cycle of length $r$ on $B$.  Here $\lambda \in \mathbb{F}_q$ and $\mu \in \mathbb{F}_q^*.$ 
 If $\lambda=0$ then $h = \diag (1,\mu)$ and, as $h$ induces a permutation of  order 
 $r$ on $B$, we have $\mu\not\in\omr$ and $B=B_1$, which is not the case. Hence $\lambda\ne 0$. 
Now 
$$
h^r = \begin{pmatrix}
    1 & 0 \\
    \lambda(1 + \mu + \ldots +\mu^{r-1}) & \mu^r
\end{pmatrix}
$$
and $h^r$ fixes $B$ pointwise.
In particular $h^r$ fixes $\omr e_2,$ and hence $1 + \mu + \ldots +\mu^{r-1}=0$ since $\lambda\ne 0$. If $\mu=1$ then $0=1 + \mu + \ldots +\mu^{r-1}=r\cdot 1_{\mathbb{F}_q}$, which is not the case since $r$ divides $q-1$. Hence $\mu \ne 1$ and 
$$
0=(1-\mu)(1 + \mu + \ldots +\mu^{r-1})=1-\mu^r,$$
so $\mu^r=1$ and 
\begin{equation} \label{eq:Br}
B=\omr \{e_2, \lambda e_1 + \mu e_2, \lambda(1+\mu)e_1 +\mu^2 e_2, \ldots, \lambda(1 + \mu + \ldots +\mu^{r-2})e_1 + \mu^{r-1} e_2\}.    
\end{equation}

     Now $\diag(1, \omega^r)$ lies in $H$ and stabilises $\omr e_2$. Therefore  $\diag(1, \omega^r)\in H_B$ and we have 
\begin{align*}
\left(\omr(\lambda e_1 + \mu e_2)\right)\diag(1, \omega^r)^i & =\omr(\lambda e_1 + \omega^{ri} \mu e_2) \\ &=\omr( \omega^{-ri} \lambda e_1 + \mu e_2) \in B \text{ for } i \in \{1, \ldots, (q-1)/r\}.
\end{align*}
Since the coefficients of $e_2$ in these $(q-1)/r$ points are the same and the coefficients of $e_1$ are pairwise distinct and non-zero, we conclude that $(q-1)/r \le|B|-1= r-1$, so $q-1 \leq r^2-r<r^2.$ In particular $r^2$ does not divide $q-1$, and hence the order $|\omr|=(q-1)/r$ is not divisible by $r$. Since $\mu \ne 1$ and $\mu^r=1,$ this implies that $\mu \not\in \omr$. Hence it follows from \eqref{eq:Br} that, for distinct points of $B$, say $\omr v$ and $\omr v'$, the coefficients of $e_2$ in $v, v'$ must lie in different cosets of $\omr$ in $\mathbb{F}_q^*$ (independent of the choices of $v, v'$ in  $\omr v$ and $\omr v'$).  Further, since $\left(\omr(\lambda e_1 + \mu e_2)\right)\diag(1, \omega^r) = \omr (\omega^{-r}\lambda e_1 + \mu e_2)=:\omr v\in B$, the coefficient of $e_2$ in $v$ must lie in $\omr \mu$. Hence this point of $B$ must be equal to $\omr (\lambda e_1 + \mu e_2)$, and we conclude that $\omega^{-r} \lambda = \lambda$. Since $\lambda \ne 0$, this implies that $\omega^r=1$, so $r=q-1.$ Since $n \geq 3,$ it follows from Remark~\ref{r:sl} that  $(q,r)=(3,2)$ or $(q,r)=(4,3)$. This completes the proof of Claim 2.

\medskip

From now on we assume that   $B$ is a block for $H$ in $S$, and that $B\ne B_1, B_2, S$.  First, we fully consider the cases $(q,r)$ in $ \{(3,2), (4,3)\}.$

\medskip
\noindent\emph{Claim 3.\quad If $(q,r)=(3,2)$  then $B$ is either   $B_4=\omr[2]\{e_2, 2e_1 + 2 e_2\}$ or $B_5=\omr[2]\{e_2, e_1 + 2 e_2\}$, as in Table~\ref{t:blocks}.  In both cases, $B$ is a block for $G_{\alpha,S}$  in $S$. (Note that $\omega^2=1$ here.)}
\medskip

In this case $Y=1$ and $\GaL_n(q)=\GL_n(3)$, so $H=(G_{\alpha,S})^\theta$ and each block for $H$ on $S$ is a block for $G_{\alpha,S}$ on $S$.   Now $|H|=qr=6=|S|$, and $H_\beta=1$, so by \cite[Lemma 2.4]{PS}, the nontrivial blocks of imprimitivity for $H$ in $S$  containing $\beta$ are in one-to-one correspondence with the nontrivial proper subgroups of $H\cong S_3$, namely three subgroups of order $2$, and one subgroup of order $3$, corresponding to three blocks of size $2$ and one block of size $3$. By Claims 1 and 2, the block of size $3$ is $B_2$ and one of the blocks of size $2$ is $B_1$, which we are excluding, so  we need to identify exactly two further blocks of size $2$, and these are $B_4$ and $B_5$ as in Claim 3. To see this we note that, for $i=4,5$, $H_{B_i}=\langle h_i\rangle \cong C_2$
where 
$$
h_4=\begin{pmatrix}
    1 & 0\\
    2 & 2
\end{pmatrix}, \text{ }
h_5=\begin{pmatrix}
    1 & 0\\
    1 & 2
\end{pmatrix} .
$$

\medskip
\noindent\emph{Claim 4.\quad If $(q,r)=(4,3)$  then  $B$ is either $B_4=\omr[3]\{e_2, e_1 +  e_2\}$ or $B_5=\omr[3]\{(1+\lambda)e_1 + \lambda  e_2\mid \lambda\in \mathbb{F}_4^*\}$, as in Table~\ref{t:blocks}.  In both cases, $B$ is a block for $G_{\alpha,S}$  in $S$. (Note that $\omega^3=1$ here.)}
\medskip

Again, $Y=1$. Here $H$ is of order $12$. In fact, $H_\beta=1$ and as in the previous case,  by \cite[Lemma 2.4]{PS}, the nontrivial blocks of imprimitivity for $H$ in $S$ containing $\beta$ are in one-to-one correspondence with the nontrivial proper subgroups of $H\cong A_4$. Thus for the $H$-action, there are three blocks of size $2$, four blocks of size $3$, and one block of size $4$. 

Further by Remark~\ref{r:sl}(a), the parameter $j=1$ and $G\not\leq \GL_n(4)$, so $G$ contains an element of the form $g \phi$ with $g \in \GL_n(4)$. Since $\SL_n(4)\leq G$ and $\SL_n(4)$ is transitive on $2$-spaces, we may assume that $g$ stabilises ${\langle e_1,e_2 \rangle}$ setwise. Also,  as $n\geq 3$, $(G\cap\GL_n(4))_{\langle e_1, e_2\rangle}$ induces $\GL_2(4)$ on $\langle e_1, e_2\rangle$, so we may assume further that  $g|_{\langle e_1,e_2 \rangle}=I_2.$ 
 Thus both $g$ and the element $\phi$ fix $\alpha$ and $\beta$, and hence their product $g\phi\in G_{\alpha,\beta} \leq G_{\alpha,S}$. Therefore $g\phi$ leaves invariant each nontrivial block $B$ for $G_{\alpha,S}$  in $S$ containing $\beta$. Recall that $g$ fixes $S$ pointwise since $g|_{\langle e_1,e_2 \rangle}=I_2.$
Thus $g\phi$ and $\phi$ induce the same action on $S$.
By Claims 1 and 2, the block of size $4$ is $B_2$ and one of the blocks of size $3$ is $B_1$, which we are excluding, so  we need to identify blocks of size $2$ and  $3$.

If $|B|=3=r$, then $H_B$ contains an element $h$ of order $r$, as in \eqref{eq:h}, permuting the $r$ points of $B$ transitively. In particular $h$ does not fix $\beta$, so $\mu\ne 1$ and  up to swapping $h$ with its inverse we may assume that $\mu=\omega$. Hence 
$B= \{e_2, \lambda e_1 + \omega e_2, \lambda(1 +\omega) e_1 + \omega^2 e_2 \}$. Since   ${g\phi}$, as in the paragraph above, leaves $B$ invariant it follows that
$$
(\lambda e_1 +\omega e_2) {g\phi}=(\lambda e_1 +\omega e_2) {\phi} =\lambda^2 e_1 +\omega^2 e_2 \in B.
$$
Considering the coefficients of $e_2$ in the points of $B$, we conclude that  $\lambda^2=\lambda(1+\omega)$. If $\lambda=0$ then $B$ is the block $B_1$ and we are assuming that this is not the case. Hence $\lambda\ne 0$ and so $\lambda=1+\omega$, and $B=\{e_2, (1+\omega)e_1+ \omega e_2, (1+ \omega^2)e_1 + \omega^2 e_2\}$, which is the block $B_5$ in Claim 4 (noting that $-1=1$ in $\mathbb{F}_4$).

Suppose now that $|B|=2$. Then $H_B\cong C_2$, and $H_B$ contains an involution $h_2$ (interchanging the two points of $B$) of the form 
$$h_2 = \begin{pmatrix}
    1 & 0\\
    \lambda & 1
\end{pmatrix}.
$$ 
Since $h_2$ does not fix $\beta$, we have $0\ne \lambda\in\mathbb{F}_4$, and since $|B|=2$ we have $B=\{e_2, \lambda e_1+e_2\}$. Also, since $g\phi$ fixes $B$, we must have $\lambda^\phi=\lambda$, and hence $\lambda=1$, and $B$ is the block $B_4$ of Claim 4.  It is easy to check that $B_4$ and $B_5$ are indeed blocks for $G_{\alpha,S}$ in $S$.  Thus Claim 4 is proved.

\medskip
From now on we may assume that  $B$ is a block for $H$ in $S$, that $B\ne B_1, B_2, S$, that $|B|\ne r$, and   that $(q,r)\notin\{ (3,2),(4,3)\}$. By Remark~\ref{r:sl} it follows that $r$ is a proper divisor of $q-1$. Since $|B|$ divides $|S|=qr$ and  $r$ is prime, either $|B|=p^k$ or $|B|=p^kr$ for some positive integer $k$. 
Next we identify all possibilities  where $|B|=p^k$.

\medskip
\noindent\emph{Claim 5.\quad If $|B|=p^k$, for some positive integer $k$, then $(q,r)=(2^4,5)$  
    and $B$ is $B_{6}= \omr \{\lambda e_1 +e_2 \mid \lambda \in \mathbb{F}_4\} \subset B_2$
as in Table~\ref{t:blocks}. Moreover, $B_6$ is a block for $G_{\alpha,S}$ on $S$ and there are no further nontrivial blocks for $(G_{\alpha,S})^S$  when  $(q,r)=(2^4,5)$ (including blocks of size $p^kr$).}
\medskip

Suppose that $|B|=p^k$.  Recall from Claim 1 that $H$ has a normal Sylow $p$-subgroup $P$, and note that $P$ is semiregular on $S$. This means that $H_B= P_1 \rtimes H_\beta$ for some subgroup $P_1\leq P$ of order $p^k$, and $B$ is the $P_1$-orbit containing $\beta$. Thus $B\subseteq B_2$ and as we are assuming that $B\ne B_2$, $B$ is a proper subset of $B_2$. 

 It is easy to verify that 
$H_{B_2}$ consists of all matrices of the form 
\begin{equation}\label{e:hb2}
\begin{pmatrix}
    1 & 0 \\
    \lambda & \omega^{ri}
\end{pmatrix}
\end{equation}
with $\lambda \in \mathbb{F}_q$ and $i \geq0$ so $|H_{B_2}|=q(q-1)/r$, and also that, for $\beta=\omr e_2$,  $H_{\beta}$ consists of all such matrices with $\lambda=0$, so $H_\beta = \langle \diag(1, \omega^r) \rangle\cong C_{(q-1)/r}$. Since $\beta\in B\subset B_2$, we have $H_\beta<H_B<H_{B_2}$, and so by the previous paragraph, $H_B= P_1 \rtimes \langle \diag(1, \omega^r) \rangle$ with $1<P_1 < P$. Notice that $P$ is isomorphic to the additive group of $\mathbb{F}_q$ which is an $a$-dimensional vector space over $\mathbb{F}_{p}$, where $q=p^a$, and $P_1$ corresponds to a proper non-zero subspace of $\mathbb{F}_{p}^a.$
Since 
\[
\begin{pmatrix}
    1 &  0\\
    \lambda & 1
\end{pmatrix}^{\diag(1, \omega^r)}=\begin{pmatrix}
    1 &  0\\
   \omega^{-r} \lambda & 1
\end{pmatrix}, 
\]
$\diag(1, \omega^r)$ acts on $P$ as a linear transformation of $\mathbb{F}_{p}^a$ leaving the subspace $P_1$ invariant, so $H_\beta$ is reducible on $P$. Indeed, $\langle \diag(1, \omega^r) \rangle$ induces on $\mathbb{F}_{p}^a$ a subgroup of a Singer cycle (the cyclic subgroup generated by $\omega$ acting on $\mathbb{F}_q= \mathbb{F}_{p}^a$ via multiplication in $\mathbb{F}_q$, see the discussion after Definition \ref{des:singer}). 
\begin{itemize}
    \item By Lemma~\ref{irrsub}, there exists a proper divisor $c$ of $a$ such that $|H_\beta|=(q-1)/r$ divides $p^c-1$; in particular $a\geq c+1\geq 2$ and $c\leqslant a/2$. 
    \item Also, by Remark~\ref{r:sl}, $r$ is a primitive prime divisor of $p^{r-1}-1$ and $r-1$ divides $a/j$ where $|G:G\cap\GL_n(q)|=a/j$.
\end{itemize}
We claim that $r$ is odd, and $a=r-1 > 2$. If $r=2$ then $(p^a-1)/2$ divides $p^c-1\leq p^{a/2}-1$, which is impossible. Thus $r$ is an odd prime. 
Suppose (for a contradiction) that $a=2$. Then $r=3$ (as $r-1$ divides $a$ by the second condition above), and by the first condition, $c=1$ and $(q-1)/r=(p^2-1)/3$ divides $p-1$. This implies that $p=2$ so $(q,r)=(4,3)$,  which we assume is not the case.  Hence $a>2$. Next suppose that $(p,a)=(2,6)$. Then   $r=3$ (as $r-1$ divides $a$ and $7$ is not a primitive prime divisor of $2^{7-1}-1$), and $(p^a-1)/r = 21$ should divide $2^c-1$ for some $c\in\{1, 2,3\}$ (as $c$ is a proper divisor of $a=6$), which is impossible. Thus  $(p,a)\ne (2,6)$, and $a>2$, and it now follows from Zsigmondy's Theorem \cite{ZSIG} that $p^a-1$ has a primitive prime divisor, say $r'$.  If $r'\ne r$, then $r'$ divides $(p^a-1)/r$, which divides $p^c-1$, which is a contradiction. Thus $r'=r$ and so we must have $a=r-1$ (by the second condition). This proves the claim.

Therefore we have
\begin{equation}\label{e:2a}
    2a> a+1 = r \geq \frac{p^a-1}{p^c-1} > p^{a-c}\geq p^{a/2}\geq 2^{a/2}.
\end{equation}
 The condition $2a>2^{a/2}$ implies that $a< 8$, and since we have $a=r-1>2$ from the claim, it follows that $(a,r)=(4,5)$ or $(6,7)$. If $(a,r)=(6,7)$, then the proper divisor $c$ of $a$ satisfies $c\leq 3$, and so by \eqref{e:2a} we have  $r=7>p^{6-c}\geq p^3$, which is a contradiction. 
Hence $(a,r)=(4,5)$ and $c\in\{1,2\}$. Thus $r=5>p^{4-c}$ by \eqref{e:2a}, and so $c=2$ and $p=2$.

In this case the $H_\beta$-orbits in $\mathbb{F}_{16}^*$ are the cosets $\omega^i\mathbb{F}_4^*$ with $0\leq i\leq 4$. We also have the condition that $r-1=4$ divides $a/j=4/j$ so $j=1$  and $G$ contains an element of the form $g \phi$ with $g \in \GL_n(16)$. Since $\SL_n(16)\leq G$ and $\SL_n(16)$ is transitive on $2$-spaces, we may assume that $g$ stabilises ${\langle e_1,e_2 \rangle}$ setwise, and since $(G\cap\GL_n(16))_{\langle e_1, e_2\rangle}$ induces $\GL_2(16)$ on $\langle e_1, e_2\rangle$ (because $n \geq 3$), we may assume further that  $g|_{\langle e_1,e_2 \rangle}=I_2.$  Thus $G_{\langle e_1, e_2\rangle}$ contains an element $g \phi$ which acts on $\langle e_1, e_2\rangle$ as $\phi$. %
In fact, $g\phi\in G_{\alpha,\beta}$ and the $G_{\alpha,\beta}$-orbits in $\mathbb{F}_{16}$ are $\{0\}, \mathbb{F}_4^*, \mathbb{F}_{16}\setminus\mathbb{F}_4$. It follows that there is a unique $G_{\alpha,S}$-block of size $4$ contained in $B_2$  and containing $\beta$, corresponding to the subfield $\mathbb{F}_4$ of $\mathbb{F}_{16}$ and the stabiliser subgroup $H_B$ consists of the matrices in \eqref{e:hb2} with $\lambda\in\mathbb{F}_4$. This is the block $B=B_{6}$ in Claim $5$.  Moreover, a computation using {\sc Magma} confirms that there are no further nontrivial blocks  for $G_{\alpha,S}$ in $S$ when $(q,r)=(2^4,5)$. Recall that $(G_{\alpha,S})^\theta \leq \GaL_2(q)$ so  it is enough to verify this computationally only for $n=2$. 
This completes the proof of Claim 5.

\medskip

 Thus we may assume that $|B|=p^kr$ with  $k\geqslant 1$, and recall that $B$ is a proper subset of $S$, so $1\leqslant k < a$.
 Since $H$ has a normal Sylow $p$-subgroup $P$ which is semiregular on $S$, it follows that $H_B$ has a normal $p$-subgroup $P_1$ with $P_1\leq P$, and $P_1$ has $r$ orbits of length $p^k$ in $B$. In fact $P_1$ is characteristic in $H_B$ and hence is normal in $(G_{\alpha,S,B})^\theta$, so each $P_1$-orbit is a block for $G_{\alpha,S, B}$ in $B$ and hence for $G_{\alpha,S}$ in $S$. In particular the $P_1$-orbit $B'$ containing $\beta$ is a block of size $p^k>1$ and is properly contained in $\beta^{P}=B_2$ since $p^k<q$. Thus $(n,q,r)$ must be as in Claim $5$ and $B'=B_6$, but it was shown there that in this case there are no nontrivial blocks of size $p^kr$. 
\end{proof}

\begin{table}[h]
\begin{tabular}{ccclcc}
\hline
Line   & $q$ & $r$  & $B$  & $|B|$ & $G$  \\ 
\hline
1 & 4  & 3    &       $B_4= \{e_2, e_1 +  e_2\}$\  &2 &    $\GaL_2(4)$                \\ 
  &  &           &           $B_5= \{(1+\lambda)e_1 +\lambda e_2 \mid \lambda \in \mathbb{F}_4^*\}$&3 & \\ \hline
  2  & $2^4$  & 5    &     $B_{6}= \omr \{\lambda e_1 +e_2 \mid \lambda \in \mathbb{F}_4\}$\  &4  &  $\GaL_2(2^4)$ \\ \hline
3  & $3^4$  & 5    &     $B_{7,1}= \omr \{\lambda e_1 +e_2 \mid \lambda \in \mathbb{F}_9\}$\  &9      &   $Z \, \SL_2(3^4) \rtimes \langle \phi \rangle$           \\ 
  & &          &          $B_{7,2}= \omr \{\lambda e_1 +e_2 \mid \lambda \in \omega^5 \mathbb{F}_9\}$ &9 & \\ \hline
4  & $5^2$  & 3    &     $B_{8,1}= \omr \{\lambda e_1 +e_2 \mid \lambda \in \mathbb{F}_5\}$\  &5            &  $Z \, \SL_2(5^2) \rtimes \langle \phi \rangle$      \\ 
  & &          &          $B_{8,2}= \omr \{\lambda e_1 +e_2 \mid \lambda \in \omega^3 \mathbb{F}_5\}$ &5 & \\ \hline
  5 & $3^2$  & 2    &      $B_{9,i}= \omr \{\lambda e_1 +e_2 \mid \lambda \in \omega^i\mathbb{F}_3\}$ for $0 \leq i \leq 3$ & 3 &  $ \langle Y \, \SL_2(3^2),   \phi \, \diag(1,\omega)  \rangle$\\ \hline
\end{tabular}
\caption{Additional blocks for Theorem~\ref{th:blocksn2} }\label{t:blocksn2}
\end{table}

\begin{Th} \label{th:blocksn2}
        Let  $Y \, \SL_2(q) \leq G \leq\GammaL_2(q)$ with $G^{\Omega}$ of rank $3$ acting on $\Omega$ as in {\rm Construction \ref{con:psl}} and satisfying Hypothesis \ref{hyp1}. Let $B$ be a nontrivial block containing $\beta=\langle \omega^r \rangle e_2$ in a system of imprimitivity for $G_{\alpha}$ on $\Omega \backslash \sigma$. Then   one of the following holds.
        \begin{enumerate}[label=\normalfont (\arabic*)]
                \item $B$ is $B_1= \omr\{e_2, \omega e_2, \ldots, \omega^{r-1} e_2 \}$ of size $r$;
                \item $B$ is  $B_2=  \omr\{\lambda e_1+e_2\mid \lambda\in \mathbb{F}_q\}$ of size $q$;
             \item  $q, r$, $B$ and $G$ are as in one of the lines of Table~$\ref{t:blocksn2}$.
                          \end{enumerate}
\end{Th}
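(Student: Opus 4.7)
The plan is to follow the strategy of the proof of Theorem~\ref{th:blocks}, with two key adaptations for $n=2$. The first and immediate observation is that when $n=2$ the set $S$ of Lemma~\ref{lem:Ssystem} coincides with $\Omega\backslash\sigma$ (both have size $qr$), so $G_{\alpha,S}=G_\alpha$ and there is no analogue of the generic block $B_3$ from Theorem~\ref{th:blocks}. The restriction-and-rescaling homomorphism $\theta:G_\alpha\to\GaL_2(q)$ is set up exactly as in equation~\eqref{eq:ghat}, and the classification reduces to finding the nontrivial blocks of $(G_\alpha)^\theta$ on $S$ containing $\beta=\omr e_2$.

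The principal structural difference from the $n\geq 3$ case is that the subgroup
\[
H=\left\{\begin{pmatrix}1&0\\\lambda&\mu\end{pmatrix}:\lambda\in\mathbb{F}_q,\ \mu\in\mathbb{F}_q^*\right\}
\]
need not lie inside $(G_\alpha)^\theta$, because $\SL_2(q)_\alpha$ realises only the $\mu=1$ unipotent part of $H$. I will split according to Lemma~\ref{lem:gltrans}(2). In case $(a)$, where $G_\alpha\cap\GL_2(q)$ is transitive on $\Omega\backslash\sigma$, the image of $\theta$ still meets enough cosets of the unipotent radical for the Sylow-$p$ and cyclic-stabiliser arguments of Claims 1--5 of the proof of Theorem~\ref{th:blocks} to go through, producing blocks $B_1$, $B_2$, and the exceptional blocks at $(q,r)\in\{(4,3),(2^4,5)\}$ listed in lines 1--2 of Table~\ref{t:blocksn2}. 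The $(q,r)=(3,2)$ exception of Theorem~\ref{th:blocks} is ruled out because Hypothesis~\ref{hyp1} together with line 3 of Table~\ref{t:qpinsprk3} forces $q\geq 5$ when $n=r=2$. However, the partial loss of $H$ for $n=2$ means that the analysis of blocks of size $p^k$ admits new solutions, namely $(q,r)=(3^4,5)$ with $q_0=9$ and $(q,r)=(5^2,3)$ with $q_0=5$: the blocks $B_{7,i}$ and $B_{8,i}$ arise as cosets of a subfield $\mathbb{F}_{q_0}$ in $\mathbb{F}_q$ fused by a Frobenius element of $G$, and I will verify their $G_\alpha$-invariance by exhibiting the required field automorphism in $G$.

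Case $(b)$ of Lemma~\ref{lem:gltrans}(2) is the most delicate: here $r=2$, $q$ is odd, $G\cap\GL_2(q)=Z\,\SL_2(q)$, and $(G_\alpha\cap\GL_2(q))$ has two orbits $O_1,O_2$ of size $q$ on $\Omega\backslash\sigma$. By Remark~\ref{r:sl}(b), $G$ contains some element $h\phi^j\notin Y\SigmaL_2(q)$ which necessarily swaps $O_1$ and $O_2$, so every $G_\alpha$-block of size less than $q$ must be assembled from equal-size pieces of each orbit paired by this swap. A careful analysis of how this pairing interacts with the multiplicative structure of $\mathbb{F}_q$ and the field-automorphism part of the swap will show that, beyond $B_1$ and $B_2$, the only new possibility is $(q,r)=(3^2,2)$ with the particular group $G=\langle Y\,\SL_2(9),\,\phi\,\diag(1,\omega)\rangle$, yielding the four blocks $B_{9,i}$ of size~$3$ built from the cosets of $\mathbb{F}_3$ in $\mathbb{F}_9$. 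The main obstacle will be precisely this exhaustive verification in the small-parameter exceptions: one must confirm that each candidate subfield or Frobenius-coset block is genuinely $G_\alpha$-invariant (not merely stabilised by some natural subgroup) and rule out further sporadic blocks by bounds analogous to~\eqref{e:2a} together with the constraint that the square of the swapping element lies in a proper subgroup of $G_\alpha\cap\GL_2(q)$.
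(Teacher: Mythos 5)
Your overall strategy is the paper's: pass to the image $K=(G_\alpha\cap\GL_2(q))^\theta$ in $\GaL_2(q)$, observe that $K$ may be a proper subgroup of the group $H$ of \eqref{eq:Hmat} (unlike the case $n\geq 3$), and rerun the Singer-cycle/Zsigmondy bounds with the smaller point stabiliser; you also correctly predict every exceptional parameter set and block in Table~\ref{t:blocksn2}. The gap is in your case division. Lemma~\ref{lem:gltrans}(2) is stated only under the hypothesis $Z\,\SL_2(q)\leq G$, so ``splitting according to Lemma~\ref{lem:gltrans}(2)'' silently discards the groups with $G\cap Z=Y$, i.e.\ the quasiprimitive groups of Remark~\ref{alloccur}(2), for which $G\cap\GL_2(q)=Y\,\SL_2(q)$ is a proper index-$2$ subgroup of $Z\,\SL_2(q)$, with $r=2$ and $q\equiv 1\pmod 4$. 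That omitted case is exactly where the blocks $B_{9,i}$ live: for $G=\langle Y\,\SL_2(9),\ \phi\,\diag(1,\omega)\rangle$ one checks $(\phi\,\diag(1,\omega))^2=\diag(1,\omega^4)\in Y\,\SL_2(9)$, so $G\cap\GL_2(9)=Y\,\SL_2(9)\not\geq Z\,\SL_2(9)$, and the relevant point stabiliser is $\langle\diag(1,\omega^4)\rangle$ of order $(q-1)/4$ rather than $(q-1)/2$. It is precisely this extra halving that makes the stabiliser reducible on the additive group of $\mathbb{F}_9$ (as $(9-1)/4=2$ divides $3-1$) and produces the four subfield blocks. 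In the situation your case $(b)$ literally describes, namely $G\cap\GL_2(q)=Z\,\SL_2(q)$ with $r=2$, the stabiliser has order $(q-1)/2$ and the paper's Case~2 shows no blocks beyond $B_1$ and $B_2$ occur; so your case $(b)$ as stated would come up empty and the $B_{9,i}$ would be lost.

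A second, related error: you assert that in the two-orbit situation every block of size less than $q$ ``must be assembled from equal-size pieces of each orbit paired by the swap.'' Since $O_1=B_2$ is itself a block, the correct trichotomy is $B\subseteq O_1$, $B\subseteq O_2$, or $|B\cap O_1|=|B\cap O_2|=|B|/2$, and the blocks $B_{9,i}$ you name fall in the first case: they are subsets of $B_2$ of odd size $3$ and cannot split evenly between the orbits, so your structural claim contradicts your own conclusion. To repair the argument you need a genuine third case, $G\not\geq Z\,\SL_2(q)$ (which by Remark~\ref{alloccur} forces $r=2$ and $q\equiv 1\pmod 4$), analysed with the stabiliser $\langle\diag(1,\omega^4)\rangle$, and within it you must allow blocks wholly contained in $O_1$.
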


\begin{Rem}\label{rem:9iConj}
    In line 5 of Table \ref{t:blocksn2}, the group $G$ is given up to conjugation in $\GammaL_2(9)$. There are only two conjugates, the second being $G^{\phi} = \langle Y \, \SL_2(3^2),   \phi \, \diag(1,\omega^3)  \rangle.$ Notice, that the $B_{9,i}$ are still blocks for $G^{\phi}_{\alpha}$ on $\Omega$. In particular, the action of $\phi$ on $\Omega$ preserves $B_{9,0}$ and $B_{9,2}$ and swaps $B_{9,1}$ and $B_{9,3}$. It turns out (see Lemma \ref{lem:LtrSL}) that only 
    $B_{9,0}$ and $B_{9,2}$ give rise to a partial linear space via Theorem \ref{th:AD2} (2). Of course, conjugate permutation groups give rise to isomorphic partial linear spaces, so it suffices to consider only the group $G$ in Lemma \ref{lem:LtrSL} and in the proof of Theorem \ref{th:main}.
\end{Rem}

\begin{proof}
Since $n=2$, $\Omega \backslash \sigma$ is equal to $S$ of size $qr$ from Lemma \ref{lem:Ssystem}, so $G_{\alpha,S}=G_{\alpha}$ and the homomorphism $\theta$ from the first paragraph of the proof of Theorem \ref{th:blocks} is defined on $G_{\alpha},  (see \eqref{eq:ghat}).$

As in Theorem \ref{th:blocks} (see {\it Claim 2} of the proof), $B_1=\sigma(\langle e_2 \rangle)$ and is always a block for $G_{\alpha}.$
Let $P$ be the subgroup of $\SL_2(q)_{\alpha}$ consisting of matrices in \eqref{eq:Hmat} with $\mu=1.$ Again, similar to Theorem \ref{th:blocks} (see {\it Claim 1} of the proof), $P$  is the unique normal (hence characteristic) Sylow $p$-subgroup of $G_{\alpha} \cap \GL_2(q).$ Thus $P$ is normal in $G_{\alpha}$, and  it is straightforward to see that
$B_2$ is an orbit of  $P$ in $\Omega \backslash \sigma$, so $B_2$ is also always a block for $G_{\alpha}.$

 From now on we assume 
that $B\ne B_1, B_2.$ We aim to show that  one of the lines of Table \ref{t:blocksn2} must hold, and that all these cases are indeed blocks for $G_\alpha$, for the group $G$ as listed.

\medskip

Denote $K:=(G_{\alpha} \cap \GL_2(q))^{\theta}$. Our strategy for the remaining part of the proof is similar to the one we use in the proof of Theorem \ref{th:blocks}: we find the blocks $B$ for $G_{\alpha} \cap \GL_2(q)$ (more precisely, for $K$; they are the same since $\ker \theta =Y$) in $\Omega \backslash \sigma$, distinct from $B_1$ and $B_2$, and identify whether they are blocks for $G_{\alpha}$. Note that, in Theorem \ref{th:blocks} (where $n \geq 3$), $K=(G_{\alpha,S} \cap \GL_n(q))^\theta$  is always equal to the group $H$ defined in \eqref{eq:Hmat}. This is not always the case for $n=2$. 
However, we note that $K \leq (\GL_2(q)_{\alpha})^{\theta} = H$ holds.

Our analysis splits into three cases in the following way. 
Since $G \geq Y\, \SL_2(q)$ and $|Z/Y|=r$ is prime, either $G \cap Z=Z$ and $(2)$ of Lemma \ref{lem:gltrans} holds, or $G \cap Z=Y,$ so $G^{\Omega}$ is quasiprimitive by Lemma \ref{l:semi} and (2) of Remark \ref{alloccur} holds for $G_0=G^{\Omega}.$ To summarise, one of the following cases holds:
\begin{enumerate}
    \item[{\bf Case 1}.] $G \geq Z \, \SL_2(q)$ and $K=H$;
    \item[{\bf Case 2}.] $G \geq Z \, \SL_2(q)$ and  $K<H$;
    \item[{\bf Case 3}.] $G \not\geq Z \, \SL_2(q)$, $r=2$, $q \equiv 1 \pmod 4.$
\end{enumerate}
 Before we consider the cases separately, we prove the following general claim.

\medskip
\noindent
{\it Claim 1.\quad  If $|B|=p^kr$ with $k<a$, then there exists a block $B'$ for $K$ in $\Omega \backslash \sigma$ with $|B'|=p^k.$}

\medskip

 Let $P$ be as in the second paragraph of the proof. Hence $P \leq K$; in particular, $P$ is the unique normal Sylow $p$-subgroup of $K$.   It follows that $K_B$ has a normal $p$-subgroup $P_1$ with $P_1\leq P$, and $P_1$ has $r$ orbits of length $p^k$ in $B$. So each $P_1$-orbit is a block for $K_B$ in $B$ and hence for $K$ in $\Omega \backslash \sigma$. In particular the $P_1$-orbit $B'$ containing $\beta$ is a block of size $p^k>1$ and is properly contained in $B$ by construction. This completes the proof of  {\it Claim 1}.

\medskip
\noindent
{\bf Case 1}. Assume that $G \geq Z \, \SL_2(q)$ and $K=H$.

Recall that $|B|$ divides $|\Omega \backslash \sigma|=qr.$

Assume $|B|=r$ and $B \ne B_1$. Here {\it Claim 2} of the proof of Theorem \ref{th:blocks} applies directly, so either $(q,r)=(3,2)$ or $(q,r)=(4,3).$ Note that if $q=3$, then $G \leq \GaL_2(3)$ is soluble and $G^{\Omega}$ does not satisfy Hypothesis \ref{hyp1}. Hence we can assume $(q,r)=(4,3).$ Since $G \geq Z \times \SL_2(4)=\GL_2(4)$ and  $r-1=a=2$, Lemma \ref{l:semi} (4) yields $G=\GaL_2(4)$.  An easy   computation using {\sc Magma} (or repeating arguments from {\it Claim 4} of the proof of Theorem \ref{th:blocks} word for word) shows that all blocks $B$ (not equal to $B_1$ or $B_2$) for $G_\alpha$ and $(q,r)=(4,3)$ are recorded in Line 1 of Table \ref{t:blocksn2}.

Therefore, now we may assume that $(q,r) \ne (4,3)$ and $|B| \ne r.$ Next we identify all possibilities when $|B|=p^k$ for some positive integer $k$ and $B \ne B_2$. Again, since  $K=H,$  {\it Claim 5} from the proof of Theorem \ref{th:blocks} applies directly and the only possibility for such $B$ is when $(q,r)=(2^4,5)$ and $B=B_6$ as in Line 2 of Table \ref{t:blocksn2}. Note that $G \geq Z \times \SL_2(2^4)=\GL_2(2^4)$ and  Lemma \ref{l:semi} (4) yields $G=\GaL_2(2^4)$ since $r-1=a=4$. An easy   computation using {\sc Magma}  shows that the only block $B$ (not equal to $B_1$ or $B_2$) for $G_\alpha$ and $(q,r)=(2^4,5)$ is recorded in Line 2 of Table \ref{t:blocksn2}. 

There are no further possibilities for $B$ by {\it Claim 1}. This completes the consideration of {\bf Case 1}.

\medskip
\noindent
{\bf Case 2}. Assume that $G \geq Z \, \SL_2(q)$ and  $K<H$.
 This case holds precisely when $G \cap \GL_2(q) = Z \, \SL_2(q)$ and $q$ is odd.
 Indeed, following the proof of Lemma \ref{lem:gltrans}, if $G> Z \, \SL_2(q)$ then $G \cap \GL_2(q)= \GL_2(q)$, so $K=H.$ If $q$ were even then we would have $H \leq Z \, \SL_2(q)$, so $K=H$.
Here 
\begin{equation}\label{eq:Hhatmat}
    K=\widehat{H}:=\left\{ \begin{pmatrix}
    1 & 0 \\
    \lambda & \mu
\end{pmatrix} \mid \lambda \in \mathbb{F}_q, \mu \in \langle \omega^2 \rangle \right\}.
\end{equation}
    By Lemma \ref{lem:gltrans}, $\widehat{H}$ is transitive on $\Omega \backslash \sigma$ when $r$ is odd, and $\widehat{H}$ has two orbits $O_1,$ $O_2$ (see \eqref{eq:orbits}) when $r=2.$ 

    First, we consider the case $|B|=r$ and prove the following claim which is similar to {\it Claim 2} in the proof of Theorem \ref{th:blocks}.

\medskip
\noindent
{\it Claim 2.1.\quad  If $|B|=r$, then $B=B_1$.}

\medskip

Suppose first that $r$ is odd. Then $q-1$ is divisible by $2r$ as $q$ is odd. Also, as mentioned above, the group $\widehat{H}$ is transitive on $\Omega \backslash \sigma$, and hence $\widehat{H}_B$ is transitive on $B$. 
Thus there exists $h \in \widehat{H}_B$ as in \eqref{eq:h} with $\mu \in \langle \omega^2 \rangle$ such that $h$ induces an $r$-cycle on $B$. Arguing as below \eqref{eq:h}, $\lambda\ne 0$ (since $B\ne B_1$), $\mu\ne 1$ (since $r$ divides $q-1$), $\mu^r=1$ (since $h^r$ fixes $B$ pointwise), and $B$ is as in \eqref{eq:Br}. In this case $\diag(1, \omega^{2r})\in \widehat{H}$ and stabilises $\omr e_2$, and hence lies in $\widehat{H}_B.$  So the $(q-1)/2r$ points
\begin{align*}
\left(\omr(\lambda e_1 + \mu e_2)\right)\diag(1, \omega^{2ri}) & =\omr(\lambda e_1 + \omega^{2ri} \mu e_2) \\ & =\omr( \omega^{-2ri} \lambda e_1 + \mu e_2) \in B,\quad \text{ for } i \in \{1, \ldots, (q-1)/(2r)\},
\end{align*}
lie in $B$ and are pairwise distinct.  Hence $(q-1)/(2r) \le|B|-1= r-1$, so $q-1 \leq 2r^2-2r<2r^2.$ If $r^2$ divided $q-1$ then we would have $q-1=r^2$, which is odd, contradicting the fact that $q$ is odd.  Therefore  $r^2$ does not divide $q-1$, and hence the order $|\omr|=(q-1)/r$ is not divisible by $r$. Since $\mu \ne 1$ and $\mu^r=1$, this implies $\mu \not\in \omr$.
Note that the point $\omr v:=\left(\omr(\lambda e_1 + \mu e_2)\right)\diag(1, \omega^{2r})$ lies in $B$ (since $\diag(1, \omega^{2r})\in\widehat{H}$), and hence must be one of the points in  \eqref{eq:Br}. Now the coefficient of $e_2$ in $v$ must lie in $\omr \omega^{2r}\mu=\omr \mu$, and hence $\omr v$ must be the point $\omr (\lambda e_1 + \mu e_2)$ of $B$, and we conclude that $\omega^{-2r} \lambda = \lambda$, and hence $\omega^{2r}=1$ since $\lambda \ne 0$. 
 Since $q$ is odd and the odd prime $r$ divides $q-1$, it follows that 
 $r=(q-1)/2$  and so $q\equiv 3\pmod 4$.  However by Remark~\ref{r:sl},  
 $r-1$ is even and divides $a$, and this is not possible 
  since $q\equiv 1\pmod 4$ when $a$ is even and $q$ is odd. 

 Finally, we consider the remaining case where $r=2$.  In this case $K=\widehat{H}$ has two orbits $O_1, O_2$ on $\Omega \backslash \sigma$ as in \eqref{eq:orbits}. Note that $O_1=B_2$ is a block for $G_{\alpha}$ on $\Omega \backslash \sigma$ of size $q$ containing $\beta$. If $B \subset O_1$, then $\{(B)g \mid g \in G_{\alpha} \cap \GL_2(q)\}$ is a system of imprimitivity for $G_{\alpha} \cap \GL_2(q)$ on $O_1$,  and so $|B|=r=2$ divides $q$ which is a contradiction. Hence $B$ consists of one point from each of the $O_i.$ There exists $g\in G_{\alpha,B}$  such that $g$ interchanges the two points of $B$.  In view of \eqref{eq:delta1}, we may take $g$  of the form
 $$\phi^m 
 \begin{pmatrix}
1 & 0 \\
\lambda & \mu 
 \end{pmatrix}
$$ 
for some $m \in \{0,1, \ldots, a-1\},$ $\lambda \in \mathbb{F}_q$ and $\mu \in \mathbb{F}_q^*$, and $g^2$ stabilises $\beta=\langle \omega^2 \rangle e_2$, and we have 
 $$g^2 = \phi^{2m}
 \begin{pmatrix}
1 & 0 \\
\lambda^{p^m}+\mu^{p^m}\lambda & \mu^{p^m+1} 
 \end{pmatrix}.
 $$ 
Since $g^2$ fixes $\beta$ we must have $\lambda(\lambda^{p^m-1}+\mu^{p^m})=0$, so either $\lambda=0$ or $\lambda^{p^m-1}+\mu^{p^m}=0.$ If $\lambda=0,$ then $(\langle \omega^2 \rangle e_2)g= \langle \omega^2 \rangle \mu e_2$ must be the second point in $B$ and hence must lie in $O_2$, so   $\mu \in  \langle \omega^2 \rangle \omega$, and $B=B_1$. Since we are assuming that this is not the case it follows that $\lambda\ne0$  and hence $\lambda^{p^m-1}+\mu^{p^m}=0.$ 
 If $q \equiv 1 \pmod{ 4}$, then  $-1 =\omega^{(q-1)/2} \in  \langle \omega^2 \rangle$, and since $p^m-1$ is even it follows that 
 $$
 \mu^{p^m}=-\lambda^{p^m-1}\in  \langle \omega^2 \rangle
 $$ 
 which in turn implies that $\mu \in  \langle \omega^2 \rangle$ since $p^m$ is odd. Thus $( \langle \omega^2 \rangle e_2)g = \langle \omega^2 \rangle(\lambda e_1 + \mu e_2) \in O_1$ and $B \subset O_1$ which is a contradiction. Thus $q \equiv 3 \pmod 4$, so $-1 =\omega^{(q-1)/2} \in  \langle \omega^2 \rangle \omega$, and therefore also $\mu^{p^m}=-\lambda^{p^m-1}$ and $\mu$ lie in $ \omr[2]\omega.$ Since $\diag(1, \omega^2)$ stabilises $\omr[2] e_2$, it lies  in $\widehat{H}_B.$ So,
 $$
(\beta) g\, \diag(1, \omega^2)=(\omr[2](\lambda e_1 + \mu e_2))\diag(1, \omega^{2})=\omr[2](\lambda e_1 + \omega^{2} \mu e_2)=\omr[2]( \omega^{-2} \lambda e_1 + \mu e_2) \in B.
$$ 
Considering the coefficient of $e_2$, we see that this must be the point $\omr (\lambda e_1 + \mu e_2)$ of $B$, and we conclude that $\omega^{-2} \lambda = \lambda$ and $\omega^{2}=1$ since $\lambda \ne 0$. However this implies that $q-1=2=r$, and this case is excluded in Construction~\ref{con:psl} (see also the argument in Case 1). 
This completes the proof of {\it Claim 2.1}. 

\medskip 

Now we identify all possibilities when $|B|=p^k$ for some positive integer $k$ and $B \ne B_2.$ 

\medskip

\noindent\emph{Claim 2.2.\quad If $|B|=p^k$, for some positive integer $k$, then $B\subseteq B_2$, and if $B\ne B_2$ then one of the following holds:
\begin{enumerate}
    \item[$(i)$] $(q,r)=(3^4,5)$ and $B$ is $B_{7,1}= \omr \{\lambda e_1 +e_2 \mid \lambda \in \mathbb{F}_9\}$ or $B_{7,2}= \omr \{\lambda e_1 +e_2 \mid \lambda \in \omega^5 \mathbb{F}_9\}$, 
    \item[$(ii)$]   $(q,r)=(5^2,3)$ and $B$ is $B_{8,1}= \omr \{\lambda e_1 +e_2 \mid \lambda \in \mathbb{F}_5\}$ or $B_{8,2}= \omr \{\lambda e_1 +e_2 \mid \lambda \in \omega^3 \mathbb{F}_5\}$, 
\end{enumerate} 
as in Table~\ref{t:blocksn2}. Moreover, for $(q,r)$ as in $(i)$ or $(ii)$, there are no further nontrivial blocks  (including blocks of size $p^kr$).}
\medskip

Note that $\widehat{H}_{B_2}$ consists of all matrices as in \eqref{e:hb2} with $ri$ even. Thus if $r=2$ then  $\widehat{H}_{B_2}=H_{B_2}$ and it follows from {\bf  Case 1} that there are no nontrivial blocks $B$ properly contained in $B_2$. We may therefore assume that $r$ is odd, so $a$ is even as $r-1$ divides $a$.  Arguing as in {\it Claim 5} of the proof of Theorem \ref{th:blocks}, $\widehat{H}_\beta = \langle \diag(1, \omega^{2r}) \rangle$ and $\widehat{H}_B=P_1\rtimes \widehat{H}_\beta$ for some $1<P_1<P$, so $\widehat{H}_\beta$ is reducible on $\mathbb{F}_q$.  
 Thus there exists a proper divisor $c$ of $a$ such that $|\widehat{H}_\beta|=(p^a-1)/2r$ divides $p^c-1$. We therefore have similar inequalities to \eqref{e:2a} with $2r$ in place of $r$ so that $$4a>2(a+1)\geq 2r \geq \frac{p^a-1}{p^c-1}>p^{a-c}\geq 3^{a/2}.$$

 The condition $4a>3^{a/2}$ implies that $a< 6$, thus $a\in\{2,4\}$ since $a$ is even.  Assume $a=2$, so $r=3$ and $c=1$. Thus $(p^2-1)/6$ divides $p-1$, and so $p+1$ divides $6$ and $p=5$.
Now assume $a=4$ and $r=5$ and $c\in\{1,2\}$. Thus $2r=10>p^{4-c}$, and it follows that $c=2$ and $p=3$.
Thus $(a,p,r,c)=(4,3,5,2)$ or $(2,5,3,1)$. In both cases $(p^a-1)/2r=p^c-1$ and $|B|=p^c$, and the condition that $r-1$ divides $a/j$ forces $j=1$ so that some element of $G$ involves the field automorphism $\phi:x\to x^p$.  Since $\phi$ fixes $\alpha$, it follows that 
 $G_{\alpha}=\langle \widehat{H}, g\phi\rangle$ 
for some $g\in \GL_2(q)_\alpha$. 
In both cases there are two possibilities for $G$ (up to conjugation in $\GammaL_2(q)$):
\begin{enumerate}
    \item $G=(Z \, \SL_2(q)) \rtimes \langle \phi \rangle$;
    \item $G=\langle Z \, \SL_2(q)),   \phi \, \diag(1,\omega)  \rangle$.
\end{enumerate}
For the group in $(2)$, 
 it is routine to establish (using {\sc Magma}) that $(G_{\alpha})_{B_2}$ is primitive on $B_2$ so there is no smaller block $B$. So further we assume that $G$ is as in $(1)$ above. Here 
\begin{equation}\label{e:cl5-gp1}
(G_{\alpha})_{B_2}= \omr ((P \rtimes \diag(1,\omega^{2r})) \rtimes \langle \phi \rangle ).    
\end{equation}
Suppose first that $(a,p,r,c)=(4,3,5,2)$.  
Using {\sc Magma}, we establish that there are ten subgroups of $P$ of order $9$ normalised by $\diag(1,\omega^{2r})$ and only two of them are normalised by $\phi$;  
namely $\mathbb{F}_9$ and $\omega^5\mathbb{F}_9$ (recall that we identify $P$ with the additive group of $\mathbb{F}_q=\mathbb{F}_{3^4}$).
These two subgroups give rise to the blocks $B_{7,1}$ and $B_{7,2}$ in {\it Claim 2.2$(i)$}. Further investigation using {\sc Magma} shows that there are no other nontrivial blocks for $G_{\alpha,S}$ in $S$.

 Now suppose that $(a,p,r,c)=(2,5,3,1)$.  
Here the orbits of the group $(G_{\alpha})_{B_2}$ (as in \eqref{e:cl5-gp1}) in $\mathbb{F}_{25}$ are  $\{0\}, \mathbb{F}_5^*,  \omega^3\mathbb{F}_5^*, \omega\mathbb{F}_5^*\cup \omega^5\mathbb{F}_5^*$ and $ \omega^2\mathbb{F}_5^*\cup \omega^4\mathbb{F}_5^*$. It follows that there are exactly two $G_{\alpha,S}$-blocks of size $5$ contained in $B_2$ (containing $\beta$), the first corresponds to the subfield $\mathbb{F}_5$ of $\mathbb{F}_{25}$ and the stabiliser subgroup $\widehat{H}_B$ consists of the matrices in \eqref{e:hb2} with $\lambda\in\mathbb{F}_5$ and $i$ even, while the second corresponds to $\omega^3\mathbb{F}_5$ with stabiliser $\widehat{H}_B$ consisting of the matrices in \eqref{e:hb2} with $\lambda\in\omega^3 \mathbb{F}_5$ and $i$ even. These are the blocks $B_{8,1}$ and $B_{8,2}$ in {\it Claim $2.2(ii)$}.  Moreover, a computation using {\sc Magma} confirms that there are no further nontrivial blocks for $G_{\alpha,S}$ in $S$, and so {\it Claim 2.2} is proved

\medskip

There are no further possibilities for $B$ by {\it Claim 1} and {\it  Claim 2.2}, so consideration of {\bf Case 2} is complete.

\medskip
\noindent
{\bf Case 3.} Assume that $G \not\geq Z \, \SL_2(q)$, $r=2$, $q \equiv 1 \pmod 4.$ \quad 
Here $G \cap \GL_2(q)= Y \, \SL_2(q),$ so 
\begin{equation}
    K=(G_{\alpha} \cap \GL_2(q))^{\theta}=\widetilde{H}=
    \left\{ \begin{pmatrix}
    1 & 0 \\
    \lambda & \mu
\end{pmatrix} \mid \lambda \in \mathbb{F}_q, \mu \in \langle \omega^4 \rangle \right\}.
\end{equation}
As in previous cases, we assume that $B$ is a proper block for $K$ on $\Omega \backslash \sigma$ containing $\beta$ and $B \ne B_1, B_2$.

     \medskip

\noindent
     {\it Claim 3.1.\quad  If $|B|=2$, then $B=B_1$.}

\medskip

  In this case $K=\widetilde{H}$ has two orbits $O_1, O_2$ on $\Omega \backslash \sigma$ as in \eqref{eq:orbits}. Indeed, $G \cap \GL_2(q)$ is contained in $Z \, \GL_2(q),$ so it stabilises the $O_i$ by Lemma \ref{lem:gltrans}. On the other hand, $K$ clearly contains $P$ which is transitive on $O_1 =B_2.$  If $B \subset O_1$, then $\{(B)g \mid g \in G_{\alpha} \cap \GL_2(q)\}$ is a system of imprimitivity for $G_{\alpha} \cap \GL_2(q)$ on $O_1$,  and so $|B|=r=2$ divides $q$ which is a contradiction. Hence $B$ consists of one point from each of the $O_i.$ There exists $g\in G_{\alpha,B}$  such that $g$ interchanges the two points of $B$. Now $g$ is of the form
 $$\phi^m 
 \begin{pmatrix}
1 & 0 \\
\lambda & \mu 
 \end{pmatrix}
$$ 
for some $m \in \{0,1, \ldots, a-1\},$ $\lambda \in \mathbb{F}_q$ and $\mu \in \mathbb{F}_q^*$, and $g^2$ stabilises $\beta=\langle \omega^2 \rangle e_2$, and we have 
 $$g^2 = \phi^{2m}
 \begin{pmatrix}
1 & 0 \\
\lambda^{p^m}+\mu^{p^m}\lambda & \mu^{p^m+1} 
 \end{pmatrix}.
 $$ 
Since $g^2$ fixes $\beta$ we must have $\lambda(\lambda^{p^m-1}+\mu^{p^m})=0$, so either $\lambda=0$ or $\lambda^{p^m-1}+\mu^{p^m}=0.$ If $\lambda=0,$ then $(\langle \omega^2 \rangle e_2)g= \langle \omega^2 \rangle \mu e_2$ must be the second point in $B$ and hence must lie in $O_2$, so   $\mu \in  \langle \omega^2 \rangle \omega$, and $B=B_1$. Since we are assuming that this is not the case it follows that $\lambda\ne0$  and hence $\lambda^{p^m-1}+\mu^{p^m}=0.$ 
 Recall that $q \equiv 1 \pmod{ 4}$, so  $-1 =\omega^{(q-1)/2} \in  \langle \omega^2 \rangle$, and since $p^m-1$ is even it follows that 
 $$
 \mu^{p^m}=-\lambda^{p^m-1}\in  \langle \omega^2 \rangle
 $$ 
 which in turn implies that $\mu \in  \langle \omega^2 \rangle$ since $p^m$ is odd. Thus $( \langle \omega^2 \rangle e_2)g = \langle \omega^2 \rangle(\lambda e_1 + \mu e_2) \in O_1$ and $B \subset O_1$ which is a contradiction.
This completes the proof of {\it Claim 3.1}. 

\medskip 

     As we established in {\it Claim 3.1}, $\widetilde{H}$  is transitive on each of the $O_i$. Further, the stabiliser $\widetilde{H}_\beta$ of the point $\beta=\langle \omega^2\rangle e_2\in O_1$ is the group  $\langle \diag(1, \omega^4) \rangle\cong C_{(q-1)/4}$, of index $2$ in $\widehat{H}_\beta$.
     As $B$ and $O_1$ are both blocks for $G_\alpha$, the intersection $B\cap O_1$ is also a block, and as $G_\alpha$ is transitive on $\Omega \backslash \sigma$, either $B\subseteq O_1$ or $|B\cap O_1|=|B|/2 > 1$.   In either case $B':=B\cap O_1$ has size $|B'|=p^k$ for some $k\geq1$, and the block stabiliser $K_{B'} = P_1\rtimes \widetilde{H}_\beta$ for some subgroup $P_1=C_p^k$ of the normal Sylow $p$-subgroup $P=C_p^a$ of $K$. Since $B\ne B_i$ for $i=1,2$ it follows that $k<a$.  
     
     We argue as in the proof of {\it Claim 5} of Theorem~\ref{th:blocks}: conjugation by $ \diag(1, \omega^4)$ induces a linear map on $P$ equivalent to field multiplication $\lambda\to \omega^{-4}\lambda$ if we identify $P$ with the additive group of the field $\mathbb{F}_q$. As $\widetilde{H}_\beta$ normalises $P_1$ it follows that $\widetilde{H}_\beta$ acts reducibly on $P$. By Lemma~\ref{irrsub}  there is a proper divisor $c$ of $a$ such that $|\widetilde{H}_\beta|=(p^a-1)/4$ divides $p^c-1$. This implies that $p^a-1$ does not have a primitive prime divisor, and hence by Zsigmondy's Theorem \cite{ZSIG}, $a=2$ and $p+1=2^b$ for some $b\geq2$. This means that $c=1$, and so $(p^2-1)/4= 2^{b-2} (p-1)$ divides $p^c-1=p-1$. Hence $p=3$ and $q=p^a=9$.  In this case $\omega^4=-1$ and $\widetilde{H}_\beta$ induces $-I\in\GL_2(3)$ on $P$ and normalises each of the four subgroups $P_1$ of $P$ of order $3$. This gives four distinct possibilities for the block stabiliser $K_{B'}$ corresponding to the four blocks  $B'=B_{9,i}$ ($0\leq i\leq 3$) as in  Table~\ref{t:blocksn2}. If $B \subset O_1,$ then $B=B'$ and the $B_{9,i}$ $(0\leq i \leq 3)$ are indeed blocks for $G_{\alpha}$ on $\Omega \backslash \sigma$ which is easy to verify using {\sc Magma}. Using  Remark \ref{alloccur}, we deduce $G=  \langle Y \, \SL_2(3^2),  \phi \, \diag(1,\omega)  \rangle$  up to conjugation in $\GammaL_2(3^2)$. 

     Finally, assume that $B\not\subset O_1$. Then as $B'=B\cap O_1$ is a nontrivial block, we have $q=9$ and $B'=B_{9,i}$ for some $i$. Thus $|B|=6$, and a computation with  {\sc Magma} shows that there is no such block. This completes the proof of {\bf Case 3} and Theorem~\ref{th:blocksn2}.
\end{proof}

\section{Systems of imprimitivity for point stabilisers of unitary groups}\label{sec:unitary}

 In this section we investigate systems of imprimitivity of $G_{\alpha}$ on $\Omega \backslash \sigma$ for $Z \, \SU_3(q) \leq G \leq \GammaU_3(q)$ of rank $3$ as in Construction \ref{con:psu}.  We assume that $G/Y$ is properly innately transitive of rank 3, so all conditions on $q,r,n$ in  the third line of Table \ref{t:qpinsprk3} hold,  that is,  $r$ is an odd prime dividing $q-1=p^a-1$, $o_r(p)=r-1$, and 
 $|G/(G \cap \GL_n(q^2))|=2a/j$ where $j$ divides $2a$ and $(j,r-1)=1$. 
 Observe  $o_r(p)=r-1$ divides $a$ since $r$ divides $p^a-1=q-1$. This implies that $a>1$ since $r>2$. 
 Recall from the beginning of Section \ref{sec:spacesU} that the natural module $V$ for $\GU_n(q)$ has basis $\{e,x,f\}$ as in \eqref{unbasis}.

\begin{table}[h]
\begin{tabular}{ccclcc}
\hline
Line    & $q$ & $r$  & $B$  & $|B|$ & $G$  \\ 
\hline
1  & 4  & 3    &       $B_5=\omr\{\lambda f + (1-\lambda )e \mid \lambda \in \mathbb{F}_4^* \}$   & 3    &  $\GammaU_3(4)$             \\ 
  &     &         &           $B_6= \omr\{f, f+e\}$        &2  &\\ \hline
2  & 16  & 5    &       $B_7=\omr\{ f+\lambda e \mid \lambda \in \mathbb{F}_{4}\}$\  &4    & $\GammaU_3(16)$                \\  \hline
\end{tabular}
\caption{Additional blocks for Theorem~\ref{th:blocksSU} }\label{t:blocksSU}
\end{table}

\begin{Lem}\label{lem:ZSUtrans}
    Let $\alpha=\omr e$ and let $r$ be an odd prime dividing $(q-1)$. Then $(Z \, \SU_3(q))_{\alpha}$ is transitive on $\Omega \backslash \sigma.$
\end{Lem}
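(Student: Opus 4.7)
The plan is to deduce transitivity of $H := (Z\,\SU_3(q))_\alpha$ on $\Omega \setminus \sigma$ (which has size $rq^3$) by first analysing the orbits of the normal subgroup $R := \SU_3(q)_\alpha = H \cap \SU_3(q)$, and then using the extra elements supplied by $Z$ to fuse those orbits. Since $R \trianglelefteq H$, the $H$-orbit of $\omr f$ is automatically a union of $R$-orbits, so the task splits into (i) counting the $R$-orbits on $\Omega\setminus\sigma$, and (ii) producing explicit elements of $H$ that visit each one.

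For step (i), I would exploit that $\SU_3(q)$ is $2$-transitive on $\Sigma$, so $\SU_3(q)_{\langle e\rangle}$ is transitive on $\Sigma\setminus\{\langle e\rangle\}$. The diagonal torus $T=\{\diag(\eta,\eta^{q-1},\eta^{-q}):\eta\in\mathbb{F}_{q^2}^*\}$ is contained in $\SU_3(q)_{\langle e\rangle}$, and acts on $\sigma$ via the character $\eta\mapsto\omr\eta$, which surjects onto the quotient $\mathbb{F}_{q^2}^*/\omr$ of order $r$; hence $\SU_3(q)_{\langle e\rangle}$ is transitive on the $r$ points of $\sigma$ and $[\SU_3(q)_{\langle e\rangle}:R]=r$. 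Because $\omr$ is characteristic in $\mathbb{F}_{q^2}^*$, a torus element fixing $\omr e$ automatically fixes $\omr f$ (its action on $\omr f$ is via $\eta\mapsto\eta^{-q}$). A short orbit-count then shows that $R$ has exactly $r$ orbits on $\Omega\setminus\sigma$, each of size $q^3$, each meeting $\sigma(\langle f\rangle)$ in exactly one point.

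For step (ii), I would test the family $g=\omega^s I\cdot\diag(\omega^t,\omega^{t(q-1)},\omega^{-tq})\in Z\,\SU_3(q)$: such a $g$ lies in $H$ precisely when $s+t\equiv 0\pmod r$, and then $g\cdot\omr f=\omr\omega^{s-tq}f$. Substituting $s\equiv -t\pmod r$ reduces the requirement $s-tq\equiv i\pmod r$ to $-t(q+1)\equiv i\pmod r$. Since $r$ is odd and $r\mid q-1$, we have $(r,q+1)=1$, so this is solvable for every $i\in\{0,\dots,r-1\}$; hence $H$ acts transitively on $\sigma(\langle f\rangle)$. Combining with step (i), the $H$-orbit of $\omr f$ contains $\sigma(\langle f\rangle)$, therefore meets every $R$-orbit on $\Omega\setminus\sigma$, and — being a union of $R$-orbits — equals the whole of $\Omega\setminus\sigma$.

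The main obstacle I anticipate is step (i): pinning down the $R$-orbit structure precisely, in particular verifying that $R_{\omr f}$ coincides with the torus stabiliser of the ordered pair $(\langle e\rangle,\langle f\rangle)$ and hence that every $R$-orbit on $\Omega\setminus\sigma$ projects bijectively onto $\Sigma\setminus\{\langle e\rangle\}$. This relies squarely on the $\omr$-invariance of the Frobenius $x\mapsto x^q$ on $\mathbb{F}_{q^2}^*$; once that is in place, the rest of the argument is essentially counting plus the elementary number-theoretic fact $(r,q+1)=1$.
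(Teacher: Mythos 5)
Your argument is correct, but it takes a genuinely different route from the paper. The paper proves the lemma by a single direct construction: given an arbitrary $\omr v\in\Omega\setminus\sigma$ with $v=\lambda_1e+\lambda_2x+\lambda_3f$ (so $\lambda_3\ne0$), it writes down an explicit lower-triangular matrix $g=\lambda I\cdot g_0$ with $g_0\in\SU_3(q)$ fixing $\omr e$ and sending $\omr f$ to $\omr v$, the scalar $\lambda$ being chosen so that $\lambda_3^{-q}\lambda^{q+1}\in\omr$ — which is where $(r,q+1)=1$ enters. You instead factor the problem through the normal subgroup $R=\SU_3(q)_\alpha$ of $H=(Z\,\SU_3(q))_\alpha$: your step (i) does go through, since $R$ has index $r$ in $\SU_3(q)_{\langle e\rangle}$, which is transitive on the $q^3$ points of $\Sigma\setminus\{\langle e\rangle\}$, and $(r,q^3)=1$ forces $R$ itself to be transitive there; combined with $R_{\omr f}=T_0\cap R=\{\diag(\eta,\eta^{q-1},\eta^{-q}):\eta\in\omr\}$ of order $(q^2-1)/r$ (using that $\omr$ is preserved by $\eta\mapsto\eta^{-q}$), this gives exactly $r$ orbits of size $q^3$, each meeting $\sigma(\langle f\rangle)$ once. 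Your step (ii) is a clean torus computation, again hinging on $(r,q+1)=1$. What your approach buys is structural information beyond the statement — the $R$-orbit decomposition of $\Omega\setminus\sigma$ and the fact that $Z$ fuses the orbits via its action on $\sigma(\langle f\rangle)$ — at the cost of more bookkeeping; the paper's one-matrix proof is shorter and self-contained but reveals less. Both hinge on the same arithmetic fact that $r$ odd with $r\mid q-1$ forces $(r,q+1)=1$.
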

\begin{proof}
    Let $\omr v \in \Omega \backslash \sigma$, so $v=\lambda_1 e+ \lambda_2 x+ \lambda_3f$ such that $\lambda_1\lambda_3^q + \lambda_2^{q+1}+ \lambda_3 \lambda_1^q=0$ and $v \notin \langle e \rangle$. In particular, $\lambda_3 \ne 0.$ Let 
    $$g = \lambda I\cdot
    \begin{pmatrix}
        (\lambda_3 \lambda^{-1})^{-q} & 0 & 0 \\
         - \lambda_2^q\lambda_3^{-1} \lambda^{1-q}  & (\lambda_3\lambda^{-1})^{q-1} & 0 \\
        \lambda_1\lambda^{-1} & \lambda_2\lambda^{-1} & \lambda_3\lambda^{-1}
    \end{pmatrix}
    $$
    where $\lambda \in \mathbb{F}_{q^2}^*$ is such that $\lambda_3^{-q} \lambda^{q+1} \in \omr.$ Such $\lambda$ always exists since $(r,q+1)=1.$ It is routine to check that $(\lambda I)^{-1}g \in \SU_3(q)$ and $(\omr e)g=\omr e$. Hence $g \in (Z \, \SU_3(q))_{\alpha}$ and $(\omr f)g=\omr v$, so all elements of $\Omega \backslash \sigma$ lie in the same orbit as $\omr f$ and the lemma follows.
\end{proof}

 In the following theorem, for $\gamma= \omr v \in \Omega$, we use $\sigma_{\gamma}$ to denote the block  $ \sigma(\langle v \rangle) \in \Sigma$ (see Construction \ref{con:psu}). So $\sigma_{\gamma}$ is the block in $\Sigma$ containing $\gamma$.

\begin{Th}\label{th:blocksSU} Let  $Z \, \SU_3(q) \leq G \leq \GammaU_3(q)$ with $G^{\Omega}$ of rank 3 and let $\alpha=\omr e$. Let $B$ be a block containing $\langle \omega^r \rangle f$ in a system of imprimitivity of $G_{\alpha}$ on $\Omega \backslash \sigma$. Then  one of the following holds.
    \begin{enumerate}[label=\normalfont (\arabic*)]
    \item $B$ is $B_1= \omr\{f +v \mid v \in \la x,e \ra \text{ with } (f+v,f+v)=0  \}$ of size $q^3$;
    \item $B$ is $B_2= \omr \{f +b  e \mid b\in \mathbb{F}_{q^2}, \mathrm{Tr}(b)=0\}$ of size $q$;
    \item $B$ is $B_3= \omr\{f, \omega f, \ldots, \omega^{r-1} f \}$ of size $r$;
    \item $B$ is $B_4=\cup_{\gamma \in B_2} \sigma_{\gamma}$ of size $qr$;
    \item $q,r$ and $B$ are as in one of the lines of Table \ref{t:blocksSU}.
    \end{enumerate}
\end{Th}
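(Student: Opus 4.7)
The proof will parallel Theorems~\ref{th:blocks} and~\ref{th:blocksn2} from the linear case. First I would verify that $B_1, B_2, B_3, B_4$ are blocks for $G_\alpha$ acting on $\Omega \setminus \sigma$. The block $B_3 = \sigma(\langle f\rangle)$ is the $\Sigma$-block through $\omr f$, hence automatic. The set $B_4$ consists of all points $\omr w \in \Omega \setminus \sigma$ with $w \in U := \langle e, f\rangle$; equivalently, it is the union of the $q$ $\Sigma$-blocks corresponding to the isotropic $1$-subspaces of $U$ distinct from $\langle e\rangle$. Invariance under $G_\alpha$ follows because any $g \in G_\alpha$ sends $U$ to some nondegenerate $2$-subspace containing $\langle e \rangle$, while transitivity of $(G_\alpha)_{B_4}$ on $B_4$ is straightforward from Lemma~\ref{l:2substab}. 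The set $B_1$ is characterised by $(e, w) \in \omr$; for $g = zg_0 \in (Z\,\SU_3(q))_\alpha$ with $z = \lambda I$, $(e)g_0 = \nu e$, and $\mu := \lambda\nu \in \omr$ by Lemma~\ref{l:2stab}, a direct computation using form-invariance for $g_0 \in \SU_3(q)$ yields $(e, wg) = \lambda^{q+1}\mu^{-1}(e, w)$, so preservation of $B_1$ requires $\lambda^{q+1} \in \omr$, defining a subgroup of index exactly $r$ in $G_\alpha$. This gives $|B_1| = q^3$ (confirmed independently by a direct count using Lemma~\ref{l:trace}(3)), and then $B_2 = B_1 \cap B_4$ has the stated parametrisation.

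Second, I would prove an analogue of Lemma~\ref{lem:blockS}: if a block $B$ containing $\omr f$ also contains a point $\omr v$ with $v \notin U$, then $B \supseteq B_4$. Writing $v = \alpha_1 e + \alpha_2 x + \alpha_3 f$ with $\alpha_2 \ne 0$, I would use explicit elements of $(Z\,\SU_3(q))_{\alpha, \omr f}$, as described by Lemma~\ref{l:2stab}, to map $\omr v$ to enough distinct points outside $\sigma \cup U$ that the block property forces $B \supseteq B_4$. This reduces the classification to (i) blocks $B \subseteq B_4$ containing $\omr f$, and (ii) blocks $B \supseteq B_4$ strictly; case (ii) forces $B \in \{B_1, \Omega \setminus \sigma\}$ by counting how $B$, being a union of $G_\alpha$-translates of $B_4$, intersects the $r$ $G_\alpha$-translates of $B_1$.

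Third, for case (i), the classification of blocks $B \subseteq B_4$ containing $\omr f$ proceeds via the induced action of $(G_\alpha)_{B_4}$ on $B_4$. Restricting to $U$, one parametrises the image of $(G_\alpha)_{B_4}$ analogously to the subgroup $H$ in~\eqref{eq:Hmat}, with unitary constraints on the diagonal part coming from Lemma~\ref{l:2substab}. The normal Sylow $p$-subgroup yields the block $B_2$ (its orbit through $\omr f$), and the $\Sigma$-refinement yields $B_3$. Smaller blocks (of size $p^k$ with $1 \leq k < a$, where $q = p^a$) are constrained exactly as in Claim~5 in the proof of Theorem~\ref{th:blocks}: reducibility of the stabiliser of $\omr f$ on the normal Sylow $p$-subgroup via Lemma~\ref{irrsub}, combined with the primitive-prime-divisor condition $o_r(p) = r-1$ from Lemma~\ref{l:semiU}(4), leaves only $(q,r) \in \{(4,3), (16,5)\}$, yielding the sporadic blocks $B_5, B_6, B_7$ of Table~\ref{t:blocksSU}; direct verification (via \textsc{Magma}) confirms these are blocks for the corresponding groups $\GammaU_3(4)$ and $\GammaU_3(16)$.

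The main obstacle will be the analogue of Lemma~\ref{lem:blockS} in the second step: constructing enough elements of the two-point stabiliser $G_{\alpha, \omr f}$ to force $B \supseteq B_4$ from a single point $\omr v \notin U$ requires careful manipulation of the unitary form together with the $\omr$-equivalence and field-automorphism twists, and is more delicate than its linear-case counterpart because the two-point stabiliser in $\SU_3(q)$ is a twisted torus rather than a Borel subgroup.
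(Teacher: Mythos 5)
Your step 2 contains a false key lemma, and this breaks the overall case division. You claim that if a block $B$ containing $\omr f$ also contains a point $\omr v$ with $v \notin U = \la e,f\ra$, then $B \supseteq B_4$. The block $B_1$ itself is a counterexample: it contains points $\omr(f + \mu x + be)$ with $\mu \ne 0$ (e.g.\ $\mu = 1$ and $\mathrm{Tr}(b) = -1$, which exist by Lemma~\ref{l:trace}(3)), so it meets $\Omega\setminus(\sigma\cup B_4)$; yet $B_1 \not\supseteq B_4$, since $\omr\omega f \in B_3 \subseteq B_4$ is not of the form $\omr(f+v)$ with $v \in \la x,e\ra$ (the $f$-coefficient $\omega \notin \omr$). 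In fact $B_1 \cap B_4 = B_2$, so the lattice of blocks through $\omr f$ is genuinely not a chain here — unlike the linear case, where Lemma~\ref{lem:blockS} forces every block into the two-dimensional set $S$. Consequently your dichotomy ``(i) $B\subseteq B_4$ or (ii) $B\supsetneq B_4$'' does not cover all blocks: it mislocates $B_1$ (your case (ii) asserts $B_1 \supseteq B_4$, which is false), and more importantly it provides no argument at all for a hypothetical block that contains $\omr f$, meets the complement of $U$, and neither contains nor is contained in $B_4$. Your claimed counting argument for case (ii) therefore cannot be repaired into a complete classification along these lines.

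For comparison, the paper does not attempt any reduction to the $2$-subspace $U$. After verifying directly that $B_1,\dots,B_4$ are blocks (as orbits of the normal Sylow $p$-subgroup $P$ of $G_\alpha\cap\GL_3(q^2)$, of its centre $R$, as a $\Sigma$-block, and as a union of $\Sigma$-blocks over $B_2$, respectively), it passes to an explicit transitive faithful subgroup $H \leq G_\alpha \cap Z\,\SU_3(q)$ of order $q^3(q^2-1)$ and classifies \emph{all} blocks by their cardinality ($r$, $p^k$, or $p^kr$), using matrix computations with the cyclic point stabiliser $H_\beta = \la h\ra$, Lemma~\ref{irrsub} on reducible subgroups of Singer cycles (applied separately to $P/R$ of dimension $2a$ and to $R$ of dimension $a$ over $\mathbb{F}_p$), and the primitive-prime-divisor constraints to isolate $(q,r)\in\{(4,3),(16,5)\}$. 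Your step 3 is in the right spirit for the blocks inside $B_4$ (and your step 1 is essentially sound), but the structural reduction in step 2 is the load-bearing part of your proof and it fails.
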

\begin{proof}
   Let $\beta:=\langle \omega^r \rangle f$. We first describe the elements of $G_\alpha$.

\medskip\noindent
\emph{Claim 1.}\quad
Elements of $G_{\alpha}$ have the form 
\begin{equation}\label{eq:unist}
\lambda I \cdot 
\begin{pmatrix}
    z & 0 & 0 \\
    d & y & 0 \\
    b & c & z^{-q}
\end{pmatrix} \phi^i 
\end{equation}
where $\lambda, b,c, d,z, y \in \mathbb{F}_{q^2}$ with $\lambda z \in \omr,$ $y^{q+1}=1$, 
$\mathrm{Tr}(bz^{-1}) + c^{q+1}=0,$ and $dz^{-1}+yc^q=0.$ 

\medskip
Indeed, all elements in $\GammaU_3(q)$ have shape $(\lambda I) g \phi^i$ where $\lambda \in \mathbb{F}_{q^2}^*$, $g \in \GU_3(q)$ and $i \in \{0, 1, \ldots, 2a-1\}$  by definition. Assume that such an element stabilises $\alpha=\omr e$. Since $\phi$ stabilises $\alpha$, so does $(\lambda I) g \in \GL_3(q^2)$.  In particular, $g \in \GU_3(q)$ stabilises the subspace $\langle e \rangle<V$ and, hence also $\langle e \rangle^\bot=\langle e, x \rangle$, so $g$ is lower-triangular. The conditions on $\lambda z$ and the entries of $g$ now follow directly from the facts that $(\lambda I) g$ stabilises $\alpha$ and that the inner product $(ug,vg)=(u,v)$ for all $u,v \in V.$ Thus Claim 1 is proved.

\medskip\noindent
\emph{Claim 2.}\quad  $B_1, \dots, B_4$ are blocks of imprimitivity for the action of $G_{\alpha}$ on $\Omega \backslash \sigma$. 

\medskip
In particular, it follows from Claim 1 that $G_{\alpha}$ has a normal subgroup $P$  of order $q^3$, which is the unique Sylow $p$-subgroup of $G_{\alpha} \cap \GL_3(q^2)$, and consists of all matrices of the form 
\begin{equation}\label{eq:Pmat}
\begin{pmatrix}
    1 & 0 & 0 \\
    -c^q & 1 & 0 \\
    b & c & 1
\end{pmatrix}    
\end{equation}

with  $\mathrm{Tr}(b)+c^{q+1}=0.$ The centre $R$ of $P$ is the normal subgroup of $G_{\alpha}$ consisting of the matrices 
$$
\begin{pmatrix}
    1 & 0 & 0 \\
    0 & 1 & 0 \\
    b & 0 & 1
\end{pmatrix}
$$ 
with $\mathrm{Tr}(b)=0$; note that $|R|=q.$
Hence the orbits of $P$ and $R$ on $\Omega \backslash \sigma$ form systems of imprimitivity. It is routine to verify that $B_1$ and $B_2$ are the blocks of these imprimitivity systems containing $\beta=\omr f$. 
Next, it is easy to see that $B_3$ is the block $\sigma_\beta$ containing $\beta$ of the system of imprimitivity $\Sigma$ of $G$ on $\Omega$ (see Construction \ref{con:psu}), so it is a block of the system of imprimitivity $\Sigma \backslash \sigma$ of $G_{\alpha}$ on $\Omega \backslash \sigma$.

Notice that $B_4$ is a disjoint union of blocks in $\Sigma$ by definition. On the other hand, $\cup_{\gamma \in B_2} \sigma_{\gamma}=\cup_{i=0}^{r-1} \omega^i B_2.$ So $B_4$ is also a disjoint union of blocks of the system $\{(B_2)g \mid g \in G_{\alpha}\}.$ 
Since $G_{\alpha}$ preserves these systems, for each $g\in G_\alpha$,  $(B_4)g$ is also a disjoint union of blocks of $\Sigma$ and a  disjoint union of blocks of $\{(B_2)h \mid h \in G_{\alpha}\}.$ In particular,
$$(B_4)g= \cup_{\gamma \in (B_2)g}\sigma_{\gamma}=\cup_{i=1}^{r-1} \omega^i (B_2)g.$$

Assume that  $ \gamma \in (B_4)g_1 \cap (B_4)g_2 \ne \emptyset$ for some $g_1, g_2 \in G_{\alpha},$ so $\sigma_{\gamma}$ is a subset of both $(B_4)g_1$ and $(B_4)g_2$. Without loss of generality, we may assume that $\gamma \in (B_2)g_1$. Hence $(B_2)g_1 \subset (B_4)g_2$ and, therefore,  $\omega^i(B_2)g_1 \subset (B_4)g_2$
 for $0 \leq i \leq r-1$ and $(B_4)g_1=(B_4)g_2$ (since these sets have the same cardinaltiy). So $\{(B_4)g \mid g \in G_{\alpha}\}$ is a system of imprimitivity. Thus Claim 2 is proved.

 \medskip
To proceed further we examine more carefully the elements of $G_{\alpha}$; recall that they have shape \eqref{eq:unist}. In particular, $G_{\alpha} \cap Z \, \SU_3(q)$ consists of all elements of the shape 
\begin{equation}\label{eq:unist1}
\lambda I \cdot 
\begin{pmatrix}
    z & 0 & 0 \\
    \tilde{d} & z^{q-1} & 0 \\
    \tilde{b} & \tilde{c} & z^{-q}
\end{pmatrix} 
\end{equation}
where $\lambda, \tilde{b},\tilde{c},\tilde{d},z \in \mathbb{F}_{q^2}$ with $\lambda z \in \omr,$  
$\mathrm{Tr}(\tilde{b}z^{-1}) + \tilde{c}^{q+1}=0,$ and $\tilde{d}z^{-1}+z^{q-1}\tilde{c}^q=0.$
Since $\lambda z \in \omr,$ there exists $\lambda_1 \in \omr$ such that $\lambda_1 \lambda z=1$. If $g$ has the shape \eqref{eq:unist1}, then $g$ and $\lambda_1I\cdot g$ induce the same action on $\Omega$ since $\lambda_1 I$ lies in the kernel $Y=\langle \omega^r I \rangle$ of the $G$-action on $\Omega$. Hence the subgroup $H$ of $G_{\alpha} \cap Z \, \SU_3(q)$ consisting of all elements of shape \eqref{eq:unist1} with $\lambda z=1$ acts on $\Omega \backslash \sigma$ transitively by Lemma \ref{lem:ZSUtrans}, and this action is faithful and $H$ preserves all the systems of imprimitivity that $G_{\alpha} \cap Z \, \SU_3(q)$ does. Since $\lambda=z^{-1}$ for elements in $H$, we may write each element of $H$ as a matrix of the following form (replacing each entry $u$ of the matrix in \eqref{eq:unist1} by $\lambda u=z^{-1}u$): 
\begin{equation}\label{eq:elH}
\begin{pmatrix}
    1 & 0 & 0 \\
    d & z^{q-2} & 0 \\
    b & c & z^{-(q+1)}
\end{pmatrix}    
\end{equation}
and the conditions on these entries become:
$b,c,d,z \in \mathbb{F}_{q^2}$ with $z \ne 0$,
$\mathrm{Tr}(b)z^{-(q+1)}+c^{q+1}=0$ and $dz^{-(q+1)}+ z^{q-2}c^q=0.$ 
(Warning: the entries $b,c$ and $d$ are not the same as the corresponding ones in \eqref{eq:unist}.)  Observe that, by Lemma \ref{l:trace}, $d$ is uniquely determined in terms of $c$ and $z$, and for each pair $(c,z)$ there are exactly $q$ choices of $b \in \mathbb{F}_{q^2}$ satisfying the conditions above. Hence $|H|= q^3 (q^2-1).$

\medskip
 \noindent\emph{Claim 3.}\quad  $G_{\alpha}$ has no other systems of imprimitivity on $\Omega \backslash \sigma$ than those in Claim 2, unless $(q,r)$ is either $(4,3)$ or $(16,5).$ 

 \medskip
Assume that $B$ is a block of a system of imprimitivity of $H$ on $\Omega \backslash \sigma$ containing $\beta$ and $B$ is not one of $B_1, \dots,B_4$. Then $|B|$ divides $|\Omega \backslash \sigma|=q^3 r.$ It is useful to know $H_{\beta}$ since $H_B>H_{\beta}.$ If $g$ of shape \eqref{eq:elH} stabilises $\beta=\omr f$, then $b=c=d=0$ and $z^{-(q+1)} \in \omr.$ Since $(r,q+1)=1,$ it follows that $H_{\beta}$ is generated by the element
 \begin{equation}\label{eq:hmatuni}
 h=\begin{pmatrix}
    1 & 0 & 0 \\
    0 & \omega^{-r(q-2)} & 0 \\
    0 & 0 & \omega^{r(q+1)}
\end{pmatrix}
\end{equation}
of order $\frac{q-1}{r} \cdot \frac{q+1}{(3,q+1)}.$ Notice that $$h^{(q-1)/r}=
\begin{pmatrix}
    1 & 0 & 0 \\
    0 & \omega^{3(q-1)} & 0 \\
    0 & 0 & 1
\end{pmatrix}
$$ is of order $(q+1)/(3,q+1)$ and it stabilises $f$. 

We subdivide the proof of Claim 3 into three cases: $|B|=r$, $|B|=p^k$ for some $k \leq 3a$, and $|B|=p^kr$ for some $k < 3a$

{\bf Case 1.} Assume that $|B|=r,$ so $H_B$ contains an element $g$  of shape \eqref{eq:elH}  inducing on $B$ a permutation of order $r$. (We will show that either $B=B_3$ or $(q,r)=(4,3)$.) 

\medskip
The element $g^r$ must stabilise $\omr f$, so $(g^r)_{3,2}=(g^r)_{2,1}=0$, where the equality for $(g^r)_{2,1}$ follows from the facts that $g^r \in H$, so the conditions after \eqref{eq:elH} hold for it. Direct calculations shows that
 \[ 
 (g^r)_{2,1}  = (1+z^{q-2}+z^{2(q-2)}+\cdots+ z^{(r-1)(q-2)})d.
 \]

\medskip\noindent\emph{Subclaim 1.} $d=0$ for such $g$.

 \medskip
Assume first that $z^{q-2}=1$. 
 Thus $(g^r)_{2,1}  = (1+ \ldots +1)d=(r \pmod p) \cdot d$. Hence $d=0$ since $r$ divides $(q-1)$ and  $(r \pmod p)\ne 0$. This proves the Subclaim 1 when  $z^{q-2}=1.$

Now assume that  $z^{q-2}\neq 1$. Then 
\[
(g^r)_{2,1}= \left(\frac{z^{r(q-2)}-1}{z^{q-2}-1}\right)d.
\]
 If $d=0$, then Subclaim 1 follows, so let us assume that $d\neq 0$,  so $z^{r(q-2)}=1$. It follows that $c\neq 0$ and $b\neq 0$ since  $dz^{-(q+1)}+ z^{q-2}c^q=0$ and $\mathrm{Tr}(b)z^{-(q+1)}+c^{q+1}=0.$ 
Note that $$(r(q-2),q^2-1)=r(q-2,\frac{q-1}{r}\cdot(q+1))=r(q-2,q+1)=r(3,q+1),$$ while $$(q-2,q^2-1)=(q-2,(q-1)(q+1))=(q-2,q+1)=(3,q+1).$$
 Thus  $z \in \langle \omega^{\frac{q^2-1}{(3,q+1)r}} \rangle\setminus \langle \omega^{\frac{q^2-1}{(3,q+1)}} \rangle$. In particular the order $|z|$ of $z$  divides $(3,q+1)r$ but does not divide  $(3,q+1)$, so $r$ divides $|z|$.

Since $h \in H_B$,
\begin{equation}\label{znotinomr}
\begin{aligned}
    (\omr f)gh^i & = \omr(z^{-(q+1)}f+cx +be)h^i \\ & =\omr(\omega^{ir(q+1)}z^{-(q+1)}f+ \omega^{-ir(q-2)}cx +be) 
 \in B \text{ for } i \in \{1, \ldots, (q-1)/r\}.  
\end{aligned}
\end{equation}
Recall that $b,c\neq 0$. Since the coefficients of $e$ of the above points are the same and the coefficient of $x$ are all distinct and non-zero, we conclude that $(q-1)/r \leq |B|-1= r-1$, so $q-1 \leq r^2-r<r^2.$ This implies that $z \not\in \omr$, as otherwise $|z|$  divides $\frac{q^2-1}{r}$, so $r^2$  divides $q-1$ which is a contradiction.
Since $q+1$ is coprime with $r$, we also obtain that $z^{q+1} \not\in \omr$.
Hence the $z^{i(q+1)}$ lie in pairwise distinct cosets of $\omr$, for $i \in \{0, 1, \ldots, r-1\}.$ Since $g$ induces a permutation of order $r$ on $B$, we obtain  
$$B=\omr\{f, z^{-(q+1)}f+cx +be, z^{-2(q+1)}f + (\ldots), \ldots, z^{-(r-1)(q+1)}f +(\ldots) \}$$
where $(\ldots)$ is some linear combination of $x$ and $e$.
It follows that for distinct points of $B$, say $\omr v$ and $\omr v'$, the coefficients of $f$ in $v, v'$ must lie in different cosets of $\omr$ in $\mathbb{F}_{q^2}^*$. 

Note that $\beta^{gh}=\omr(z^{-(q+1)}f+cx +be)h\in B$ and we compute that this point is $$\omr(z^{-(q+1)}\omega^{r(q+1)}f+c\omega^{-r(q-2)}x +be)=\omr(z^{-(q+1)}f+c\omega^{-r(q-2)}\omega^{-r(q+1)}x +b\omega^{-r(q+1)}e).$$ Since the coefficient of $f$ of this point is in the same $\omr$-coset as $\omr(z^{-(q+1)}f+cx +be)$, we see that they must be the same point of $B$.
In particular, $b\omega^{-r(q+1)}=b$ and so $\omega^{-r(q+1)}=1$ (since $b\neq 0$). It follows that  $r=q-1$.
Therefore also $c\omega^{-r(q-2)}=c$ and so $\omega^{-r(q-2)}=\omega^{-(q-1)(q-2)}=1$ (since $c\neq 0$). It follows that $q+1$ divides $q-2$, and so $q=2$ and $r=q-1=1$, which is a contradiction.  Hence $d=0$ and this proves Subclaim 1. 

\medskip
We also have that $c=0$ since $dz^{-(q+1)}+ z^{q-2}c^q=0$, and $\mathrm{Tr}(b)=0$ since $\mathrm{Tr}(b)z^{-(q+1)}+c^{q+1}=0.$ 
Therefore,
\begin{equation}\label{eq:grun}
    g^r=\begin{pmatrix}
    1 & 0 & 0 \\
    0 & z^{r(q-2)} & 0 \\
    (z^{-(r-1)(q+1)}+z^{-(r-2)(q+1)}+ \ldots+ z^{-(q+1)}+1)b & 0 & z^{-r(q+1)}
\end{pmatrix} 
\end{equation}
fixes $B$ pointwise, and so $(z^{-(r-1)(q+1)}+z^{-(r-2)(q+1)}+ \ldots+ z^{-(q+1)}+1)b=0$.

Assume first that $b=0$. Since $g$  induces on $B$ a permutation of order $r$, we obtain $B=\omr\{f,z^{-(q+1)}f, \ldots,z^{-(r-1)(q+1)}f \}$. Thus $z\notin\omr$ and the coefficients of $f$ for the points of $B$ are in pairwise distinct cosets of $\omr$. In other words, $B=B_3$.

Now assume $b\neq 0$, so  $z^{-(r-1)(q+1)}+z^{-(r-2)(q+1)}+ \ldots+ z^{-(q+1)}+1=0$. Note that $z^{-(q+1)} \ne 1$ as otherwise $$z^{-(r-1)(q+1)}+z^{-(r-2)(q+1)}+ \ldots+ z^{-(q+1)}+1 = 1+ \ldots +1=(r \pmod p)\neq 0.$$ 
Therefore $$z^{-(r-1)(q+1)}+z^{-(r-2)(q+1)}+ \ldots+ z^{-(q+1)}+1=\frac{z^{-r(q+1)}-1}{z^{-(q+1)}-1},$$ and so 
 $z^{r(q+1)}=1.$  In particular, $r$ divides $|z|$ since $z^{q+1} \ne 1$ as we note in the beginning of the paragraph.
 As in the above proof of Subclaim 1, we consider $(\beta){gh^i}=(\omr f)gh^i$ for $i \in \{1, \ldots, (q-1)/r\}$ 
 and note that the coefficients of $f$ in \eqref{znotinomr} are distinct for distinct values of $i$, while the coefficients of $e$ are equal to $b$ for each $i$. Hence, all points in \eqref{znotinomr} are distinct and $(q-1)/r \leq |B|-1$, so $q-1<r^2$, and in particular $r^2$ does not divide $q-1$. This implies that    $z \not\in \omr$ as otherwise $r^2$ divides $q-1$ which is a contradiction.  In particular, $z^{q+1}\notin \omr$ and  $z^{i(q+1)}$ lie in distinct cosets of $\omr$ for $i \in \{0, 1, \ldots, r-1\}.$ 
 Since $g$ induces a permutation of order $r$ on $B$, we obtain  
\begin{equation}\label{eq:BCase1}
\begin{aligned}
B= \omr\{f, z^{-(q+1)}f+be,  z^{-2(q+1)}f+(z^{-(q+1)}+1)be,  \ldots \\ \ldots,  z^{-(r-1)(q+1)}f+(z^{-(r-2)(q+1)}+\ldots + z^{-(q+1)}+1)be \}.
\end{aligned}
\end{equation}
In particular, $ \omr (z^{-(q+1)}f+be)h=\omr (\omega^{r(q+1)}z^{-(q+1)}f+be)= \omr (z^{-(q+1)}f+\omega^{-r(q+1)}be)$ lies in $B$ and so $\omega^{-r(q+1)}b=b.$ So, since $b \ne 0,$ we obtain $\omega^{r(q+1)}=1$   and $r=q-1$. 
 Recall from the beginning of the section that $r$ is an odd prime and $r-1$ divides $a>1$ since $G^{\Omega}$ is rank $3$.
 Thus $r-1=p^a-2$ divides $a$. It follows that  $q=4$ and $r=3$.  Therefore, there are no options for $B$ of size $r$ other than $B_3$ unless $q=4$ and $r=3.$

\medskip

{\bf Case 2.} Assume that $|B|=p^k$ for some $k\leq 3a.$ (We will show that $B$ is $B_1$ of size $q^3$, or $B_2$ of size $q$, or  $(q,r) \in \{(4,3), (16,5)\}.$)

\medskip
Since $|B|=p^k$, the Sylow $p$-subgroup $P_1$  of $H_B$ is a subgroup of $P$ of order $p^k.$ If $P_1=P$, then $(\omr f)S=B_1,$ and $B=B_1$, so we may assume that $k<3a.$ First, assume that $P_1$ does not lie in the subgroup $R$ defined in Claim 2, so $P_1R/R\ne 1.$ 
Recall that $P$ consists of matrices of the form \eqref{eq:Pmat}
with  $\mathrm{Tr}(b)+c^{q+1}=0.$
Observe that two elements $$ R \begin{pmatrix}
     1 & 0 & 0 \\
    -c^q & 1 & 0 \\
    b& c & 1
\end{pmatrix}\text{ and } R \begin{pmatrix}
     1 & 0 & 0 \\
    -c'^q & 1 & 0 \\
    b'& c' & 1
\end{pmatrix}$$ of $P/R$ are equal if and only if $c=c'$, since $c=c'$ yields $\mathrm{Tr}(b-b')=0.$  
Moreover 
$$ R \begin{pmatrix}
     1 & 0 & 0 \\
    -c^q & 1 & 0 \\
    b& c & 1
\end{pmatrix}\cdot R \begin{pmatrix}
     1 & 0 & 0 \\
    -c'^q & 1 & 0 \\
    b'& c' & 1
\end{pmatrix}=R \begin{pmatrix}
     1 & 0 & 0 \\
    -(c+c')^q & 1 & 0 \\
    b+b'-cc'^q& c+c' & 1
\end{pmatrix}. $$
So $P/R$ is isomorphic to the additive group of $\mathbb{F}_{q^2}$ which is a $2a$-dimensional vector space over $\mathbb{F}_{p}.$ 
In particular,  $P_1R/R$ is a subspace of this vector space.  Since $H_{\omr f} \leq H_B$ and $P$ is normal in $H$, $H_{\omr f}$ stabilises $P_1R/R$ in the   action on $P/R$ induced by conjugation.  Let  $h$ be as in \eqref{eq:hmatuni}, so  
 $$R\begin{pmatrix}
     1 & 0 & 0 \\
    -c^q & 1 & 0 \\
    b & c & 1
\end{pmatrix}^h =R
\begin{pmatrix}
     1 & 0 & 0 \\
    -\omega^{r(q-2)}c^q & 1 & 0 \\
    \omega^{-r(q+1)}b & \omega^{-r(2q-1)}c & 1
\end{pmatrix}
$$
and the action of $h$ on $P/R$ is equivalent to the action on $\mathbb{F}_{p^{2a}}$ induced by multiplication by $\omega^{-r(2q-1)}.$ Hence $\langle h \rangle$ induces a subgroup $\langle \hat{h} \rangle$ of a Singer cycle of $\GL_{2a}(p)$ of order  $|\omega^{-r(2q-1)}|=\frac{q^2-1}{(r(2q-1),q^2-1)}$. Note that $(r(2q-1),q^2-1)=r(2q-1,\frac{q-1}{r}\cdot(q+1))=r(2q-1,q+1)=r(3,q+1).$  Since $h$ stabilises  $P_1R/R$ (which is a subspace of $P/R$), $ \langle \hat{h} \rangle $ is reducible and its order has to divide $p^\ell-1$ for some proper divisor $\ell$ of $2a$ by Lemma \ref{irrsub}. So, $(q^2-1)/(r(3,q+1)) \leq p^\ell-1$ and 
\begin{equation*}
    r(3,q+1) \geq (p^{2a}-1)/(p^\ell-1)>p^{2a-\ell}\geq p^a.
\end{equation*}
 Recall from the beginning of the section that $r$ is an odd prime  dividing $q-1$, and $r-1$ divides $a>1$ since $G^{\Omega}$ is rank $3$. In particular, $r\leq a+1,$ so 
$$(a+1)(3,q+1) \geq p^a.$$ 
It follows that $q=8$ and $a=3$.  The only odd prime $r\leq a+1=4$ is $3$, which does not divide $q-1=7$. 
Thus there is no block $B$ of size $|B|=p^k$ other than $B_1$ in the case where $P_1 \not\leqslant R$.

\medskip
Now we assume that $P_1\leqslant R$; if $P_1=R,$ then $B=B_2.$ So assume that $P_1<R$ and $|B|<|B_2|=p^a$. Recall that $R$ consists of all matrices 
$$\begin{pmatrix}
     1 & 0 & 0 \\
    0 & 1 & 0 \\
    b & 0 & 1
\end{pmatrix}$$ 
with $b \in \mathbb{F}_{q^2}$ such that $\mathrm{Tr}(b)=0$.  Thus $R$ is elementary Abelian of order $p^a$ and can be considered as an $a$-dimensional space over $\mathbb{F}_{p}.$ As before, $P_1$ is a subspace of $R$ stabilised by $h$. Observe that 
$$\begin{pmatrix}
     1 & 0 & 0 \\
    0 & 1 & 0 \\
    b & 0 & 1
\end{pmatrix}^h =
\begin{pmatrix}
     1 & 0 & 0 \\
    0 & 1 & 0 \\
    \omega^{-r(q+1)}b & 0 & 1
\end{pmatrix},
$$ 
so $h$ induces a linear invertible map $\hat{h}$ of order $(p^a-1)/r$, regarding $R$ as an $\mathbb{F}_{p}-$vector space. Hence $\langle \hat{h}\rangle$  is a subgroup of a Singer cycle of $\GL_a(p)$ that stabilises a proper subspace. Therefore, $|\hat{h}|=(p^a-1)/r$ divides $p^\ell-1$ for some proper divisor $\ell$ of $a$ by Lemma \ref{irrsub}.
So, $(p^a-1)/r \leq p^\ell-1$ and 
\begin{equation}\label{eq:r0un}
    r\geq (p^{a}-1)/(p^\ell-1)>p^{a-\ell}\geq p^{a/2}.
\end{equation}
 Recall that $r$ is an odd prime dividing $q-1$ and $r-1$ divides $a>1$. In particular, $r\leq a+1,$ so \eqref{eq:r0un} implies that
$$a+1 \geq p^{a/2}.$$ 
It follows that $q\in\{4,8,9,16,32\}$.
If $q\in \{8,32\}$, then $q-1$ is prime so $r=q-1$, and $r-1$ does not divide $a$. If $q=9$, then no odd prime divides $q-1=8$.
Thus $q\in \{4,16\}.$
If $q=4$, we immediately get that $(q,r,\ell)=(4,3,1)$, which satisfies all the conditions.
If $q=16$, then $r \mid 15$ so $r\in\{3,5\}$. On the other hand, $15/r$ divides $2^\ell-1$ for some proper divisor $\ell$ of $4$, and so $r=5,\ell=2$. 
Thus  $(q,r) \in \{(4,3), (16,5)\}.$

\medskip

{\bf Case 3.} Assume that $|B|=p^kr$ where $0<k<3a.$ (We will show that $B$ is $B_4$ of size $qr$,  or  $(q,r) \in \{(4,3), (16,5)\}.$)

\medskip

 Here $H_B$ has a normal Sylow $p$-subgroup $P_1$ of size $p^k$. The orbits of $P_1$ form a system of imprimitivity  for $H_B$ on  $B$.  Let $B'$ be a block of this system containing $\beta$. Acting on these orbits by $H$ we get a system of imprimitivity of $H$ on   $\Omega\setminus \sigma$ with blocks of size $p^k$, and  the analysis of Case 2 forces either $p^k=q$ and $B'=B_2$ or 
$(q,r)$ = $(4,3)$ or $(16,5)$. 

Assume that $p^k=q$ and $B'=B_2$. In particular, $R \leq H_{B_2} \leq H_B$ and $|H_B| = |H|/q^2 = q(q^2-1).$ Since $R$ is normal in $H_B$, we obtain $H_B= R \rtimes C$ for some $C \leq H_B$ of order $q^2-1$ by the Schur–Zassenhaus Theorem.  One can easily check that $H= P \rtimes \langle t \rangle$ where $t = \diag(1, \omega^{q-2}, \omega^{-(q+1)})$ of order $q^2-1.$ Let $\pi$ be the set of prime divisors of $q^2-1,$ so $\langle t \rangle$ and $C$ are Hall $\pi$-subgroups of $H$ that are conjugate since $H$ is soluble.  Therefore, all subgroups of $H$ of the form $R \rtimes C$ with $|C|=q^2-1$ are conjugate to $R \rtimes \langle t \rangle$ and are stabilisers of blocks of the system $\{(B)g \mid g \in H\}.$ It is routine to check that $R \rtimes \langle t \rangle=H_{B_4}$ and, hence, $\{(B)g \mid g \in H\}= \{(B_4)g \mid g \in H\}.$

\medskip
The analyses of Cases 1--3 complete the proof of Claim 3 that $B$ is either one of $B_1, \ldots, B_4$, or $(q,r) \in \{(4,3), (16,5)\}.$

\medskip

 Now we describe blocks $B$ arising when $(q,r)$ is either $(4,3)$ or $(16,5)$ and  $B$ is not one of $B_1, \ldots, B_4$.  Since $G$ is rank $3$, $(j,r-1)=1$ where $|G^{\Sigma}/(G^{\Sigma} \cap \PGU_3(q))|=2a/j,$ so $j=1$ in both cases. Since $(3,q+1)=1$ in both cases, $Z \, \SU_3(q)= Z \, \GU_3(q)$ and, therefore, $G=\GammaU_3(q).$ 

 Assume $|B|=r.$ By the arguments of Case 1, $(q,r)=(4,3)$ and the element $g$ inducing a permutation of order $r$ on $B$ satisfies $g_{2,1}=g_{3,2}=0$, $g_{3,1}\neq 0$. Moreover $B$ is as in \eqref{eq:BCase1}. Setting $\mu:=z^{-(q+1)}=z^{-5}$ we obtain
$$B= \omr\{f, \mu f+be, \mu^2f+(1+\mu)be\},$$
where  $\mathrm{Tr}(b)=0$ and $\mu$ has order $r=3.$
In particular $b,\mu\in \mathbb{F}_4^*=\omr[5]$ with $\mu\neq 1$.

 Observe that $\phi$ stabilises $f$ so, since $G = \GammaU_3(4),$ $\phi$ stabilises $B$. In particular
$$\omr(\mu f+be)\phi =\omr( \mu^2 f+b^2 e) \in B$$
and $b^2 =(1+\mu)b$. Hence $b=1+\mu$ and 
\begin{align*}
     B & = \omr\{f, \mu f+(1+\mu)e, \mu^2f+(1+\mu^2)e\} \} \\
       & = \omr\{\lambda f + (1-\lambda )e \mid \lambda \in \mathbb{F}_4^* \}=B_5.
\end{align*}

Assume $|B|=p^k$ with $k<a.$ 
 By the arguments of Case 2, $(q,r,\ell)=(4,3,1)$ or $(16,5,2).$ Moreover 
 $$B= \omr\{f + b e \mid b \in U\}$$
where $U \subset \{b\in \mathbb{F}_{q^2} \mid  \mathrm{Tr}(b)=0\}$ such that $U$ is a proper additive subgroup of $\{b\in \mathbb{F}_{q^2} \mid \mathrm{Tr}(b)=0\}$ invariant under  multiplication by $\omega^{-r(q+1)}.$ Since $q$ is even, $\{b\in \mathbb{F}_{q^2} \mid \mathrm{Tr}(b)=0\}=\mathbb{F}_q.$
Recall  from Case 2 that $|U|=|B|=p^\ell$.
Since $G=\GammaU_3(q)$ in both cases, $\phi \in (G_{\alpha})_{\omr f}$, so $U$ must be invariant under the field automorphism $\phi$.
 If $(q,r)=(4,3)$, it immediately follows that $U=\mathbb{F}_{2}$ and $B=B_6$.
Assume $(q,r)=(16,5)$ so that $k=2$.  Recall that $U$ is invariant under  multiplication by $\omega^{-85}$ and note that $\omr[85] =\mathbb{F}_4^*$. So $U$ is invariant under multiplication by an element of $\mathbb{F}_4$. Let $v\in U^*$ and $\lambda\in \mathbb{F}_4^*$ with $\lambda\neq 1$. Then $U=\{0, v,\lambda v,\lambda^2 v \}$. Since $U$ is invariant under the field automorphism $\phi$, we get that $v^2=\lambda^i v $ for some $i\in\{0,1,2\}$, and thus $v=\lambda^i\in \mathbb{F}_4$. It follows that $U=\mathbb{F}_4$ and $B=B_7$.

 It is easy to check by hand or using {\sc Magma} that $B_5,$ $B_6$ and $B_7$ are indeed blocks of imprimitivity for $G_{\alpha}$ acting on $\Omega\backslash \sigma$ and there are no further such blocks  (in particular, no block  as in Case 3 apart from $B_4$).  This finishes the proof of the theorem. 
\end{proof}

\section{Proof of the main theorem}\label{mainproof}

Let $G \leq \Sym(\Omega)$ have rank $3$. Recall from Theorems \ref{th:AD1} and \ref{th:AD2}, and the comments following these statements, that  a block $B$ of a nontrivial system of imprimitivity for the action of $G_{\alpha}$ on one of its orbits in $\Omega$ gives rise to a partial linear space if and only if the setwise stabiliser $G_{L}$ acts transitively on $L=\{\alpha\} \cup B$. The following simple consequence of this fact will be useful in our analysis for connected partial linear spaces.

\begin{Lem}\label{lem:int}
    Suppose that $G \leq \Sym(\Omega)$ is imprimitive of rank $3$, and that $\Sigma$ is the unique nontrivial $G$-invariant block system on $\Omega$. Let $\alpha\in\Omega$ and let $\sigma\in\Sigma$ be the block containing $\alpha$, so that the $G_\alpha$-orbits are $\{\alpha\},\ \sigma\setminus\{\alpha\},$ and $\Omega\setminus\sigma$. If $B$ is a block of imprimitivity for the $G_\alpha$-action on  $\Omega\setminus\sigma$ such that the setwise stabiliser $G_L$ of $L:=B\cup \{\alpha\}$ is transitive on $L$, then $|L\cap \sigma'|\leq 1$ for all $\sigma'\in\Sigma$.
\end{Lem}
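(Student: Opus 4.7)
The plan is to show that any potential excess of $L\cap\sigma'$ collapses via the transitivity of $G_L$ on $L$, exploiting the fact that every element of $G$ permutes the blocks of $\Sigma$.

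First I would handle the block $\sigma'=\sigma$ trivially: since $B\subseteq\Omega\setminus\sigma$ by hypothesis, we have $L\cap\sigma=\{\alpha\}$, of size $1$. So the nontrivial content of the lemma is to rule out $|L\cap\sigma'|\geq 2$ when $\sigma'\neq\sigma$. Suppose for contradiction that $\sigma'\neq\sigma$ and that $L\cap\sigma'$ contains two distinct points $\beta_1,\beta_2$. Since $\sigma'\cap\sigma=\emptyset$, both $\beta_i$ lie in $B\subseteq\Omega\setminus\sigma$.

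Next, I would use the hypothesis that $G_L$ acts transitively on $L$. Since $\alpha\in L$, there exists $g\in G_L$ with $\beta_1^{\,g}=\alpha$. Because $G\leq\Sym(\Omega)$ preserves the partition $\Sigma$, the image $\sigma'^{\,g}$ is again a block of $\Sigma$; as $\beta_1^{\,g}=\alpha\in\sigma$, disjointness of blocks forces $\sigma'^{\,g}=\sigma$. Therefore $\beta_2^{\,g}\in\sigma'^{\,g}=\sigma$, while at the same time $\beta_2^{\,g}\in L^{g}=L$ since $g\in G_L$. Combining these, $\beta_2^{\,g}\in L\cap\sigma=\{\alpha\}$, whence $\beta_2^{\,g}=\alpha=\beta_1^{\,g}$, and injectivity of $g$ gives $\beta_1=\beta_2$, a contradiction.

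I do not foresee a genuine obstacle here: the argument is essentially a one-line consequence of $G$-invariance of $\Sigma$ together with the transitivity of $G_L$ on $L$, so the only care needed is to ensure that the element $g$ produced by transitivity actually maps $\sigma'$ onto $\sigma$, which follows immediately because a permutation in $\mathrm{Aut}(\Sigma)$ carrying a point into $\sigma$ must carry its entire block to $\sigma$.
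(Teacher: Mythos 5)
Your proof is correct and follows essentially the same route as the paper: the paper observes that since $G_L$ preserves $\Sigma$ and is transitive on $L$, the non-empty intersections $L\cap\sigma'$ all have the same size, which must be $1$ because $L\cap\sigma=\{\alpha\}$. Your argument by contradiction with the two points $\beta_1,\beta_2$ is just an explicit unwinding of that same constant-size observation.
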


\begin{proof}
    Note that the subgroup $G_L$ leaves $\Sigma$ invariant. Thus, since  $G_L$ is transitive on $L$, it follows that the non-empty intersections $L\cap \sigma'$, for $\sigma'\in\Sigma$, must have a constant size. Then, since $L\cap \sigma=\{\alpha\}$ by the definition of $L$ (since $B\subseteq \Omega\setminus\sigma$), this constant size is $1$.
\end{proof}

First we consider the case where $G$ is a linear group  as in Construction \ref{con:psl}.

\begin{table}[h]
\begin{tabular}{cccclc}
\hline
Case  & $n$  & $q$ & $r$  & $B$  & $G$ \\ 
\hline
(i) & $\geq 3$ & 3  & 2    &       $B_4= \{e_2, 2e_1 + 2 e_2\}$\   & $\SL_n(3), \GL_n(3)$  \\ 
  & $\geq 2$ & $4$ & 3  &  $B_4=  \{e_2, e_1 +  e_2\}$\    & $\SL_n(4) \leq G \leq \GammaL_n(4)$  \\   
    & $\geq 2$ & $4$ & 3  &  $B_5= \{(1+\lambda)e_1 +\lambda e_2 \mid \lambda \in \mathbb{F}_4^*\}$\    & $\SL_n(4) \leq G \leq \GammaL_n(4)$  \\ 
    (ii) & $\geq 2$  & $2^4$  & 5    &     $B_{6}= \omr \{\lambda e_1 +e_2 \mid \lambda \in \mathbb{F}_4\}$\  &$Y\,\SL_n(2^4)/Y \leq G \leq \GammaL_n(2^4)/Y$     \\ 
    (iii) & $2$  & $3^4$  & 5    &     $B_{7,1}= \omr \{\lambda e_1 +e_2 \mid \lambda \in \mathbb{F}_9\}$\quad or  &$(Z \, \SL_2(3^4)\rtimes \langle \phi \rangle)/Y$    \\ 
        &   &   &   &$B_{7,2}= \omr \{\lambda e_1 +e_2 \mid \lambda \in \omega^5\,\mathbb{F}_9\}$ \\
    (iv) & $2$  & $5^2$  & 3    &     $B_{8,1}= \omr \{\lambda e_1 +e_2 \mid \lambda \in \mathbb{F}_5\}$\quad or   &$(Z \, \SL_2(5^2)\rtimes \langle \phi \rangle)/Y$     \\ 
        &   &   &   &$B_{8,2}= \omr \{\lambda e_1 +e_2 \mid \lambda \in \omega^3\,\mathbb{F}_5\}$  \\
(v) & $2$ & $3^2$  & 2    &      $B_{9,0}= \omr \{\lambda e_1 +e_2 \mid \lambda \in \mathbb{F}_3\}$\quad or  &  $\langle  Y\,\SL_2(3^2),   \phi \, \diag(1,\omega)  \rangle/Y$ \\ 
        &   &   &   & $B_{9,2}= \omr \{\lambda e_1 +e_2 \mid \lambda \in \omega^2\mathbb{F}_3\}$ \\ \hline
\end{tabular}
\caption{Blocks for Lemma~\ref{lem:LtrSL}, where $\omega$ is a primitive element in $\mathbb{F}_q$ }\label{t:LtrSL}
\end{table}

\begin{Rem}\label{r:LtrSL}
In all lines of Table~\ref{t:LtrSL}, we require that the group $G$ has rank $3$ and so satisfies the conditions in Table \ref{t:qpinsprk3}. Here we make a few additional comments.

In case (i), lines 2 and 3 of Table~\ref{t:LtrSL},  if $n \geq 3$, then the rank $3$ groups $G$   are the following: 
$\GammaL_n(4)$, $\langle \SL_n(4), \phi \rangle$, and $\langle \SL_n(4), \diag(1, \ldots, 1, \omega) \phi \rangle$. If $n=2$, then $G=\GammaL_2(4)$ by Theorem \ref{th:blocksn2}. Further, in all lines for case (i) the group $Y=\{I\}$.

In case (ii), all groups in the range given in Table~\ref{t:LtrSL} have rank $3$, except that,  when $n=2$ then by Theorem \ref{th:blocksn2} we have $G=\GammaL_2(16)/Y$, so the group $G$ can be quasiprimitive only if $n\geq3$.

In cases (iii) and (iv) of Table~\ref{t:LtrSL}, the group $G$ is equal to  the group given in Table~\ref{t:LtrSL} (and is normal in $\GammaL_2(q)/Y$), and $G$ is properly innately transitive.

 In case (v) of Table~\ref{t:LtrSL},  the group $G$ is conjugate in $\GammaL_2(3^2)/Y$ to 
 $\langle  Y\,\SL_2(3^2),   \phi \, \diag(1,\omega)  \rangle/Y$ (see Remark \ref{rem:9iConj}). In particular, $G$ is quasiprimitive.

\end{Rem}

\begin{Lem}\label{lem:LtrSL}
   Let $G \leq \Sym(\Omega)$ satisfying $Y\,\SL_n(q)/Y \leq G \leq \GammaL_n(q)/Y$, where $\Omega$ and $Y$ are as in Construction \ref{con:psl}, and let $\alpha=\omr e_1$ and $\beta=\omr e_2$. Assume that $G$  has rank $3$, and that $B$ is a nontrivial block of imprimitivity for the action of $G_{\alpha}$ on $\Omega \backslash \sigma$ such that $\beta\in B$. Then, for $L :=\{\alpha\} \cup B$,  $G_L$ is transitive on $L$ if and only if one of the lines of Table~\ref{t:LtrSL} holds. 
\end{Lem}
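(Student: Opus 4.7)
By Theorems \ref{th:blocks} and \ref{th:blocksn2}, the block $B$ containing $\beta$ is either one of the "generic" blocks $B_1, B_2$ (or $B_3=S$ when $n\geq 3$) or one of the exceptional blocks listed in Tables \ref{t:blocks} and \ref{t:blocksn2}. The plan is to dispose of $B_1, B_2, B_3$ first by general arguments, then to handle each exceptional case on its own merits, identifying $L=\{\alpha\}\cup B$ either as a line of a previously-defined proper partial linear space (giving transitivity of $G_L$) or producing an explicit obstruction to transitivity.

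For $B_1$, note that $B_1=\sigma(\langle e_2\rangle)\in\Sigma$, so $|L\cap\sigma(\langle e_2\rangle)|=r\geq 2$ and Lemma~\ref{lem:int} forces $G_L$ to be non-transitive; for $B_3=S$ (the case $n\geq 3$), for any $\mu\in\mathbb{F}_q^*$ the block $\sigma(\langle\mu e_1+e_2\rangle)$ lies entirely in $S$, again giving $|L\cap\sigma'|=r\geq 2$ by Lemma~\ref{lem:int}. The block $B_2$ requires a direct argument: I would suppose $g\in G_L$ with $g(\alpha)=\beta$, write $g=g_0\phi^k$ with $g_0\in\GL_n(q)$ and (after absorbing a scalar in $Y$) take $g_0(e_1)=e_2$. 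Computing $g(\omr(\lambda e_1+e_2))$ for arbitrary $\lambda\in\mathbb{F}_q^*$ and forcing the image to lie in $L$ will show that the coefficient of $e_2$ in the image must lie in $\omr$, which as $\lambda$ varies over $\mathbb{F}_q^*$ forces $\mathbb{F}_q^*\subseteq\omr$, contradicting $r>1$.

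For the exceptional blocks I would verify case by case that $L$ is a line in a known partial linear space, from which transitivity of $G_L$ follows by Lemmas~\ref{lf7}(1), \ref{lf7U}(2) and their analogues (the line stabilisers are $2$-transitive on lines). Explicitly: for $(q,r)=(3,2)$ or $(4,3)$, the three/four points of $L_4$ sum to $0$, identifying it as a line of $\Delta(n,q)$; for $(q,r)=(4,3)$ the set $L_5=\{\lambda e_1+(1-\lambda)e_2\mid\lambda\in\mathbb{F}_4\}$ is a line of $\mathrm{AG}^*(n,4)$; for $(q,r)=(16,5)$ the set $L_6=L_{e_1,e_2}$ is a line of $\mathrm{LSub}(n,16,4,5)$ by Lemma~\ref{lem:lineform}(1); for $(q,r)\in\{(81,5),(25,3)\}$, similar calculations identify $L_{7,i}$ and $L_{8,i}$ as lines of $\mathrm{LSub}(2,q,q_0,r)$ (noting that the two blocks correspond to different $G_1$-orbits on $\widehat{\mathcal{L}}$, which are fused by the field automorphism present in $G$); for $(q,r)=(9,2)$ the blocks $B_{9,0}$ and $B_{9,2}$ correspond to lines of $\mathrm{DLSub}(9,3,2,1)$ in the two orbits $\mathcal{L}$ and $\omega^2\mathcal{L}$ respectively, and transitivity of $G_L$ can be verified by exhibiting an explicit element such as $h=\bigl(\begin{smallmatrix}0&1\\1&0\end{smallmatrix}\bigr)\in Y\,\SL_2(9)$ (together with a $3$-cycle in each case).

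The main obstacle will be the two non-transitive exceptional cases: $B_5$ for $(q,r)=(3,2)$ and $B_{9,1}, B_{9,3}$ for $(q,r)=(9,2)$. For $B_5$ with $n\geq 3$ I would enumerate the six possible bijections of $L_5=\{e_1,e_2,e_1+2e_2\}$ that move $e_1$ and verify in each case that no linear extension in $\GL_n(3)$ exists (since $g(e_1+2e_2)=g(e_1)+2g(e_2)$ is forced and never equals the required target). For $B_{9,1}, B_{9,3}$, the key observation is that $G=Y\,\SL_2(9)\sqcup Y\,\SL_2(9)\cdot\phi\diag(1,\omega)$, and that $\phi\diag(1,\omega)\in G_\alpha$ swaps the two $\widetilde{H}$-orbits $O_1, O_2$ on $\Omega\setminus\sigma$ from Lemma~\ref{lem:gltrans}. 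Consequently, for $g=g'\phi\diag(1,\omega)\in G\setminus\GL_2(9)$ with $g(\alpha)\in B_{9,1}\subseteq O_1$, the image $g(\omr(\omega e_1+e_2))$ lies in $O_2$, hence outside $L\subseteq\{\alpha\}\cup O_1$. For $g\in Y\,\SL_2(9)$ with $g(\alpha)\in B_{9,1}$, a short computation parametrising $g$ by its effect on the basis shows that the determinant condition $\det(g)\in\{1,2\}$ together with the requirement $g(\beta)\in L_{9,1}$ leads to contradictions in each of the three subcases $g(\alpha)=\beta$, $g(\alpha)=\omr(\omega e_1+e_2)$, $g(\alpha)=\omr(\omega^5 e_1+e_2)$; this forces $G_L$ to fix $\alpha$. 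The same argument applies to $B_{9,3}$ by analogous computation.
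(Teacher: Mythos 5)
Your reduction to the exceptional blocks is the same as the paper's: the paper also eliminates $B_1$ and $B_3$ with Lemma~\ref{lem:int}, and disposes of $B_2$ by forcing the $e_2$-coefficient of the image of a point of $B_2$ into $\omr\cup\{0\}$ (the paper substitutes the single value $\mu=(\omega-\lambda_2)\delta^{-1}$ to get the contradiction $\omega\in\omr$, where you let the parameter vary to force $\mathbb{F}_q^*\subseteq\omr$; same computation). Where you genuinely diverge is in the exceptional cases: the paper exhibits, for each line of Table~\ref{t:LtrSL}, an explicit element of $G_L$ interchanging $\omr e_1$ and $\omr e_2$, and for the failing cases argues directly on coefficients, whereas you identify $L$ with a line of $\Delta(n,q)$, $\mathrm{AG}^*(n,4)$, $\mathrm{LSub}$ or $\mathrm{DLSub}$ and invoke the $2$-transitivity of line stabilisers. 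That route is viable and arguably more conceptual, but it carries an obligation you should make explicit: Lemma~\ref{lf7}(1) gives $2$-transitivity of a subgroup $M\leq\GL_n(q)$, and you must verify that $M$ (or its conjugate by $\diag(\omega^r,1)$ for $B_{7,2}$, $B_{8,2}$) lies inside the particular rank~$3$ group $G$, which in cases (iii)--(v) is a proper subgroup of $\GammaL_2(q)/Y$. The checks succeed (e.g.\ $\det \GL_2(q_0)=\mathbb{F}_{q_0}^*$ consists of squares in $\mathbb{F}_q^*$ when $q=q_0^2$, and the swap matrix has determinant $-1=\omega^4\in\det(Y\,\SL_2(9))$), but they are part of the proof. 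There is also no explicit analogue of Lemma~\ref{lf7} for $\mathrm{AG}^*$ in the paper, so the $(4,3)$, $B_5$ case still needs a concrete element, as the paper provides.

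The one step that does not stand as written is Case B of your treatment of $B_{9,1},B_{9,3}$. From ``$\phi\,\diag(1,\omega)$ swaps $O_1,O_2$'' you conclude that $g=g'\phi\,\diag(1,\omega)$ with $g(\alpha)\in O_1$ sends $\omr(\omega e_1+e_2)$ into $O_2$; but $g'$ is an arbitrary element of $Y\,\SL_2(9)$, not an element of $G_\alpha$, and it need not preserve $\{O_1,O_2\}$, which is a block system only for $G_\alpha$. The repair is to transport the system: if $g\in G_L$ maps $\alpha$ to $\beta$, then $g$ carries $\{O_1,O_2\}$ to the corresponding system $\{O_1^\beta,O_2^\beta\}$ based at $\beta$. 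The three points of $B_{9,i}$ all lie in $O_1$, while their images under $g$ are $\alpha\in O_1^\beta$ together with two points of $B_{9,i}\setminus\{\beta\}$, and for odd $i$ these lie in $O_2^\beta$ since $\omega^i\lambda\notin\omr[2]$ for $\lambda\in\mathbb{F}_3^*$; hence $g(O_1)$ meets both blocks, a contradiction. Note that this repaired argument makes no reference to whether $g$ lies in $\GL_2(9)$, so it subsumes your Case A and the determinant computation becomes unnecessary. The paper instead reaches the same conclusion by a direct analysis of the coefficients of $(\gamma)gY$ for the three points $\gamma$ of $B_{9,i}$.
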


\begin{proof}
Since the block $B$ is nontrivial, it must be one of those determined in Theorems \ref{th:blocks} or \ref{th:blocksn2}. In all cases $B$ consists of points $\omr v$ for various $v\in\langle e_1,e_2\rangle$, and it follows that $G_L$ leaves $\langle e_1,e_2\rangle$ invariant. Also, as $G_{\alpha,B}$ is transitive on $B$, the stabiliser $G_L$ is transitive on $L=B\cup\{\alpha\}$ if and only if there exists an element $gY \in G_L$ such that $(\omr e_1)gY=\omr e_2.$ 
For any element $gY$ with this property, $g\in\GammaL_n(q)$ has restriction to $\langle e_1,e_2\rangle$ of the form
 \begin{equation} \label{e:gY}
g \mid_{\langle e_1, e_2 \rangle} = \begin{pmatrix} 
        0 & \delta \\
        \lambda_1 & \lambda_2
    \end{pmatrix} \cdot \phi^k
 \end{equation}
    for some $\delta \in \omr,$ $\lambda_i \in \mathbb{F}_q$ with $\lambda_1\ne0$, and $k \in \{0, 1, \ldots, a-1\}.$ 
    We first use this observation together with Lemma~\ref{lem:int} to identify precisely the blocks occurring in Table~\ref{t:LtrSL} for further analysis.
    
 \medskip\noindent   
    {\it Claim: If $G_L$ is transitive on $L$, then $B$ is one of the blocks in the list below.
    \begin{enumerate}[label=\normalfont $(\roman*)$]
    \item $n \geq 3$, $(q,r) \in \{(3,2),(4,3)\}$, or $(n,q,r)=(2,4,3)$, and $B \in \{B_4, B_5\}$;\label{trblcase1}
    \item $n \geq 2$, $(q,r)=(2^4,5)$ and $B=B_6$;\label{trblcase11}
    \item $n=2$, $(q,r) = (3^4,5)$ and $B=B_{7,i}$ for $i \in \{1,2\}$; \label{trblcase2}
    \item $n=2$, $(q,r) = (5^2,3)$ and $B=B_{8,i}$ for $i \in \{1,2\}$; \label{trblcase21}
    \item $n=2$, $(q,r) = (3^2,2)$ and $B=B_{9,i}$ for $0\leq i \leq 3$. \label{trblcase3}
\end{enumerate}  
} 

\medskip\noindent 
 It follows from Theorems \ref{th:blocks} and \ref{th:blocksn2} that either $B$ is one of the blocks in the list above, or $B$ is one of $B_1, B_2$, or $B_3$, as defined in Theorems \ref{th:blocks} and \ref{th:blocksn2}. Further, if $G_L$ is transitive on $L$, then by Lemma~\ref{lem:int}, the non-empty intersections $L\cap \sigma'$, for $\sigma'\in\Sigma$, all have size $1$. Since this condition fails for $B_1$ and $B_3$, it remains to consider the case where $B=B_2$ and $G_L$ is transitive on $L$. Thus $G_L$ contains an element $gY$ with $g$ as in \eqref{e:gY}, and for this element $g$ we let $\mu :=(\omega - \lambda_2)\delta^{-1}\in\mathbb{F}_q$. By the definition of $B_2$ in Theorems~\ref{th:blocks} and \ref{th:blocksn2}, we have $ \omr (\mu e_1 + e_2)\in B_2$, and the image of this point under $gY$ must lie in $L$, that is,
$$
( \omr (\mu e_1 + e_2))gY = \omr (\lambda_1 ^{p^k}  e_1 + (\mu \delta +\lambda_2 )^{p^k} e_2 ) \in L. 
$$
This implies, again by the definition of $B_2$, that $(\mu \delta +\lambda_2 )^{p^k} \in \omr \cup \{0\}$, and hence also $\mu \delta +\lambda_2  \in \omr \cup \{0\}$. However $\mu \delta +\lambda_2=\omega\not\in\omr \cup\{0\}$, and we have a contradiction. Thus the Claim is proved.

\medskip
We now consider each of the cases (i)--(v) of the Claim separately.

\medskip \noindent
{Case \ref{trblcase1}:} \quad 
 { Assume first that $(q,r)=(3,2)$ with $n\geq3$ and $B=B_4$ or $B_5$},  as in Theorems \ref{th:blocks} and \ref{th:blocksn2}. In this case $\omr =\{1\}$ and $q=p^a$ with $a=1$, so $Y=\{I\}$ and $G \leq \GL_n(3),$ and we identify $B$ with the corresponding subset of $\langle e_1, e_2 \rangle$.  If $B=B_4=\{e_2, 2e_1+2e_2\}$, then there is  an element $g\in \SL_n(3) \leq G$, fixing $\langle e_1, e_2 \rangle$ setwise such that the restriction in \eqref{e:gY} is   
 $$g \mid_{\langle e_1, e_2 \rangle} =\begin{pmatrix}
        0 &1 \\
        1& 0
    \end{pmatrix}.$$
    This element satisfies $(e_i)g= e_{3-i}$ for $i=1,2$, and fixes $2e_1+2e_2$. Hence $g\in G_L$, so $G_L$ is transitive on $L$ and we have the example in line $1$ of Table~\ref{t:LtrSL} with $G$ either $\SL_n(3)$ or $\GL_n(3)$. Suppose now that $B=B_5=\{e_2, e_1+2e_2\}$. Here the element $g\in G_L$ in \eqref{e:gY} must satisfy $( e_1)g=  e_2$, and hence $\delta=1$. Also we require 
    $(e_2)g= \lambda_1 e_1 + \lambda_2e_2 \in L\setminus\{e_2\} = \{e_1, e_1+2e_2\}$, and hence $(\lambda_1,\lambda_2)=(1,0)$ or $(1,2)$. In either case $\lambda_1=1$ and hence $(e_1+2e_2)g= 2 e_1 + (1+2\lambda_2) e_2$, which does not lie in $L$, so $g\not\in G_L$ and we have a contradiction. Thus for $B=B_5$, the stabiliser $G_L$ is not transitive on $L$.

    Now assume that $(q,r)=(4,3)$  with $n\geq2$ and $B=B_4$ or $B_5$. Here $\omr =1$ and $a=2$, so $Y=\{I\}$ and $G \leq \GammaL_n(4),$ and again we identify $B$ with a subset of  $\langle e_1, e_2 \rangle$. Let us show that $G_L$ contains suitable elements $g_4, g_5$ as in \eqref{e:gY} for $B=B_4, B_5$, respectively. Note that the group induced by $G_{\langle e_1, e_2 \rangle}$  on $\langle e_1, e_2 \rangle$  contains  $Z \, \SL_2(4)= \GL_2(4)$ and consider the following elements $g_4, g_5\in Z\, \SL_n(4)$ such that 
    $$g_4 \mid_{\langle e_1, e_2 \rangle}  =\begin{pmatrix}
        0 &1 \\
        1& 0
    \end{pmatrix} \text{ and } g_5 \mid_{\langle e_1, e_2 \rangle} =\begin{pmatrix}
        0 &1 \\
        1+\omega & \omega
    \end{pmatrix}.$$
    It is straightforward to check that $g_4$ and $g_5$ lie in $G_{L}$ and satisfy the required condition for $B=B_4$ and $B_5$ respectively.  This justifies lines 2 and 3 of Case (i) in Table~\ref{t:LtrSL}.

\medskip \noindent
{Case \ref{trblcase11}:} \quad Here $n \geq 2$, $(q,r)=(2^4,5)$ and $B=B_6= \omr \{\lambda e_1 +e_2 \mid \lambda \in \mathbb{F}_4\}$. Note that $\omr=\omr[5]=\mathbb{F}_4^*=\mathbb{F}_4\setminus\{0\}.$  Since $G$ has rank $3$, we must have $G = \GammaL_2(2^4)/Y$ when $n=2$ (see Remark~\ref{r:LtrSL}) and hence $G$ contains an element $gY$ mapping $\alpha=\omr[5]e_1$ to $\beta=\omr[5] e_2$, as in \eqref{e:gY}, such that 
\[
g \mid_{\langle e_1, e_2 \rangle}  =\begin{pmatrix}
        0 &1 \\
        1& 0
    \end{pmatrix}.
    \]
Also  $gY$ maps $\beta$ to $\alpha$ and,  for $\lambda \in \mathbb{F}_4^*=\omr[5]$,
    $$
    (\omr[5](\lambda e_1 +e_2))gY = \omr[5]( e_1 + \lambda e_2)=\omr[5](\lambda^{-1} e_1 +e_2)\in B_6.
    $$
It follows that $gY$ leaves $L=\{\alpha\}\cup B_6$ invariant, and we conclude that $G_L$ is transitive on $L$, as in Table~\ref{t:LtrSL}.

\medskip \noindent
{Cases  \ref{trblcase2}, \ref{trblcase21}:} \quad Here $n=2$, and $(q,r) = (3^4,5)$ or $(5^2,3)$. Let $q_0=q^{1/2}$ and note that $\mathbb{F}_{q_0}^*=\omr[2r] < \omr$ in each case.  Let $s$ be $7$ or $8$, in case  \ref{trblcase2} or \ref{trblcase21}, respectively.
Then $B=B_{s,i}= \omr \{\lambda e_1 +e_2 \mid \lambda \in a_{s,i}\,\mathbb{F}_{q_0}\}$, for some $i\in\{1,2\}$, where $a_{s,1}=1$ and $a_{s,2}=\omega^r$ in $\mathbb{F}_{q}$.  Note that $a_{s,i}\,\mathbb{F}_{q_0}\subseteq \{0\}\cup  \omr$ in both cases.   By Theorem~\ref{th:blocksn2}, we have $G= (Z \, \SL_2(q) \rtimes \langle \phi \rangle)/Y$ in each case. Thus $G$ contains the element $gY$ with 
\[
g =\begin{pmatrix}
        0 &1 \\
        1& 0
    \end{pmatrix}=(\mu^{-1}I)\cdot\begin{pmatrix}
        0 &\mu \\
        \mu& 0
    \end{pmatrix} \textrm{ where } \mu=\omega^{\frac{q-1}4} \textrm{ so that } \mu^2=-1
\]
(over the appropriate field), and this element interchanges $\alpha$ and $\beta$. 

Let  $\lambda \in a_{s,i}\,\mathbb{F}_{q_0}^*$. Then  $\lambda\in\omr$. When $i=1$, clearly $\lambda^{-1}\in \mathbb{F}_{q_0}^*=a_{s,i}\,\mathbb{F}_{q_0}^*$. When $i=2$, $\lambda=\omega^r \nu$ for some $\nu\in \mathbb{F}_{q_0}^*,$ and so  $\lambda^{-1}=\omega^{-r} \nu^{-1}=\omega^r\omega^{-2r} \nu^{-1}$ and $\omega^{-2r} \nu^{-1}\in\mathbb{F}_{q_0}^*=\omr[2r].$ Thus in both cases $\lambda^{-1}\in a_{s,i}\,\mathbb{F}_{q_0}^*\subseteq \omr$, and therefore, 
$$
 (\omr[r](\lambda e_1 +e_2))gY= \omr[r]( e_1 + \lambda e_2)
 =\omr[r](\lambda^{-1} e_1 +e_2) \in B_{s,i}.
    $$
It follows that $gY$ leaves $L=\{\alpha\}\cup B$ invariant, and hence $G_L$ is transitive on $L$, as in Table~\ref{t:LtrSL}.

 \medskip \noindent
{Case   \ref{trblcase3}:} \quad   Here $n=2$ and $(q,r) = (3^2,2)$, and the block  is 
 $B=B_{9,i}= \omr \{\lambda e_1 +e_2 \mid \lambda \in \omega^i\,\mathbb{F}_{3}\}$, for some $i\in\{0,1,2,3\}$. Note that $\mathbb{F}_3^*=\omr[4]=\omr[2r] < \omr$.   Note also that these blocks occur only in  Case 3 of the proof of Theorem~\ref{th:blocksn2}, where the group $G$ is quasiprimitive,  and $G=(Y \, \SL_2(3^2) \rtimes \langle \diag(1,\omega) \phi \rangle)/Y$ by Theorem ~\ref{th:blocksn2} (up to conjugation). Thus 
$$g=\begin{pmatrix}
    0 & 1 \\
    1 & 0
\end{pmatrix}=(\omega^{-2}I)\cdot \begin{pmatrix}
     0&\omega^2 \\
     \omega^2&0
\end{pmatrix}\in Y\,\SL_2(9)$$ since $\omega^4=-1.$
For each value of $i$, this element interchanges the points $\alpha=\omr e_1$ and $\beta=\omr e_2$ of $L$.  Assume first that $i \in \{0,2\}$. For $\lambda \in \omega^i \mathbb{F}_3^*=\omega^i\omr[4]$, we see that $\lambda\in\omr=\omr[2]$ and that $\lambda^{-1}\in \omega^i\omr[4]$. 
Thus, for  $\lambda \in \omega^i \mathbb{F}_3^*$,
$$(\omr[2](\lambda e_1 + e_2))gY = \omr[2](e_1 + \lambda e_2)= \omr[2](\lambda^{-1} e_1 + e_2) \in B_{9, i}.
$$
It follows that $gY \in G_{L},$ and hence $G_L$ is transitive on $L$, as in Table~\ref{t:LtrSL}.
Finally assume that $i \in \{1,3\}$. We claim that in these cases the stabiliser $G_L$ is not transitive on $L$, equivalently, there is no element $gY\in G_L$ as in \eqref{e:gY}. Suppose to the contrary that $gY$ is such an element. Since $G$ contains $Y\,\SL_2(9)/Y$, we may assume that the entry $\delta$ as in \eqref{e:gY} is $1$, and as noted above $gY$ maps $\alpha=\omr e_1$ to $\beta=\omr e_2$. Consider $\gamma=(\omr[2](\lambda e_1 +e_2))\in B_{9,i} = L\setminus\{\alpha\}$. Then 
\[
(\omr[2](\lambda e_1 +e_2))gY =\omr[2](\lambda_1^{\phi^k}e_1+ (\lambda+\lambda_2)^{\phi^k} e_2) \in  L\setminus\{\beta\}.
\]
In particular, this image must equal $\alpha$ for one of the three points $\gamma$, and this holds precisely when $\lambda=-\lambda_2$ and $\lambda_1^{\phi^k}\in\omr[2]$. This implies that 
$\lambda_1\in\omr[2]$. For the other two points $\gamma$ the quantity $ (\lambda+\lambda_2)^{\phi^k}\ne 0$ and 
we have
\[
(\gamma ) gY =  \omr[2]((\lambda+\lambda_2)^{\phi^k})((\lambda_1(\lambda+\lambda_2)^{-1})^{\phi^k}e_1+  e_2) \in B_{9,i}.
\]
Considering the coefficient of $e_2$ 
we conclude that $(\lambda+\lambda_2)^{\phi^k}\in\omr[2]$, but then the coefficient $(\lambda_1(\lambda+\lambda_2)^{-1})^{\phi^k}$ of $e_1$ also lies in $\omr[2]$, contradicting the definition of $B_{9,i}$. Thus no such element $gY$ exists and 
$G_L$ is not transitive on $L$. This completes the proof.
\end{proof}

Let now $Z \, \SU_3(q)/Y \leq G \leq \GammaU_3(q)/Y$, $G \leq \Sym(\Omega)$ where $\Omega$ and $Y$ are as in Construction \ref{con:psu}.

\begin{Lem}\label{lem:LtrSU}
     Let  $Z \, \SU_3(q)/Y \leq G \leq \GammaU_3(q)/Y$ with $G^{\Omega}$ of rank 3 and let $\alpha=\omr e$. Let $B$ be a block containing $\langle \omega^r \rangle f$ in a system of imprimitivity of $G_{\alpha}$ on $\Omega \backslash \sigma$. Then $G_L$ is transitive on $L=\{\alpha\} \cup B$ if and only if $G=\GammaU_3(q)/Y$ and one of the following holds:
      \begin{enumerate}[label=\normalfont (\arabic*)]
       \item $q=4,$ $r=3$,  $B=B_5$ as in Table \ref{t:blocksSU};
       \item  $q=4,$ $r=3$,  $B=B_6$ as in Table \ref{t:blocksSU};
       \item $q=16,$ $r=5$, $B=B_7$ as in Table \ref{t:blocksSU}.
       \end{enumerate}
\end{Lem}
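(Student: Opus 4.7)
The plan is to follow the template of Lemma \ref{lem:LtrSL}. By Theorem \ref{th:blocksSU}, the block $B$ is one of $B_1, B_2, B_3, B_4$ or, only in the exceptional cases $(q,r)\in\{(4,3),(16,5)\}$, one of $B_5, B_6, B_7$ from Table \ref{t:blocksSU}; in the latter cases, the rank-$3$ analysis at the end of the proof of Theorem \ref{th:blocksSU} forces $G=\GammaU_3(q)/Y$. Since $G_{\alpha,B}$ is already transitive on $B$ and fixes $\alpha$, we have $G_L$ transitive on $L$ if and only if there exists some $gY\in G_L$ with $g\alpha\neq\alpha$; modulo the transitive action of $G_{\alpha,B}$ we may assume $g\alpha=\omr f$.

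First I rule out $B_3$ and $B_4$ via Lemma \ref{lem:int}. The block $B_3$ equals $\sigma(\langle f\rangle)\in\Sigma$, a set disjoint from $\alpha$, so $|L\cap\sigma(\langle f\rangle)|=r>1$; and $B_4$ by construction contains $\sigma(\langle f\rangle)$, so the same conclusion holds. Next, for $B_1$ and $B_2$, I observe that any $g$ with $g\alpha=\omr f$ must send $\langle e\rangle$ to $\langle f\rangle$ and $\langle f\rangle$ to $\langle e\rangle$; preservation of the Hermitian form up to scalar then forces $g$ to stabilise $\langle x\rangle=\langle e\rangle^\perp\cap\langle f\rangle^\perp$, so $g$ is of antidiagonal shape $\mathrm{antidiag}(\delta_1,\delta_2,\delta_3)\cdot\phi^k$ with $\delta_1\delta_3^q=1$ and $\delta_2^{q+1}=1$ (up to the kernel $Y$ and a scalar from $Z$). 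Computing the image of a typical point $\omr(f+\lambda x+\mu e)\in B_1$ yields $\omr(\delta_3 e+\lambda^{p^k}\delta_2 x+\mu^{p^k}\delta_1 f)$, and rewriting this in the normalised form $\omr(f+v')$ required by $B_1$ forces $\mu^{p^k}\delta_1\in\omr$. Because $\mu$ can be chosen freely in $\mathbb{F}_{q^2}^*$ with $\lambda$ adjusted to satisfy the isotropy condition $\lambda^{q+1}+\mathrm{Tr}(\mu)=0$, and because $\omr\delta_1^{-1}$ is a proper subset of $\mathbb{F}_{q^2}^*$ whenever $r>1$, no such $g$ preserves $B_1$. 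The same calculation with $\lambda=0$ and $\mu=b\in\ker(\mathrm{Tr})$ rules out $B_2$: the condition becomes $b^{p^k}\delta_1\in\omr$ for every $b\in\ker(\mathrm{Tr})\setminus\{0\}$, which fails as soon as $|\ker(\mathrm{Tr})|=q$ exceeds $|\omr\cap (\delta_1^{-1}\ker(\mathrm{Tr}))|$, which holds whenever $r>1$.

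For the three exceptional cases of the statement, the Weyl involution $w=\mathrm{antidiag}(1,1,1)$ does the job. Since $q$ is even in both exceptional $(q,r)$ pairs, $\det(w)=1$ and $w P \bar w^T = P$, so $w\in\SU_3(q)\leq G$. The element $w$ swaps $\omr e$ with $\omr f$, and acts on the remaining points of $B$ as follows: on $B_5$ it induces $\lambda\mapsto 1-\lambda$ on $\mathbb{F}_4$, which preserves $\mathbb{F}_4^*\cup\{0\}$ and hence $L$; on $B_6$ it fixes the unique other point $\omr(f+e)$; and on $B_7$ it sends $\omr(f+\lambda e)$ to $\omr(e+\lambda f)$, which rewrites as $\omr(f+\lambda^{-1}e)\in B_7$ after rescaling by $\lambda^{-1}$, using that $\mathbb{F}_4^*\subseteq\omr$ because $|\mathbb{F}_4^*|=3$ divides $|\omr|=(16^2-1)/5=51$. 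In each case $w\in G_L$, and combined with $G_{\alpha,B}$ this gives transitivity of $G_L$ on $L$.

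The main technical obstacle is the bookkeeping in cases $B_1$ and $B_2$, where one must check that the isotropy condition does not conspire with a scalar identification or with the Frobenius twist $\phi^k$ to accidentally return the image to $L$ when $\mu\delta_1\notin\omr$. The identity $\mu^{q+1}\mathrm{Tr}(\mu^{-1})=\mathrm{Tr}(\mu)$ (valid in $\mathbb{F}_{q^2}$) is what keeps the isotropy invariant under the antidiagonal action, thereby isolating the single obstruction $\mu^{p^k}\delta_1\in\omr$ as the genuine failure point; once this is established, the non-existence of a uniform $\delta_1$ satisfying it for all admissible $\mu$ is immediate from $r>1$.
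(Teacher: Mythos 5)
Your overall strategy matches the paper's: Lemma \ref{lem:int} for $B_3,B_4$, a matrix computation to kill $B_1,B_2$, and the antidiagonal Weyl element $w=P\in\SU_3(q)$ for the three exceptional blocks (your treatment of $B_5,B_6,B_7$, including the $\mathbb{F}_4^*\subseteq\omr$ observation for $B_7$ and the appeal to $G=\GammaU_3(q)/Y$ from the end of the proof of Theorem \ref{th:blocksSU}, is exactly the paper's argument). The one place you genuinely diverge is the $B_1,B_2$ case, and there your argument as written has a gap: you assert that ``any $g$ with $g\alpha=\omr f$ must send $\langle e\rangle$ to $\langle f\rangle$ and $\langle f\rangle$ to $\langle e\rangle$''. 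The second half is false for a general such $g$: an element of $G_L$ mapping $\alpha$ to $\omr f$ need only map \emph{some} point of $B$ back to $\alpha$, and for $B_1$ that point is $\omr(f+\lambda x+\mu e)$ with $(\lambda,\mu)$ not necessarily $(0,0)$, so $g$ need not be antidiagonal. Your reduction ``modulo the transitive action of $G_{\alpha,B}$ we may assume $g\alpha=\omr f$'' only buys the first half of the swap. The fix is one line — since $G_{\alpha,B}$ is transitive on $B$ and $G_L$ is assumed transitive on $L$, the group $G_L$ is $2$-transitive on $L$ and therefore contains an element mapping the ordered pair $(\alpha,\omr f)$ to $(\omr f,\alpha)$ — but you must say it, because the antidiagonal shape (and hence the whole $\mu^{p^k}\delta_1\in\omr$ obstruction) rests on it. The paper sidesteps this entirely by working with the more general form \eqref{eq:etof} of an element mapping $\omr e$ to $\omr f$ (antitriangular rather than antidiagonal), locating the point $\omr(f+\lambda x+\mu e)$ sent to $\alpha$, and then perturbing its $e$-coefficient by a nonzero $\nu\in\ker(\mathrm{Tr})\setminus\omr$; your version, once repaired, is arguably cleaner since the swap forces $g$ to stabilise $\langle x\rangle$.

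Two smaller points. For $B_2$ your cardinality justification (``fails whenever $r>1$'') is not quite enough on its own: $\ker(\mathrm{Tr})\setminus\{0\}$ is a coset of $\mathbb{F}_q^*=\omr[q+1]$, and you need $\omr[q+1]\not\leq\omr[r]$, i.e.\ $r\nmid q+1$, which holds because $r$ is an odd prime dividing $q-1$ — that coprimality, not merely $r>1$, is what makes the intersection with a coset of $\omr$ have size $(q-1)/r<q-1$. Finally, the identity $\mu^{q+1}\mathrm{Tr}(\mu^{-1})=\mathrm{Tr}(\mu)$ you invoke at the end is true but not needed: isotropy of the image is automatic because $g$ is a semisimilarity composed with a field automorphism.
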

\begin{proof}
Since the block $B$ is nontrivial, it must be one of those determined in Theorem \ref{th:blocksSU}.

\medskip 

First we show that   if $B$ is  $B_1$ or $B_2$ in Table \ref{t:blocksSU}, then $G_L$ is not transitive on $L$. 
Note that here  $B_2\subseteq B_1$ and all elements in $B$ are of the form $\omr (f+v)$ with $v\in \langle x,e\rangle$.

Assume that $G_L$ is transitive on $L$, so there exists $\tilde{g}Y \in G_L$ such that $(\omr e)\tilde{g}=\omr f.$ Note that $\tilde{g}=g \phi^k$ where $g\in (Z \, \GU_3(q))_{L}$ and $k \in \{0, \ldots, 2a-1\}$ since $\phi$ (and hence $\phi^k$) stabilises $L$.
Observe that  $$(\omr e)g=(\omr e)\tilde{g}\phi^{-k}=(\omr f)\phi^{-k}=\omr f.$$ So $g$ in $(Z \, \GU_3(q))_L$ is such that $(\omr e)g=\omr f$.  
Hence $g$ must have the form 
\begin{equation}\label{eq:etof}
\lambda I \cdot 
\begin{pmatrix}
    0 & 0 & \tilde{z} \\
    0 & \tilde{y} & \tilde{d} \\
    \tilde{z}^{-q}&\tilde{c} & \tilde{b}  
\end{pmatrix} \end{equation}
where $\lambda, \tilde{b},\tilde{c}, \tilde{d},\tilde{z}, \tilde{y} \in \mathbb{F}_{q^2}$ with $\lambda \tilde{z} \in \omr,$ $\tilde{y}^{q+1}=1$, 
$\mathrm{Tr}(\tilde{b}\tilde{z}^{-1}) + \tilde{c}^{q+1}=0,$ and $\tilde{d}\tilde{z}^{-1}+\tilde{y}\tilde{c}^q=0.$ 
Since $\lambda \tilde{z} \in \omr,$ there exists $\lambda_1 \in \omr$ such that $\lambda_1 \lambda \tilde{z}=1$. If $g$ has the shape \eqref{eq:etof}, then $\lambda_1I\cdot g$ acts on $\Omega$ in the same way that $g$ does since $\lambda_1 I$ lies in the kernel of the action $Y=\langle \omega^r I \rangle$.
Thus we may assume that $g$ has the form 
\begin{equation*}
\begin{pmatrix}
    0 & 0 & 1 \\
    0 & y & d \\
    y^{q+1}&c & b  
\end{pmatrix} \end{equation*}
where $ b,c, d, y \in \mathbb{F}_{q^2}$ with $y\neq 0$,  
$\mathrm{Tr}(b) + (cy^{-1})^{q+1}=0,$ and $d+(cy^{-1})^q=0.$

Now there must be a point of $B$ that $g$ maps to $\omr e$, say $\omr (f+\lambda x+\mu e)$ where $\mathrm{Tr}(\mu)+\lambda^{q+1}=0$ (if $B=B_2$, then $\lambda=0$).
Thus 
\[ \omr e= \omr (f+\lambda x+\mu e)g= \omr \left((b+\lambda d+\mu)f+(c+\lambda y)x+y^{q+1}e\right).\]
It follows that $b+\lambda d+\mu=c+\lambda y=0$, and $y^{q+1}\in \omr$.
Let $U:= \mathrm{ker}(\mathrm{Tr})= \{b \in \mathbb{F}_{q^2} \mid \mathrm{Tr}(b)=0\},$ so $U$ is described in Lemma \ref{l:trace}(2).
    Note that $U \backslash \omr$ contains a non-zero element.  Indeed, since $r$ does not divide $q+1$, $\omega^{(q+1)} \in U \backslash \omr$ if $q$ is even and  $\omega^{(q+1)/2} \in U \backslash \omr$ if $q$ is odd. 
 Let $\nu\in U \backslash \omr$ with $\nu\neq 0$. 
Observe that $$\mathrm{Tr}(\mu+\nu)+\lambda^{q+1}=\mathrm{Tr}(\mu)+\mathrm{Tr}(\nu)+\lambda^{q+1}=0,$$ so $\omr(f + \lambda x + (\mu+\nu) e)\in B$. Indeed, this is obvious if $B=B_1$ and it follows from the fact that $\lambda=0$ if $B=B_2$. Thus
\begin{align*}
    \omr(f + \lambda x + (\mu+\nu)  e)g & = \omr \left((b+\lambda d+\mu+\nu)f+(c+\lambda y)x+y^{q+1}e\right) \\ & = \omr \left(\nu f+y^{q+1}e\right) \in B
\end{align*}
which  is a contradiction since  $\nu\notin \omr$ and all elements in $B$ are of the form $\omr (f+v)$ with $v\in \langle x,e\rangle$. Hence $G_L$ is not transitive on $L=\{\alpha \} \cup B$ for $B \in \{B_1, B_2\}.$

\medskip

 Further, if $G_L$ is transitive on $L$, then by Lemma~\ref{lem:int}, the non-empty intersections $L\cap \sigma'$, for $\sigma'\in\Sigma$, all have size $1$. If $B$ is $B_3$ or $B_4$, then this condition fails and $G_L$ is not transitive on $L$.

\medskip

 Finally, let us show that  if  $B$ is one of the $B_5$ -- $B_7$, then $G_L$ is transitive on $L$.  It is shown in the proof of Theorem \ref{th:blocksSU} after {\bf Case 3} that if $(q,r)$ is $(4,3)$ or $(16,5),$ then $G=\GammaU_3(q)/Y.$
Since $G_B$ is transitive on $B$, it suffices to find $gY \in G_L$ such that $(\omr e)gY=\omr f$.
    Let $gY$ be 
$$
\begin{pmatrix}
    0 & 0 & 1 \\
    0 & 1 & 0 \\
    1 & 0 & 0 \\
\end{pmatrix}Y \in Z \,  \SU_3(q)/Y, 
$$
so $(\omr e)gY=\omr f$.
It is easy to see  that, if $B$ is either $B_5$ or $B_6$, then $gY \in G_L$ by the symmetry of $L.$
Let $B=B_7$, so $(q,r)=(16,5)$ and $\langle \omega \rangle = \mathbb{F}_{{16}^2}^*$.  We claim that $gY \in G_L$. Indeed, if $\lambda \in \mathbb{F}_4^*$, then   
$$\omr(f+ \lambda e)gY= \omr (e + \lambda f )=\omr(f + \lambda^{-1}e) \in B,$$
where the last equality holds since $\mathbb{F}_{4}^* \leq \omr.$
If $\lambda =0,$ then 
$\omr fgY= \omr e  \in B.$
\end{proof}

Now we are ready to prove the main result of the paper.

\begin{proof}[Proof of Theorem $\ref{th:main}$] 

Let $G\leq \Sym(\Omega)$ be a rank 3 group satisfying Hypothesis \ref{hyp1}, so $G$ affords a unique nontrivial system of imprimitivity $\Sigma$ on $\Omega$. Suppose that $\mathcal{D}=(\Omega, \mathcal{L})$ is a connected proper partial linear space such that $G \leq \Aut(\mathcal{D})$, and let    $L\in\mathcal{L}$, so $\mathcal{L}= \{(L)g \mid g \in G\}$. Let $\sigma\in\Sigma$ be such that $L\cap \sigma$ is nonempty and let $\alpha\in L\cap \sigma$ and $B:=L\setminus\{\alpha\}$. Since $\mathcal{D}$ is connected it follows that case (2) of Theorem~\ref{th:AD2} holds, and hence $B$ is a block of imprimitivity for the action of $G_\alpha$ on one of its orbits in $\Omega\setminus\{\alpha\}$, namely $\sigma\setminus \{\alpha\}$ or $\Omega\setminus\sigma$.  If $B$ were contained in  $\sigma\setminus \{\alpha\}$, then the line $L$ would be contained in $\sigma$ and hence each line would be contained in a block of $\Sigma$ and $\mathcal{D}$ would be disconnected. As this is not the case we must have $B\subset \Omega\setminus\sigma$. Also, by  Theorem~\ref{th:AD2}(2), the stabiliser $G_L$ is transitive on $L$. We complete the proof by considering the various possibilities for the group $G$, as given in Table \ref{t:qpinsprk3}.

\medskip\noindent
{\it Case $1$:  $G$  as in one of  the first two lines of Table $\ref{t:qpinsprk3}$.}\quad Here $G \leq \ff$ with $\ff$ as in Construction \ref{con:psl}, and $\Omega$ is as in Construction \ref{con:psl}.  We choose $\alpha=\omr e_1$ with $\alpha\in \sigma$ so, by  Lemma \ref{lem:LtrSL}, $B, G$ satisfy one of the Cases of Table~\ref{t:LtrSL}. We now consider each of these Cases.

\medskip\noindent
\emph{Case $1.1$.\ Lines $1$ and $2$ of Case $\mathrm{(i)}$ in Table \ref{t:LtrSL}.}\quad Here $B=B_4$  and $\Omega=\mathbb{F}_q^n\setminus\{0\}$ since $r=q-1$. We claim that  $\mathcal{L}=\{ \{u,v, -(u+v)\} \mid u,v \in V, \dim \langle u,v \rangle=2\}.$ If $n\geq 3$, then $G \geq \SL_n(q)$ and acts transitively on linearly independent pairs from $V= \mathbb{F}_q^n$;  this transitivity also holds if $n=2$ since the conditions in Table \ref{t:qpinsprk3} force the group $G$ to contain $\GL_2(q)$. This proves the claim, and hence $\mathcal{D}= \Delta(n,q)$ with $q\in \{3,4\}$ (see Definition \ref{Deltadef}), and is a partial linear space by Lemma~\ref{l:AGDel}, as in lines 2 or 3 of Table~\ref{tabLU}. 

\medskip\noindent
\emph{Case $1.2$.\ Line $3$ of Case $\mathrm{(i)}$ in Table \ref{t:LtrSL}.}\quad Here $B=B_5$,  $\Omega=\mathbb{F}_4^n\setminus\{0\}$ since $r=q-1=3$, and $\mathcal{L}=\{\{\lambda_1 u+ \lambda_2 v \mid \lambda_i \in \mathbb{F}_4, \lambda_1 + \lambda_2=1\} \mid u,v \in V, \dim\langle u,v \rangle=2 \}$.  Thus $\mathcal{D}=\mathrm{AG}^*(n,4)$ (see Definition \ref{AGdef}), and is a partial linear space by Lemma~\ref{l:AGDel}, as in line 1 of Table~\ref{tabLU}.

\medskip\noindent
\emph{Case $1.3$.\ Case $\mathrm{(ii)}$ in Table \ref{t:LtrSL}.}\quad Here $L=\{\alpha\}\cup B_6$, and we claim that $\mathcal{L}=\{(L)g \mid g \in G\}$ is equal to $\{(L)g \mid g \in \GL_n(16)/Y\}$. Note first that the field automorphism $\phi$ leaves $L$ invariant, so $\mathcal{L}=\{(L)g \mid g \in G \cap(\GL_n(16)/Y)\}$. If $n\geq 3$ then $G$ contains $Y\,\SL_n(q)/Y$, and $\SL_n(q)$ is transitive on linearly independent pairs from $V$, while if $n=2$ then $G \geq Z \, \SL_2(16)/Y=\GL_2(16)/Y$ (see Remark \ref{r:LtrSL}) which also has this transitivity property. The claim now follows from these comments. Thus, by Definition \ref{con:subfield} and the form of $B_6$ in Table \ref{t:LtrSL}, we have $\mathcal{D}=\mathrm{LSub}(n,16,4,5)$ (with $k=t=1$ in Definition~\ref{con:subfield}). Finally,  $\mathrm{LSub}(n,16,4,5)$ is a partial linear space by Theorem~\ref{th:LSub}, as in line 4 of Table~\ref{tabLU}. 

\medskip\noindent
\emph{Case $1.4$.\ Cases $\mathrm{(iii)}$ or $\mathrm{(iv)}$ in Table \ref{t:LtrSL}.}\quad Here  $B = B_{s,i}$ for some $i\in\{1,2\}$, with $s=7$ or $s=8$, respectively. Also $n=2$ and we note that  the field automorphism $\phi$ leaves $L=\{\alpha\}\cup B$ invariant in all cases, so $\mathcal{L}=\{(L)g \mid g \in G \cap(\GL_2(q)/Y)\}$. Now $G \cap(\GL_2(q)/Y) = Z\,\SL_2(q)/Y$ (see Table~\ref{t:LtrSL}), and it follows that $\mathcal{L}= \{(L)hY \mid hY \in \GL_2(q)/Y, \det(h) \in \omr[2]\}$. It is easy to see that if $i=1$, then $\mathcal{D}=\mathrm{LSub}(2,q,q_0,r)$ with $q=q_0^2$ (as in Definition \ref{con:subfield} with $k=2$ and $t=2$).
On the other hand, if  $i=2$, then the action of the matrix $\diag(\omega^r,1)$ on $\Omega$ induces an isomorphism from $\mathcal{D}$ to $\mathrm{LSub}(2,q,q_0,r)$. (This is easy to verify by hand or computationally using {\sc Magma}.) Thus we have just one possibility for $\mathcal{D}$ up to isomorphism in each of Cases $\mathrm{(iii)}$ and $\mathrm{(iv)}$, and by Theorem~\ref{th:LSub}, $\mathcal{D}$ is a partial linear space as in lines 5 and 6 of Table~\ref{tabLU}.

\medskip\noindent
\emph{Case $1.5$.\ Case $\mathrm{(v)}$ in Table \ref{t:LtrSL}.}\quad Here $n=r=2$ and $B = B_{9,i}$ for some $i\in\{ 0,2\}$.  Recall from Table~\ref{t:blocksn2} that $G= \langle Y \, \SL_2(9), \phi \, \diag(1,\omega) \rangle /Y$. Note that $\phi\, \diag(1,\omega)$ normalises $Y \, \SL_2(9)$ and $(\phi\, \diag(1,\omega))^2=\diag(1, \omega^4) \in Y \, \SL_2(9)$, so each element of $\langle Y \, \SL_2(9), \phi \, \diag(1,\omega) \rangle$ can be written as $(\phi\, \diag(1,\omega))^s h$ with $s \in \{0,1\}$ and $h \in \SL_2(9).$  Since the field automorphism $\phi$ leaves $L=\{\alpha\}\cup B$ invariant, we obtain  $\mathcal{L}= \{(L)g \mid g \in  Y\,\SL_2(9){/Y}\} \cup \{(L)\diag(1, \omega)g 
 \mid g \in  Y\,\SL_2(9){/Y} \}.$ Note that $\diag(\omega, \omega^{-1}) \in \SL_2(9)$, so $\{(L)\diag(1, \omega)g 
 \mid g \in  Y\,\SL_2(9){/Y} \} = \{(L)\diag(\omega, 1)g 
 \mid g \in  Y\,\SL_2(9){/Y} \} .$ Now, by Definition \ref{DLSdef}, if $i=0$ 
then 
 $\mathcal{D}=\mathrm{DLSub}(9,3,2,1)$, while if $i=2$ then the action of the matrix $\diag(\omega^2,1)$ on $\Omega$ induces an isomorphism from $\mathcal{D}$ to  $\mathrm{DLSub}(9,3,2,1)$. 
(It is easy to verify this by hand or computationally using {\sc Magma}.) Thus 
 $\mathcal{D}\cong \mathrm{DLSub}(9,3,2,1)$,
and by Lemma~\ref{lem:DLS}, $\mathcal{D}$ is a partial linear space as in line 7 of Table~\ref{tabLU}.

\medskip\noindent
In all the sub-cases of Case 1, the possibilities for the group $G$ given in Table~\ref{tabLU} follow from Lemma~\ref{lem:LtrSL} and Remark~\ref{r:LtrSL}, and the 
 assertions about the group $G$ given in the last column of Table~\ref{tabLU} follow from Lemma~\ref{l:semi}, see also Remark~\ref{qpitsp}.

\medskip\noindent

    \medskip

    {\it Case 2}. Let $G$ be as in the third line of Table \ref{t:qpinsprk3}, so $G \leq \ff$ where $\ff$ is as in Construction \ref{con:psu}. In particular, $\Omega$ is as in Construction \ref{con:psu} and we fix $\alpha$ to be $\omr e$. By  Lemma \ref{lem:LtrSU}, we may assume that one of the following holds:
    \begin{enumerate}
    \item[$(a)$]  $q=4$, $r=3$, $L= B_5 \cup \{\alpha\}= \omr\{\lambda_1 e+ \lambda_2 f \mid \lambda_i \in \mathbb{F}_q, \lambda_1+ \lambda_2=1\} $;
    \item[$(b)$]  $q=4$, $r=3$, $L= B_6 \cup \{\alpha\}= \omr\{e,f,e+f\} $ ;
    \item[$(c)$]  $q=16,$ $r=5$,   $L=B_7\cup \{\alpha\}= \{\omr e\} \cup \{\omr (\lambda e+ f) \mid \lambda \in \mathbb{F}_4\}$;
\end{enumerate}
and $G=\ff=\GammaU_3(q)/Y$ in all three cases above. Notice that an element $g \in \GammaU_3(q)$ can always be written as
$$
g=
\diag(1, \delta, 1) \cdot \phi^i \cdot \Tilde{g}
$$
with $\diag(1, \delta, 1) \in \GU_3(q)$, $0 \leq i \leq 2a-1$ and $\Tilde{g} \in Z \, \SU_3(q).$
    It is easy to see that in $(a)$--$(c)$ we have $L$ stabilised by $\phi$ and $\diag(1, \delta, 1)$ for all $\delta \in \mathbb{F}_q^*.$ Therefore, in each of the cases, 
    $$
    \mathcal{L}=\{(L)g \mid g \in G\}=\{(L)g \mid g \in Z \, \SU_3(q)\}.
    $$
Hence $\mathcal{D}$ is $\mathrm{AGU^*}(4)$, $\mathrm{USub}(4,2,3)$ and $\mathrm{USub}(16,4,5)$ (see  Definitions \ref{con:AGSU} and \ref{con:subfieldSU}) in cases $(a)$, $(b)$ and $(c)$ respectively, and these are partial linear spaces by Theorems~\ref{t:AGUstar} and~\ref{t:USub}, as in lines 8--10 of Table \ref{tabLU}.

 \medskip

    {\it Case 3}. In the remaining cases, $G$ is as in one of lines 4--12 of Table~\ref{t:qpinsprk3}. In these cases we obtain the result computationally using {\sc Magma}, showing that one of the lines 1--10 of Table~\ref{tabPLS} holds. See Appendix for details.
\end{proof}

\section{Disconnected rank 3 partial linear spaces}\label{sec:7}

Systems of imprimitivity of $G_{\alpha}$ on $\sigma \backslash \{\alpha\}$ are very similar for linear groups (as in Construction  \ref{con:psl}) and for unitary groups (as in Construction \ref{con:psu}). Hence we formulate and prove corresponding results simultaneously for both families of groups of rank 3. For this purpose, let us denote the first basis vector in $V$ by $e$ instead of $e_1$ in the case of linear groups and let $\alpha=\omr e$ in both cases. Also let $b=a$ for linear groups and $b=2a$ for unitary groups.

\begin{Lem}\label{lem:bsigmaPSL}
      Let either $Y \,\SL_n(q) \leq G \leq \GammaL_n(q)$ or $Z \, \SU_3(q) \leq G \leq \GammaU_3(q)$ with $G^{\Omega}$ of rank $3$, where $\Omega$ is defined as in Constructions \ref{con:psl} or \ref{con:psu} respectively. Let $B$ be a nontrivial block of imprimitivity containing $\omr \omega e$ for the action of $G_{\alpha}$ on $\sigma \backslash \{\alpha\}$.   Then $r\geq5$ and there exists $k$ dividing $r-1$  with $1<k<r-1$ such that 
      $$B =\omr\{\omega e, \omega^{p^{jk}} e, \omega^{p^{2jk}}e, \ldots, \omega^{p^{j(r-1 -k)}}e\}.$$
\end{Lem}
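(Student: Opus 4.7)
The plan is to pin down the permutation group induced by $G_\alpha$ on $\sigma$ and read the blocks off directly. I would first parametrize $\sigma$ by $\mathbb{Z}/r\mathbb{Z}$ via $i\mapsto \omr\omega^i e$, so that $\alpha$ corresponds to $0$ and $\sigma\setminus\{\alpha\}$ to $(\mathbb{Z}/r\mathbb{Z})^*$. Using the standard form of $\alpha$-stabilising elements given in \eqref{eq:delta1} for the linear case and in Claim~1 of Theorem~\ref{th:blocksSU} for the unitary case, any $g\in G_\alpha$ can be written modulo $Y$ as $\hat g\,\phi^{jk}$ for some $k$, where $\hat g$ lies in $(G\cap \GL_n(q))_\alpha$ or $(G\cap \GL_3(q^2))_\alpha$ respectively, with first row of the form $(1,0,\ldots,0)$. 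A direct computation then gives $\omr\omega^i e\mapsto \omr\omega^{i p^{jk}}e$, so $\hat g$ alone fixes $\sigma$ pointwise and the whole action of $G_\alpha$ on $\sigma$ is multiplication by $p^{jk}$ on indices.

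Next, the rank $3$ hypotheses in Table~\ref{t:qpinsprk3} give $o_r(p)=r-1$ and $(j,r-1)=1$, so $p^j$ is a primitive root modulo $r$. Hence the image of $G_\alpha$ in $\mathrm{Sym}(\sigma\setminus\{\alpha\})$ is the full cyclic group $\langle p^j\rangle=(\mathbb{Z}/r\mathbb{Z})^*$, acting regularly on the $r-1$ nonzero residues. The blocks of a regular cyclic action through a fixed point are in bijection with the subgroups of the acting group, i.e., with the divisors $k$ of $r-1$: the subgroup $\langle p^{jk}\rangle$ produces the block through the index $1$ equal to $\{1,p^{jk},p^{2jk},\ldots,p^{j(r-1-k)}\}$ modulo $r$, which translates back under the parametrization to precisely the formula for $B$ in the statement.

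Finally, $B$ is nontrivial (neither a singleton nor all of $\sigma\setminus\{\alpha\}$) if and only if $1<k<r-1$. Such a divisor of $r-1$ exists precisely when $r-1$ admits a proper divisor in $(1,r-1)$, which forces $r-1\geq 4$ and hence $r\geq 5$. The only step requiring care is the uniform extraction of $\hat g$ from $g$ in both the linear and unitary settings, but this is already handled by the structural lemmas cited above, so no serious obstacle remains.
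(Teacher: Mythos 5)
Your proposal is correct and follows essentially the same route as the paper: both reduce the action of $G_\alpha$ on $\sigma\setminus\{\alpha\}$ to the regular action of the cyclic group generated by multiplication by $p^{j}$ on $(\mathbb{Z}/r\mathbb{Z})^*$ (using that elements of $G_\alpha\cap\GL_n(q^{\bf u})$ fix $\sigma$ pointwise and that $o_r(p)=r-1$ with $(j,r-1)=1$), and then identify the blocks with orbits of subgroups of that cyclic group. The derivation of $r\geq 5$ from the existence of a nontrivial block is likewise the same, just placed at the end rather than the start.
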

\begin{proof}
   Since $B$ is a nontrivial block of imprimitivity for the transitive $G_\alpha$-action on $\sigma \backslash \{\alpha\}$, the cardinality $|B|$ divides $|\sigma \backslash \{\alpha\}|$ and satisfies $1<|B|<|\sigma \backslash \{\alpha\}|=r-1$. In particular $r\geq 5$ and $r-1$ is composite.
   Let ${\bf u}$ be $1$ in the linear case and $2$ in the unitary case, so $V$ is $\mathbb{F}_{q^{\bf u}}^n$ with $n=3$ for the unitary case. In the unitary case, there is a unitary form on $V$ and $e\in V$ is an isotropic vector.  Recall from Table \ref{t:qpinsprk3} that 
   $$
   |G^{\Sigma}/(G^{\Sigma} \cap \PGL_n(q^{\bf u}))|=b/j
   $$ 
   with $(j,r-1)=1.$ In particular, all elements of $G$ have the form $g \phi^{ji}$ with $g \in \GL_n(q^{\bf u})$ and $i \in \{1, \ldots, b/j-1\}$. Notice that if  $g \phi^{ji} \in G_{\alpha}$, then
   $$g= \begin{pmatrix}
       \delta & 0 \\
       c & A
   \end{pmatrix}$$ where $\delta \in \omr,$ $c^{\top} \in (\mathbb{F}_{q^{\bf u}})^{n-1}$ and $A \in \GL_{n-1}(q^{\bf u})$.  Therefore, $g$ stabilises $\sigma$ pointwise and $G_{\alpha}$ acts on $\sigma \backslash \{\alpha\}$ as $\langle \phi^{j} \rangle.$ Further, $\langle \phi^{j} \rangle$ acts on $\sigma \backslash \{\alpha\}$ as a  cycle of order $r-1$ since $(j,r-1)=1$ and $o_r(p)=r-1.$ Since each system of imprimitivity of an abelian regular permutation group
   is the set of orbits of a subgroup, there exists $k$ dividing $r-1$,  with  $1<k<r-1$, such that $B$ is the orbit of $\langle \phi^{jk} \rangle$ containing $\omr \omega e$, so 
   $$
   B= \omr\{\omega e, \omega^{p^{jk}} e, \omega^{p^{2jk}}e, \ldots, \omega^{p^{j(r-1 -k)}}e\}. \eqno{\qedhere}
   $$ 
\end{proof}

\begin{Lem}\label{lem:sigmatr}
    If $B$ is as in Lemma \ref{lem:bsigmaPSL}, then $G_L$ is not transitive on $L$, where $L=B\cup\{\alpha\}$.
\end{Lem}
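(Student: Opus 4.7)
My plan is to reduce the problem to a purely combinatorial question about affine maps on $\mathbb{F}_r$ preserving the subset $\{0\}\cup H$ for a proper nontrivial subgroup $H\le\mathbb{F}_r^*$, and to resolve that question by a short summation argument.

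\medskip
\emph{Step 1 (Identification of $\sigma$ with $\mathbb{F}_r$).} I would parametrise $\sigma$ by $\mathbb{Z}/r\mathbb{Z}$ via $\omr\omega^i e\leftrightarrow i\pmod r$, so that $\alpha\leftrightarrow 0$. Because $o_r(p)=r-1$ and $(j,r-1)=1$, the residue $p^j$ generates $\mathbb{F}_r^*$, and $p^{jk}$ generates the unique subgroup $H\le\mathbb{F}_r^*$ of order $(r-1)/k$. Reading off the description of $B$ from Lemma~\ref{lem:bsigmaPSL}, we then have $B\leftrightarrow H$ and $L\leftrightarrow \{0\}\cup H$.

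\medskip
\emph{Step 2 ($G_\sigma^\sigma\le \AGL_1(r)$).} Every element of $G$ has the form $h\phi^{js}$ with $h\in\GL_n(q^{\mathbf u})$ (and, in the unitary case, allowing a further central scalar, which is absorbed modulo $Y$). If such an element lies in $G_\sigma$ it must stabilise the $1$-space $\langle e\rangle$, so $h(e)=\mu e$ for some $\mu\in\mathbb{F}_{q^{\mathbf u}}^*$; a direct calculation then gives the induced action on $\sigma$ as $i\mapsto p^{js}\,i+m\pmod r$, where $m\equiv\log_\omega(\mu^{p^{s}})\pmod r$. In particular $G_\sigma^\sigma$ embeds into $\AGL_1(r)$.

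\medskip
\emph{Step 3 (Reduction to an affine map preserving $\{0\}\cup H$).} Suppose for contradiction that $G_L$ is transitive on $L$. Then there exists $g\in G_L$ with $g(\alpha)=\beta$ for some $\beta\in B$. Since $L\subseteq\sigma$ and $\Sigma$ is a partition of $\Omega$, $g$ must fix $\sigma$ setwise, so $g\in G_\sigma$ and induces an affine permutation $\tau(x)=ax+b$ of $\mathbb{F}_r$ with $\tau(\{0\}\cup H)=\{0\}\cup H$ and $b=\tau(0)\ne 0$.

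\medskip
\emph{Step 4 (Sum argument giving the contradiction).} From $\tau(L)=L$ and $b\ne 0$ we extract
\[
aH+b \;=\; \bigl(\{0\}\cup H\bigr)\setminus\{b\} \;=\; \{0\}\cup (H\setminus\{b\}),
\]
using that $b\in H$ (as $\tau(0)\in L\setminus\{0\}$). Summing in $\mathbb{F}_r$ over both sides gives
\[
a\!\sum_{h\in H}h \;+\; |H|\,b \;=\; 0\;+\;\Bigl(\sum_{h\in H}h\Bigr)-b.
\]
Since $H$ is a subgroup of the cyclic group $\mathbb{F}_r^*$ of order $>1$, multiplication by any $h_0\in H$ permutes $H$, so $\sum_{h\in H}h=0$. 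Thus $|H|\,b=-b$, i.e.\ $(|H|+1)\,b=0$ in $\mathbb{F}_r$. As $b\ne 0$, this forces $|H|\equiv -1\pmod r$, hence $|H|=r-1$; but $|H|=(r-1)/k$ with $k>1$, a contradiction.

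\medskip
The main obstacle is pinpointing the right rigidity statement in Step~4: once one notices that the sum of the elements of any nontrivial subgroup of $\mathbb{F}_r^*$ vanishes, the contradiction drops out in one line. Steps~1--3 are essentially bookkeeping using the fact that $o_r(p^j)=r-1$ and that $G_\sigma$ acts on $\sigma$ only through the Frobenius twist and the linear scalar at the $\langle e\rangle$--coordinate, both of which are affine on $\mathbb{F}_r$.
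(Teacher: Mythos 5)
Your proof is correct and follows essentially the same route as the paper: both reduce the question to an affine permutation of the index set modulo $r$ and derive a contradiction by summing over that set, using that the sum of the elements of $H$ (equivalently, of the paper's set $I$) vanishes mod $r$ because the set is invariant under multiplication by $p^{jk}\not\equiv 1$. The only cosmetic differences are that the paper works with the specific map $s\mapsto 1-s$ coming from an element swapping $\alpha$ and $\omr\omega e$, whereas you allow a general affine map $x\mapsto ax+b$ (and your exponent $\mu^{p^s}$ in Step 2 should be the image of $\mu$ itself, depending on the chosen order of composition — immaterial to the argument).
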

\begin{proof}
    By Lemma \ref{lem:bsigmaPSL}, $r\geq5$ and
   $L=\omr\{e,\omega e, \omega^{p^{jk}} e, \omega^{p^{2jk}}e, \ldots, \omega^{p^{j(r-1 -k)}}e\}$. We denote the set $\{0,1,p^{jk}, p^{j2k}, \ldots,  p^{j(r-1 -k)} \}$ by $I,$ so $L=\{\omr \omega^s  e \mid s \in I\}.$ 
    Assume that $G_L$ is transitive on $L$. Since $G_{\alpha}$ is transitive on $B=L\setminus\{\alpha\}$, it follows that $G_L$ is 2-transitive on $L$. Thus there exists $g \in G_L$ interchanging $\omr e$ and $\omr \omega e$.
Since $g$ maps $\omr e$ to $\omr \omega e$, we have 
    $$g = \phi^{jm} \cdot \begin{pmatrix}
       \omega \delta & 0 \\
       c & A
   \end{pmatrix}Y$$
   with $m \in \{0,1, \ldots. b/j-1\},$ 
   $\delta \in \omr$, $c^{\top} \in \mathbb{F}_{q^{\bf u}}^{n-1}$ and $A \in \GL_{n-1}(q^{\bf u}).$

   Hence 
   $$ (\omr \omega^se)g=\omr (\omega^s)^{p^{jm}} \omega\delta e=\omr \omega^{sp^{jm}+1}e.$$
So, if $s=1$, then  $$(\omr \omega e)g=\omr \omega^{p^{jm}+1}e=\omr e,$$
and $\omega^{p^{jm}+1}\in \omr$.
Therefore  $$ (\omr \omega^se)g=\omr \omega^{sp^{jm}+1}e=\omr \omega^{s(p^{jm}+1)+1-s}e=\omr \omega^{1-s} e.$$
 It follows that
    $  L=(L)g  = \{\omr \omega^{1-s} e \mid s \in I\}.$
 In particular, 
 \begin{equation}\label{IDC}
 \{s \mod r \mid s \in I\}=\{(1-s) \mod r \mid s \in I\}.
 \end{equation}

We claim  that $\sum_{s \in I}s \equiv 0 \pmod r.$ Indeed, 
$$(\sum_{s \in I}s) \cdot p^{jk}=p^{jk}+p^{2jk}+ \ldots+p^{j(r-1 -k)}+ p^{j(r-1)}\equiv \sum_{s \in I}s \pmod r$$
since $p^{r-1}\equiv 1\pmod r$.

Therefore, combining the above with \eqref{IDC}, we obtain
$$0 \equiv \sum_{s \in I}s \equiv \sum_{s \in I} (1-s) \equiv |I| -\sum_{s \in I} s \equiv (r-1)/k+1  \pmod r,$$
which is a contradiction since $r>3$ and $k>1$, so $(r-1)/k+1 \not\equiv 0 \pmod r.$
\end{proof}

\begin{proof}[Proof of Theorem \ref{th:discon}]
    If $G$ is as in lines $1$--$3$ of Table \ref{t:qpinsprk3}, then $\mathcal{D}= (\Omega, (B \cup \{\alpha\})^G)$ is not a partial linear space by Theorem \ref{th:AD2} and Lemma \ref{lem:sigmatr}. The results for $G$ as in lines 4--12   are obtained computationally using {\sc Magma}. See Appendix for details.
\end{proof}

\section*{Appendix}

Here we describe our computations with {\sc Magma}. Let $G\leq \Sym(\Omega)$ be a group satisfying Hypothesis \ref{hyp1}. First, we address how to construct such a group in the correct permutation representation in  {\sc Magma}. For $G$ as in the last two lines of Table \ref{t:qpinsprk3}, the representations (including generators in {\sc Magma}) are listed in the ATLAS of Finite Group Representations \cite{AtlasV3}. For $G$  as in the first three lines of Table \ref{t:qpinsprk3}, let $S$ be $\SL_n(q)$, or $\SU_3(q)$, for the first two lines, or for the third line, respectively. Then $\langle \omega^r I  \rangle S/ \langle \omega^r I  \rangle$ is a normal subgroup of $G$. One can construct $\langle \omega^r I  \rangle S/ \langle \omega^r I  \rangle$ in {\sc Magma} using the following procedure:
\begin{enumerate}
    \item find the stabiliser $H$ in $S$ of a (totally isotropic) $1$-subspace of the natural module of $S$ via the command \texttt{ClassicalMaximals};
    \item find the unique normal subgroup $R$ of index $r$ of $H$ via the command  \texttt{NormalSubgroups};
    \item obtain $\langle \omega^r I  \rangle S/ \langle \omega^r I  \rangle$ as a permutation group  via the command \texttt{CosetAction(S,R)}.
\end{enumerate}
Now $G$ can be obtained as a subgroup of the normaliser $N$ of $\langle \omega^r I  \rangle S/ \langle \omega^r I  \rangle$ in $\Sym(\Omega)$ using \texttt{MaximalSubgroups} or  \texttt{NormalSubgroups} since in all cases, we are interested in, $G$ is either equal to $N$ or is an (often normal) subgroup of low (often prime) index.

For the rest of the groups $G$ in Table \ref{t:qpinsprk3}, $G$ can be easily constructed using \texttt{CosetAction(M,R)} (and then taking the normaliser in $\Sym(\Omega)$, if necessary) where $M$ is the plinth of $G$ (which can be obtained using generic commands in {\sc Magma}, and is isomorphic to \texttt{PSL(3,4)}) and $R$ is a subgroup of $M$ of index $|\Sigma|r$ that is found using the command \texttt{SubgroupClasses}.

Once the group $G$ is constructed as a permutation group on $\Omega$, the following procedure constructs the set $\texttt{Or}$ of orbits of a point stabiliser, the set \texttt{Par3} of all its blocks of imprimitivity on the third orbit ($\Omega \backslash \sigma$) and, if a block gives rise to a partial linear space (via Theorem \ref{th:AD1}), it is recorded in the list \texttt{D3}. Of course, the same procedure can be applied to the second orbit, instead of the third, to construct possible disconnected partial linear spaces as well.

\lstset{
basicstyle=\ttfamily}  
{\small
\begin{lstlisting}
d:=Degree(G); 
vst:=Stabiliser(G,1);
Or:=Orbits(vst);

Or3G:=ActionImage(vst,Or[3]); 
Par3:=AllPartitions(Or3G); 
D3:=[];
for i in [1..#Par3] do
ExtractRep(~Par3, ~B);
Include(~B,1);
GBst:=Stabiliser(G,B);
BGset:=GSet(GBst,B); 
ActGB:=ActionImage(GBst,BGset);
if IsTransitive(ActGB) then 
L:=Orbit(G,B);
Append(~D3, NearLinearSpace<d|L>);
end if;
end for;
\end{lstlisting}}

\bibliographystyle{abbrv}
\bibliography{PLSRev.bib}

\begin{thebibliography}{10}

\bibitem{BDFP}
J.~Bamberg, A.~Devillers, J.~B. Fawcett, and C.~E. Praeger.
\newblock Partial linear spaces with a rank 3 affine primitive group of
  automorphisms.
\newblock {\em J. Lond. Math. Soc. (2)}, 104(3):1011--1084, 2021.

\bibitem{R3PIT}
A.~A. Baykalov, A.~Devillers, and C.~E. Praeger.
\newblock Rank three innately transitive permutation groups and related
  2-transitive groups.
\newblock {\em Innov. Incidence Geom.}, 20(2-3):135--175, 2023.

\bibitem{magma}
W.~Bosma, J.~Cannon, and C.~Playoust.
\newblock The {M}agma algebra system. {I}. {T}he user language.
\newblock {\em J. Symbolic Comput.}, 24(3-4):235--265, 1997.
\newblock Computational algebra and number theory (London, 1993).

\bibitem{D05}
A.~Devillers.
\newblock A classification of finite partial linear spaces with a primitive
  rank 3 automorphism group of almost simple type.
\newblock {\em Innov. Incidence Geom.}, 2:129--175, 2005.

\bibitem{D08}
A.~Devillers.
\newblock A classification of finite partial linear spaces with a primitive
  rank 3 automorphism group of grid type.
\newblock {\em European J. Combin.}, 29(1):268--272, 2008.

\bibitem{DGLPP}
A.~Devillers, M.~Giudici, C.~H. Li, G.~Pearce, and C.~E. Praeger.
\newblock On imprimitive rank 3 permutation groups.
\newblock {\em J. Lond. Math. Soc. (2)}, 84(3):649--669, 2011.

\bibitem{DGLPP2}
A.~Devillers, M.~Giudici, C.~H. Li, G.~Pearce, and C.~E. Praeger.
\newblock Erratum: {O}n imprimitive rank 3 permutation groups (necessitated by
  publisher error).
\newblock {\em J. Lond. Math. Soc. (2)}, 85(2):592, 2012.

\bibitem{DH06}
A.~Devillers and J.~I. Hall.
\newblock Rank 3 {L}atin square designs.
\newblock {\em J. Combin. Theory Ser. A}, 113(5):894--902, 2006.

\bibitem{Higman}
D.~G. Higman.
\newblock Finite permutation groups of rank {$3$}.
\newblock {\em Math. Z.}, 86:145--156, 1964.

\bibitem{semiprR3}
H.~Y. Huang, C.~H. Li, and Y.~Z. Zhu.
\newblock On finite permutation groups of rank three.
\newblock {\em J. Algebra}, 666:703--732, 2025.

\bibitem{hupp}
B.~Huppert.
\newblock {\em Endliche {G}ruppen. {I}}.
\newblock Die Grundlehren der Mathematischen Wissenschaften, Band 134.
  Springer-Verlag, Berlin-New York, 1967.

\bibitem{KL}
P.~Kleidman and M.~Liebeck.
\newblock {\em The subgroup structure of the finite classical groups}, volume
  129 of {\em London Mathematical Society Lecture Note Series}.
\newblock Cambridge University Press, Cambridge, 1990.

\bibitem{Li2025}
C.~H. Li, H.~Yi, and Y.~Z. Zhu.
\newblock Finite semiprimitive permutation groups of rank $3$.
\newblock 2025.
\newblock Preprint arXiv:2412.03168.

\bibitem{Ffields}
R.~Lidl and H.~Niederreiter.
\newblock {\em Finite fields}, volume~20 of {\em Encyclopedia of Mathematics
  and its Applications}.
\newblock Cambridge University Press, Cambridge, second edition, 1997.
\newblock With a foreword by P. M. Cohn.

\bibitem{PS}
C.~E. Praeger and C.~Schneider.
\newblock {\em Permutation groups and {C}artesian decompositions}, volume 449
  of {\em London Mathematical Society Lecture Note Series}.
\newblock Cambridge University Press, Cambridge, 2018.

\bibitem{sup}
D.~A. Suprunenko.
\newblock {\em Matrix groups}.
\newblock American Mathematical Society, Providence, R.I., 1976.
\newblock Translated from the Russian, Translation edited by K. A. Hirsch,
  Translations of Mathematical Monographs, Vol. 45.

\bibitem{VV2016}
G.~{Van de Voorde}.
\newblock Desarguesian spreads and field reduction for elements of the
  semilinear group.
\newblock {\em Linear Algebra Appl.}, 507:96--120, 2016.

\bibitem{AtlasV3}
R.~Wilson, P.~Walsh, J.~Tripp, I.~Suleiman, R.~Parker, S.~Norton, S.~Nickerson,
  S.~Linton, J.~Bray, and R.~Abbott.
\newblock Atlas of finite group representations - version 3.
\newblock https://brauer.maths.qmul.ac.uk/Atlas/v3/.

\bibitem{ZSIG}
K.~Zsigmondy.
\newblock Zur {T}heorie der {P}otenzreste.
\newblock {\em Monatsh. Math. Phys.}, 3(1):265--284, 1892.

\end{thebibliography}

\end{document}